\documentclass[11pt, reqno]{amsart}
\usepackage{amsmath,amsthm,amsfonts,amssymb, wrapfig}
\usepackage[normalem]{ulem}
\usepackage{graphicx}
\usepackage{wrapfig}
\usepackage{mathrsfs}
\usepackage{enumitem}
\usepackage{wrapfig}
\usepackage{etoolbox}
\usepackage{comment}
\usepackage{xcolor}
\usepackage{esint}
\apptocmd{\sloppy}{\hbadness 10000\relax}{}{}
\apptocmd{\sloppy}{\vbadness 10000\relax}{}{}
\usepackage[colorlinks=true, linkcolor=blue, citecolor=blue, urlcolor=blue]{hyperref}
\usepackage[letterpaper,margin=1.1in]{geometry}
\usepackage{accents}
\usepackage{enumitem}
\usepackage{todonotes}
\setlist[enumerate]{label=\textup{(\arabic*)}}

\DeclareRobustCommand{\hvec}[1]{\accentset{\scriptstyle\rightharpoonup}{#1}}
\renewcommand{\vec}[1]{\hvec{#1}}

\allowdisplaybreaks

\numberwithin{equation}{section}
\theoremstyle{plain}

\newtheorem{theorem}{Theorem}[section]
\newtheorem{proposition}[theorem]{Proposition}
\newtheorem{porism}[theorem]{Porism}
\newtheorem{corollary}[theorem]{Corollary}

\newtheorem{lemma}[theorem]{Lemma}

\theoremstyle{definition}
\newtheorem{remark}[theorem]{Remark}
\newtheorem{definition}[theorem]{Definition}

\newcommand{\norm}[1]{\left\lVert#1\right\rVert}

\def\R{\mathbb{R}}
\def\SS{\mathbb{S}}

\def\e{\varepsilon}

\def\XXint#1#2#3{{\setbox0=\hbox{$#1{#2#3}{\int}$ }
\vcenter{\hbox{$#2#3$ }}\kern-.6\wd0}}

\newcommand{\dist}{\mathop\mathrm{dist}\nolimits}

\newcommand{\res}{\hbox{ {\vrule height .22cm}{\leaders\hrule\hskip.2cm} }}

\newcommand{\Per}{\operatorname{Per}}

\renewcommand{\tilde}{\widetilde}

\let\epsilon\varepsilon

\usepackage{alphalph}

\title{Quantitative Stability for Anisotropic Gaussian Isoperimetric Inequality}
\author{Sean McCurdy and Kuan-Ting Yeh}

\subjclass[2020]{Primary: 28A75, 49Q20; Secondary: 26D20}
\keywords{Ehrhard symmetrization, log-concave measures, anisotropic Gaussian measures, Gaussian isoperimetric inequality, quantitative stability}

\address{Instituto de Matemáticas\\ Universidad Nacional Autonoma de México\\ Cuidad de México, México}
\email{sean.mccurdy@im.unam.mx}

\address{Department of Mathematics\\ Purdue University\\
 150 N. University Street,
 West Lafayette, IN 47907-2067, USA}
 \email{yeh49@purdue.edu}

\begin{document}

\begin{abstract}  
We prove a sharp, dimension-free quantitative stability theorem for the Gaussian isoperimetric inequality in \emph{general} Gauss spaces $(\mathbb{R}^n,\gamma_A)$, where $A$ is an arbitrary positive-definite matrix. More precisely, we show that the anisotropic Gaussian perimeter deficit controls the square of the anisotropic Gaussian asymmetry with a dimension-free constant, recovering the sharp quadratic stability behavior known in the isotropic setting. The central difficulties are the loss of rotational symmetry and the lack of compactness of the isoperimetric sets. The main novelty of our work is a new symmetrization-based reduction to the small-perturbation regime, together with effective Fuglede-type estimates derived from the weighted constant mean curvature equation. These ingredients overcome the principal obstructions created by anisotropy and extend the proof-from-regularity approach to quantitative isoperimetric stability in the anisotropic setting.
\end{abstract}

\maketitle

\tableofcontents

\section{Introduction}
\subsection{Background on Gaussian Isoperimetry}\label{background} The classical isoperimetric problem admits a natural analogue in Gauss space $(\mathbb{R}^n,\gamma_n)$. Here, the \textit{isotropic} Gaussian measure $\gamma_n$ and its associated perimeter $\Per_{\gamma_n}$ are defined by
\begin{align*}
    \gamma_n(E):= \frac{1}{(2\pi)^{\frac{n}{2}}}\int_E e^{-\frac{1}{2}|x|^2}dx,\qquad \Per_{\gamma_n}(E):= \frac{1}{(2\pi)^{\frac{n-1}{2}}}\int_{\partial^ME}e^{-\frac{1}{2}|x|^2}d\mathcal{H}^{n-1},
\end{align*}
where $\partial^M E$ denotes the $(n-1)$-dimensional essential boundary of $E$ and $\mathcal{H}^{n-1}$ denotes the standard $(n-1)$-dimensional Hausdorff measure.

Unlike in the Euclidean setting, where isoperimetric sets are balls, the isoperimetric sets in Gauss space are half-spaces of the form
\begin{align*}
    H_n(\omega, \overline{s}):=\{x \in \mathbb{R}^n: \omega \cdot x \le \overline{s}\}
\end{align*}
for $\omega \in \mathbb{S}^{n-1}$ and $\overline{s} \in \mathbb{R}$. This was first proved independently by Borell \cite{Borell75_Gaussisoperi} and Sudakov and Cirel'son \cite{CirelsonSudakov74_Gisoperi}, based on L\'evy's isoperimetric inequality on spheres and Poincar\'e's limit theorem (orthogonal projections onto $\mathbb{R}^n$ of the uniform measure on higher-dimensional spheres $\sqrt{N}\,\mathbb{S}^{N-1}$ converge to the isotropic Gaussian measure on $\mathbb{R}^n$ as $N \rightarrow \infty$).  Later, many other proofs were discovered.  In particular, we highlight Ehrhard's geometric proof using the $\epsilon$-enlargement version of perimeter, which provided a direct analogue in Gauss space for the proof of the isperimetric inequality in Euclidean space using Steiner symmetrization \cite{Ehrhard82,Ehrhard83,Ehrhard84}.  The classification of perimeter minimizers was carried out by Carlen and Kerce in \cite{CarlenKerce01}, where they used semigroup methods (following the work of Ledoux \cite{Ledoux94}) to show that half-spaces are the unique minimizers of the Gaussian weighted perimeter in their mass class.

Once the equality cases had been classified, the next natural question for the isoperimetric inequality is \textit{quantitative stability}, \textit{i.e.,} whether or not it is possible to control the ``distance" of a set $E \subset \mathbb{R}^n$ from the class of perimeter minimizers by the error or \textit{deficit} in the isoperimetric inequality.  That is, in the Gaussian setting, one wants to prove an inequality of the form
\begin{align*}
    \delta_{\gamma_n}(E):= \text{Per}_{\gamma_n}(E) - \min_{\substack{F \subset \mathbb{R}^n\\\gamma_n(F) = \gamma_n(E)}}\text{Per}_{\gamma_n}(F) \ge B(E),
\end{align*}
where $B$ is some quantity that is zero when $E$ is a perimeter minimizer and gives some sense of how ``far" $E$ is from being a perimeter minimizer, in this case, a half-space.  Quantitative stability estimates (also called Bonnesen-style inequalities) have a long history, see for example \cite{Osserman79_BonnesenIneqs}. However, since Hall, Hayman, and Weitsman's introduction of Fraenkel's asymmetry in the context of Newtonian capacity in \cite{HallHaymanWeitsman91}, Fraenkel-type asymmetries have received the most attention in dimensions $n\ge 3$. In the Gaussian context, the \textit{Gaussian asymmetry} is defined by
\begin{align*}
    \lambda_{\gamma_n}(E):=\min_{\omega\in \mathbb{S}^{n-1}}\Big\{\gamma_{n}(E \Delta H_n(\omega,s)): \gamma_n(E)=\gamma_n(H_n(\omega,s))\Big\}.
\end{align*}

In the \textit{isotropic} Gaussian setting, Cianchi, Fusco, Maggi, and Pratelli \cite{CianchiFuscoMaggiPratelli11} were the first to establish a sharp quantitative stability estimate for the Gaussian isoperimetric inequality:
$$
\lambda_{\gamma_n}(E)^2 \leq C(n,\alpha)\delta_{\gamma_n}(E),
$$
for every measurable set $E \subset \mathbb{R}^n$ with $\gamma_A(E)=\alpha$. For $n\geq 2$, the exponent $2$ is sharp, as in the Euclidean setting. Their proof adapts the argument of \cite{FuscoMaggiPrateli08_QuantStabIsoIneq} to the Gaussian setting. After a careful analysis of the one-dimensional case, they use a reflection argument to reduce the problem to $(n-1)$-symmetric sets. They then analyze the two-dimensional case and, through a more involved comparison argument, reduce the higher-dimensional case to establishing an estimate of a slightly different form. The reduction to $(n-1)$-symmetric sets is particularly delicate, as it requires simultaneous control of the Gaussian asymmetry and the Gaussian deficit; this is also where the dimension dependence enters the estimates.

The dependence of the constant on the dimension is somewhat unnatural, since the perimeter of the minimizers, namely half-spaces, is independent of the dimension in Gauss space. The first \textit{dimension-independent} quantitative stability estimates for the isotropic Gaussian isoperimetric inequality were obtained by Mossel and Neeman \cite{MosselNeeman15}. Their argument uses semigroup techniques, building on the work of Ledoux \cite{Ledoux94} and Carlen and Kerce \cite{CarlenKerce01}. We note, however, that the dependence on $\delta_{\gamma_n}(E)$ obtained by Mossel and Neeman is not sharp.

In the same year, Eldan \cite{Eldan2015} introduced the \textit{barycenter excess} $\beta_{\gamma_n}(E)$. Since $\beta_{\gamma_n}(E)$ controls the \textit{Gaussian asymmetry}, quantitative stability estimates in terms of the barycenter excess are naturally stronger than those formulated in terms of the Gaussian asymmetry. Eldan obtained such a quantitative stability estimate as a consequence of a two-sided quantitative stability result for Gaussian noise stability. His argument relies on methods from stochastic calculus. The resulting estimate is a $\log$-factor away from being sharp.

The search for sharp, dimension-free quantitative stability estimates for the Gaussian isoperimetric inequality was ultimately resolved by Barchiesi, Brancolini, and Julin \cite{BarchiesiBrancoliniJulin17}. The core of their approach is the study of minimizers of a functional containing a penalization term. Using first- and second-variation formulas, they show that, under suitable conditions, the relevant minimizers are half-spaces, thereby reducing the problem to an analysis of half-spaces. More recently, Barchiesi and Julin \cite{BarchiesiJulin2020} obtained a full characterization of the minimizers of this functional, showing that the only minimizers are either half-spaces or symmetric strips.

\subsection{Main Result and Novelty}\label{Main Result} The methods discussed above rely crucially on the rotational invariance of the isotropic Gaussian measure. This naturally raises the question of what remains true when rotational invariance is lost, as in the anisotropic Gaussian setting. Recently, the anisotropic Gaussian isoperimetric problem was studied from a geometric perspective by the second named author in \cite{Yeh2024}. Let $n\ge 2$, let $A$ be a symmetric positive-definite $n\times n$ matrix, and let $\gamma_A$ denote the \emph{$A$-anisotropic Gaussian measure}
\begin{align}\label{e:gamma A mass def}\gamma_A(E):=\frac{\sqrt{\det A}}{(2\pi)^{\frac{n}{2}}}\int_{E} e^{-\langle Ax,x\rangle /2}\ dx.\end{align}
The perimeter associated with $\gamma_A$ is called the \emph{$A$-anisotropic Gaussian perimeter} (or simply the \emph{$\gamma_A$-perimeter}) and is defined by
\begin{align}\label{e:gamma A perimeter def}\Per_{\gamma_A}(E):=\frac{\sqrt{\det A}}{(2\pi)^{\frac{n-1}{2}}}\int_{\partial^M E} e^{-\langle Ax,x\rangle /2}\ d\mathcal{H}^{n-1}(x).
\end{align}
It was shown in \cite[Theorem 1.1]{Yeh2024} that the minimizers of the anisotropic Gaussian isoperimetric problem are half-spaces corresponding to the smallest eigenvalue of \(\sqrt A\), namely,
\begin{equation}\label{unique minimizer}
H_n\left(\omega,\frac{\phi^{-1}(\gamma_A(E))}{d_{\min}}\right)\mbox{\qquad for some unit vector $\omega\in V_{d_{\min}}(\sqrt{A})$},
\end{equation}
where $d_{\min}$ denotes the smallest eigenvalue of $\sqrt{A}$, $V_{d_{\min}}(\sqrt{A})$ is the corresponding eigenspace, and $\phi(s):=\frac{1}{\sqrt{2 \pi}} \int_{-\infty}^s e^{-\frac{1}{2} t^2} d t$. In particular, we have
\begin{align*}
\min_{\substack{F \subset \mathbb{R}^n\\\gamma_{A}(F) = \gamma_A(E)}}\text{Per}_{\gamma_A}(F)= e^{-[\phi^{-1}(\gamma_A(E))]^2/2}\frac{1}{\|(\sqrt{A})^{-1}\|}.
\end{align*}

Equation \eqref{unique minimizer} suggests that the natural formulation of quantitative stability in the anisotropic Gaussian setting should be expressed in terms of the following quantities. We define the \emph{$A$-anisotropic Gaussian perimeter deficit} (or simply the \emph{$\gamma_A$-deficit}) by
\begin{align}\label{e:gamma A deficit def}
\delta_{\gamma_A}(E)&:=\Per_{\gamma_A}(E)-\min_{\substack{F \subset \mathbb{R}^n\\\gamma_{A}(F) = \gamma_A(E)}}\text{Per}_{\gamma_A}(F)=\Per_{\gamma_A}(E)-e^{-[\phi^{-1}(\gamma_A(E))]^2/2}\frac{1}{\|(\sqrt{A})^{-1}\|},
\end{align}
and the \emph{$A$-anisotropic Gaussian asymmetry} (or simply the \emph{$\gamma_A$-asymmetry}) by
\begin{align}\label{e:gamma A asymmetry def}\lambda_{\gamma_A}(E):=\min _{\omega \in V_{d_{\min}}(\sqrt{A})\cap \SS^{n-1}} \gamma_A\left(E \Delta H_n\left(\omega, \frac{\phi^{-1}\left(\gamma_A(E)\right)}{d_{\min}}\right)\right). 
\end{align}
Our main result establishes a quantitative stability estimate for the isoperimetric inequality in \textit{general} Gauss spaces $(\mathbb{R}^n,\gamma_A)$.

\begin{theorem}[Quantitative Anisotropic Gaussian Isoperimetric Inequality]\label{t:main theorem}
Let $\alpha\in (0,1)$ and $n\geq 2$. Let $A$ be an $n \times n$ symmetric positive-definite matrix with two smallest eigenvalues $0<d_1^2\le d_2^2$, counting multiplicity.  There exists a constant $0<C(\alpha,d_1^2, d_2^2)<\infty$ such that 
\begin{align*}
    \lambda_{\gamma_A}(E)^2 \leq C(\alpha,d_1^2, d_2^2)\, \delta_{\gamma_A}(E)
\end{align*}
for every measurable set $E \subset \mathbb{R}^{n}$ with $\gamma_A(E)=\alpha$.
\end{theorem}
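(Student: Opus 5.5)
We will follow the proof-from-regularity strategy of Cianchi--Fusco--Maggi--Pratelli and of Barchiesi--Brancolini--Julin, adapted to the anisotropic weight. We first diagonalize: after an orthogonal change of variables we may assume $A=\mathrm{diag}(d_1^2,\dots,d_n^2)$ with $d_1\le d_2\le\dots\le d_n$. Both $\gamma_A$ and $\Per_{\gamma_A}$ factor as products of one-dimensional Gaussians with variances $d_i^{-2}$. The minimizers \eqref{unique minimizer} are the half-spaces $\{\omega\cdot x\le \phi^{-1}(\alpha)/d_1\}$ with $\omega$ in the $d_1$-eigenspace.

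\textbf{Step 1: reduction to small deficit and to an effectively low-dimensional problem.} If $\delta_{\gamma_A}(E)\ge \delta_0(\alpha,d_1,d_2)$, the estimate is trivial because $\lambda_{\gamma_A}\le 2$. So assume $\delta_{\gamma_A}(E)$ is small. We then apply Ehrhard symmetrization in the directions orthogonal to $e_1$, and more generally in the coordinates $x_2,\dots,x_n$. Ehrhard symmetrization in a product-Gaussian coordinate preserves $\gamma_A$-mass and does not increase $\Per_{\gamma_A}$. It replaces sections by half-lines, and we need to check that it does not decrease the asymmetry by more than a controlled amount.

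The mechanism is as follows. Because the weights in $x_j$ with $d_j>d_1$ are strictly ``more concentrated'', a set whose boundary tilts toward those directions pays a perimeter penalty of order $(d_2-d_1)$ times the tilt. Hence a small deficit forces $E$ to be close to a set whose normal lies essentially in $V_{d_{\min}}$. We would make this precise with a one-dimensional Ehrhard-type comparison: compare the profile of $E$ along $e_j$ with that of the minimizing half-space. This yields an inequality $\delta\gtrsim (d_2^2-d_1^2)\cdot(\text{mass misaligned with }V_{d_{\min}})$.

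On the eigenspace $V_{d_{\min}}$ the measure is isotropic. There we invoke the known sharp dimension-free isotropic result (Barchiesi--Brancolini--Julin) on the fibers. Combined with the gain from the product structure, this should control the part of the asymmetry coming from the $V_{d_{\min}}$ variables. This dichotomy, isotropic part versus anisotropic tilt, is where the dependence on $d_1^2,d_2^2$ alone enters, and it makes the constant independent of $n$.

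\textbf{Step 2: penalized minimization and regularity.} To get the sharp quadratic power rather than a linear-in-asymmetry estimate for the tilted part, we argue by contradiction in the style of Barchiesi--Brancolini--Julin. Suppose $E_k$ satisfies $\delta(E_k)\le \lambda(E_k)^2/k$. Replace $E_k$ by minimizers $F_k$ of $\Per_{\gamma_A}(F)+\Lambda|\gamma_A(F)-\alpha| + \sqrt{(\lambda(F)-\lambda(E_k))^2+\varepsilon_k}$ within the symmetrized class. These $F_k$ are $\Lambda$-minimizers of the weighted perimeter, so they are $C^{1,\beta}$-close to a minimizing half-space on compact sets. They satisfy a weighted constant mean curvature equation $H_{F}-\langle Ax,\nu\rangle=\text{const}+O(\text{penalty})$.

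The lack of compactness is handled by the Gaussian decay of the weight together with the symmetrized structure from Step 1. After symmetrization, $\partial F_k$ is a graph over the hyperplane $\{x_1=s\}$ in the relevant region, so tails contribute negligibly.

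\textbf{Step 3: Fuglede-type estimate, the main obstacle.} For a graph $\partial F=\{x_1=s+u(x')\}$ with $u$ small in $C^1$, we expand $\Per_{\gamma_A}$ to second order. The quadratic form is $\int (|\nabla u|^2 - d_1^2 u^2 + \text{anisotropic terms})\,d\gamma'$. Its kernel after the volume constraint consists exactly of the rotations toward $V_{d_{\min}}$, and it has a spectral gap governed by the Ornstein--Uhlenbeck eigenvalues $d_j^2$. The spectral gap on the complement is at least $\min(d_1^2, d_2^2-d_1^2+\dots)$, which is the delicate point when $d_2$ is close to $d_1$; there the rotations toward $e_2$ become almost-kernel directions.

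Modding out by the optimal $\omega\in V_{d_{\min}}$ via the minimization in \eqref{e:gamma A asymmetry def} then gives $\delta(F_k)\ge c\|u\|_{L^2(\gamma)}^2\ge c\lambda(F_k)^2$. We derive this effectively from the CMC equation by testing with $u$ and using the Gaussian Poincar\'e inequality. This contradicts the choice of $E_k$ and proves Theorem~\ref{t:main theorem}.

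Making this estimate uniform in $n$ and effective in the $d_1,d_2$ dependence, especially controlling the higher-order remainders in non-compact regions, is the step we expect to be hardest.
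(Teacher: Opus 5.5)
Your outline names the standard ingredients, but the step that carries the real difficulty is asserted rather than proved. In Step~2 the minimizers $F_k$ are only $\Lambda$-minimizers, so Tamanini-type regularity gives $C^{1,\beta}$-closeness to the half-space only on compact sets. Step~3, however, needs $\partial F_k$ to be a \emph{global} graph $\{x_1=\bar s+u_k\}$ with $\|u_k\|_{W^{1,\infty}}\to 0$.

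``Gaussian decay makes the tails negligible'' does not give this. Outside a fixed box of size $R$ the weight is of order $e^{-cR^2}$, which is not small compared with $\lambda_{\gamma_A}(E_k)^2\to 0$, so a fixed fraction of both the deficit and the asymmetry may sit in the unbounded region. Symmetrization alone yields at best the subgraph of a monotone function that may jump or be infinite. Even that requires a second, $e_1$-directional symmetrization, which can cancel asymmetry; this is what Lemma~\ref{t:can symm vertically 1} controls.

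This is exactly the obstruction the paper singles out for selection-principle arguments, and most of the paper is spent on it. The symmetrized sets are replaced by relative volume-constrained minimizers in $\{x_2>0\}$ and $\{x_2<0\}$, whose boundary curves solve \eqref{e:ODE} with constants $d_1^2\ell_k^\pm\to d_1^2\bar s$. Zero-curvature comparison fields and Lemma~\ref{l:curvature bounds by angle} then give graphicality, Lipschitz bounds in the center strip and in cones around $\{x_1=\ell_k^\pm\}$, and the fine bound of Theorem~\ref{t:finer estimates on gradient}. Finally, truncation at heights $h_k\to0$, using the multiplicative separation of Lemma~\ref{l:separation of exit times}, makes the mass and perimeter errors $o(\lambda_{\gamma_A}(E_k)^8)$ (Theorem~\ref{t:full truncation result}). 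Nothing in your proposal replaces this.

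Your Step~1 mechanism is also false, and the example that refutes it shows the almost-kernel issue you flag in Step~3 is not a technicality. Tilt the minimizing half-space by an angle $\theta$ toward an eigenvector $e_j$ with $d_j>d_1$, keeping the mass fixed. Then
\[
\delta_{\gamma_A}=d_1e^{-\phi^{-1}(\alpha)^2/2}\Bigl[\bigl(1-(1-\tfrac{d_1^2}{d_j^2})\sin^2\theta\bigr)^{-1/2}-1\Bigr]\approx\tfrac{d_1}{2}\bigl(1-\tfrac{d_1^2}{d_j^2}\bigr)e^{-\phi^{-1}(\alpha)^2/2}\,\theta^2,
\]
while $\lambda_{\gamma_A}\gtrsim_\alpha \tfrac{d_1}{d_j}\theta$. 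So the penalty is quadratic, not linear, in the tilt. This is the linear Hermite mode of $\int(\frac12|\nabla u|^2-\frac{d_1^2}{2}u^2)\,d\gamma$, with eigenvalue of order $d_j^2-d_1^2$.

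Your symmetrization preserves only the relative asymmetry $\gamma_A(E\Delta H_n(e_1,\bar s))$, so you cannot remove this mode by re-optimizing $\omega$ afterwards. For instance, when $d_1=d_2$, a set whose slices $\{x_1=t\}$ are $\{|x_2|\ge r(t)\}$, with $r$ increasing and suitably chosen, is symmetrized into an exactly rotated half-plane, so $\lambda$ drops to $0$. The Gaussian Poincar\'e inequality cannot beat the gap $d_2^2-d_1^2$ either.

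Worse, take $A=\mathrm{diag}(1,1,1+\eta)$ and $j=3$. The display gives $\lambda_{\gamma_A}^2/\delta_{\gamma_A}\gtrsim_\alpha 1/\eta$ with $d_1^2=d_2^2=1$ fixed. So for $d_1=d_2$ and $n\ge3$ no constant depending only on $(\alpha,d_1^2,d_2^2)$ exists: the constant must blow up at least like $(d_*^2-d_1^2)^{-1}$, where $d_*^2$ is the smallest eigenvalue of $A$ exceeding $d_1^2$.

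The paper handles the linear mode through an ODE bound $|a_{1,k}|\le 1-\delta_0$ with universal $\delta_0$ (Lemma~\ref{l:separation from linear case}). Be aware, though, that the same tilted half-planes in $\mathbb{R}^2$ give $Q(H_2(e_1,\bar s))\lesssim d_2^2-d_1^2$. For $d_2$ close to $d_1$ this is incompatible with the bound $Q\ge\frac{1}{25}C_{SPR}$ that the paper derives from Corollary~\ref{c:reduction ratio estimates} and Theorem~\ref{t:small perturbation regime}, so this step cannot be borrowed from there either.
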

As a trivial extension of Theorem \ref{t:main theorem}, we have the following result for \textit{arbitrary} real positive-definite matrices.
\begin{corollary}
    Let $\alpha\in (0,1)$ and $n\geq 2$. Let $A$ be an $n \times n$ (real) positive-definite matrix such that the symmetric part of $A$ has two smallest eigenvalues $0<d_1^2\le d_2^2$, counting multiplicity.  There exists a constant $0<C(\alpha,d_1^2, d_2^2)<\infty$ such that 
\begin{align*}
    \lambda_{\gamma_A}(E)^2 \leq C(\alpha,d_1^2, d_2^2)\, \delta_{\gamma_A}(E)
\end{align*}
for every measurable set $E \subset \mathbb{R}^{n}$ with $\gamma_A(E)=\alpha$.
\end{corollary}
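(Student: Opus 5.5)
The plan is to reduce the corollary to Theorem \ref{t:main theorem} applied to the symmetric part $A_s:=\tfrac12(A+A^{T})$. The one observation needed is that for every $x\in\R^n$,
\begin{align*}
\langle Ax,x\rangle=\langle A^{T}x,x\rangle=\langle A_s x,x\rangle ,
\end{align*}
because the antisymmetric part $\tfrac12(A-A^{T})$ contributes $\langle \tfrac12(A-A^T)x,x\rangle=0$. Since $A$ is a real positive-definite matrix, $\langle A_sx,x\rangle=\langle Ax,x\rangle>0$ for $x\neq 0$. Hence $A_s$ is symmetric positive-definite, and by hypothesis its two smallest eigenvalues are $0<d_1^2\le d_2^2$. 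So $A_s$ satisfies the hypotheses of Theorem \ref{t:main theorem}.

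Next I compare all the quantities for $A$ and $A_s$. The densities in \eqref{e:gamma A mass def} and \eqref{e:gamma A perimeter def} differ only through the prefactor. Thus $\gamma_A=c\,\gamma_{A_s}$ and $\Per_{\gamma_A}=c\,\Per_{\gamma_{A_s}}$, where $c=\sqrt{\det A/\det A_s}$. I will read the anisotropic Gaussian measure with quadratic form $\langle Ax,x\rangle$ as a probability measure, which is implicit in the use of $\phi^{-1}(\gamma_A(E))$ in \eqref{unique minimizer}--\eqref{e:gamma A asymmetry def}. Under that reading the prefactor is forced to be the one making the total mass $1$, namely $\sqrt{\det A_s}/(2\pi)^{n/2}$, so $c=1$. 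The objects attached to $\sqrt{A}$ in \eqref{e:gamma A asymmetry def} (the value $d_{\min}$ and the eigenspace $V_{d_{\min}}$) are then understood through the quadratic form, i.e.\ through $\sqrt{A_s}$.

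With this reading, $\gamma_A=\gamma_{A_s}$ and $\Per_{\gamma_A}=\Per_{\gamma_{A_s}}$ as set functions. The mass constraint $\gamma_A(F)=\gamma_A(E)$ is the same as $\gamma_{A_s}(F)=\gamma_{A_s}(E)$. Therefore $\delta_{\gamma_A}(E)=\delta_{\gamma_{A_s}}(E)$ and $\lambda_{\gamma_A}(E)=\lambda_{\gamma_{A_s}}(E)$. For $E$ with $\gamma_A(E)=\gamma_{A_s}(E)=\alpha$, Theorem \ref{t:main theorem} for $A_s$ gives
\begin{align*}
\lambda_{\gamma_A}(E)^2=\lambda_{\gamma_{A_s}}(E)^2\le C(\alpha,d_1^2,d_2^2)\,\delta_{\gamma_{A_s}}(E)=C(\alpha,d_1^2,d_2^2)\,\delta_{\gamma_A}(E).
\end{align*}
The constant is the same one as in the theorem.

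The only point needing care, and the main potential obstacle, is the normalization if one insists on the literal prefactor $\sqrt{\det A}$ for nonsymmetric $A$. In that case $\det A\ge\det A_s$ by the Ostrowski--Taussky inequality, so $c\ge1$ in general. The constraint $\gamma_A(F)=\gamma_A(E)$ is still equivalent to $\gamma_{A_s}(F)=\gamma_{A_s}(E)$. Defining the deficit as perimeter minus the constrained minimum then gives $\delta_{\gamma_A}=c\,\delta_{\gamma_{A_s}}$, and the asymmetry scales as $\lambda_{\gamma_A}=c\,\lambda_{\gamma_{A_s}}$. Applying the theorem at mass $\alpha/c$ yields the estimate with constant $c\,C(\alpha/c,d_1^2,d_2^2)$, which depends on more than $(\alpha,d_1^2,d_2^2)$. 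For this reason I adopt the probability normalization above, where $c=1$ and the reduction is exact.
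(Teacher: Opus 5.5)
Your proposal is correct and matches the paper's proof: both pass to the symmetric part $\tfrac12(A+A^{T})$ using $\langle Ax,x\rangle=\langle \tfrac12(A+A^{T})x,x\rangle$, normalize with $\sqrt{\det(\tfrac12(A+A^{T}))}$ so that $\gamma_A$ is a probability measure, and then apply Theorem \ref{t:main theorem} directly. Your side remark on the literal $\sqrt{\det A}$ prefactor is not needed, but it correctly explains why the paper adopts this normalization.
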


\begin{proof}
    For any square matrix $A$, 
$$\left\langle \left(\frac{A+A^T}{2}\right)x,x\right\rangle = \langle Ax,x\rangle.$$
Therefore, we may extend the definitions of the quantities $\gamma_{A}, \delta_{\gamma_A}, \lambda_{\gamma_A}$ to arbitrary real positive-definite matrices, taking care that in our formulas we use $\sqrt{\text{det}(\frac{A+A^T}{2})}$ in place of $\sqrt{\text{det}A}$ to correctly normalize them to probability measures.  The corollary then follows immediately from Theorem \ref{t:main theorem}.
\end{proof}

To the best of our knowledge, this is the first quantitative stability estimate for general \textit{anisotropic} Gauss spaces and, in the isotropic setting, it recovers and in some respects refines the results of \cite{CianchiFuscoMaggiPratelli11,MosselNeeman15,Eldan2015} by independent means. We also show that the exponent 2 is sharp in the anisotropic setting, extending the corresponding sharpness known in the isotropic case. We emphasize that the anisotropy considered here enters through the matrix defining the underlying Gaussian weight. In particular, our $A$-anisotropic Gaussian perimeter should be distinguished from the anisotropic perimeter commonly considered in the literature; see, for instance, \cite{FigalliMaggiPratelli10_Mass_transport_anisotropic_IP}.

At first sight, one might hope to derive quantitative stability results in anisotropic Gauss spaces from their isotropic counterparts by a change of variables. However, the anisotropy encoded by the matrix $A$ leads to a substantially more restrictive class of isoperimetric sets; see \eqref{unique minimizer}. Consequently, the natural asymmetry $\lambda_{\gamma_A}(E)$ introduced in \eqref{e:gamma A asymmetry def} is not directly comparable to its isotropic Gaussian counterpart. Thus, Theorem \ref{t:main theorem} cannot be obtained simply by applying a change of variables to the results of \cite{CianchiFuscoMaggiPratelli11,MosselNeeman15,Eldan2015,BarchiesiBrancoliniJulin17}. We refer to Section \ref{ss:anisotropic setting} for further discussion.

In broad outline, the proof of Theorem \ref{t:main theorem} is inspired by the ``proof from regularity'' strategy introduced by Cicalese and Leonardi \cite{CicaleseLeonardi2012_selectionprinciple} in the Euclidean setting, known as the \emph{selection principle}. The strategy is to first reduce to the small-perturbation regime, wherein one considers small $C^1$ or Lipschitz perturbations of a perimeter minimizer.  Then, one works to establish a quantitative estimate near a perimeter-minimizing configuration and then use qualitative stability to pass from the small asymmetry regime to the large asymmetry regime. The implementation of this strategy in the present setting is substantially different. In particular, the reduction to the small-perturbation regime, which constitutes the technical core of the argument, is completely novel. It is carried out using elementary geometric ideas together with the regularity theory for volume-constrained perimeter minimizers in two-dimensional Gauss space. This reduction is tailored to the anisotropic Gaussian structure and does not arise from a direct adaptation of the Euclidean selection principle.

\subsection{Comparison with Related Results.}\label{ss:anisotropic setting}
We now explain in more detail why the anisotropic stability estimate cannot be obtained directly from the isotropic theory by a change of variables. We shall assume that $A$ is symmetric and positive-definite, as in the hypotheses of Theorem \ref{t:main theorem}.  At the level of the deficit, the $\gamma_A$-deficit is comparable to the $\gamma_n$-deficit, i.e., $\delta_{\gamma_A}(E)\approx \delta_{\gamma_n}(\sqrt{A}E)$ by change of variables. The essential difference lies in the asymmetry. In general, the $\gamma_A$-asymmetry is \emph{not} comparable to the corresponding $\gamma_n$-asymmetry of $\sqrt{A}E$. Because of the asymmetry of $A$, the class of comparison sets is restricted to half-spaces whose defining vectors are restricted to $V_{d_{\min}}(\sqrt{A}) \cap \mathbb{S}^{n-1}$. This means that, if for example $E,H\subset \mathbb{R}^n$, $H$ is a half-space with $\gamma_A(E) = \gamma_A(H)$ and $\lambda_{\gamma_A}(E) = \gamma_A(E\Delta H)$, then by change of variables one has the trivial observation that $\lambda_{\gamma_A}(E) = \gamma_{n}\big(\sqrt{A}E \Delta \sqrt{A}H\big)$, where $\sqrt{A}H$ is another half-space by linearity.  However, it is not true that 
$$\lambda_{\gamma_A}(E)=\gamma_{n}\big(\sqrt{A}E \Delta \sqrt{A}H\big) \approx \lambda_{\gamma_n}(\sqrt{A}E).$$  Indeed, letting $E$ be a half-space, one could have $\lambda_{\gamma_n}(\sqrt{A}E)=0$ and $\lambda_{\gamma_A}(E) > 0$.  Thus, the anisotropic case is not reducible to the corresponding isotropic results by change of variables. Moreover, this calculation shows that, in general $\lambda_{\gamma_n}(\sqrt{A}E)\le \lambda_{\gamma_A}(E)$, so the inequality in Theorem \ref{t:main theorem} is stronger than the corresponding inequality induced by change of variables from the isotropic case.  

Since quantitative stability in the anisotropic Gaussian setting cannot be obtained directly by a change of variables, a natural next step is to ask whether Theorem \ref{t:main theorem} can be proved by adapting one of the elegant arguments available in the Euclidean or \textit{isotropic} Gaussian setting. The presence of anisotropy, however, makes such adaptations far from straightforward. For brevity, we consider the approaches of \cite{CianchiFuscoMaggiPratelli11,BarchiesiBrancoliniJulin17,CicaleseLeonardi2012_selectionprinciple} and indicate where the anisotropic structure prevents a direct adaptation of their methods. 

As mentioned in Section \ref{background}, one of the key steps in \cite{CianchiFuscoMaggiPratelli11} is the reduction to $(n-1)$-symmetric sets.  This argument relies upon finding a hyperplane such that one of the symmetric sets generated by reflection satisfies certain estimates controlling the growth in $\delta_{\gamma_n}$ and $\lambda_{\gamma_n}$. Besides introducing dimensional dependence, this step  uses the rotational symmetry of the isotropic Gaussian measure. In the anisotropic setting, however, the density $e^{-\frac{1}{2}\langle Ax, x\rangle}$ is not symmetric under reflection across arbitrary hyperplanes containing the origin. Consequently, this reduction does not carry over directly to the anisotropic setting. The approach developed in this paper therefore follows a different reduction strategy.

The proof of \cite{BarchiesiBrancoliniJulin17} also appears to rely in an essential way on the rotational invariance of isotropic Gauss space. They observe that the barycenter excess $\beta_{\gamma_n}(E)$ is minimized in the direction $\omega=-\frac{b(E)}{|b(E)|}$. Moreover, the barycenter excess can be expressed in this direction as the difference $|b(H_n(\omega, s))|-|b(E)|$. Combined with the fact that the minimizers of their regularizing functional are half-spaces for suitable choices of the parameters, this yields a quantitative estimate relating the barycenter excess to the Gaussian perimeter deficit. However, in the anisotropic setting, the $\gamma_A$-perimeter minimizers are only half-spaces whose unit normal vectors lie in the eigenspace $V_{d_{\text {min }}}(\sqrt{A})$ corresponding to the smallest eigenvalue of $\sqrt{A}$. It seems unclear how to carry forward their method if the barycenter direction $\omega=-\frac{b(E)}{|b(E)|}$ is not an element of the eigenspace $V_{d_{\text {min }}}(\sqrt{A})$, and therefore not ``aligned" with an isoperimetric half-space. A direct adaptation of \cite{BarchiesiBrancoliniJulin17} to the anisotropic Gauss setting would therefore require additional control of the angle between these two directions, which leads to a substantial technical difficulty. We note, however, that in the isotropic case our result does not recover the \textit{explicit} dimension-free estimates obtained in \cite{BarchiesiBrancoliniJulin17}.

The work of \cite{CicaleseLeonardi2012_selectionprinciple} was an important inspiration for this project. Nevertheless, it is not clear how to adapt their method even to the \textit{isotropic} Gaussian setting. Here, the principal difficulty is the loss of compactness. In Euclidean space, perimeter minimizers are balls. Thus, after establishing that their regularized sets $\widetilde E_k$ are uniformly bounded, Cicalese and Leonardi can simply apply Tamanini's regularity theory \cite{tamanini1984regularity} to reduce the problem to the small-perturbation regime. In the Gaussian setting, however, the situation is fundamentally different. Even if one were to introduce an analogous regularizing functional, one would not expect the corresponding sets $\widetilde E_k$ to be uniformly bounded, since they must approximate half-spaces. Thus, Tamanini's regularity theory would only give that in a very large rectangle $R$, one could reduce to considering sets $\widetilde{E}_k$ such that
\begin{align*}
    \widetilde{E}_k \cap R \quad \text{is a }C^1\text{-perturbation of a half-space,}
\end{align*}
but it does not allow us to say anything about what happens outside this rectangle $R$. \emph{A priori}, a substantial portion of both the deficit $\delta_{\gamma_A}(\widetilde E_k)$ and the asymmetry $\lambda_{\gamma_A}(\widetilde E_k)$ could escape outside such rectangle $R$ as $k\to\infty$. Thus, even if one could prove a local version of a Fuglede estimate in the rectangle $R$, it would not provide the global control required to prove Theorem \ref{t:main theorem}. Thus, a direct adaptation of this approach breaks down at this stage.

While these difficulties may not be insurmountable, our aim is to obtain a direct and geometric proof of sharp quantitative stability. This leads us to a different and more robust approach, whose main ideas are presented in the next section.

\subsection{Proof of Theorem \ref{t:main theorem}.}\label{ss:outline of proof}
As mentioned in Section \ref{Main Result}, the proof of Theorem \ref{t:main theorem} follows the general ``proof from regularity'' strategy introduced by Cicalese and Leonardi \cite{CicaleseLeonardi2012_selectionprinciple} in the Euclidean setting. More precisely, we study the quantity
\begin{align}\label{e:Q def}
    Q(E):= \inf \left\{\liminf_{k\to \infty}\frac{\delta_{\gamma_A}(E_k)}{\lambda_{\gamma_A}(E_k)^2}: E_k \overset{L^1(\mathbb{R}^n, \gamma_A)}{\rightarrow} E,\, \lambda_{\gamma_A}(E_k)>0, \, \gamma_{A}(E_k) = \gamma_A(E)\right\}.
\end{align}
Sequences of admissible sets $E_k$ such that $\lim_{k \rightarrow\infty}Q(E_k) = Q(E)$ are called \textit{recovery sequences} for $Q(E)$. Roughly speaking, the ``proof from regularity'' strategy can be outlined as follows.
\begin{enumerate}
    \item (\textit{Reduction to the Small Perturbation Regime}) Let $H$ be a $\gamma_A$-perimeter minimizer. For any recovery sequence $\{E_k\}_k$ for $Q(H)$, one must show that one can reduce to considering a different sequence of sets $\{\widetilde{E}_k\}_k$ with better properties.  In the anisotropic Gaussian case, we want $\widetilde{E}_k$ to be $W^{1,\infty}$-perturbations of the half space $H$ such that the $W^{1,\infty}$-norm vanishes as $k \rightarrow \infty$.
    \item (\textit{Estimates on the Small Perturbation Regime}) Next, one must obtain lower estimates on $\frac{\delta_{\gamma_A}(\widetilde{E}_k)}{\lambda_{\gamma_A}(\widetilde{E}_k)^2}$ for the sequence of sets $\widetilde{E}_k \subset \mathbb{R}^n$ obtained from the previous step. Typically, this follows the work of Fuglede \cite{Fuglede89}, which requires control on the barycenter.
    \item (\textit{Compactness}) Finally, one uses the lower semicontinuity of $Q$ under convergence in $L^1(\mathbb{R}^n,\gamma_A)$, together with a qualitative stability result, to obtain a universal lower bound for $Q(H)$.
\end{enumerate}

We now make these steps precise in the anisotropic Gaussian setting. We first state the auxiliary results needed for the proof of Theorem \ref{t:main theorem} and show how they combine to yield the main result. Their proofs are deferred to the subsequent sections.

Note that, for $n\geq 2$ and a symmetric positive-definite $n\times n$ matrix $A$ whose two smallest eigenvalues are $0<d_1^2\leq d_2^2$, we may assume without loss of generality that $A$ is diagonal. Furthermore, we write $A$ and $A_{1,2}$ as follows; see also Definition \ref{d:standing assumptions}(3)(4).
\begin{equation}\label{A and A_{1,2}}
A=\begin{pmatrix}
d_1^2 & 0 & \cdots & 0 \\
0 & d_2^2 & \ddots & \vdots \\
\vdots & \ddots & \ddots & 0 \\
0 & \cdots & 0 & d_n^2
\end{pmatrix},\qquad A_{1,2}=A_{d_1,d_2}= \begin{pmatrix}
        d_1^2 & 0\\
        0 & d_2^2
    \end{pmatrix},
\end{equation}
with $0<d_1\leq d_2\leq \cdots\leq d_n$. Here, the notations $A_{1,2}$ and $A_{d_1,d_2}$ are used interchangeably whenever there is no risk of confusion.
    
\begin{theorem}[Reduction to the Small Perturbation Regime]\label{t:technical theorem}
Let $n\ge 2$, $\overline{s}\in\mathbb R$, and let $A$ and $A_{1,2}$ be the matrices defined in \eqref{A and A_{1,2}}. Let $H_n(e_1,\overline{s})\subset\mathbb R^n$ be a
$\gamma_A$-perimeter-minimizing half-space, and let
$H_2(e_1,\overline{s})\subset\mathbb R^2$ be the corresponding
$\gamma_{A_{1,2}}$-perimeter-minimizing half-space. Assume that, for some
$\alpha\in(0,1)$,
\[
\gamma_A\bigl(H_n(e_1,\overline{s})\bigr)
=
\alpha
=
\gamma_{A_{1,2}}\bigl(H_2(e_1,\overline{s})\bigr).
\]
Then the following statements hold.
\begin{enumerate}
    \item \textup{(Symmetrization)} For any $G\subset \mathbb{R}^n$, there exists a set $\tilde{G}\subset \mathbb{R}^2$ such that
    \begin{enumerate}
        \item[a.] $\gamma_A(G) = \gamma_{A_{1,2}}(\tilde{G})=\alpha.$
        \item[b.] $\delta_{\gamma_A}(G)\ge \delta_{\gamma_{A_{1,2}}}(\tilde{G}).$
        \item[c.] $\lambda_{\gamma_A}(G) = \gamma_{A_{1,2}}(\tilde{G}\Delta H_2(e_1,\overline{s}))$.
        \item[d.] There exists $\omega \in \mathbb{S}^{n-1}$ such that for all $t \in \mathbb{R}$, 
        \begin{align*}
            \gamma_{A_{1,2}}(\tilde{G}\Delta H_2(e_1,t)) & = \gamma_{A}(G\Delta H_n(\omega,t))\\
            \gamma_{A_{1,2}}(\tilde{G}\Delta H_2(-e_1,-t)) & = \gamma_{A}(G\Delta H_n(-\omega,-t)).
        \end{align*}
    \end{enumerate}
\item In addition, for any recovery sequence $\{E_k\}_k$ for $Q(H_n(e_1, \overline{s}))$ there exists a sequence $\{\tilde{E}_k\}_k$ of sets $\tilde{E}_k\subset \mathbb{R}^2$ such that for all sufficiently large $k \in \mathbb{N}$,
    \begin{enumerate}
        \item[a.] $\gamma_{A_{1,2}}(\tilde{E}_k) = \alpha_k$ for $|\alpha - \alpha_k| \le o(\lambda_{\gamma_{A}}(E_k)^8)$. 
        \item[b.] $\delta_{\gamma_A}(E_k) \ge \delta_{\gamma_{A_{1,2}}}(\tilde{E}_k) - o(\lambda_{\gamma_A}(E_k)^8)$.
        \item[c.] $\gamma_{A_{1,2}}(\tilde{E}_k \Delta H_2(e_1, \overline{s})) \ge \frac{1}{5}\lambda_{\gamma_A}(E_k).$
    \item[d.] $\widetilde{E}_k$ can be represented as the subgraph of a Lipschitz function $\widetilde{u}_k$ on $\partial H_2(e_1,\overline{s})$, namely, $\widetilde{E}_k = \emph{subgraph}_{\partial H_2(e_1,\overline{s})}(\widetilde{u}_k)$, for some Lipschitz function $\widetilde{u}_k:\partial H_2(e_1,\overline{s})\to\mathbb{R}$ satisfying $$\limsup_{k\to\infty} \|\widetilde{u}_k\|_{W^{1,\infty}(\mathbb{R})} =0.$$
    \end{enumerate}
\end{enumerate}
\end{theorem}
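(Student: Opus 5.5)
Part (1) will be handled by an anisotropic Ehrhard symmetrization in the variables $(x_2,\dots,x_n)$. Given $G\subset\mathbb{R}^n$, I first choose $\omega\in V_{d_{\min}}(\sqrt A)\cap\SS^{n-1}$ attaining the minimum in \eqref{e:gamma A asymmetry def}. Rotating inside the eigenspace $V_{d_{\min}}$ leaves $\gamma_A$ invariant, so we may take $\omega=e_1$. For each $x_1$ I consider the slice $G_{x_1}\subset\mathbb{R}^{n-1}$ and replace it by the interval $(-\infty,\tau(x_1))\subset\mathbb{R}$ in the $x_2$ variable, where $\tau$ is chosen so that the one-dimensional measure with density $d_2 e^{-d_2^2t^2/2}/\sqrt{2\pi}$ of the interval equals the $\gamma_{A'}$-measure of the slice, with $A'=\mathrm{diag}(d_2^2,\dots,d_n^2)$. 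This defines $\tilde G$. Fubini gives (a). Since half-spaces $H_n(e_1,t)$ are cylinders in $(x_2,\dots,x_n)$, the measure of $G\Delta H_n(e_1,t)$ slice by slice depends only on the slice masses. This gives (c) and (d), with $\omega=e_1$ (and $-\omega$) in the original coordinates. For (b), the minimal perimeters agree because $d_1=d_{\min}$ in both settings, so it remains to show $\Per_{\gamma_A}(G)\ge\Per_{\gamma_{A_{1,2}}}(\tilde G)$. I will obtain this from the Ehrhard $\epsilon$-enlargement inequality. The product measure $\gamma_{A'}$ on $\mathbb{R}^{n-1}$ has isoperimetric profile at least that of $\gamma_{d_2^2}$ on $\mathbb{R}$, because its smallest eigenvalue is $d_2^2$ (by \cite{Yeh2024}). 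Ehrhard's argument then says that enlarging slices by $\epsilon$ dominates enlarging their symmetrals. Alternatively, I would use the standard coarea/Gaussian-symmetrization perimeter inequality with weights.

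For part (2), I start from a recovery sequence $E_k\to H_n(e_1,\overline{s})$. Since $Q(H)$ is finite (a test sequence of tilted half-spaces gives this), $\delta_{\gamma_A}(E_k)\lesssim\lambda_{\gamma_A}(E_k)^2\to0$. Applying part (1) gives planar sets $\tilde G_k$ with mass $\alpha$, smaller deficit, and the same asymmetry relative to $H_2(e_1,\overline{s})$. In the plane I then run a penalization/selection step. I replace $\tilde G_k$ by a minimizer $F_k$ of
\[
\Per_{\gamma_{A_{1,2}}}(F)+\Lambda\,\bigl|\gamma_{A_{1,2}}(F)-\alpha\bigr|+\sqrt{\bigl(\gamma_{A_{1,2}}(F\Delta \tilde G_k)-\epsilon_k\bigr)^2+\epsilon_k}
\]
in the style of Cicalese--Leonardi, with $\epsilon_k$ comparable to $\lambda_{\gamma_A}(E_k)$. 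Comparing $F_k$ with $\tilde G_k$ gives the mass error (a), the deficit comparison (b), and the lower bound (c) with a constant such as $1/5$. The small errors $o(\lambda^8)$ come from tuning the penalization and from the Gaussian tails.

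The main obstacle is (d): showing that $F_k$ is a global Lipschitz graph over the line $\partial H_2(e_1,\overline{s})$ with $W^{1,\infty}$ norm tending to zero, despite the lack of compactness. In a large box, $F_k$ are $\Lambda$-minimizers of weighted perimeter in two dimensions, so Tamanini-type regularity and $L^1$ convergence to the half-plane give $C^{1,\beta}$ graphicality with small norm. Outside the box I would exploit the one-dimensional structure. In two dimensions $\partial F_k$ is a union of curves of bounded weighted curvature (from the weighted CMC equation $\kappa - \langle A_{1,2}x,\nu\rangle=\mathrm{const}$). Far out along the $x_2$ direction the weight is tiny, so any deviation from graphicality costs little perimeter. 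I would then replace $F_k$ there, outside a large $|x_2|\le R_k$, by the half-plane itself, where the truncation error is Gaussian-small and absorbed in $o(\lambda^8)$. The graph is interpolated near $|x_2|=R_k$ using the $C^1$ control. This gives $\tilde E_k$ as a global subgraph with vanishing Lipschitz norm.
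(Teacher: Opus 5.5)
Your part (1) is the paper's argument. You rotate (or reflect) a best-approximating direction to $e_1$, apply the $(n-1)$-symmetrization $S^{n-1}_{e_1,e_2}$, use Fubini for (a), (c), (d), and use the perimeter-decreasing property for (b), which the paper also only cites from the isotropic theory.

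The gap is in (2d), and in the $o(\lambda_k^8)$ errors in (a)--(b) that depend on it; here $\lambda_k:=\lambda_{\gamma_A}(E_k)$. Tamanini's theorem gives $C^{1,\beta}$-smallness of your $\partial F_k$ on $[-R,R]^2$ only for $k\ge k(R)$, with no rate. For the region beyond $|x_2|=R_k$ to be negligible you need $e^{-d_2^2R_k^2/2}\ll\lambda_k^8$, i.e.\ $R_k\gtrsim\sqrt{\log(1/\lambda_k)}$. Nothing in your sketch links $k(R)$ to $\lambda_k$. Nor does anything prevent a definite fraction of $\gamma_{A_{1,2}}(F_k\Delta H_2(e_1,\overline s))$, or of the deficit, from sitting in $\{|x_2|\ge R_k\}$, where $\partial F_k$ need not even be a graph. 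Replacing that region by the half-plane then either destroys (c) or creates errors that are not negligible against $\lambda_k^2$. The observation that far-out deviations ``cost little perimeter'' is exactly why there is no a priori control there; it does not supply control. This is the loss-of-compactness obstruction described in Section~\ref{ss:anisotropic setting}.

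The paper gets past it by proving quantitative information on the boundary in the unbounded region, and it reaches (a)--(c) by a route chosen to make that possible.
\begin{itemize}
\item A second symmetrization $S^1_{e_2,-e_1}$ produces monotone subgraphs. Cancellation of asymmetry is controlled in Lemma~\ref{t:can symm vertically 1} by the $1$D-versus-$2$D dichotomy.
\item These are replaced by relative volume-constrained minimizers in $\{x_2>0\}$ and $\{x_2<0\}$, whose boundaries satisfy the clean ODE \eqref{e:ODE}.
\item Comparison with the fields $\vec F_k,\vec G_k$, whose curvature bounds grow like $d_2^2x_2$, yields global graphicality and Lipschitz control in cones around $\{x_1=\ell_k^\pm\}$ at all distances.
\item The key bound is $u_k'\le C\frac{d_1^2}{d_2^2}\frac{u_k+\overline s-\ell_k^+}{x_2}$ (Theorem~\ref{t:finer estimates on gradient}). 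It forces polynomial growth and the separation $x_{2,k,H}\ge 10d_2^2\,x_2^{k,h_0}$ (Lemma~\ref{l:separation of exit times}).
\item Truncating \emph{in height} at levels $h_k\to0$, not in $|x_2|$, then costs $\lesssim e^{-x_{2,k,H}^2/(2d_2^2)}$. This is $o$ of the eighth power of an asymmetry bounded below by $e^{-d_2^2(x_2^{k,h_0})^2}$ up to constants.
\end{itemize}
Your penalized minimizers have neither the monotone structure nor a clean CMC equation, since the penalty adds a jump across $\{x_1=\overline s\}$. None of this analysis is available to you without redoing it.

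There are also three smaller issues.
\begin{itemize}
\item As written, your penalty is centered at $\tilde G_k$, so comparing with $\tilde G_k$ only gives $\Per_{\gamma_{A_{1,2}}}(F_k)\le\Per_{\gamma_{A_{1,2}}}(\tilde G_k)+\tfrac12\epsilon_k^{3/2}(1+o(1))$. That is far too weak for (b). You want to penalize $\gamma_{A_{1,2}}(F\Delta H_2(e_1,\overline s))$, with $\epsilon_k$ equal to its value at $\tilde G_k$.
\item You still need to show the mass constraint is exact, since the volume penalty alone gives only $|\gamma_{A_{1,2}}(F_k)-\alpha|=O(\lambda_k^2)$.
\item Tilted half-spaces do not show $Q<\infty$ when $A$ is a multiple of the identity, because they are then minimizers.
\end{itemize}
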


In some sense, the main difference from the reduction argument in \cite{CianchiFuscoMaggiPratelli11} is that we do not attempt to estimate $\lambda_{\gamma_A}(\widetilde{E}_k)$ directly. Instead, we work with the simpler quantity $\gamma_{A_{1,2}}(\tilde{E}_k \Delta H_2(e_1, \overline{s}))$, which is preserved under $(n-1)$-symmetrization.  This allows us to obtain the dimension-free constant $\frac{1}{5}$ in Theorem \ref{t:technical theorem}(2c), above.  The cost of this improvement is that, \textit{a priori}, $(n-1)$-symmetrization destroys any hope of controlling the barycenters of the sets $\tilde{E}_k$.  Control of the barycenter is crucial to the standard Fuglede-type inequality, but surprisingly, we find in Theorem \ref{t:small perturbation regime} that we can use an ODE to recover this control despite the $(n-1)$-symmetrization (see Section \ref{s:The Small Perturbation Regime} for more details).

A direct consequence of Theorem \ref{t:technical theorem} is the following corollary, which provides a lower bound for $Q(H_n(e_1,\bar{s}))$.

\begin{corollary}\label{c:reduction ratio estimates}
Under the hypotheses of Theorem \ref{t:technical theorem}(2), the sets $\tilde{E}_k \subset \mathbb{R}^2$ satisfy
        \begin{align*}
            \frac{\delta_{\gamma_A}(E_k)}{\lambda_{\gamma_A}(E_k)^2} \ge \frac{1}{25}\frac{\delta_{\gamma_{A_{1,2}}}(\tilde{E}_k)}{\gamma_{A_{1,2}}(\tilde{E}_k\Delta H_2(e_1,\overline{s}))^2} -o(\lambda_{\gamma_A}(E_k)^6),
        \end{align*}
        and therefore
        \begin{align*}
            Q(H_n(e_1, \overline{s})) = \lim_{k \rightarrow \infty}\frac{\delta_{\gamma_A}(E_k)}{\lambda_{\gamma_A}(E_k)^2} \ge \frac{1}{25}\liminf_{k \rightarrow \infty}\frac{\delta_{\gamma_{A_{1,2}}}(\tilde{E}_k)}{\gamma_{A_{1,2}}(\tilde{E}_k \Delta H_2(e_1, \overline{s}))^2}.
        \end{align*}
\end{corollary}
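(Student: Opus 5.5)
The corollary follows by dividing the estimates of Theorem \ref{t:technical theorem}(2) by $\lambda_{\gamma_A}(E_k)^2$. Write $\lambda_k:=\lambda_{\gamma_A}(E_k)>0$ and $\mu_k:=\gamma_{A_{1,2}}(\tilde{E}_k\Delta H_2(e_1,\overline{s}))$.

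First, from (2c) we have $\mu_k\ge \tfrac15\lambda_k>0$, so the ratio on the right-hand side is well defined. In addition, $\lambda_k^{-2}\ge \tfrac{1}{25}\mu_k^{-2}$.

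Second, from (2b), dividing by $\lambda_k^2$ gives
\begin{align*}
\frac{\delta_{\gamma_A}(E_k)}{\lambda_k^2}\ge \frac{\delta_{\gamma_{A_{1,2}}}(\tilde{E}_k)}{\lambda_k^2}-\frac{o(\lambda_k^8)}{\lambda_k^2}.
\end{align*}
The error term is $o(\lambda_k^6)$.

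Third, we bound $\delta_{\gamma_{A_{1,2}}}(\tilde{E}_k)/\lambda_k^2$ from below by $\tfrac1{25}\delta_{\gamma_{A_{1,2}}}(\tilde{E}_k)/\mu_k^2$. This is immediate when $\delta_{\gamma_{A_{1,2}}}(\tilde{E}_k)\ge 0$. The only subtlety is the sign of the deficit. By (2a), $\tilde{E}_k$ has mass $\alpha_k$ rather than exactly $\alpha$. If the deficit is defined relative to the mass of $\tilde{E}_k$, it is nonnegative by the two-dimensional anisotropic isoperimetric inequality \eqref{unique minimizer}, and the bound holds.

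If instead the deficit is measured against the minimizer of mass $\alpha$, the deficit may be slightly negative. In that case I would use the Lipschitz continuity of $m\mapsto e^{-\phi^{-1}(m)^2/2}$ near $\alpha\in(0,1)$. This shows the negative part is at most $C|\alpha-\alpha_k|=o(\lambda_k^8)$. Dividing it by $\lambda_k^2$ or by $\mu_k^2\gtrsim\lambda_k^2$ only produces a further $o(\lambda_k^6)$, which is absorbed into the error term. This sign check is the only point needing care; everything else is bookkeeping. This gives the first inequality.

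For the second statement, $\{E_k\}$ is a recovery sequence, so the ratios $\delta_{\gamma_A}(E_k)/\lambda_k^2$ converge to $Q(H_n(e_1,\overline{s}))$. Since $E_k\to H_n(e_1,\overline{s})$ in $L^1(\gamma_A)$, we have
\begin{align*}
\lambda_k\le \gamma_A\bigl(E_k\Delta H_n(e_1,\overline{s})\bigr)\to 0,
\end{align*}
so $o(\lambda_k^6)\to0$. Taking $\liminf_{k\to\infty}$ in the first inequality then yields the claimed lower bound on $Q(H_n(e_1,\overline{s}))$.
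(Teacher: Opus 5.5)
Your argument is correct and is exactly the direct consequence the paper has in mind. You divide (2b) by $\lambda_{\gamma_A}(E_k)^2$, use (2c) in the form $\lambda_{\gamma_A}(E_k)^{-2}\ge \frac{1}{25}\gamma_{A_{1,2}}(\tilde{E}_k\Delta H_2(e_1,\overline{s}))^{-2}$, and pass to the limit using $\lambda_{\gamma_A}(E_k)\le\gamma_A(E_k\Delta H_n(e_1,\overline{s}))\to 0$. Since the paper's deficit \eqref{e:gamma A deficit def} is measured against the minimizer of the set's own mass, $\delta_{\gamma_{A_{1,2}}}(\tilde{E}_k)\ge 0$ automatically, so your second (Lipschitz) case is unnecessary, though it is also correct.
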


Next, we state our estimates in the small-perturbation regime. The estimates in Theorem \ref{t:small perturbation regime} are obtained through Taylor expansion and a Fuglede-type estimate. See Section \ref{s:The Small Perturbation Regime} for the proof and further discussion.

\begin{theorem}[Estimates for the Small Perturbation Regime]\label{t:small perturbation regime}
    Let $A, A_{1,2}, d_1^2, d_2^2, \overline{s}, \alpha,$ $\{E_k\}_k, \{\tilde{E}_k\}_k$, and $\tilde{u}_k$ be as in Theorem \ref{t:technical theorem}. Then,
        \begin{align*}
        \liminf_{k\rightarrow \infty}\frac{\delta_{\gamma_{A_{1,2}}}(\tilde{E}_k)}{\gamma_{A_{1,2}}(\tilde{E}_k \Delta H_2(e_1, \overline{s}))^2} \ge \begin{cases}\frac{(2\pi)e^{\frac{d_1^2\overline{s}^2}{2}}}{d_1}\left[\frac{d_2^2-d_1^2}{2}+d_2^2(\delta_0-\frac{1}{2}\delta_0^2)\right]& 1 \le \frac{d_2^2}{d_1^2}\le 2\\
        \frac{(2\pi)e^{\frac{d_1^2\overline{s}^2}{2}}}{d_1}\left[\frac{d_2^2-d_1^2}{2}\right] & \frac{d_2^2}{d_1^2}>2,
        \end{cases}
    \end{align*}
    where $0<\delta_0<1$ is a universal constant given in Lemma \ref{l:close to linear and comparability of lambdas}.
\end{theorem}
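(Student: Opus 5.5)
We will prove the estimate by a second-order (Fuglede-type) expansion of the $\gamma_{A_{1,2}}$-deficit about the half-space $H_2(e_1,\overline s)$, exploiting that $\tilde E_k$ is the subgraph of $\tilde u_k$ over the line $\partial H_2(e_1,\overline s)=\{x_1=\overline s\}$ with $\|\tilde u_k\|_{W^{1,\infty}}\to 0$ (Theorem \ref{t:technical theorem}(2d)). We parametrize the boundary as $x_1=\overline s+u(y)$, $y=x_2$, and work with the density $\frac{d_1d_2}{\sqrt{2\pi}}e^{-d_1^2x_1^2/2-d_2^2y^2/2}$. We write
\[
\Per_{\gamma_{A_{1,2}}}(\tilde E_k)=\frac{d_1d_2}{\sqrt{2\pi}}\int_{\R}\sqrt{1+u'^2}\,e^{-d_1^2(\overline s+u)^2/2}e^{-d_2^2y^2/2}\,dy,
\]
and compute the mass $\gamma_{A_{1,2}}(\tilde E_k)=\alpha_k$ as an integral of $\int_{\overline s}^{\overline s+u}e^{-d_1^2t^2/2}dt$ against $e^{-d_2^2y^2/2}$. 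Expanding both to second order in $u$ (errors $O(\|u\|_\infty)\int(u^2+u'^2)\,d\mu$, with $d\mu=e^{-d_2^2y^2/2}dy$), we use the mass relation to eliminate the first-order term $\int u\,d\mu$. Since $|\alpha-\alpha_k|=o(\lambda^8)$ and $\gamma(\tilde E_k\Delta H_2)\approx c\int|u|\,d\mu$ is comparable to $\lambda$ by (2c), this mass correction is negligible. The expected outcome is
\[
\delta_{\gamma_{A_{1,2}}}(\tilde E_k)\gtrsim \frac{d_1d_2}{2\sqrt{2\pi}}e^{-d_1^2\overline s^2/2}\Big(\int u'^2\,d\mu-d_1^2\int u^2\,d\mu+d_1^2\big(\textstyle\int u\,d\mu\big)^2/\textstyle\int d\mu\Big)-o(\cdot),
\]
that is, a quadratic form given by the second variation of weighted perimeter at the half-space, whose Jacobi operator is the Ornstein--Uhlenbeck operator in $y$ with parameter $d_2$, shifted by $d_1^2$.

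Next we apply the Gaussian Poincaré inequality for the measure $\mu$ (variance $d_2^{-2}$). Its spectrum is $0,d_2^2,2d_2^2,\dots$ with Hermite eigenfunctions $h_0=1$, $h_1=y$, $h_2=y^2-d_2^{-2}$. We decompose $u=a_0+a_1h_1+u_\perp$. The mean $a_0$ is controlled by the mass constraint. The $h_1$ mode contributes $(d_2^2-d_1^2)a_1^2\|h_1\|^2$. The remainder contributes at least $(2d_2^2-d_1^2)\|u_\perp\|^2$. This gives a bound $\ge (d_2^2-d_1^2)\int u^2\,d\mu$ up to the constant factor. Combining it with $\int u^2\,d\mu\ge (\int|u|\,d\mu)^2/\mu(\R)$ via Cauchy--Schwarz and the comparability of $\int|u|\,d\mu$ with $\gamma(\tilde E_k\Delta H_2)$, and tracking the normalizing constants, should produce $\frac{2\pi e^{d_1^2\overline s^2/2}}{d_1}\frac{d_2^2-d_1^2}{2}$. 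This settles the case $d_2^2/d_1^2>2$, where the higher modes are automatically dominated.

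The main obstacle is the regime $d_2\approx d_1$, including the isotropic case, where the $h_1$ mode (a rotation of the half-space) has nearly zero second variation. Here the barycenter control has been lost through symmetrization, as the paper remarks. The plan is to use the minimality built into Theorem \ref{t:technical theorem}(1c,d): $\tilde E_k$ realizes $\lambda_{\gamma_A}(E_k)$ against $e_1$ among all admissible directions $\omega$. In the isotropic-type case, competing tilted half-spaces $H_2(\cos\theta e_1+\sin\theta e_2,\cdot)$ are admissible. The first-order optimality in $\theta$ of $\gamma(\tilde E_k\Delta H)$ should then give an ODE/orthogonality condition forcing the $h_1$ component of $u$ to be a small fraction of the $L^1$ mass. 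We would expect to obtain this through Lemma \ref{l:close to linear and comparability of lambdas}, which supplies the universal $\delta_0$: a function within $\delta_0$ of linear has asymmetry comparable to its orthogonal part. On that portion the spectral gap $2d_2^2-d_1^2\ge d_2^2$ applies, yielding the extra term $d_2^2(\delta_0-\tfrac12\delta_0^2)$. We would split $\int|u|$ into its $h_1$ part and remainder, optimize the split in terms of $\delta_0$ (the $\delta_0-\tfrac12\delta_0^2$ arising from $1-(1-\delta_0)^2$ halved), and take $\liminf_k$ to absorb all $o(1)$ errors.
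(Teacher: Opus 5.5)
Your first two steps follow the paper. These are the second-order expansion in which the mass constraint eliminates $\int u\,d\mu$ (Lemmas \ref{l:first order estimate}, \ref{l:deficit estimates}, \ref{l:fraenkel symmetry} and Theorem \ref{l:fuglede-type estimate}), and the Hermite decomposition with negligible $a_0$ plus the Gaussian Poincar\'e inequality, which yields $\frac{d_2^2-d_1^2}{2}$ for $d_2^2/d_1^2>2$. Your closing algebra $1-(1-\delta_0)^2=2\delta_0-\delta_0^2$ also matches.

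The gap is the input that this algebra needs in the range $1\le d_2^2/d_1^2\le 2$: the normalized first Hermite coefficient of $\tilde u_k\circ\frac{1}{d_2}$ must satisfy $|a_{1,k}|\le 1-\delta_0$ for large $k$. The source you propose for this is not available.
\begin{itemize}
\item Lemma \ref{l:close to linear and comparability of lambdas} does not provide it; it only describes $\tilde u_k$ \emph{if} $a_{1,k}>1-\delta_0$.
\item $\tilde E_k$ has no tilt-optimality to differentiate. Theorem \ref{t:technical theorem}(1c,d) concern the first $(n-1)$-symmetrization $\tilde G$ and compare it only with the untilted half-planes $H_2(\pm e_1,t)$. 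So they do not make $H_2(e_1,\overline s)$ optimal among tilted half-planes even for $\tilde G$.
\item The sets $\tilde E_k$ of part (2) are produced only after a second symmetrization, replacement by the relative minimizers $I_k$, and truncation. After these steps only the one-sided bound (2c) survives; this is exactly the loss of barycenter control the paper stresses.
\item Even granting such optimality, tilted half-planes are admissible competitors only when $d_1=d_2$. The statement, however, claims the $\delta_0$-improvement on all of $1\le d_2^2/d_1^2\le 2$.
\end{itemize}
Some structural input beyond (2a)--(2d) is unavoidable. For $d_1=d_2$, truncate the tilted half-plane $\{x_1\le \overline s\sec\theta+x_2\tan\theta\}$ at height $h=\sqrt{\theta}$ and add an exponentially small mass correction. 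This gives small Lipschitz subgraphs with deficit $o(\theta^2)$ but $\gamma_{A_{1,2}}(\,\cdot\,\Delta H_2(e_1,\overline s))\sim\theta$.

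The paper rules out the rotation mode using the Euler--Lagrange equation instead (Lemma \ref{l:separation from linear case}). Suppose $a_{1,k}>1-\delta_0$ along a subsequence.
\begin{itemize}
\item On $\{1\le x_2\le 100\}$, where $\tilde E_k=I_k$, one has $\tilde u_k=\|\tilde u_k\|x_2+f_k$ with $f_k,f_k',f_k''$ small relative to $\|\tilde u_k\|$. Hence the averaged curvature of $\partial\tilde E_k$ there is at most $\frac{1}{5000}\|\tilde u_k\|$.
\item On the other hand, $\partial I_k$ solves the weighted CMC equation \eqref{e:ODE} with multiplier $d_1^2\ell_k^+$. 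The excess mass of $I_k$ in $\Omega_+$ created by the symmetrizations (see \eqref{e:excess mass I_k estimate}) gives $\ell_k^+-\overline s\gtrsim\lambda_{\gamma_A}(E_k)\gtrsim\|\tilde u_k\|$ (see \eqref{e:LB on ell_k height}).
\item Inserting the near-linear form into \eqref{e:ODE}, and using $d_1\le d_2$ to discard the $(d_2^2-d_1^2)$ term, bounds the curvature below by $d_1^2(\ell_k^+-\overline s)-(\frac{d_1^2}{10^4}+\frac{d_2^2}{100})\|\tilde u_k\|$.
\item This contradicts the upper bound when $d_2^2\le 2d_1^2$, which is where the case split comes from.
\end{itemize}
The two ideas you are missing are:
\begin{itemize}
\item the lower bound on the Lagrange multiplier, coming from the mass imbalance between $\Omega_+$ and $\Omega_-$;
\item the use of the ODE to turn that bound into curvature incompatible with near-linearity.
\end{itemize}
A first-order condition in a tilt angle cannot replace them.
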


For convenience, we define
\begin{align}\label{e:spr lb constant}
    C_{SPR}(d_1, d_2, \alpha):= \begin{cases}\frac{(2\pi)e^{\frac{d_1^2\overline{s}^2}{2}}}{d_1}\left[\frac{d_2^2-d_1^2}{2}+d_2^2(\delta_0-\frac{1}{2}\delta_0^2)\right]& 1 \le \frac{d_2^2}{d_1^2}\le 2\\
        \frac{(2\pi)e^{\frac{d_1^2\overline{s}^2}{2}}}{d_1}\left[\frac{d_2^2-d_1^2}{2}\right] & \frac{d_2^2}{d_1^2}>2.
        \end{cases}
\end{align}

Now, we state our qualitative stability result and the relevant compactness result.

\begin{lemma}[Qualitative Stability]\label{l:qualitative stability}
Let $A$ be a symmetric, positive-definite $n\times n$ matrix. For every $\varepsilon>0$ and $\alpha\in(0,1)$, there exists $\rho=\rho(\alpha,\varepsilon,A,n)>0$ such that, if $F\subset\mathbb{R}^n$ satisfies
    \begin{align*}
        \gamma_A(F)=\alpha, \qquad \delta_{\gamma_A}(F)< \rho(\alpha, \varepsilon, A, n),
    \end{align*}
    then $\lambda_{\gamma_A}(F)< \varepsilon$. Moreover, the threshold $\rho(\alpha,\epsilon, A, n)$ satisfies the following properties:
    \begin{enumerate}
        \item $\rho(\alpha,\epsilon, A, n)$ is monotone non-decreasing in $\epsilon$ and left continuous in $\epsilon$.
        \item $\rho(\alpha,\epsilon, A, n)$ is lower semicontinuous in $\alpha$ and $A$ jointly.
    \end{enumerate}
Here, to describe the lower semicontinuity with respect to $A$, we use the topology induced by the pseudometric
\begin{align*}
    \dist(A, \tilde{A})^2 =\sum_{i=1}^n (d_i^2 -(\tilde{d_i})^2)^2,
\end{align*}
where $\{d_i^2\}_{i=1}^n$ and $\{\widetilde{d}_i^{\,2}\}_{i=1}^n$ are the eigenvalues of $A$ and $\widetilde{A}$, respectively, listed in increasing order and counted with multiplicity. When $A$ is diagonal and has the form given in \eqref{A and A_{1,2}}, we will, for simplicity, write $\rho(\alpha,\epsilon,A,n)$ as
\[
\rho(\alpha,\epsilon,A,n)=\rho(\alpha,\epsilon,d_1,d_2,\ldots,d_n,n).
\]
\end{lemma}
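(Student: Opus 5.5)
The plan is to argue by contradiction and compactness, and then to obtain the regularity properties of $\rho$ by defining it as an infimum. Set
\[
\rho_0(\alpha,\varepsilon,A,n):=\inf\{\delta_{\gamma_A}(F):\gamma_A(F)=\alpha,\ \lambda_{\gamma_A}(F)\ge\varepsilon\}.
\]
The first claim is exactly the statement $\rho_0>0$, and we then take $\rho:=\rho_0$.

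\textbf{Positivity of $\rho_0$.} Suppose instead that $F_k$ satisfy $\gamma_A(F_k)=\alpha$, $\lambda_{\gamma_A}(F_k)\ge\varepsilon$ and $\delta_{\gamma_A}(F_k)\to0$.
\begin{enumerate}
\item The perimeters $\Per_{\gamma_A}(F_k)$ are then bounded.
\item The weight $e^{-\langle Ax,x\rangle/2}$ is bounded below on each ball, so the sets have locally bounded Euclidean perimeter. BV compactness and a diagonal argument give $F_k\to F$ in $L^1_{loc}$.
\item Gaussian tails make the convergence hold in $L^1(\gamma_A)$. In particular, $\gamma_A(F)=\alpha$ and no mass escapes.
\item By lower semicontinuity of the weighted perimeter, $F$ is a $\gamma_A$-isoperimetric set.
\item By \eqref{unique minimizer} (the classification in \cite{Yeh2024}), $F=H_n(\omega,\phi^{-1}(\alpha)/d_{\min})$ for some $\omega\in V_{d_{\min}}(\sqrt A)$.
\item Hence $\lambda_{\gamma_A}(F_k)\le\gamma_A(F_k\Delta F)\to0$, which contradicts $\lambda_{\gamma_A}(F_k)\ge\varepsilon$.
\end{enumerate}
Two details need care here. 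The minimum in \eqref{e:gamma A asymmetry def} is attained, since it is taken over a compact sphere of directions and the integrand is continuous. Also, $\delta_{\gamma_A}$ involves $\alpha$ only through the fixed minimal profile.

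\textbf{Monotonicity and left continuity in $\varepsilon$.} Monotonicity is immediate, because enlarging $\varepsilon$ shrinks the admissible class. For left continuity, take $\varepsilon_j\uparrow\varepsilon$ and pick near-optimal sets $F_j$ with $\lambda_{\gamma_A}(F_j)\ge\varepsilon_j$. The perimeters $\Per_{\gamma_A}(F_j)$ are bounded, so the same compactness applies. The asymmetry $\lambda_{\gamma_A}$ is continuous under $L^1(\gamma_A)$ convergence at fixed mass: it is $1$-Lipschitz with respect to $\gamma_A(\cdot\,\Delta\,\cdot)$. Therefore the limit $F$ satisfies $\lambda_{\gamma_A}(F)\ge\varepsilon$. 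Lower semicontinuity of $\delta$ gives $\rho_0(\varepsilon)\le\lim_j\rho_0(\varepsilon_j)$, and the reverse inequality is monotonicity.

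\textbf{Joint lower semicontinuity in $(\alpha,A)$.} This is the main obstacle. Take $(\alpha_j,A_j)\to(\alpha,A)$ and sets $F_j$ that are near-optimal for $\rho_0(\alpha_j,\varepsilon,A_j,n)$. I would first diagonalize, so that each $A_j$ is diagonal with ordered eigenvalues converging to those of $A$. This is legitimate because $\rho_0$ is invariant under orthogonal conjugation, which is why the pseudometric only sees eigenvalues. Pushing forward by $\sqrt{A_j}$ and $\sqrt A^{-1}$ then lets one compare the measures $\gamma_{A_j}$ and $\gamma_A$ uniformly, so the same BV compactness applies. The deficit is lower semicontinuous because the minimal profile $e^{-\phi^{-1}(\alpha)^2/2}\,d_{\min}$ is continuous in $(\alpha,A)$.

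The delicate point is the asymmetry. When $d_1^2=d_2^2$ for $A$ but $d_1^2<d_2^2$ for $A_j$, the eigenspace $V_{d_{\min}}$ can jump upward in dimension. Upper semicontinuity of these eigenspaces gives
\[
\liminf_j V_{d_{\min}}(A_j)\subset V_{d_{\min}}(A),
\]
so the admissible class of comparison half-spaces for $A$ is at least as large. It follows only that $\lambda_{\gamma_A}(F)\le\liminf_j\lambda_{\gamma_{A_j}}(F_j)$, which is the wrong direction for the contradiction. I would handle this by contradiction at a slightly smaller level $\varepsilon'$, combined with the left continuity from the previous step. Failing that, I would interpret lower semicontinuity only along sequences with stable eigenspace multiplicity, which is all the later application on the segment $d_2\downarrow d_1$ needs. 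The mass normalization $\gamma_{A_j}(F_j)=\alpha_j\to\alpha$ is passed to the limit using the uniform Gaussian tails.
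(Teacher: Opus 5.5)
Your treatment of the first assertion and of property (1) is correct and matches the paper. The paper defines $\rho$ as the supremum of admissible thresholds (Definition \ref{d:rho as supremum}), which is exactly your infimum $\rho_0$. It proves positivity and left continuity by the same compactness--contradiction argument based on \eqref{unique minimizer}. Your observation that $\lambda_{\gamma_A}$ is $1$-Lipschitz with respect to $\gamma_A(\cdot\,\Delta\,\cdot)$ at fixed mass makes the left-continuity step cleaner than the paper's. You should add the trivial case $\lim_j\rho_0(\varepsilon_j)=+\infty$: there the perimeters are not bounded, but monotonicity already gives the claim.

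The gap is property (2). You correctly locate the obstruction; the paper's own proof passes over it by asserting $\lambda_{\gamma_A}(F_0)=\lim_i\lambda_{\gamma_{A_i}}(F_i)$. Neither of your remedies works, because (2) is false at points where the multiplicity of the smallest eigenvalue jumps. Take $n=2$, $A=I_2$, $A_j=\mathrm{diag}(1,d_{2,j}^2)$ with $d_{2,j}\searrow 1$, and $F_j=\{x_2\le \phi^{-1}(\alpha)/d_{2,j}\}$.
\begin{itemize}
\item Then $\gamma_{A_j}(F_j)=\alpha$ and $\delta_{\gamma_{A_j}}(F_j)=(d_{2,j}-1)e^{-\phi^{-1}(\alpha)^2/2}\to 0$.
\item The only admissible half-spaces for $A_j$ are $\{\pm x_1\le\phi^{-1}(\alpha)\}$. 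Since the coordinates are independent under $\gamma_{A_j}$, each has symmetric difference with $F_j$ of measure $2\alpha(1-\alpha)$, so $\lambda_{\gamma_{A_j}}(F_j)=2\alpha(1-\alpha)$.
\item Hence $\rho(\alpha,\varepsilon,A_j,2)\to0$ for every $\varepsilon\le 2\alpha(1-\alpha)$, whereas $\rho(\alpha,\varepsilon,I_2,2)>0$.
\end{itemize}
Lowering the level to some $\varepsilon'<\varepsilon$ cannot help, because the limit $\{x_2\le\phi^{-1}(\alpha)\}$ is $\gamma_{I_2}$-isoperimetric and so has $\lambda_{\gamma_{I_2}}=0$.

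Your fallback is lower semicontinuity only along sequences whose smallest eigenvalue eventually has the same multiplicity as at the limit. This is true, and your argument proves it. After diagonalizing with ordered eigenvalues the admissible direction sets coincide, the heights converge, and $\gamma_{A_j}\to\gamma_A$ in total variation, so $\lambda$ does pass to the limit. But it is not ``all the later application needs.'' Case 2 of the proof of Theorem \ref{t:main theorem} invokes lower semicontinuity along $d_{2,j}\searrow d_1$. That passes from a simple to a double bottom eigenvalue, which is exactly the case the example rules out. So (2) can only be established in the restricted form, and the isotropic case of the main theorem needs a different argument at that step.
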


\begin{definition}\label{d:epsilon threshold def}
Let $A_{1,2}$ be a diagonal, positive-definite $2\times 2$ matrix with eigenvalues
$0<d_1^2\leq d_2^2$, as in \eqref{A and A_{1,2}}, and let $\alpha\in(0,1)$. We define
$\epsilon_0(d_1,d_2,\alpha)$ as the supremum of all $0<\epsilon\leq 2\alpha$
with the following property: for every $\mathcal{L}^2$-measurable set
$F\subset\mathbb{R}^2$ satisfying $\gamma_{A_{1,2}}(F) =\alpha = \gamma_{A_{1,2}}(H_{2}(e_1, \overline{s}))$, if $0<\gamma_{A_{1,2}}(F \Delta H_2(e_1, \overline{s})) \le \epsilon$ then
\begin{align*}
\frac{\delta_{\gamma_{A_{1,2}}}(F)}{\gamma_{A_{1,2}}(F\Delta H_2(e_1, \overline{s}))^2} \ge  \frac{1}{100} C_{SPR}(d_1, d_2, \alpha),
\end{align*}
where $C_{SPR}(d_1, d_2, \alpha)$ is defined in \eqref{e:spr lb constant} and $\overline{s}=\frac{1}{d_1}\phi^{-1}(\alpha)$ with $\phi(s)=\frac{1}{\sqrt{2 \pi}} \int_{-\infty}^s e^{-\frac{1}{2} t^2} d t$.
\end{definition}

\begin{lemma}[Compactness]\label{l:continuity of epsilon}
    For $A_{1,2}$, $d_1^2$, $d_2^2$, $\alpha$, and $\epsilon_0(d_1,d_2,\alpha)$ as in Definition \ref{d:epsilon threshold def}, we have $\epsilon_0(d_1,d_2,\alpha)>0$,
and the function $(d_1,d_2,\alpha)\mapsto \epsilon_0(d_1,d_2,\alpha)$ is upper semicontinuous.
\end{lemma}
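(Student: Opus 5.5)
The plan has two parts. Positivity of $\epsilon_0$ will come from a contradiction argument feeding into Theorems \ref{t:technical theorem} and \ref{t:small perturbation regime} with $n=2$. Upper semicontinuity will come from transporting a single bad set between nearby parameter values by a linear change of variables.

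\textbf{Positivity.} Suppose $\epsilon_0(d_1,d_2,\alpha)=0$. Then there are sets $F_k\subset\mathbb{R}^2$ with $\gamma_{A_{1,2}}(F_k)=\alpha$, $0<\gamma_{A_{1,2}}(F_k\Delta H_2(e_1,\overline{s}))\to 0$, and
\[
\frac{\delta_{\gamma_{A_{1,2}}}(F_k)}{\gamma_{A_{1,2}}(F_k\Delta H_2(e_1,\overline{s}))^2}<\tfrac{1}{100}C_{SPR}(d_1,d_2,\alpha).
\]
In particular $\delta_{\gamma_{A_{1,2}}}(F_k)\to0$ and $F_k\to H_2(e_1,\overline{s})$ in $L^1(\gamma_{A_{1,2}})$.

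I would then run the regularization of Theorem \ref{t:technical theorem}(2) with $n=2$ and $A=A_{1,2}$. Its proof only uses $L^1$-convergence to the half-space together with a bounded deficit-to-asymmetry ratio, not the fact that $\{E_k\}_k$ is a recovery sequence. For $d_1<d_2$, the two admissible half-spaces have normals $\pm e_1$, and $F_k$ is $L^1$-close to $H_2(e_1,\overline{s})$. Hence $\lambda_{\gamma_{A_{1,2}}}(F_k)=\gamma_{A_{1,2}}(F_k\Delta H_2(e_1,\overline{s}))$ for large $k$.

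For $d_1=d_2$ the minimum defining $\lambda$ runs over all of $\mathbb{S}^1$. In that case I would use the version of (2c) phrased with the fixed half-space, namely $\gamma(\tilde F_k\Delta H_2)\ge\frac15\gamma(F_k\Delta H_2)$. This is what the symmetrization identity (1c)--(1d) naturally delivers, since it preserves $\gamma(\cdot\,\Delta H)$.

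In either case we obtain Lipschitz subgraphs $\tilde F_k$ with vanishing $W^{1,\infty}$ norm. Theorem \ref{t:small perturbation regime} then gives
\[
\liminf_{k\to\infty}\frac{\delta(F_k)}{\gamma(F_k\Delta H_2)^2}\ge\tfrac1{25}C_{SPR}>\tfrac1{100}C_{SPR},
\]
which is a contradiction. The error terms $o(\lambda^8)$ and $o(\lambda^6)$ from Corollary \ref{c:reduction ratio estimates}, and the slight mass mismatch $\alpha_k$ (fixed by translating in the $e_1$ direction), are negligible.

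\textbf{Upper semicontinuity.} Take $(d_1^j,d_2^j,\alpha^j)\to(d_1,d_2,\alpha)$ and suppose $\limsup_j\epsilon_0^j>\epsilon_0$. Choose $\epsilon_0<\epsilon'<\epsilon''<\limsup_j\epsilon_0^j$. Since $\epsilon'>\epsilon_0$, there is a set $F$ with $\gamma_{A_{1,2}}(F)=\alpha$, $0<\gamma(F\Delta H_2)\le\epsilon'$ and ratio strictly below $\frac1{100}C_{SPR}(d_1,d_2,\alpha)$.

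Set $F_j:=D_jF+t_je_1$, where $D_j=\mathrm{diag}(d_1/d_1^j,\,d_2/d_2^j)$ and $t_j\to0$ is chosen so that $\gamma_{A^j}(F_j)=\alpha^j$. The linear map makes the densities match exactly, and the translation only perturbs the weight continuously. Consequently
\[
\gamma_{A^j}\bigl(F_j\Delta H_2(e_1,\overline{s}_j)\bigr)\to\gamma(F\Delta H_2),\qquad \Per_{\gamma_{A^j}}(F_j)\to\Per_{\gamma_{A_{1,2}}}(F).
\]
The minimal perimeter $e^{-\phi^{-1}(\alpha)^2/2}d_1$ and $\overline{s}=\phi^{-1}(\alpha)/d_1$ are continuous in the parameters, so $\delta_{\gamma_{A^j}}(F_j)\to\delta(F)$.

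For large $j$ we then have $\gamma(F_j\Delta H)\le\epsilon''<\epsilon_0^j$ along a subsequence, while the ratio stays below $\frac1{100}C_{SPR}^j$. This contradicts the definition of $\epsilon_0^j$, since the defining property is monotone in $\epsilon$ and therefore holds for every $\epsilon<\epsilon_0^j$.

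\textbf{Main obstacle.} The last step needs $\liminf_j C_{SPR}(d_1^j,d_2^j,\alpha^j)\ge C_{SPR}(d_1,d_2,\alpha)$. This holds wherever $d_2^2/d_1^2\neq2$, but $C_{SPR}$ jumps down as the ratio crosses $2$ from below, so at $d_2^2/d_1^2=2$ it is only upper semicontinuous. There I would first try to use the strict inequality for the fixed $F$ together with its room below $\frac1{100}C_{SPR}$. If the jump $d_2^2(\delta_0-\frac12\delta_0^2)$ exceeds that room, I would instead either rework the argument with the continuous lower envelope of $C_{SPR}$ or restrict the semicontinuity claim to approach from the side $d_2^2/d_1^2\le2$.
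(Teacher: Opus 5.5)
\textbf{Positivity fails exactly where you extend the argument to $d_1=d_2$, and the statement is false there.} With $\epsilon_0$ as in Definition \ref{d:epsilon threshold def}, every $F$ with $\gamma_{A_{1,2}}(F)=\alpha$ is measured against the fixed half-plane $H_2(e_1,\overline{s})$. Take $d_1=d_2$ and $F_\theta:=\{x:\,x_1\cos\theta+x_2\sin\theta\le\overline{s}\}$ with $\theta>0$ small. Then $\gamma_{A_{1,2}}(F_\theta)=\alpha$, $\delta_{\gamma_{A_{1,2}}}(F_\theta)=0$, and $0<\gamma_{A_{1,2}}(F_\theta\Delta H_2(e_1,\overline{s}))\to0$. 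Since $C_{SPR}(d_1,d_1,\alpha)>0$, no $\epsilon>0$ has the defining property.

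The error enters through your claim that Theorem \ref{t:technical theorem}(2) and Theorem \ref{t:small perturbation regime} apply to any $L^1$-convergent sequence once $\lambda$ is replaced by $\gamma_{A_{1,2}}(\,\cdot\,\Delta H_2(e_1,\overline{s}))$. Here $\delta=0$, so the reduction can only output another global minimizer, i.e.\ a tilted half-plane. Its constant in \eqref{e:ODE} gives $\ell_k^+=\overline{s}$. Then the estimate \eqref{e:LB on ell_k height} in the proof of Lemma \ref{l:separation from linear case} is false, and that lemma is the source of the $\delta_0$-term. This estimate genuinely needs $\lambda_{\gamma_A}(E_k)>0$ realized by $H_2(e_1,\overline{s})$. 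The paper's proof makes the same slip when it declares $\{F_i\}_i$ ``admissible in the definition of $Q$''.

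Nor is the problem confined to the diagonal. For $d_1<d_2$, tilt the normal to $(\cos\theta,\sin\theta)$ and adjust the offset to keep mass $\alpha$. With $c:=\phi^{-1}(\alpha)$ (so $e^{c^2/2}=e^{d_1^2\overline{s}^2/2}$ as in \eqref{e:spr lb constant}), a direct computation in the variables $y=\sqrt{A_{1,2}}\,x$ gives
\[
\lim_{\theta\to0}\frac{\delta_{\gamma_{A_{1,2}}}(F_\theta)}{\gamma_{A_{1,2}}(F_\theta\Delta H_2(e_1,\overline{s}))^2}=\frac{\pi^2e^{c^2/2}(d_2^2-d_1^2)}{2d_1}.
\]
This is below $\frac{1}{100}C_{SPR}(d_1,d_2,\alpha)$ as soon as $(\frac{\pi}{2}-\frac{1}{100})(d_2^2-d_1^2)<\frac{1}{50}d_2^2(\delta_0-\frac12\delta_0^2)$, so $\epsilon_0$ is not positive there either. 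These $F_\theta$ have $\lambda_{\gamma_{A_{1,2}}}(F_\theta)>0$, so they are admissible for $Q$. They therefore also contradict $Q(H_2(e_1,\overline{s}))\ge\frac{1}{25}C_{SPR}$, which means the black box itself is unavailable near the diagonal. For a provable statement, the threshold in Definition \ref{d:epsilon threshold def} has to degenerate as $d_2\downarrow d_1$. At $d_1=d_2$ the fixed half-plane has to be replaced, e.g.\ by $\lambda_{\gamma_{A_{1,2}}}$.

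\textbf{Upper semicontinuity.} You are right that $C_{SPR}$ jumps down across $d_2^2/d_1^2=2$. The paper's proof simply asserts $\lim C_{SPR}(d_1',d_2',\alpha')=C_{SPR}(d_1,d_2,\alpha)$, which is false when approaching from the side $d_2^2/d_1^2>2$. So neither argument covers those points, and your fallbacks change the statement rather than prove it.

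Away from those points your transport argument works, and in one respect it is better than the paper's. $D_j$ matches the densities exactly, so symmetric differences and perimeters converge for every $F$ of finite perimeter. By contrast, \eqref{e:continuity 2} for a fixed $F$ needs a domination of the weights that fails when an eigenvalue decreases.

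Do not fix the mass by translation, though. The map $t\mapsto\gamma_{A^j}(D_jF+te_1)$ may have an extremum at $t=0$, so $t_j\to0$ need not exist. Also, the weighted perimeter of a translate need not converge, because the weight ratio $e^{-d_1^2tx_1-\frac12d_1^2t^2}$ is unbounded on $\partial F$. Instead, use the paper's device of adding or removing a small ball at a Lebesgue point. Alternatively, first replace $F$ by a set agreeing with $H_2(e_1,\overline{s})$ outside a large ball.
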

The proofs of Lemma \ref{l:qualitative stability} and Lemma \ref{l:continuity of epsilon} are given in Section \ref{s:qualitative stability}. Assuming Theorem \ref{t:technical theorem}, Theorem \ref{t:small perturbation regime}, Lemma \ref{l:qualitative stability}, and Lemma \ref{l:continuity of epsilon}, the proof of Theorem \ref{t:main theorem} proceeds as follows.\\

\noindent {\bf Proof of Theorem \ref{t:main theorem}}: Fix $\alpha\in(0,1)$. Let $n\geq 2$, and let $A$ be a symmetric positive-definite $n\times n$ matrix whose two smallest eigenvalues are $0<d_1^2\leq d_2^2$. Following the comment before \eqref{A and A_{1,2}}, we recall that we may assume that $A$ and $A_{1,2}$ have the form given in \eqref{A and A_{1,2}}. With a slight abuse of notation, we denote by $e_1$ and $e_2$ the corresponding standard basis vectors in both $\mathbb{R}^n$ and $\mathbb{R}^2$. Thus, $e_1$ is an eigenvector associated with $d_1^2$ and $e_2$ with $d_2^2$ for both $A$ and $A_{1,2}$. Let $\overline{s} \in\mathbb{R}$ such that $\gamma_{A_{1,2}}(H_2(e_1, \overline{s})) = \gamma_{A}(H_n(e_1, \overline{s}))=\alpha$.

For any recovery sequence $\{E_k\}_k$ for $Q(H_n(e_1, \overline{s}))$, by Theorem \ref{t:technical theorem}(2), Corollary \ref{c:reduction ratio estimates}, and Theorem \ref{t:small perturbation regime}, there exist sets $\tilde{E}_k \subset \mathbb{R}^2$ such that
\begin{align}
    Q(H_n(e_1, \overline{s})) & \ge\frac{1}{25} \liminf_{k\rightarrow \infty}\frac{\delta_{\gamma_{A_{1,2}}}(\tilde{E}_k)}{\gamma_{A_{1,2}}(\tilde{E}_k \Delta H_2(e_1, \overline{s}))^2}\ge \frac{1}{25}C_{SPR}(d_1, d_2, \alpha),    
\end{align}
where $C_{SPR}(d_1, d_2, \alpha)>0$ is defined in \eqref{e:spr lb constant}.

Let $\epsilon_0(d_1,d_2,\alpha)$ be as in Definition \ref{d:epsilon threshold def}.  Note that if $\epsilon_0(d_1, d_2, \alpha) = 2\alpha$, then there is nothing to prove, since in this case Theorem \ref{t:main theorem} holds with $C(d_1, d_2, \alpha) = \frac{1}{100}C_{SPR}(d_1, d_2, \alpha)$. Thus, we may assume that $0<\epsilon_0(d_1, d_2, \alpha)<2\alpha$.

For any $n \ge 2$ and any $G \subset \mathbb{R}^n$ with $\gamma_A(G)=\alpha$, if $\lambda_{\gamma_A}(G) < \epsilon_0(d_1,d_2,\alpha)$, then our reduction theorem (Theorem \ref{t:technical theorem}(1)) gives us a set $\tilde{G}\subset \mathbb{R}^2$ such that $\gamma_{A_{1,2}}(\tilde{G})=\alpha$ and 
$$\epsilon_0(d_1, d_2, \alpha) > \lambda_{\gamma_{A}}(G) =\gamma_{A_{1,2}}(\tilde{G}\Delta H_2(e_1, \overline{s})).$$
Hence, by the definition of $\epsilon_0(d_1,d_2,\alpha)$,
\begin{align}\label{e:G to G tilde epsilon estimate}
    \frac{\delta_{\gamma_A}(G)}{\lambda_{\gamma_A}(G)^2} \ge \frac{\delta_{\gamma_{A_{1,2}}}(\tilde{G})}{\gamma_{A_{1,2}}(\tilde{G}\Delta H_2(e_1, \overline{s}))^2} \ge \frac{1}{100}
C_{SPR}(d_1, d_2, \alpha).
\end{align}

Thus, it only remains to consider $G \subset \mathbb{R}^n$ such that $\gamma_A(G)=\alpha$ and $\lambda_{\gamma_A}(G)\ge \epsilon_0(d_1, d_2, \alpha)$.  We claim that for such a set $G$, the $(n-1)$-symmetrization $\tilde{G}\subset \mathbb{R}^2$ guaranteed by Theorem \ref{t:technical theorem}(1) also satisfies
\begin{align}\label{e:claim in proof of big theorem}
    \delta_{\gamma_{A_{1,2}}}(\tilde{G}) \ge \rho(\alpha,\epsilon_0(d_1,d_2,\alpha), d_1, d_2, 2),
\end{align}
where $\rho$ is the quantity guaranteed by Lemma \ref{l:qualitative stability}. We split the proof into two cases.

\noindent \emph{Case 1: The strictly anisotropic case.} Assume that $d_1<d_2$.  Let $G \subset \mathbb{R}^n$ such that $\gamma_A(G)=\alpha$ and $\lambda_{\gamma_A}(G)\ge \epsilon_0(d_1, d_2, \alpha)$ with associated $\tilde{G}\subset \mathbb{R}^n$, as above.  In $2$-dimensions, strictly anisotropic Gauss space has only two $\gamma_{A_{1,2}}$-perimeter minimizers: $H_{2}(e_1,\overline{s})$ and $H_{2}(-e_1,-\overline{s})$.  Thus, if $\lambda_{\gamma_A}(G)\ge \epsilon_0(d_1, d_2, \alpha)$, by Theorem \ref{t:technical theorem}(1),
\begin{align*}
    \lambda_{\gamma_{A_{1,2}}}(\tilde{G}) & = \min\{\gamma_{A_{1,2}}(\tilde{G}\Delta H_{2}(e_1,\overline{s})),\gamma_{A_{1,2}}(\tilde{G}\Delta H_{2}(-e_1,-\overline{s})) \}\\
    & = \min\{\gamma_{A}(G\Delta H_{n}(\omega,\overline{s})),\gamma_{A}(G\Delta H_{n}(-\omega,-\overline{s}))\}\\
    & \ge \lambda_{\gamma_{A}}(G) \ge \epsilon_0(d_1, d_2, \alpha).
\end{align*}
Therefore, by Lemma \ref{l:qualitative stability}, it must be that $\delta_{\gamma_{A_{1,2}}}(\tilde{G}) \ge \rho(\alpha,\epsilon_0(d_1,d_2,\alpha), d_1,d_2, 2)$.

\noindent \emph{Case 2: The isotropic case.}
Let $0<d_1 = d_2$.  Let $G \subset \mathbb{R}^n$ such that $\gamma_A(G)=\alpha$. Suppose that $\lambda_{\gamma_A}(G)\ge \epsilon_0(d_1, d_1, \alpha)$. Without loss of generality, we may assume that $H_n(e_1, \bar{s})$ attains the minimum in the definition of $\lambda_{\gamma_A}(G)$ . Indeed, if the minimum is attained by another half-space, we may apply a suitable rotation to both $G$ and the minimizing half-space so that the latter coincides with $H_n(e_1, \bar{s})$. Note that Theorem \ref{t:technical theorem} immediately gives that there exists a $\tilde{G} \subset \mathbb{R}^2$ such that
\begin{align}\label{e:symm diference ge epsilon}
    \gamma_{A_{1,2}}(\tilde{G}\Delta H_2(e_1, \overline{s})) = \gamma_{A}(G\Delta H_n(e_1, \overline{s})) = \lambda_{\gamma_A}(G)\ge \epsilon_0(d_1, d_1, \alpha),
\end{align}
but this is insufficient to obtain control on $\lambda_{\gamma_{A_{1,2}}}(\tilde{G})$. We need to use Lemma \ref{l:qualitative stability} and Lemma \ref{l:continuity of epsilon}. 

Consider the sequence of matrices $A_{d_1, d_{2, i}}$, where $d_{2, i} \searrow d_1$, and note that, by assumption, $A_{1,2}=$ $A_{d_1, d_1}$. Then, by the continuity of $\gamma_{A_{d_1, d_2}}(\tilde{G}\Delta H_2(e_1, \overline{s}))$ in the $d_2$-variable,
\begin{align}\label{e:limit of symm differences}
    \gamma_{A_{d_1, d_1}}(\tilde{G}\Delta H_2(e_1, \overline{s})) = \lim_{i \rightarrow \infty} \gamma_{A_{d_1, d_{2,i}}}(\tilde{G}\Delta H_2(e_1, \overline{s})).
\end{align}
For this sequence, by Theorem \ref{t:technical theorem}(1d) and the assumption that $H_n(e_1, \overline{s})$ is a best-approximating half plane for $G \subset (\mathbb{R}^n;\gamma_{A})$, after possibly reflecting $\tilde{G}$, we may find a subsequence $d_{2,j}$ such that for all sufficiently large $j \in \mathbb{N}$
\begin{align}\label{e:symm diferences are lambdas}
    \gamma_{A_{d_1, d_{2,j}}}(\tilde{G}\Delta H_2(e_1, \overline{s}_j)) = \lambda_{\gamma_{A_{d_1, d_{2,j}}}}(\tilde{G}),
\end{align}
where we use $\overline{s}_j$ to denote the height such that $\gamma_{A_{d_1, d_{2,j}}}(H_2(e_1, \overline{s}_j)) =\alpha_j := \gamma_{A_{d_1, d_{2,j}}}(\tilde{G})$.  Then by Lemma \ref{l:continuity of epsilon}, \eqref{e:symm diference ge epsilon}, \eqref{e:limit of symm differences}, and \eqref{e:symm diferences are lambdas} we have
\begin{align*}
    0 < \epsilon_0(d_1, d_1, \alpha) & \le \gamma_{A_{d_1, d_1}}(\tilde{G} \Delta H_2(e_1, \overline{s}))\\
    & = \lim_{j \rightarrow \infty}\gamma_{A_{d_1, d_{2,j}}}(\tilde{G}\Delta H_2(e_1, \overline{s}))\\
    & \le \lim_{j\rightarrow \infty}\lambda_{\gamma_{A_{d_1, d_{2,j}}}}(\tilde{G}) + \gamma_{A_{d_1, d_{2,j}}}(H_2(e_1, \overline{s}_j)\Delta H_2(e_1, \overline{s}))\\
    & = \lim_{j\rightarrow \infty}\lambda_{\gamma_{A_{d_1, d_{2,j}}}}(\tilde{G}),
\end{align*}
because $\gamma_{A_{d_1, d_{2,j}}}(H_2(e_1, \overline{s}_j)\Delta H_2(e_1, \overline{s})) \rightarrow 0$ as $j \rightarrow \infty$.

Thus, for any $C<1$ it must be that for all sufficiently large $j \in \mathbb{N}$, $$\lambda_{\gamma_{A_{d_1, d_{2,j}}}}(\tilde{G}) > C\epsilon_0(d_1, d_{1}, \alpha).$$

Therefore, by the lower semicontinuity of $\rho_0$ in the $(\alpha, d_1, d_1)$ coordinates (Lemma \ref{l:qualitative stability}), we see
\begin{align*}
\delta_{\gamma_{A_{d_1, d_1}}}(\tilde{G})& = \lim_{j\rightarrow \infty} \delta_{\gamma_{A_{d_1,d_{2,j}}}}(\tilde{G})\\
    &\ge \liminf_{j\rightarrow \infty} \rho(\alpha_j, C\epsilon_0(d_1, d_{1}, \alpha), d_1,d_{2,j}, 2)\\
    & \ge \rho(\alpha, C\epsilon_0(d_1, d_{1}, \alpha), d_1,d_{1}, 2).
\end{align*}
Letting $C\nearrow 1$ and using the left continuity of $\rho$ guaranteed by Lemma \ref{l:qualitative stability}, we obtain \eqref{e:claim in proof of big theorem}.

Finally, we have \eqref{e:claim in proof of big theorem} in all cases, Theorem \ref{t:technical theorem}(1) immediately gives
\begin{align*}
    \frac{\delta_{\gamma_A}(G)}{\lambda_{\gamma_{A}}(G)^2} \ge 
    \frac{\delta_{\gamma_{A_{1,2}}}(\tilde{G})}{4\alpha^2}\ge \frac{1}{4\alpha^2}\rho(\alpha,\epsilon_0(d_1,d_2, \alpha), d_1, d_2, 2).
\end{align*}

Therefore, for any $n \ge 2$ and any $G \subset \mathbb{R}^n$ with $\gamma_{A}(G)=\alpha\in (0,1)$,
\begin{align*}
  \frac{\delta_{\gamma_{A}}(G)}{\lambda_{\gamma_{A}}(G)^2} &\ge \min\left\{\frac{1}{100}C_{SPR}(d_1, d_2, \alpha), \frac{1}{4\alpha^2}\rho(\alpha,\epsilon_0(d_1,d_2, \alpha), d_1, d_2, 2)\right\}\\
 &=: C(\alpha, d_1^2, d_2^2)>0.
\end{align*}
This gives Theorem \ref{t:main theorem}.
\qed

\subsection{Organization of the Paper}

The remainder of the paper is devoted to the proofs of Theorem \ref{t:technical theorem}, Theorem \ref{t:small perturbation regime}, Lemma \ref{l:qualitative stability}, and Lemma \ref{l:continuity of epsilon}. We first outline the proof of Theorem Theorem \ref{t:technical theorem} at the end of Section \ref{Notation and Preliminaries}, as it constitutes our main technical contribution. Sections \ref{s:reduction_symmetrization}--\ref{s:truncation} are then devoted to its full proof. In Section \ref{s:The Small Perturbation Regime}, we derive a Fuglede-type estimate, establish the sharpness of the exponent $2$ in the anisotropic Gaussian setting, and prove Theorem \ref{t:small perturbation regime}. Finally, in Section \ref{s:qualitative stability}, we prove Lemma \ref{l:qualitative stability} and Lemma \ref{l:continuity of epsilon} by standard compactness-contradiction arguments.

\subsection{Additional Remarks} We conclude the introduction by mentioning a few related developments. Fusco and La Manna \cite{FuscoLaManna2023} extended \cite{CianchiFuscoMaggiPratelli11} from isotropic Gauss space to radial weights of the form $e^{-\omega(|x|)}$, while Alvino, Brock, Chiacchio, Mercaldo, and Posteraro \cite{AlvinoBrockChiacchioMercaloPosteraro2019} studied anisotropic power weights $|x_n|^\alpha$ in half-spaces. La Manna \cite{LaManna2019_BallsLocalMinimizerGaussianPerimeter} used the selection principle to prove local minimality of centered balls for the $\gamma_n$-perimeter. For additional work on the function $Q(E)$ in the Euclidean setting, Alvino, Ferone, and Nitsch give a second-order expansion of the perimeter deficit, centered at the ball $B_r(0)$, in terms of the Fraenkel asymmetry in $\mathbb{R}^2$ for a nice family of subsets in \cite{AlvinoFeroneNitsch2011_2DHigherOrderExpansion}.  In a follow-up to \cite{CicaleseLeonardi2012_selectionprinciple}, Cicalese and Leonardi use the selection principle to obtain higher-order expansions of the Euclidean perimeter, centered at the ball $B_r(0)$, as a polynomial of the asymmetry in $\mathbb{R}^2$ for all Borel Sets of positive and finite measure \cite{CicaleseLeonardi2013_TaylorExpansionPerimeteratBall}.

\section{Notation and Preliminaries}\label{Notation and Preliminaries}
For the remainder of this paper, we fix the following definitions and notation.

\begin{definition}[Standing assumptions]\label{d:standing assumptions}\mbox{}
    \begin{enumerate}
        \item Unless otherwise specified, we shall assume that $n$ is an integer satisfying $n \ge 2$.
        \item Let $S_{n \times n}^+(\mathbb{R})$ denote the set of all symmetric, positive-definite $(n \times n)$-matrices with real coefficients. We consider $S_{n \times n}^+(\mathbb{R})$ as a pseudometric space $(S_{n \times n}^+(\mathbb{R}),d_{\psi})$, where $d_{\psi}$ is the pseudometric
        \begin{align*}
    \dist(A, \tilde{A})^2 =\sum_{i=1}^n (d_i^2 -(\tilde{d_i})^2)^2,
\end{align*}
where $\{d_i^2\}_{i=1}^n$ and $\{(\tilde{d_i})^2\}_{i=1}^n$ are the eigenvalues of $A$ and $\tilde{A}$, respectively, listed with multiplicity in increasing order, as defined in Lemma \ref{l:qualitative stability}.
        
\item For the rest of the paper, we shall assume that $A \in (S_{n \times n}^+(\mathbb{R}), d_{\psi})$.  By rotation we may always assume that $A$ is diagonal in the standard basis.
$$
A=\begin{pmatrix}
d_1^2 & 0 & \cdots & 0 \\
0 & d_2^2 & \ddots & \vdots \\
\vdots & \ddots & \ddots & 0 \\
0 & \cdots & 0 & d_n^2
\end{pmatrix}
=\begin{pmatrix}
d_1^2 & 0 \\
0 & A'
\end{pmatrix}.
$$
        where $A'$ is the corresponding $(n-1)\times(n-1)$-submatrix.
\item For $A$ as above, we write
\begin{align*}
    A_{d_1,d_2}:= \begin{pmatrix}
        d_1^2 & 0\\
        0 & d_2^2
    \end{pmatrix},
\end{align*} so that $A_{d_1,d_2}' = d_2^2$. We also write $A_{d_1,d_2}$ as $A_{1,2}$ when there is no confusion.
\item  For any $t \in \mathbb{R}$ and $A'$ as above, we define the measure $\gamma_{A', t}$ on $\mathbb{R}^{n-1}$ by 
    \begin{align*}
    d\gamma_{A', t}(x'):=e^{\frac{-\langle A(t,x'),(t,x')\rangle}{2}}d\mathcal{H}^{n-1}(x').
    \end{align*}
\item For any $A$ as above, we use $\sqrt{A}$ to denote the diagonal matrix such that $\sqrt{A}\sqrt{A} = A$.  We denote the eigenspace associated to its smallest eigenvalue of $\sqrt{A}$ by $V_{d_{\min}}(\sqrt{A})=V_{d_{1}}(\sqrt{A})$.
\item (Half-spaces) For $n \ge 2$, any $\omega \in V_{d_{1}}(\sqrt{A}) \cap \mathbb{S}^{n-1}$, and $\overline{s} \in \mathbb{R}$, we define 
\begin{align*}
    H_n(\omega, \overline{s}):=\{x \in \mathbb{R}^n: \omega \cdot x \le \overline{s}\}.
\end{align*}
\item (Mass and Perimeter) We shall use $\alpha \in (0,1)$ to denote the $\gamma_{A}$-measure of a set.  For any fixed $\alpha$, we shall associate $\overline{s} \in \mathbb{R}$ such that $\overline{s}$ satisfies
\begin{align*}
    \gamma_A(H_n(\omega, \overline{s}))&= \frac{\det\sqrt{A}}{(2\pi)^{n/2}}\int_{H_n(\omega, \overline{s})}e^{-\frac{1}{2}\langle Ax, x\rangle}dx\\
    & = \frac{1}{(2\pi)^{n/2}}\int_{H_n\left(\frac{\sqrt{A}^{-1}\omega}{|\sqrt{A}^{-1}\omega|}, d_1\overline{s}\right)}e^{-\frac{1}{2}|x|^2}dx =\phi(d_1\bar{s})=\alpha,
\end{align*}
and
\begin{align*}\text{Per}_{\gamma_A}(H_n(\omega,\bar{s}))=e^{-\frac{1}{2} \frac{\bar{s}^2}{\left|(\sqrt{A})^{-1} \omega\right|^2}} \frac{1}{|(\sqrt{A})^{-1} \omega|}=e^{-[\phi^{-1}(\alpha)]^2/2}d_1.\end{align*}
        \item (Subgraphs) Let $u:\mathbb{R}^{2} \rightarrow \mathbb{R}$.  We shall use the notation 
        \begin{align}\label{d:E_u def}
        E_u:= \{(x',t): t\le \overline{s} + u(x')\}.
    \end{align}
    \end{enumerate}
\end{definition}

We shall also use the following notation. For a curve $\Gamma:(-\delta, \delta) \rightarrow \mathbb{R}^2$, $(\Gamma(t))_{e_i} := \Gamma(t) \cdot e_i$ for $i=1,2$.  Furthermore, for vector-valued functions $\vec{\eta}: (-\delta, \delta) \rightarrow \mathbb{R}^2$, we shall use $\eta_i := \vec{\eta}\cdot e_i$ for $i=1,2$.

\subsection{Definitions from Geometric Measure Theory}

We let $\mathcal{H}^n$ denote the $n$-dimensional Hausdorff measure.

\begin{definition}
    Let $E \subset \mathbb{R}^n$ be a Borel set. 
    \begin{enumerate}
    \item We let $E^{(1)}$ and $E^{(0)}$ denote the collection of points of Lebesgue density $1$ and $0$, respectively.
    \item $E$ is a \textit{set of locally finite perimeter} in Euclidean space if for all compact sets $K \subset \mathbb{R}^n$,
    \begin{align*}
        \sup\Big\{\int_E\text{div}(\phi)dx: \phi \in C^\infty_c(K;\mathbb{R}^n), \ \norm{\phi}_{\infty}\le 1\Big\}<\infty.
    \end{align*}
    \item If $E$ is a set of locally finite perimeter in Euclidean space, then there exists a Radon measure $\norm{\partial E}$ and a $\norm{\partial E}$-measurable vector valued function $\vec{\eta}_E$ such that
    \begin{enumerate}
        \item For any open set $U\subseteq \mathbb{R}^n$,
        \begin{align*}
        \norm{\partial E}(U)=\sup\Big\{\int_E\text{div}(\phi)dx: \phi \in C^\infty_c(U;\mathbb{R}^n), \quad \norm{\phi}_{\infty}\le 1\Big\}.
    \end{align*}
    \item For any $\phi \in C^\infty_c(\mathbb{R}^n:\mathbb{R}^n)$,
    \begin{align*}
        \int_E\text{div}(\phi)dx = \int \phi \cdot \vec{\eta}_E d\norm{\partial E}.
    \end{align*}
    \end{enumerate}
    \item For a set of locally finite perimeter in Euclidean space, the \textit{reduced perimeter} of $E$ is defined to be
    \begin{align*}
        \partial^*E:=\Big\{x \in \mathbb{R}^n: \lim_{r \rightarrow 0^+}\frac{\int_{B_r(x)}\vec{\eta}_E d\norm{\partial E}}{\norm{\partial E}(B_r(x))} = \vec{\eta}_E(x) \quad \text{and } |\vec{\eta}_E(x)|=1\Big\}.
    \end{align*}
    That is, $\partial^*E$ are points of density with respect to $\norm{\partial E}$ with norm $1$.
    \item For any Borel set $E \subset \mathbb{R}^n$, the \textit{measure-theoretic boundary} of $E$ is defined by
    \begin{align*}
        \partial^ME:= \Big\{x \in \mathbb{R}^n:  \lim_{r\rightarrow 0+}\frac{\mathcal{H}^n(E \cap B_r(x))}{\omega_nr^n} \not = 0,1\Big\}
    \end{align*} 
    That is, this set includes points where the limit doesn't exist and points where the limit exists, but is not $0$ or $1$.
\end{enumerate}
\end{definition}

\begin{remark}
    If $E\subset \mathbb{R}^n$ is a set of locally finite perimeter in Euclidean space, then $\norm{\partial E} = \mathcal{H}^{n-1} \res \partial^*E= \mathcal{H}^{n-1} \res \partial^ME$ by De Giorgi's Structure theorem and Federer's Theorem (see \cite{evansgariepy} Theorem 5.15 and Lemma 5.5).
\end{remark}

\begin{definition}
For any Borel set $E \subset \mathbb{R}^n$, we define
\begin{enumerate}
    \item $\gamma_A(E)$ as in \eqref{e:gamma A mass def}, 
    \item $\operatorname{Per}_{\gamma_A}(E)$ as in \eqref{e:gamma A perimeter def},
    \item $\delta_{\gamma_A}(E)$ as in \eqref{e:gamma A deficit def},
    \item $\lambda_{\gamma_A}(E)$ as in \eqref{e:gamma A asymmetry def},
    \item and, for an open set $U\subseteq \mathbb{R}^n$, define the \textit{relative $A$-anisotropic Gaussian perimeter} of $E$ in $U$ by
\begin{align*}
\operatorname{Per}_{\gamma_{A}}(E; U) := \frac{\sqrt{\det(A)}}{(2\pi)^{\frac{n-1}{2}}}\int_{\partial^ME \cap U}e^{-\frac{1}{2}\langle Ax, x \rangle}d\mathcal{H}^{n-1}(x).
\end{align*}
\end{enumerate} 
\end{definition}
    
\begin{remark}
    Because the weight $e^{-\frac{1}{2}\langle Ax, x \rangle}$ is smooth and locally strictly positive, $\operatorname{Per}_{\gamma_A}(E)<\infty$ implied that $E$ is a set of locally finite perimeter in Euclidean space, and hence we may speak of the reduced boundary $\partial^*E$ and
    \begin{align*}
        \operatorname{Per}_{\gamma_{A}}(E; U) = \frac{\sqrt{\det(A)}}{(2\pi)^{\frac{n-1}{2}}}\int_{U}e^{-\frac{1}{2}\langle Ax, x \rangle}d\norm{\partial E}(x)
    \end{align*} for all open sets $U\subseteq \mathbb{R}^n$.
\end{remark}

In Section \ref{s:truncation}, we shall find it convenient to rescale so that $1 =d_1 \le d_2$.  

\begin{lemma}\label{l:rescaling d_1 is ok}
    Let $A_{1,2}$, $\alpha$, $\overline{s}$ be as in Definition \ref{d:standing assumptions}.  Then, by the change of variables $y = d_1x$,
    \begin{align*}
    \frac{\delta_{\gamma_{A_{1,2}}}(E)}{\lambda_{\gamma_{A_{1,2}}}(E)^2} = d_1\frac{\delta_{\gamma_{\frac{1}{d_1}A_{1,2}}}(d_1E)}{\lambda_{\gamma_{\frac{1}{d_1}A_{1,2}}}(d_1E)^2}.
    \end{align*}
\end{lemma}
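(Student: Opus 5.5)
The plan is a direct change of variables: verify that each ingredient of the ratio (mass, perimeter, the minimal perimeter in the mass class, and the asymmetry) transforms in a controlled way under the dilation $y=d_1x$. I will read the rescaled matrix as $B:=d_1^{-2}A_{1,2}=\mathrm{diag}(1,d_2^2/d_1^2)$. This is the matrix for which $\langle A_{1,2}x,x\rangle=\langle By,y\rangle$ when $y=d_1x$, and it has smallest eigenvalue $1$, matching the normalization $1=d_1\le d_2$ announced before the lemma. I will interpret the notation $\tfrac1{d_1}A_{1,2}$ in the statement in this sense.

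\emph{Step 1 (mass).} With $y=d_1x$ we have $dy=d_1^2\,dx$ in $\mathbb{R}^2$ and $\sqrt{\det B}=\sqrt{\det A_{1,2}}/d_1^2$. Hence $\gamma_B(d_1E)=\gamma_{A_{1,2}}(E)=\alpha$ for every Borel $E$. In particular, $\gamma_B(d_1E\,\Delta\,d_1F)=\gamma_{A_{1,2}}(E\Delta F)$ for all Borel $E,F$.

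\emph{Step 2 (perimeter and deficit).} The map $x\mapsto d_1x$ sends $\partial^M E$ to $\partial^M(d_1E)$ and scales $\mathcal H^1$ by $d_1$. Combined with the factor $d_1^{-2}$ in the normalizing constant, this gives $\Per_{\gamma_B}(d_1E)=d_1^{-1}\Per_{\gamma_{A_{1,2}}}(E)$. By \eqref{e:gamma A deficit def}, the minimal perimeter in mass class $\alpha$ equals $e^{-[\phi^{-1}(\alpha)]^2/2}$ times the smallest eigenvalue of the square root of the matrix. This eigenvalue is $d_1$ for $A_{1,2}$ and $1$ for $B$, so the minimal perimeter also scales by $d_1^{-1}$. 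Since the mass is unchanged by Step 1, we conclude $\delta_{\gamma_B}(d_1E)=d_1^{-1}\delta_{\gamma_{A_{1,2}}}(E)$.

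\emph{Step 3 (asymmetry).} The eigenspace for the smallest eigenvalue is the same for $A_{1,2}$ and $B$, because $B$ is a scalar multiple of $A_{1,2}$. The dilation maps $H_2(\omega,\overline s)$ to $H_2(\omega,d_1\overline s)$, and $d_1\overline s=\phi^{-1}(\alpha)$ is exactly the height associated to mass $\alpha$ for $B$, whose smallest eigenvalue is $1$. Therefore the dilation induces a bijection between the competing half-spaces in \eqref{e:gamma A asymmetry def} for the two measures. By Step 1 it preserves the corresponding symmetric-difference masses, so $\lambda_{\gamma_B}(d_1E)=\lambda_{\gamma_{A_{1,2}}}(E)$.

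Dividing the deficit identity of Step 2 by the square of the asymmetry identity of Step 3 yields the claimed formula with the factor $d_1$. There is no real obstacle here. The only point needing care is the bookkeeping of the determinant normalization against the Jacobians in dimension $2$ (area factor $d_1^2$ for mass, length factor $d_1$ for perimeter), together with checking that the optimal heights $\overline s$ correspond under the dilation.
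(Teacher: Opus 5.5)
Your proposal is correct and is the same change-of-variables argument the paper invokes with $y=d_1x$; the paper supplies no further detail, and your bookkeeping fills it in correctly: the determinant normalization against the area and length Jacobians, the scaling of the minimal perimeter by $d_1^{-1}$, and the matching of heights $d_1\overline{s}=\phi^{-1}(\alpha)$ between the competing half-spaces. You are also right to read the rescaled matrix as $d_1^{-2}A_{1,2}$, since with the literal $\tfrac{1}{d_1}A_{1,2}$ the Gaussian weight is not preserved under $y=d_1x$ unless $d_1=1$.
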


By applications of standard theorems for sets of locally finite perimeter in Euclidean space, we have the following basic results.

\begin{theorem}[Basic Results]\label{t:weak compactness, LSC}
    Let $A$ be as in Definition \ref{d:standing assumptions}.
    \begin{enumerate}
        \item Any collection $\{E_i\}_i$ of sets of finite $\gamma_A$-perimeter such that $\sup_i \emph{Per}_{\gamma_A}(E_i)<\infty$ is weakly pre-compact.
        \item If $E_i \rightarrow E_0$ in $\gamma_A$-measure, then
        \begin{align*}
            \emph{Per}_{\gamma_A}(E_0) \le \liminf_{i \rightarrow \infty}\emph{Per}_{\gamma_A}(E_i).
        \end{align*}
    \end{enumerate}
\end{theorem}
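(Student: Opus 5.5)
The statement is Theorem \ref{t:weak compactness, LSC}. I would reduce both parts to the classical Euclidean facts for sets of locally finite perimeter. The link is that the weight $w(x):=e^{-\frac12\langle Ax,x\rangle}$ is smooth, strictly positive and bounded below on every compact set. Fix $R>0$ and set $c_R:=\min_{\overline{B_R}} w>0$. By the Remark preceding Lemma \ref{l:rescaling d_1 is ok}, $\norm{\partial E_i}=\mathcal{H}^{n-1}\res\partial^M E_i$. Hence
\[
\norm{\partial E_i}(B_R)\le \frac{(2\pi)^{\frac{n-1}{2}}}{c_R\sqrt{\det A}}\operatorname{Per}_{\gamma_A}(E_i)\le C_R\sup_j \operatorname{Per}_{\gamma_A}(E_j)<\infty,
\]
so the Euclidean perimeters are uniformly bounded on every ball.

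For (1), I would apply the Euclidean compactness theorem for BV functions (e.g.\ \cite{evansgariepy}) on each ball $B_R$, $R\in\mathbb N$. It gives a subsequence with $\chi_{E_i}\to\chi_{E^{R}}$ in $L^1(B_R)$. A diagonal argument produces a single subsequence and a set $E_0$ with $\chi_{E_i}\to\chi_{E_0}$ in $L^1_{loc}(\mathbb R^n)$. To upgrade this to convergence in $\gamma_A$-measure, split
\[
\gamma_A(E_i\Delta E_0)\le \gamma_A\big((E_i\Delta E_0)\cap B_R\big)+\gamma_A(\mathbb R^n\setminus B_R).
\]
The first term tends to $0$ because the density of $\gamma_A$ is bounded. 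The second term tends to $0$ as $R\to\infty$ uniformly in $i$, since $\gamma_A$ is a probability measure. Thus the tail needs no perimeter control. The weak convergence of the measures $w\,\norm{\partial E_i}$ also follows along a further subsequence, by Banach–Alaoglu applied to these uniformly bounded finite measures. Together these give the weak precompactness claimed.

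For (2), I would use the dual (variational) formula. For $\psi\in C^\infty_c(\mathbb R^n;\mathbb R^n)$ with $|\psi|\le w$ pointwise, the definition of $\norm{\partial E}$ and the Remark give
\[
\int_E \operatorname{div}\psi\,dx=\int \psi\cdot\vec\eta_E\,d\norm{\partial E}\le \frac{(2\pi)^{\frac{n-1}{2}}}{\sqrt{\det A}}\operatorname{Per}_{\gamma_A}(E).
\]
Conversely, the supremum of the left side over such $\psi$ equals that same multiple of $\operatorname{Per}_{\gamma_A}(E)$. To see this, approximate $w\,\vec\eta_E$ in $L^1(\norm{\partial E})$ by smooth compactly supported fields, using Lusin's theorem and truncation. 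If $\norm{\partial E}$ is not locally finite, the supremum is $+\infty$, which is consistent with the claim. Since convergence in $\gamma_A$-measure implies $L^1_{loc}$ convergence (the density of $\gamma_A$ is bounded below on compacts), for each fixed $\psi$ we get $\int_{E_i}\operatorname{div}\psi\to\int_{E_0}\operatorname{div}\psi$. Taking the $\liminf$ and then the supremum over $\psi$ gives the lower semicontinuity.

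The main obstacle is the representation of $\operatorname{Per}_{\gamma_A}$ as a supremum in (2). The approximation must keep the constraint $|\psi|\le w$ exactly while achieving near equality. I would first handle this on a ball $B_R$, where a mollified $\vec\eta_E$ multiplied by a cutoff and by $w(1-\epsilon)$ works. I would then let $R\to\infty$, using monotone convergence of $\int_{B_R} w\,d\norm{\partial E}$.
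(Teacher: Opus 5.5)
Your proposal is correct and follows the route the paper intends: the paper gives no argument beyond saying the statement follows from standard theorems for sets of locally finite perimeter in Euclidean space, and your reduction supplies exactly those details (the weight bounded below on compacts, local BV compactness with a diagonal argument plus the uniform Gaussian tail bound, and the weighted dual formula for lower semicontinuity, including the case where $E_0$ fails to have locally finite perimeter). The only point to phrase carefully is that you should mollify a continuous Lusin approximant of $\vec{\eta}_E$, radially projected into the closed unit ball, rather than $\vec{\eta}_E$ itself, which is defined only on a Lebesgue-null set; your earlier sentence about Lusin's theorem already indicates this.
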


\subsection{Symmetrization}

We use the notation that $\mathcal{B}(\mathbb{R}^n)$ denotes the set of all Borel sets in $\mathbb{R}^n$ and in general, $\mathcal{B}(X)$ denotes the set of all Borel sets in a subset $X \subset \mathbb{R}^n$.

\begin{definition}\label{d:n-1 symmetrizations}($(n-1)$-symmetrizations)
    Let $A$ be a matrix as in Definition \ref{d:standing assumptions}. Let $e_{1}, e_2$ be the eigenvectors associated to the two smallest eigenvalues $0<d_1^2\le d_2^2$, respectively. Define, \begin{align*}
S_{e_1, e_{2}}^{n-1}:\mathcal{B}(\mathbb{R}^n) \rightarrow \mathcal{B}(\mathbb{R}^n)
\end{align*}
by the following operation.  Let
\begin{align*}
\pi_{e_1}:\mathbb{R}^n \rightarrow \text{span}(e_1)
\end{align*}
be orthogonal projection.  Let $
t \in \text{span}(e_1)$.  We define $S_{e_2}^t: \mathcal{B}(\pi_{e_1}^{-1}(t)) \rightarrow \mathcal{B}(\pi_{e_1}^{-1}(t))$
be the symmetrization map defined by
\begin{align*}
& S^t_{e_2}(B):= \{t\}\times H_{n-1}(e_2, \overline{s}), \quad \text{s.t.}\quad  \gamma_{A', t}(H_{n-1}(e_2, \overline{s})) = \gamma_{A', t}(B)
\end{align*}

Finally, we define the $(n-1)$-symmetrization defined by $(e_1, e_2)$ by
\begin{align*}
    S_{e_1, e_2}^{n-1}(B) := \bigcup_{t \in \text{span}(e_1)}S^t_{e_2}(B \cap \pi_{e_1}^{-1}(t)).
\end{align*}
\end{definition}

The key property that we need about $(n-1)$-symmetrizations is the following theorem.

\begin{theorem}\label{t:symmetrization}\emph{($(n-1)$-symmetrizations are perimeter-decreasing)}
    Let $n\ge2$ and $A$ be a diagonal matrix in the coordinate basis $\{e_i\}_{i=1}^n$.  Let $E \subset \mathbb{R}^n$ be a set of $\gamma_A$-finite perimeter.  Then,
    \begin{align*}
        \operatorname{Per}_{\gamma_{A}}(S^{n-1}_{e_1, e_2}(E)) \le \operatorname{Per}_{\gamma_A}(E).
    \end{align*}
\end{theorem}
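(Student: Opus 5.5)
The plan is to reduce the statement to the classical Gaussian isoperimetric inequality on each slice $\pi_{e_1}^{-1}(t)\cong\mathbb{R}^{n-1}$, combined with a convexity argument in the spirit of Ehrhard's proof of Gaussian symmetrization.

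\textbf{Step 1: disintegration.} Write $x=(t,x')$. Since $A$ is diagonal, the weight factorises:
\[
e^{-\frac12\langle Ax,x\rangle}=e^{-\frac12 d_1^2t^2}\,e^{-\frac12\langle A'x',x'\rangle}.
\]
Hence each slice measure $\gamma_{A',t}$ is a constant multiple of the fixed anisotropic Gaussian $\gamma_{A'}$ on $\mathbb{R}^{n-1}$. Normalising $\gamma_{A'}$ to a probability measure, set $m(t):=\gamma_{A'}(E_t)$, where $E_t$ is the slice. The symmetral $S^{n-1}_{e_1,e_2}(E)$ has slices $\{x_2\le s(t)\}$, where $s(t)=\phi^{-1}(m(t))/d_2$. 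The half-space is taken normal to $e_2$, and $d_2$ is the smallest eigenvalue of $A'$, so by the known minimality result \cite[Theorem 1.1]{Yeh2024} applied in dimension $n-1$ these slices are $\gamma_{A'}$-isoperimetric in each slice. In dimension $n-1=1$ there is nothing to prove on slices.

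\textbf{Step 2: reduction to a one-dimensional functional.} I would first reduce, by density and lower semicontinuity (Theorem \ref{t:weak compactness, LSC}), to sets $E$ with smooth boundary in general position. For such sets we have the standard inequality for the weighted perimeter of a sliced set. Writing $\nu=(\nu_1,\nu')$ for the unit normal,
\[
\operatorname{Per}_{\gamma_A}(E)\ \ge\ c\int_{\mathbb{R}} e^{-\frac12 d_1^2t^2}\sqrt{\bigl(\operatorname{Per}_{\gamma_{A'}}(E_t)\bigr)^2+\bigl(m'(t)\bigr)^2}\,dt .
\]
This follows from the coarea formula on $\partial E$ restricted to $x_1=t$: one uses $m'(t)=\int_{\partial E_t}\frac{\nu_1}{|\nu'|}\,d\gamma$ (up to normalisation) together with Cauchy--Schwarz/Minkowski in the form
\[
\int\sqrt{a^2+b^2}\ \ge\ \sqrt{\Bigl(\int a\Bigr)^2+\Bigl(\int b\Bigr)^2},
\]
applied on each slice to the weighted integrals of $1$ and $\nu_1/|\nu'|$ with respect to $|\nu'|\,d\mathcal{H}^{n-2}$.

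For the symmetral, the boundary is the graph $x_2=s(t)$, independent of the other variables. Its perimeter is exactly
\[
c\int e^{-\frac12 d_1^2t^2}\sqrt{I(m(t))^2+m'(t)^2}\,dt,
\]
where $I(m)=d_2e^{-[\phi^{-1}(m)]^2/2}$ (up to normalisation) is the $\gamma_{A'}$-isoperimetric profile. Equality holds here because $\nu_1/|\nu'|$ is constant along each slice of a graph depending only on $t$.

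\textbf{Step 3: conclusion.} On each slice, $\operatorname{Per}_{\gamma_{A'}}(E_t)\ge I(m(t))$ by the $(n-1)$-dimensional anisotropic isoperimetric inequality. Since $\sqrt{a^2+b^2}$ is increasing in $a\ge 0$, the integrand for $E$ dominates the integrand for the symmetral pointwise in $t$, and integrating gives the inequality.

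\textbf{Main obstacle.} The delicate point is Step 2 for general finite-perimeter sets. One needs to justify that $m$ is BV, and that its jump and Cantor parts are also dominated; these correspond to vertical portions of $\partial E$ (normal parallel to $e_1$), which contribute $|Dm|$ directly. I would handle this with the standard approach for Steiner symmetrization in Maggi's book: approximate $E$ by smooth sets $E_k$ with $\operatorname{Per}_{\gamma_A}(E_k)\to\operatorname{Per}_{\gamma_A}(E)$, prove the inequality for them, and pass to the limit. In the limit, $S(E_k)\to S(E)$ in $L^1(\gamma_A)$ because $m_k\to m$ in $L^1$ and $\phi^{-1}$ is continuous; lower semicontinuity then finishes.
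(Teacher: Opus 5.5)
Your proposal is correct, and it is the standard slicing argument the paper defers to (it omits the proof and cites the isotropic treatment in \cite{CianchiFuscoMaggiPratelli11}): factor the weight as $e^{-\frac12 d_1^2t^2}e^{-\frac12\langle A'x',x'\rangle}$, bound $\operatorname{Per}_{\gamma_A}(E)$ from below by $\int e^{-\frac12 d_1^2t^2}\sqrt{\operatorname{Per}_{\gamma_{A'}}(E_t)^2+m'(t)^2}\,dt$ (plus the singular part of $Dm$, which your smooth approximation removes), apply the isoperimetric inequality slice by slice, and observe equality for the symmetral; the point that $d_2^2$ is the smallest eigenvalue of $A'$, which makes $\{x_2\le s\}$ optimal, is the anisotropic ingredient. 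Two harmless slips: for $n=2$ the slice step still needs the classical one-dimensional Gaussian isoperimetric inequality rather than being vacuous, and in the Minkowski step the slice measure is $e^{-\frac12\langle A'x',x'\rangle}\,d\mathcal{H}^{n-2}$ on $\partial E\cap\{x_1=t\}$ with integrand $1/|\nu'|=\sqrt{1+\nu_1^2/|\nu'|^2}$, not $|\nu'|\,d\mathcal{H}^{n-2}$.
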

The proof of this result is omitted, as it follows arguments analogous to those in the isotropic setting and closely parallels the techniques used for one-dimensional symmetrizations; see, for instance, Section 4 and Theorem 7.1 of \cite{CianchiFuscoMaggiPratelli11}.

\subsection{Regularity theory}

\begin{definition}
Let $n \ge 2$ and $\Omega \subset \mathbb{R}^n$ be an open set.  We shall say that a set $E \subset \mathbb{R}^n$ is a \textit{Euclidean perimeter almost-minimizer} in $\Omega$ if for all $A \subset \subset \Omega$ there exists a $T \in (0,\text{dist}(A, \partial \Omega))$ and a function $\alpha: (0, T) \rightarrow [0,\infty)$ such that
\begin{align*}
    \Per(E;B_{r}(x)) \le \Per(F;B_r(x)) + \alpha(r)r^{n-1}
\end{align*}
for all $x \in A$ and all $r \in (0,T)$ and all sets of finite perimeter $F \subset \mathbb{R}^n$ such that $E \Delta F \subset \subset B_r(x)$.

Let $0<r_0, \Lambda<\infty$.  We shall say a set $E \subset \mathbb{R}^n$ is a $(\Lambda, r_0)$-\textit{Euclidean perimeter almost-minimizer} in $\Omega$ if for all $0<r\le r_0$ and all sets of finite perimeter $F \subset \mathbb{R}^n$ such that $E \Delta F \subset \subset B_r(x) \cap \Omega$, 
\begin{align*}
    \Per(E;B_{r}(x)) \le \Per(F;B_r(x)) + \Lambda \mathcal{H}^n(E \Delta F).
\end{align*}
\end{definition}

\begin{theorem}[Tamanini Convergence, \cite{tamanini1984regularity} Section 1.9]\label{t:tamanini}
Suppose that $\{E_i\}_i$ is a sequence of Euclidean perimeter almost-minimizers in $\Omega$, with uniform $\alpha:(0, T)\rightarrow [0,\infty)$ such that
\begin{enumerate}
    \item \textup{(Monotonicity)} the function $t^{-1}\alpha(t)$ is non-increasing in $(0,T).$
    \item \textup{(Integrability)}
    \begin{align*}
    \int_{0}^{T}\alpha(t)^{\frac{1}{2}}\frac{dt}{t} < 0.
    \end{align*}
\end{enumerate}
Let $x_i \in \partial E_i$, $E_0 \subset \mathbb{R}^n$ such that $E_i \rightarrow E_0$ in measure, and assume that $x_i \rightarrow x_0 \in \Omega$. Then, $x_0 \in \partial E_0$. Furthermore, if $x_0 \in \partial^*E_0$, then
\begin{align*}
    x_i \in \partial^*E_i
\end{align*}
for all sufficiently large $i \in \mathbb{N}$ and $\vec{\eta}_{E_i}(x_i) \rightarrow \vec{\eta}_{E_0}(x_0)$.
\end{theorem}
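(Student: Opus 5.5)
The plan is to treat Theorem~\ref{t:tamanini} as the standard $\varepsilon$-regularity and compactness package for almost-minimizers. The hypothesis should read $\int_0^T\alpha(t)^{1/2}\,\frac{dt}{t}<\infty$; I read the stated ``$<0$'' as a typo for this. The argument has three steps, all using only the \emph{uniform} almost-minimality with the common modulus $\alpha$:
\begin{enumerate}
\item uniform density estimates;
\item convergence of the perimeter measures;
\item excess decay, which gives regularity at $x_i$.
\end{enumerate}

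\emph{Step 1 (density estimates, and $x_0\in\partial E_0$).} Fix $y\in\partial E_i$ in a compact neighbourhood of $x_0$ and set $m(r)=\mathcal{H}^n(E_i\cap B_r(y))$. Compare $E_i$ with $E_i\setminus B_r(y)$, using the almost-minimality inequality with error $\alpha(r)r^{n-1}$. The isoperimetric inequality in $B_r(y)$ then gives a differential inequality of the form
\[
c_n\, m(r)^{\frac{n-1}{n}}\le 2m'(r)+\alpha(r)r^{n-1}.
\]
Since $\alpha(r)\to0$ (by integrability and monotonicity), integrating gives $m(r)\ge c\,r^n$ for $r<r_1$. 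The same argument applied to the complement gives $\mathcal{H}^n(B_r(y)\setminus E_i)\ge c\,r^n$. The radius $r_1$ depends only on $n$ and $\alpha$, not on $i$. Comparing instead with $E_i\cup B_r(y)$ gives the upper perimeter bound $\Per(E_i;B_r(y))\le C r^{n-1}$. Passing the two-sided volume bounds at $x_i\to x_0$ to the limit in $L^1_{\rm loc}$ yields the same bounds for $E_0$ at $x_0$. Hence $x_0\in\partial E_0$, meaning the topological boundary of the canonical representative.

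\emph{Step 2 (perimeter convergence).} The upper bound from Step 1 gives weak-$*$ precompactness of $\|\partial E_i\|$, and lower semicontinuity gives $\|\partial E_0\|(B_r)\le\liminf\|\partial E_i\|(B_r)$. For the reverse inequality, I will run the usual cut-and-paste argument. Take a competitor $F_i=(E_0\cap B_\rho)\cup(E_i\setminus B_\rho)$, choosing $\rho$ by Fubini/coarea so that the slice $\mathcal{H}^{n-1}((E_i\Delta E_0)\cap\partial B_\rho)\to0$. Almost-minimality then gives
\[
\limsup_i \|\partial E_i\|(B_\rho)\le\|\partial E_0\|(B_\rho)+\alpha(\rho)\rho^{n-1}.
\]
Applying this on small balls shows that $E_0$ is itself an almost-minimizer with the same $\alpha$. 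Localizing, the total variation measures converge, and so do the vector measures $\vec\eta_{E_i}\|\partial E_i\|$. Consequently the \emph{excess}
\[
\mathbf{e}(E,y,r)=r^{1-n}\inf_{\nu\in\mathbb{S}^{n-1}}\int_{B_r(y)}|\vec\eta_E-\nu|^2\,d\|\partial E\|
\]
satisfies $\mathbf{e}(E_i,x_i,r)\to\mathbf{e}(E_0,x_0,r)$ for a.e.\ $r$. Since $x_0\in\partial^*E_0$, we have $\mathbf{e}(E_0,x_0,r)\to0$, so for any $\varepsilon>0$ we can fix $r$ with $\mathbf{e}(E_i,x_i,r)<\varepsilon$ for all large $i$. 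We may also arrange that the averaged normal of $E_i$ in $B_r(x_i)$ is close to $\vec\eta_{E_0}(x_0)$.

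\emph{Step 3 (excess decay; main obstacle).} This is the heart of Tamanini's theorem and the step I expect to be hardest. The claim is an excess-improvement lemma: there are $\varepsilon_0,\theta\in(0,1)$ such that if $\mathbf{e}(E,y,r)+\alpha(r)\le\varepsilon_0$, then
\[
\mathbf{e}(E,y,\theta r)\le\tfrac12\,\mathbf{e}(E,y,r)+C\alpha(r).
\]
I would prove it by the De Giorgi approach: Lipschitz approximation of $\partial E$ by a graph, a harmonic approximation (the linearized problem), and a Caccioppoli inequality in which the almost-minimality error enters additively. Iterating along $r_k=\theta^k r$, the monotonicity of $t^{-1}\alpha(t)$ makes the error terms summable. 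The Dini condition $\int_0^T\alpha^{1/2}\,dt/t<\infty$ is exactly what makes the increments of the averaged normals, which are controlled by $\sqrt{\mathbf{e}}\lesssim\sqrt{\alpha}$, summable. Hence the averaged normals of $E_i$ at $x_i$ converge as $r\to0$, with $|\vec\eta_{E_i}(x_i)-\nu_{E_i}(x_i,r)|\le C(\sqrt{\varepsilon}+\int_0^r\alpha^{1/2}dt/t)$ uniformly in $i$. Thus $x_i\in\partial^*E_i$. Combining with Step 2 and then letting $\varepsilon\to0$ and $r\to0$ gives $\vec\eta_{E_i}(x_i)\to\vec\eta_{E_0}(x_0)$. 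The delicate point is keeping every constant in this iteration uniform in $i$; this holds because all of them depend only on $n$ and the common $\alpha$.
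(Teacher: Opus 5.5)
The paper does not prove this statement; it quotes Tamanini. Your overall architecture is the standard proof behind that citation: uniform density bounds, then convergence of the perimeter measures, then De Giorgi--Tamanini excess decay, with the Dini condition summing the tilts of the averaged normals. Your reading of ``$<0$'' as ``$<\infty$'' is right, as is your remark that the two hypotheses force $\alpha(t)\to0$.

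\textbf{The gap is in Step 1.} The inequality $c_n m(r)^{(n-1)/n}\le 2m'(r)+\alpha(r)r^{n-1}$ does not integrate to $m(r)\ge c\,r^n$.
\begin{itemize}
\item In the $(\Lambda,r_0)$ setting the error is $\Lambda|E\Delta F|=\Lambda m$, so it can be absorbed.
\item Here the error is additive. It dominates exactly in the regime $m(r)\ll \alpha(r)^{n/(n-1)}r^n$, which is the regime you need to exclude.
\item Concretely, $m(s)=(\alpha(s)/c_n)^{n/(n-1)}s^n$ satisfies your inequality and has density zero.
\end{itemize}
This step is load-bearing in two places. It is what gives $x_0\in\partial E_0$. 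It also supplies the uniform lower perimeter density of $\mu=\lim_i\|\partial E_i\|$ that Step 2 needs. The cut-and-paste argument only yields $(\mu-\|\partial E_0\|)(B_\rho)\le\alpha(\rho)\rho^{n-1}$. Concluding that this defect measure vanishes requires knowing that it is carried by a set of locally finite $\mathcal{H}^{n-1}$-measure, and that comes from the lower density bound.

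\textbf{Two standard repairs.} In both, the constants depend only on $n$ and $\alpha$.
\begin{itemize}
\item[(i)] \emph{Propagate smallness downward.} Suppose $m(r)\le\kappa_0r^n$ and $\alpha\le\kappa_0$ on $(0,r]$. Choose a slice $s\in(r/2,r)$ with $m'(s)\le 2m(r)/r$. Then $\Per(E\cap B_s)\le 2m'(s)+\kappa_0r^{n-1}\le 5\kappa_0r^{n-1}$. The isoperimetric inequality gives $m(r/2)\le C_n\kappa_0^{1/(n-1)}\kappa_0(r/2)^n\le\kappa_0(r/2)^n$ once $\kappa_0=\kappa_0(n)$ is small. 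Iterating contradicts volume density $\tfrac12$ at points of $\partial^*E$. Hence $|E\cap B_r(y)|>\kappa_0r^n$ for $y\in\partial^*E$ and $r<r_1$. Continuity of $y\mapsto|E\cap B_r(y)|$ extends this to $\partial E=\overline{\partial^*E}$. The same argument applies to the complement.
\item[(ii)] \emph{Monotonicity via cones.} Compare with the cone over $E\cap\partial B_r(y)$. This gives $\frac{d}{dr}\big(r^{1-n}\Per(E;B_r(y))\big)\ge-(n-1)\alpha(r)/r$. Since $\int_0^{r_1}\alpha\,dt/t\le\int_0^{r_1}\alpha^{1/2}\,dt/t<\infty$ once $\alpha\le1$, you get $\Per(E;B_r(y))\ge c\,r^{n-1}$ at every $y\in\partial E$. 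Then $m'(s)\ge\Per(E;B_s(y))-\alpha(s)s^{n-1}\ge\tfrac{c}{2}s^{n-1}$, which integrates to the volume bound.
\end{itemize}

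With Step 1 repaired, Steps 2 and 3 go through as you outline. In Step 3, the role of the monotonicity of $t^{-1}\alpha(t)$ is this: it gives $\alpha\ge\theta\,\alpha(\theta^k r)$ on $[\theta^{k+1}r,\theta^k r]$. That yields $\sum_k\alpha(\theta^k r)^{1/2}\le\big(\sqrt{\theta}\,\log(1/\theta)\big)^{-1}\int_0^r\alpha^{1/2}\,dt/t$.
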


\begin{theorem}[Regularity for volume-constrained perimeter minimizers in anisotropic Gauss space]\label{t:regularity for VCPMs}
    Let $n=2$ and $A_{1,2}$ be a matrix as in Definition \ref{d:standing assumptions}(4).  Let $\Omega \subset \mathbb{R}^2$ be an open set with Lipschitz boundary and $E \subset \mathbb{R}^2$ satisfy 
\begin{align*}
\operatorname{Per}_{\gamma_{A_{1,2}}}(E; U) \le \operatorname{Per}_{\gamma_{A_{1,2}}}(F; U) 
\end{align*}
for all $F \subset \mathbb{R}^2$ such that $E \Delta F \subset \subset U$ and $\gamma_A(F \cap U) = \gamma_A(E \cap U)$. Then,
\begin{enumerate}
    \item \textup{(Regularity)} For every $x \in \partial^M E \cap U$ there is a neighborhood $B_{r_x}(x) \subset \subset U$ such that $\partial^ME \cap B_{r_x}(x)$ is the graph of a $C^\infty$ function.
    \item \textup{(First Variation)} For any $U' \subset \subset U$ such that $\partial^ME \cap U'$ is a graph, we parametrize $\partial^ME \cap U'$ by a unit-speed curve
    \begin{align*}
        \Gamma:(-\epsilon,\epsilon) \rightarrow U', \qquad \Gamma((-\epsilon,\epsilon)) = \partial^ME \cap U'.
    \end{align*}
    Then, for any $\phi\in C^1_c((-\varepsilon,\varepsilon))$ such that 
    \begin{align*}
        \int_{B_\varepsilon(0)} \phi(t) e^{-\frac{1}{2}\langle A_{1,2} \Gamma(t), \Gamma(t)\rangle}dt = 0 \qquad \textup{(Infinitesimally Volume Preserving)},
    \end{align*}
the parametrizing curve $\Gamma$ satisfies
\begin{align*}
        \int_{B_\varepsilon(0)} \Big(-\langle \vec{\eta}_E(t), \Gamma''(t)\rangle -\langle A_{1,2}\Gamma(t), \vec{\eta}_E(t) \rangle \Big) \phi(t)e^{-\frac{1}{2}\langle A_{1,2} \Gamma(t), \Gamma(t)\rangle}dt=0.
    \end{align*}
    \item \textup{(Constant Mean Curvature Equations)} Using the notation, above, there is a constant $C$ such that 
    \begin{align*}
    \langle \Gamma''(t), \vec{\eta}_E(t)\rangle=C -\langle A_{1,2}\Gamma(t), \vec{\eta}_E(t)\rangle
    \end{align*}
    for all $t \in (-\epsilon, \epsilon)$.
\end{enumerate}
\end{theorem}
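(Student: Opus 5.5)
The plan is to treat the result as a weighted version of the classical regularity theory for volume-constrained perimeter minimizers in the plane. The weight $w(x)=e^{-\frac12\langle A_{1,2}x,x\rangle}$ is smooth and bounded above and below on compact sets. Everything therefore reduces to three things: a localization making $E$ a Euclidean almost-minimizer near each boundary point, the planar regularity of such almost-minimizers, and a first-variation computation along the resulting smooth curve.

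\emph{Regularity.} Fix $x\in\partial^ME\cap U$ and a ball $B_{r_0}(x)\subset\subset U$ on which $w$ is pinched between $w(x)(1\pm Cr)$ and $|\nabla w|\le C$.

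We first remove the volume constraint using the standard volume-fixing variations. Choose two disjoint small balls $B_1,B_2\subset\subset U$, away from $x$, in which $\partial^ME$ has positive measure. In these balls there are one-parameter diffeomorphic deformations changing $\gamma_{A_{1,2}}(E\cap U)$ at nonzero rate. Hence any competitor $F$ with $E\Delta F\subset\subset B_r(x)$ can be corrected back to the right volume at a perimeter cost $\le\Lambda\,\gamma_{A_{1,2}}(E\Delta F)$. This shows that $E$ is a $(\Lambda,r_0)$-weighted almost-minimizer.

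Next we compare weighted and Euclidean perimeters inside $B_r(x)$. Since $w$ varies by a factor $1+O(r)$ there,
\[
\Per(E;B_r(x))\le\Per(F;B_r(x))+Cr\,\Per(F;B_r(x))+\Lambda'r^2 .
\]
Testing with $F=E\setminus B_r(x)$ gives the density bound $\Per(E;B_r(x))\le Cr$. Hence $E$ is a Euclidean almost-minimizer with $\alpha(r)=Cr$, which satisfies the monotonicity and integrability hypotheses of Theorem~\ref{t:tamanini}.

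Tamanini's theory then gives that $\partial^*E\cap B_{r_x}(x)$ is a $C^{1,\beta}$ graph. The singular set $\partial^ME\setminus\partial^*E$ has Hausdorff dimension $\le n-8<0$, so it is empty in the plane. Thus every $x\in\partial^ME\cap U$ is a regular point.

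\emph{First variation and bootstrap.} Parametrize $\partial^ME\cap U'$ by a unit-speed curve $\Gamma$ with outward normal $\vec\eta_E$. For a normal perturbation $\Gamma_s=\Gamma+s\phi\vec\eta_E$, the weighted length $\int|\Gamma_s'|\,w(\Gamma_s)\,dt$ has derivative at $s=0$
\[
\int\big(-\langle\vec\eta_E,\Gamma''\rangle-\langle A_{1,2}\Gamma,\vec\eta_E\rangle\big)\phi\,w(\Gamma)\,dt .
\]
This uses $|\Gamma_s'|'=\langle\Gamma',(\phi\vec\eta_E)'\rangle=-\phi\langle\Gamma'',\vec\eta_E\rangle$ (from the Frenet relations) and $\nabla w=-A_{1,2}x\,w$. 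The corresponding volume change is $\int\phi\,w(\Gamma)\,dt$. Minimality under infinitesimally volume-preserving $\phi$ gives item (2).

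At the $C^{1,\beta}$ level this should be read weakly: write $\Gamma$ as a graph $y=f(x)$ and view the identity as the weak form of a divergence-form equation $(f'/\sqrt{1+f'^2}\,w)'=\dots$.

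For item (3), the identity in (2) says that $h:=-\langle\vec\eta_E,\Gamma''\rangle-\langle A_{1,2}\Gamma,\vec\eta_E\rangle$ is $L^2(w\,dt)$-orthogonal to every $\phi$ that is orthogonal to the constant function $1$. Hence $h$ equals a constant $C$, which is a Lagrange multiplier. Since $\Gamma$ may be connected or not, the constant is justified to be global by testing with $\phi$ supported on two different components.

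The graph equation $f''=(1+f'^2)^{3/2}\big(C-\langle A_{1,2}(x,f),\nu\rangle\big)$ has a right-hand side as regular as $f'$. Bootstrapping from $C^{1,\beta}$ therefore gives $C^\infty$, completing item (1).

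\emph{Main obstacle.} I expect the hardest step to be the volume-fixing construction that yields a uniform $\Lambda$. It is routine, but it needs $\partial^ME\cap U$ to have at least two points away from $x$. If $E\cap U$ is trivial or its boundary is tiny, the argument has to be adapted, for instance by using the single-point variation scaled appropriately.
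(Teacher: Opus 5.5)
Your proposal is correct and follows essentially the same route as the paper. Volume-fixing variations make $E$ a weighted $(\Lambda,r_0)$-minimizer. Comparing weighted and Euclidean perimeter, together with an upper density bound, makes it a Euclidean almost-minimizer with error $Cr^2$. Planar regularity theory then gives a $C^{1,\beta}$ graph, and the normal-variation computation with a Lagrange-multiplier argument yields (2)--(3).

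The differences are cosmetic. You cite Tamanini's theory with the empty singular set in the plane, where the paper cites Maggi's Theorem 21.8, and you bootstrap through the weak one-dimensional Euler--Lagrange ODE, where the paper invokes Schauder estimates. The obstacle you flag is not real: $x\in\partial^M E\cap U$ already forces $\mathcal{H}^1(\partial^*E\cap U)>0$, so two disjoint volume-fixing balls always exist.
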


\begin{proof}
First, we prove local $C^{1,\alpha}$ regularity.  The proof follows from demonstrating that $\gamma_{A_{1,2}}$-volume constrained $\gamma_{A_{1,2}}$-perimeter minimizers are $(\Lambda, r_0)$-perimeter minimizers for the unweighted, Euclidean perimeter, in the sense of \cite{Maggi_book12} Chapter 21.  The regularity statement holds from \cite{Maggi_book12} Theorem 21.8. 

The reduction to the Euclidean case follows from two observations.  First, by mirroring the standard proofs for perimeter almost-minimizers in Euclidean space, one obtains that for every $x_0 \in \partial^M E \cap U$, there exists a $0<r_0<\Lambda<\infty$ possibly depending upon $x_0$ such that
\begin{align*}
\operatorname{Per}_{\gamma_{A_{1,2}}}(E; B_{r_0}(x_0)) \le \operatorname{Per}_{\gamma_{A_{1,2}}}(F; B_{r_0}(x_0)) + \Lambda \gamma_{A_{1,2}}(E \Delta F).
\end{align*}
for any $F \subset \mathbb{R}^2$ such that $E \Delta F \subset \subset B_{r_0}(x_0)$. Specifically, one needs a version of \cite{Maggi_book12} Lemma 17.21 (Volume-fixing variations) in anisotropic Gauss space. Then, one follows the argument in \cite{Maggi_book12} Example 21.3 that volume-constrained perimeter minimizers are locally $(\Lambda, r_0)$-minimizers. Though the convenient formulas no longer hold because of the non-constant weight, the argument goes through with trivial modification because the weight $\gamma_{A_{1,2}}$ is continuous.

Therefore, we have that
\begin{align}\label{e:gauss to euclidean}\nonumber
&\Big(\min_{x \in B_{r_0}(x_0)}\gamma_{A_{1,2}}(x) \Big)  \norm{\partial E}( B_{r_0}(x_0))\\ \nonumber
& \le \operatorname{Per}_{\gamma_{A_{1,2}}}(E; B_{r_0}(x_0))\\ \nonumber
& \le \operatorname{Per}_{\gamma_{A_{1,2}}}(F; B_{r_0}(x_0)) + \Lambda \gamma_{A_{1,2}}(E \Delta F)\\ \nonumber
& \le \Big(\min_{x \in B_{r_0}(x_0)}\gamma_{A_{1,2}}(x)\Big)\norm{ \partial F}(B_{r_0}(x_0))\\ \nonumber
& \qquad + \Big(\max_{x \in B_{r_0}(x_0)}\gamma_{A_{1,2}}(x) - \min_{x \in B_{r_0}(x_0)}\gamma_{A_{1,2}}(x)\Big)\norm{\partial F}(B_{r_0}(x_0)) + \Lambda \gamma_{A_{1,2}}(E \Delta F)\\
& \le \Big(\min_{x \in B_{r_0}(x_0)}\gamma_{A_{1,2}}(x)\Big)\norm{\partial F}(B_{r_0}(x_0)) + Cr_0\norm{\partial F}(B_{r_0}(x_0)) + \Lambda' \mathcal{H}^n(E \Delta F).
\end{align}

Second, by shadowing the proof of upper density bounds for Euclidean perimeter (see \cite{Maggi_book12} Theorem 21.11) we obtain
\begin{align*}
\operatorname{Per}_{\gamma_{A_{1,2}}}(E; B_r(x)) \le Cr^{n-1}
\end{align*}
for any $B_r(x) \subset B_{r_0}(x_0)$.

Thus, to see that there exists a $\Lambda''<0$ such that for any $0<r<r_0$ and any $F \subset \mathbb{R}^n$ with $F \Delta E \subset \subset B_r(x) \cap B_{r_0}(x_0)$, 
\begin{align}\label{e:lambda r minimizer}
    \Per(E;B_{r}(x)) \le \Per(F;B_{r}(x)) + \Lambda'' r^n,
\end{align}
we observe that we only need to argue \eqref{e:lambda r minimizer} for $F$ such that $\norm{\partial F}(B_r(x)) \le \norm{\partial E}(B_{r}(x)) \le Cr^{n-1}$, since otherwise \eqref{e:lambda r minimizer} holds trivially. But, for such sets $F$, we have from \eqref{e:gauss to euclidean} that
\begin{align*}
\Per(E;B_{r}(x)) \le \Per(F;B_{r}(x)) + Cr^n.
\end{align*}
This gives the claim.

To see higher regularity of $\partial E$, by local graphical representation we see the regularity for $\partial E$ follows from the regularity for solutions $f:\mathbb{R}^{n-1} \rightarrow \mathbb{R}$ such that
\begin{align*}
    \text{div}\Big(e^{-\frac{1}{2}\langle A_{1,2} (x', f(x')), (x', f(x')) \rangle }\frac{\nabla f}{\sqrt{1+|\nabla f|^2}}\Big) = -d_n^2f.
\end{align*}
Since $f \in C^{1,\alpha}$ and $e^{-\frac{1}{2}\langle A (x', f(x')), (x', f(x'))\rangle }$ is smooth and locally bounded above and away from zero, standard Schauder estimates give $C^{\infty}$-regularity.

Now, since $E$ is locally a volume-constrained perimeter minimizer in anisotropic Gauss space, we use the first variation formula of weighted perimeter to prove the constant mean curvature equations.  Define the normal variations $\Gamma_\tau(t):= \Gamma(t)+ \tau \phi(t)\vec{\eta}_E(t)$.
    \begin{align*}
        0&=\frac{d}{d\tau} \Big(\int |\Gamma_{\tau}'(t)|e^{-\frac{1}{2}\langle A_{1,2} \Gamma_{\tau}(t), \Gamma_{\tau}(t)\rangle}dt\Big)\Big|_{\tau =0}\\
        & = \int \Big(\langle \vec{\eta}'_E(t), \Gamma'(t)\rangle -\langle A_{1,2}\Gamma(t), \vec{\eta}_E \rangle \Big) \phi(t)e^{-\frac{1}{2}\langle A_{1,2} \Gamma(t), \Gamma(t)\rangle}dt \\
        & = \int \Big(-\langle \vec{\eta}_E(t), \Gamma''(t)\rangle -\langle A_{1,2}\Gamma(t), \vec{\eta}_E \rangle \Big) \phi(t)e^{-\frac{1}{2}\langle A_{1,2} \Gamma(t), \Gamma(t)\rangle}dt,
    \end{align*}
where we have used that $|\Gamma'(t)|=1$ in the second line and  $\frac{d}{dt}\langle \vec{\eta}_E(t), \Gamma'(t)\rangle \equiv 0$ in the last line. 

Choosing any two $t_1, t_2 \in (-\epsilon, \epsilon)$, we can choose $\phi_1, \phi_2 \in C^1_c((-\epsilon, \epsilon))$ and, without loss of generality, that $\int \phi_2(x) d\norm{\partial E}_{\gamma_A}(x)>0$.  Thus, we can define
\begin{align*}
    \phi= \phi_1 - \frac{\int \phi_1 d\norm{\partial E}_{\gamma_{A_{1,2}}}}{\int \phi_2 d\norm{\partial E}_{\gamma_{A_{1,2}}}}\phi_2. 
\end{align*}
By our first variation formula, we obtain
\begin{align*}
    &\frac{\displaystyle\int \Big(\langle \vec{\eta}_E(t), \Gamma''(t)\rangle +\langle A_{1,2}\Gamma(t), \vec{\eta}_E \rangle \Big) \phi_1(t) e^{-\frac{1}{2}\langle A_{1,2} \Gamma(t), \Gamma(t)\rangle}dt}{\displaystyle \int \phi_1(t) e^{-\frac{1}{2}\langle A_{1,2} \Gamma(t), \Gamma(t)\rangle}dt}\\
    & \qquad = \frac{\displaystyle\int \Big(\langle \vec{\eta}_E(t), \Gamma''(t)\rangle +\langle A_{1,2}\Gamma(t), \vec{\eta}_E \rangle \Big) \phi_2(t) e^{-\frac{1}{2}\langle A_{1,2} \Gamma(t), \Gamma(t)\rangle}dt}{\displaystyle \int \phi_2(t) e^{-\frac{1}{2}\langle A_{1,2} \Gamma(t), \Gamma(t)\rangle}dt}.
\end{align*}

Since the left-hand side and the right-hand side are independent of each other, these quantities must be constant for all $t_1, t_2 \in (-\epsilon, \epsilon)$.  Unwinding this, we see that $\Gamma$ satisfies the following ODE:
    \begin{align}\label{e:ODE def}
        \langle \vec{\eta}_E(t), \Gamma''(t)\rangle = C_{\vec{\eta}_E} - \langle A_{1,2} \Gamma(t), \vec{\eta}_E(t) \rangle,
    \end{align}
    where the constant $C_{\vec{\eta}_E}$ depends upon the choice of orienting normal $\vec{\eta}_E$ according to the rule $-C_{\vec{\eta}_E} =  C_{-\vec{\eta}_E}$.
\end{proof}

\begin{corollary}[Applying Theorem \ref{t:tamanini}]\label{c:tamanini convergence}
    If $\{E_i\}_i \subset \mathbb{R}^2$ is a sequence of volume-constrained minimizers in $(\mathbb{R}^n, \gamma_A)$ which converge in $\gamma_A$-measure to $H(\omega, \overline{s})$, then for all sufficiently large $i \in \mathbb{N}$ and all $0<R$,
    \begin{align*}
        \partial E_i \cap [-R,R]^2 = \emph{graph}_{[-R,R]}(u_i)
    \end{align*}
    and $u_i \rightarrow 0$ in $C^1([-R,R])$ as $i \rightarrow \infty$.
\end{corollary}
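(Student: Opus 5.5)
We derive the corollary from Theorem \ref{t:tamanini} and the regularity result Theorem \ref{t:regularity for VCPMs}, working in $\mathbb{R}^2$ with the weight $\gamma_{A_{1,2}}$. Fix $R>0$ and an open set $\Omega \supset\supset [-R-1,R+1]^2$.

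\textbf{Step 1: uniform almost-minimality.} The proof of Theorem \ref{t:regularity for VCPMs} shows that a volume-constrained $\gamma_{A_{1,2}}$-minimizer is a $(\Lambda,r_0)$-Euclidean almost-minimizer in $\Omega$. We will check that $\Lambda$ and $r_0$ can be chosen independently of $i$. The Euclidean error term is controlled by the oscillation of the weight on $\overline\Omega$ (a constant $C(R)$) and by the volume-fixing constant. To get a uniform volume-fixing constant, we will apply the volume-fixing variations lemma (the analogue of \cite{Maggi_book12} Lemma 17.21) to the limit $H(\omega,\overline s)$ at two points of $\partial H\cap\Omega$ far apart. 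These variations persist for all large $i$ by $L^1$ convergence, which is the standard argument of \cite{Maggi_book12} Chapter 21. This gives $\alpha(r)=\Lambda r$, which satisfies the monotonicity and integrability hypotheses of Theorem \ref{t:tamanini}.

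\textbf{Step 2: Hausdorff convergence of boundaries.} We will show that $\partial E_i\cap[-R,R]^2$ lies in an $\epsilon$-neighborhood of $\partial H$ for large $i$. If not, take $x_i\in\partial E_i$ with $\dist(x_i,\partial H)\ge\epsilon$ and pass to a convergent subsequence. The first conclusion of Theorem \ref{t:tamanini} then gives a limit point in $\partial H$, a contradiction. Conversely, the uniform density estimates rule out holes: every point of $\partial H\cap[-R,R]^2$ is a limit of points of $\partial E_i$.

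\textbf{Step 3: graphicality and $C^1$ convergence.} Since $\partial H=\partial^*H$ with constant normal $\omega$, the second conclusion of Theorem \ref{t:tamanini} gives, by a compactness argument, that for large $i$ every $x\in\partial E_i\cap[-R,R]^2$ is a regular point with $|\vec\eta_{E_i}(x)-\omega|<\epsilon$, uniformly in $x$. If the uniformity failed, we would extract a sequence of bad points converging to a point of $\partial H$ and contradict the theorem.

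\textbf{Step 4: conclusion.} A closed one-dimensional $C^1$ curve in the thin slab whose normal is close to $\omega$ projects injectively onto $\partial H$. The density estimates exclude multiple sheets, since two sheets would enclose a thin region whose volume disagrees with the $L^1$ convergence. Hence $\partial E_i\cap[-R,R]^2=\mathrm{graph}(u_i)$ with $\|u_i\|_\infty+\|u_i'\|_\infty\to0$.

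The main obstacle is making the constants uniform in $i$ in Step 1. A subsidiary obstacle is excluding extra boundary components in Step 4, which we expect the uniform lower density bounds combined with $L^1$ convergence to handle.
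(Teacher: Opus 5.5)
Your route is the argument the paper intends. The paper gives no proof beyond citing Theorem \ref{t:tamanini}, and your plan (uniform almost-minimality of the $E_i$ via the proof of Theorem \ref{t:regularity for VCPMs} and volume-fixing variations at the limit $H$, then both conclusions of Theorem \ref{t:tamanini} made uniform on $[-R,R]^2$ by compactness) fills that in. Your reading of the quantifiers (fix $R$, then take $i\ge i_0(R)$) is also the correct one.

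One precision in Step 1. The Euclidean error does not come from the oscillation of the weight on $\overline\Omega$. It comes from the weight's Lipschitz constant there times $\Per(E_i;B_r(x))$. So you also need the upper bound $\Per(E_i;B_r(x))\le Cr$, uniformly in $i$, to get an error of order $r^2$. This follows from the same uniform constants, as in the paper's proof of Theorem \ref{t:regularity for VCPMs}.

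The step that fails as written is the exclusion of multiple sheets in Step 4.
\begin{itemize}
\item A thin region between two sheets has small measure, so it is fully compatible with $L^1$ convergence to $H$.
\item Lower density bounds do not exclude it either: a stack of parallel sheets at tiny mutual distance, with $E_i$ and its complement alternating, satisfies them at every scale with a universal constant.
\end{itemize}

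What works is the \emph{orientation} of the normal, which you already have. Theorem \ref{t:tamanini} gives convergence of the oriented outer normals, so Step 3 yields $|\vec\eta_{E_i}(x)-\omega|<\epsilon$, and in particular $\vec\eta_{E_i}(x)\cdot\omega>0$, for every $x\in\partial E_i\cap[-R,R]^2$.

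Suppose there were two consecutive sheets, graphs of $g_1<g_2$ over $\partial H$. The open region between them is connected and disjoint from $\partial E_i$. Hence it lies in the interior of $E_i$ or in the interior of its complement.
\begin{itemize}
\item If it lies in $E_i$, then $E_i$ is on the $+\omega$ side of the lower sheet, so $\vec\eta_{E_i}\cdot\omega<0$ there.
\item If it lies in the complement, then the complement is on the $-\omega$ side of the upper sheet, so again $\vec\eta_{E_i}\cdot\omega<0$ there.
\end{itemize}
Either way Step 3 is contradicted.

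With this replacement, Step 4 goes through. You also need that each component of $\partial E_i$ in the box is properly embedded and cannot close up, since its normal is nearly constant. Hence each component is a graph over all of $[-R,R]$.
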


\subsection{Outline of the Proof of Theorem \ref{t:technical theorem}}\mbox{}\\
\noindent \textbf{Step 1: Reduction to ODEs.} Let $A$ be a symmetric, positive definite matrix as in Definition \ref{d:standing assumptions}.  Given a recovery sequence $\{E_k\}_k$ for $Q(H_n(e_1, \overline{s}))$, the first step is to replace $\{E_k\}_k$ by a sequence of better sets, keeping track at every stage that we decrease the $\gamma_A$-deficit and do not loose too much in the $\gamma_A$-asymmetry.  We accomplish this in the following steps.
\begin{enumerate}
    \item (Reduction to $n=2$) First, we use $(n-1)$-symmetrization to replace $\{E_k\}_k \subset \mathbb{R}^n$ by sets $\{F_k\}_k \subset \mathbb{R}^2$.  This is done in Lemma \ref{l:n-1 symmetrization} and Remark \ref{r:new assumptions, mid symmetrization}. This step gives Theorem \ref{t:technical theorem}(1abcd).  A key advantage of symmetrizing first is that 
    \begin{align*}
        \lambda_{\gamma_A}(E_k) = \gamma_{A_{1,2}}(F_k \Delta H_2(e_1, \overline{s})).
    \end{align*}
    From this point on we only keep track of the \textit{relative} asymmetries $\gamma_{A_{1,2}}(F_k \Delta H_2(e_1, \overline{s}))$, and we abandon entirely the Fraenkel-type anisotropic Gaussian asymmetry $\lambda_{\gamma_{A_{1,2}}}(F_k)$.  The trade-off for this simplicity is that we lose control on barycenter of the sets $F_k$, which complicates the Small Perturbation Regime argument later.
    \item (Reduction to subgraphs of monotone functions) Then, we symmetrize the $\{F_k\}_k$ again to produce $\{H_k\}_k \subset \mathbb{R}^2$ such that
    \begin{align*}
        H_k = \text{subgraph}_{\partial H_2(e_1, \overline{s})}(u_k), \qquad u_k \text{ monotonically increasing.}
    \end{align*}
    At this step, we must be careful that the relative asymmetry does not drop uncontrollably because of cancellation from the symmerization procedure (see Corollary \ref{c:second symmetrization}). The key to overcoming this is the difference in sharp powers between the $n=1$ and $n \ge 2$ case.
    \item (Reduction to relative volume-constrained perimeter minimizers) Finally, we replace our $\{H_k\}_k$ by sets $\{I_k\}_k \subset \mathbb{R}^2$ which are relative volume-constrained $\gamma_{A_{1,2}}$-perimeter-minimizers in $\{x_2 \le 0\}$ and $\{x_2 \ge 0\}$. Here the monotonicity of $u_k$ and control on the relative asymmetry obtained in the previous step play a decisive role in controlling the relative asymmetry of the $\{I_k\}_k$. 
\end{enumerate}
Section \ref{s:reduction_symmetrization} is dedicated to Step 1.

\noindent\textbf{Step 2: Studying the ODE gives Gradient Control.} The next step is to show that these $\{I_k\}_k$ are \textit{almost} the sets $\tilde{E}_k$ promised in Theorem \ref{t:technical theorem}.  This is based upon the study of the family of ODEs that the perimeters of the $\{I_k\}_k$ satisfy, coming from the constant mean curvature equations in weighted Euclidean space (see Theorem \ref{t:regularity for VCPMs}). More specifically, because the $\{I_k\}_k$ are volume-constrained $\gamma_{A_{1,2}}$-perimeter minimizers in $\{x_2 \le 0\}$ and $\{x_2 \ge 0\}$, the convergence results of \cite{tamanini1984regularity} imply that for any $0<\epsilon<R<\infty$, for all sufficiently large $k \in \mathbb{N}$
\begin{align*}
    I_k \cap \{(x_1, x_2): |x_2| \in (\epsilon, R)\} = \text{subgraph}_{\partial H_2(e_1, \overline{s})} u_k, \qquad \lim_{k \rightarrow \infty}\norm{u_k}_{C^1((-R, -\epsilon) \cup (\epsilon, R))}=0.
\end{align*}
Thus, the second step consists in obtaining height bounds and gradient bounds in the Center Strip $\{(x_1, x_2): |x_2|\le \epsilon\}$ and partial gradient control in the Unbounded Region $\{(x_1, x_2): |x_2|\ge R\}$.  The main ingredients are:
\begin{enumerate}
    \item (Preliminary Curvature Estimates) Employing some auxiliary vector fields to compare with tangent vectors to $\partial I_k$, we obtain some basic curvature estimates on $\partial I_k$ based upon the slope of $u_k$.  These vector fields give us a special ``height" $\ell_k$ such that the quantity $\frac{|x_1-\ell_k|}{|x_2|}$ plays an important role in our analysis.  This is the focus of Section \ref{s:preliminaries for gradient control}.  The main curvature estimates are given in Lemma \ref{l:curvature bounds by angle}. 
    \item (Gradient and Height Control in the Center Strip) We use the curvature estimates of Lemma \ref{l:curvature bounds by angle} and a topological argument to obtain Lipschitz control (and therefore height control) in the Center Strip.  See Lemma \ref{l:center strip control}.
    \item (Partial Gradient Control in the Unbounded Region) All our estimates on $u_k'$ in the unbounded regions are based upon a simple comparison argument.  If the slope is too far from our comparison vector fields, then the curvature estimates of Lemma \ref{l:curvature bounds by angle} force $\partial I_k$ to circle back upon itself and violate the fact that $I_k$ is the subgraph of a function.  First, in Lemma \ref{l:preliminary gradient control Comparison} and Corollary \ref{c:lipschitz control in cones} we prove that $u_k'$ is Lipschitz in certain conical regions $\Omega_{k, \varepsilon_0} \approx \{\frac{|x_1-\ell_k|}{|x_2|} \le \varepsilon_0\}$ (see Definition \ref{r:Omega def, existence of two sols}).  Next, in Porism \ref{porism graphicality}, we obtain one-sided estimates on the gradient outside the conical regions $\Omega_{k,\varepsilon_0}$.  Finally, we prove fine estimates on the gradient in very narrow strips $\{|x_1-\ell_k| <<1\}$ in Theorem \ref{t:finer estimates on gradient}.    
\end{enumerate}

\noindent \textbf{Step 3: Truncation.}
Despite all the estimates on $u_k'$ for sufficiently large $k \in \mathbb{N}$, Step 2 does not prove full Lipschitz control and height estimates in the Unbounded Region.  That is, it does not show that $I_k$ are eventually in the Small Perturbation Regime.  Therefore, to finish the reduction we need to truncate.  The estimates on the change in $\gamma_{A_{1,2}}$-mass (Lemma \ref{l:mass error is negligible}) and the change in $\gamma_{A_{1,2}}$-perimeter (Lemma \ref{l:perimeter error bound}) under the truncation operation rely essentially upon the results of Theorem \ref{t:finer estimates on gradient} and an auxiliary ODE (See Lemma \ref{l:separation of exit times}). 

We put the results of Section \ref{s:preliminaries for gradient control}, Section \ref{s:gradient control}, and Section \ref{s:truncation} together to prove Theorem \ref{t:technical theorem}(2abcd) in subsection \ref{ss:Ttheorem 2abcd}.

\section{Proof of Theorem \ref{t:technical theorem}: Reduction to the graphical case}\label{s:reduction_symmetrization}
    
First, we prove Theorem \ref{t:technical theorem}(1).

\begin{lemma}[Proof of Theorem \ref{t:technical theorem}(1)]\label{l:n-1 symmetrization}
    Let 
    \begin{enumerate}
        \item $A$ be a positive-definite, symmetric matrix as in Definition \ref{d:standing assumptions}(3).
        \item $S^{n-1}_{e_1, e_2}$ be the $(n-1)$-symmetrization defined in Definition \ref{d:n-1 symmetrizations}.
    \end{enumerate} 
    Then, for every $G\subset \mathbb{R}^n$, after a possible rotation, there exists a set $\tilde{G}\subset \mathbb{R}^2$ such that
    \begin{align*}
        S^{n-1}_{e_1, e_2}(G)= \tilde{G}\times \mathbb{R}^{n-2} 
    \end{align*} satisfies the claims of Theorem \ref{t:technical theorem}(1).
\end{lemma}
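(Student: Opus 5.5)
We prove the lemma by a direct slice-by-slice computation. The only genuine input is Theorem \ref{t:symmetrization}; everything else is Fubini together with the product structure of $\gamma_A$ in the diagonal basis.

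\emph{Step 0: Normalization by rotation.} The map $\omega\mapsto \gamma_A\bigl(G\Delta H_n(\omega,\overline{s})\bigr)$ is continuous on the compact set $V_{d_1}(\sqrt{A})\cap\mathbb{S}^{n-1}$, by dominated convergence. Hence the minimum in \eqref{e:gamma A asymmetry def} is attained at some $\omega_0$. Any orthogonal map $R$ that preserves each eigenspace of $A$ satisfies $\gamma_A\circ R=\gamma_A$ and $\operatorname{Per}_{\gamma_A}\circ R=\operatorname{Per}_{\gamma_A}$. So after such a rotation we may assume $\omega_0=e_1$, with $A$ still diagonal as in Definition \ref{d:standing assumptions}(3). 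This uses only that $\omega_0\in V_{d_1}(\sqrt{A})$, so it also covers the case $d_1=d_2$. The vector $\omega$ in (1d) will be the preimage of $e_1$ under this rotation.

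\emph{Step 1: Structure of $S^{n-1}_{e_1,e_2}(G)$ and property (a).} Fix $t$. On the slice $\pi_{e_1}^{-1}(t)$ the measure $\gamma_{A',t}$ equals $e^{-d_1^2t^2/2}$ times the product Gaussian $\prod_{i\ge2}e^{-d_i^2x_i^2/2}\,dx_i$. A half-space $H_{n-1}(e_2,h)$ in this slice therefore has $\gamma_{A',t}$-measure proportional to $\phi(d_2h)$, a strictly increasing function of $h$. So there is a unique $h(t)\in[-\infty,\infty]$ with $\gamma_{A',t}(H_{n-1}(e_2,h(t)))=\gamma_{A',t}(G\cap\pi_{e_1}^{-1}(t))$, and $h$ is measurable in $t$ by Fubini. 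Consequently
\[
S^{n-1}_{e_1,e_2}(G)=\tilde G\times\mathbb{R}^{n-2},\qquad \tilde G:=\{(t,x_2):x_2\le h(t)\}.
\]
Integrating the slice masses in $t$ shows that $\gamma_A(S^{n-1}_{e_1,e_2}(G))=\gamma_A(G)$. The coordinates $x_3,\dots,x_n$ integrate to $1$ after normalization, which gives $\gamma_A(\tilde G\times\mathbb{R}^{n-2})=\gamma_{A_{1,2}}(\tilde G)$. This proves (a).

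\emph{Step 2: Property (b).} By Theorem \ref{t:symmetrization}, $\operatorname{Per}_{\gamma_A}(\tilde G\times\mathbb{R}^{n-2})\le\operatorname{Per}_{\gamma_A}(G)$. For a cylinder $\tilde G\times\mathbb{R}^{n-2}$ of locally finite perimeter, the reduced boundary is $\partial^*\tilde G\times\mathbb{R}^{n-2}$ and the normals lie in $\mathrm{span}(e_1,e_2)$. Applying Fubini and integrating out the Gaussian factors in $x_3,\dots,x_n$, with their normalizing constants, gives $\operatorname{Per}_{\gamma_A}(\tilde G\times\mathbb{R}^{n-2})=\operatorname{Per}_{\gamma_{A_{1,2}}}(\tilde G)$. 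If $\operatorname{Per}_{\gamma_A}(G)=\infty$ there is nothing to prove. The isoperimetric profile $e^{-[\phi^{-1}(\alpha)]^2/2}d_1$ depends only on $\alpha$ and $d_1$, so it is the same for $A$ and $A_{1,2}$. Subtracting it from both sides gives (b).

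\emph{Step 3: Properties (c) and (d).} For every $t$, the half-space $H_n(e_1,t)$ is a union of full slices $\{x_1=\tau\}$ with $\tau\le t$. Slice by slice, $G\Delta H_n(e_1,t)$ is the complement of $G$ in the slice when $\tau\le t$, and is $G$ itself in the slice when $\tau>t$. In either case its slice measure depends only on $\gamma_{A',\tau}(G\cap\pi_{e_1}^{-1}(\tau))$, and the symmetrization preserves this quantity. By Fubini and Step 1,
\[
\gamma_A\bigl(G\Delta H_n(e_1,t)\bigr)=\gamma_A\bigl((\tilde G\times\mathbb{R}^{n-2})\Delta H_n(e_1,t)\bigr)=\gamma_{A_{1,2}}\bigl(\tilde G\Delta H_2(e_1,t)\bigr).
\]
The same computation applies to $H_n(-e_1,-t)$, which differs from the complement of $H_n(e_1,t)$ only by a null set. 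Undoing the rotation gives (d). Taking $t=\overline{s}$ and using the choice of $\omega_0$ in Step 0 gives (c).

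\emph{Main obstacle.} The only nontrivial point is Step 2: identifying the weighted perimeter of the cylinder with the two-dimensional perimeter. We will handle it through the product structure of $\|\partial(\tilde G\times\mathbb{R}^{n-2})\|$, namely the De Giorgi structure theorem applied to a product with $\mathbb{R}^{n-2}$. A second point of care is the degenerate slices where $h(t)=\pm\infty$. These only affect null sets and cause no difficulty in the Fubini arguments.
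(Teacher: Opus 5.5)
Your proposal is correct and follows essentially the same route as the paper: Fubini on $x_1$-slices for (a) and (d), Theorem \ref{t:symmetrization} for (b), and a normalization making $H_n(e_1,\overline{s})$ the half-space that realizes $\lambda_{\gamma_A}(G)$ for (c). Two small differences are improvements. First, you normalize in one step with an eigenspace-preserving orthogonal map, which for $d_1<d_2$ is the reflection $x_1\mapsto -x_1$ when $-e_1$ is optimal; the paper instead splits into cases and defers that reflection to Remark \ref{r:new assumptions, mid symmetrization}. Second, you explicitly check that $\operatorname{Per}_{\gamma_A}(\tilde G\times\mathbb{R}^{n-2})=\operatorname{Per}_{\gamma_{A_{1,2}}}(\tilde G)$ and that $A$ and $A_{1,2}$ have the same isoperimetric profile, both of which the paper leaves implicit.
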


\begin{proof}
    That $\tilde{G}\subset \mathbb{R}^2$ exists follows from Theorem \ref{t:symmetrization}.  Fubini's theorem gives Theorem \ref{t:technical theorem}(1ad).  Theorem \ref{t:symmetrization} gives Theorem \ref{t:technical theorem}(1b).  If $A$ satisfies $d_1^2 <d_2^2$, then $H_n(e_1, \overline{s}), H_n(e_1, -\overline{s})$ are the only two competitors for $\lambda_{\gamma_{A}}(G)$.  Since by construction, $H_n(e_1, \pm\overline{s}) =  S^{n-1}_{e_1, e_2}(H_n(e_1, \pm\overline{s}))$ and
    \begin{align*}
        \gamma_A(G \cap H_n(e_1, \pm\overline{s})) = \gamma_{A}(S^{n-1}_{e_1, e_2}(G) \cap H_n(e_1, \pm\overline{s}))\\
        \gamma_A(G \cap H_n(e_1, \pm\overline{s})^c) = \gamma_{A}(S^{n-1}_{e_1, e_2}(G) \cap H_n(e_1, \pm\overline{s})^c),
    \end{align*}
    Fubini's theorem gives us Theorem \ref{t:technical theorem}(1c), as well.

    The only case that remains is $d_1^2 = d_2^2$.  In this case, we may need to rotate $G$ such that $H_n(e_1,\overline{s})$ is one of the best-approximating half-planes which realize $\lambda_{\gamma_A}(G)$. Then, Theorem \ref{t:technical theorem}(1c) follows immediately. 
\end{proof}

\begin{remark}\label{r:new assumptions, mid symmetrization}
    Now, we may assume that $\{E_k\}_k \subset \mathbb{R}^n$ is a recovery sequence for $Q(H_{n}(e_1, \overline{s}))$.  We let $\{F_k\}_k\subset \mathbb{R}^2$ be the sequence obtained from $\{E_k\}_k$ by applying Lemma \ref{l:n-1 symmetrization}. That is, we define $$F_k := \tilde{G}_k \quad \text{ for } G_k = E_k.$$  Moreover, by reflection if necessary, we may assume that
    \begin{align*}
        \lambda_{\gamma_A}(E_k) = \gamma_{A_{1,2}}(F_k \Delta H_2(e_1, \overline{s})).
    \end{align*}

Note that, by Fubini's theorem, the slice function
\begin{align*}
    m_{F_k}(t):= \gamma_{A', t}(F_k \cap \{(x_1 = t)\})
\end{align*}
satisfies $m_{F_k}\in BV(\mathbb{R}, \gamma_{A'_{1,2}})$ (see Definition \ref{d:standing assumptions}(5) for the notation).  By the structure theorem for BV functions in one dimension, if we write $F_k$ as the subgraph of a function $w_k: \{x_2=0\} \rightarrow [-\infty, \infty]$, then for every $s \in \mathbb{R}$, the one-sided limits
\begin{align*}
w_k(s^-):= \lim_{x_1 \rightarrow s^-} w_k(x_1), \quad w_k(s^+):= \lim_{x_1 \rightarrow s^+} w_k(x_1)
\end{align*}
exist, though they may be infinite.
\end{remark}    

The next step is to symmetrize the sets $F_k \subset \mathbb{R}^2$ again, this time in the $e_1$-direction (in the notation of Definition \ref{d:n-1 symmetrizations}, this is $S_{e_2, -e_1}^1$-symmetrization). However, here we need to be careful to guarantee that for sufficiently large $k\in \mathbb{N}$
\begin{align*}
    \gamma_{A_{1,2}}(F_k\Delta H_2(e_1, \overline{s})) \approx \gamma_{A_{1,2}}(S_{e_2,-e_1}^1(F_k) \Delta H_2(e_1, \overline{s})).
\end{align*}
Obviously, the danger is that mass in $F_k$ that contributes to $\gamma_{A_{1,2}}(F_k \setminus H_2(e_1, \overline{s}))$ may be pushed by the symmetrization to cancel out a hole that contributed to $\gamma_{A_{1,2}}(H_2(e_1, \overline{s}) \setminus F_k)$. In Lemma \ref{t:can symm vertically 1}, we prove that as $k \rightarrow \infty$, the amount of mass which cancels out in $S^1_{e_2, -e_1}$-symmetrization is asymptotically small compared to $\lambda_{\gamma_A}(E_k)$.

First, we make the following observation.

\begin{remark}\label{r:Q is finite and consequences}
   Let $A, A_{1,2}$ be as in Theorem \ref{t:technical theorem}. Let $\alpha \in (0,1/2]$.  Then, $Q(H_n(e_1, \overline{s}))<\infty$.  In particular, if $\{E_k\}_k$ is a recovery sequence for $Q(H_n(e_1, \overline{s}))$ and $\{F_k\}_k$ is the sequence obtained from $\{E_k\}_k$ by Lemma \ref{l:n-1 symmetrization} and Remark \ref{r:new assumptions, mid symmetrization}, then $$Q(H_n(e_1, \overline{s}))<\infty.$$  As an immediate consequence, we have that
   \begin{align}\label{e:weaker ratio bounds}
       \liminf_{k \rightarrow \infty}\frac{\delta_{\gamma_{A_{1,2}}}(F_k)}{\gamma_{A_{1,2}}(F_k \Delta H_2(e_1, \overline{s}))}=0.
   \end{align}
We will use \eqref{e:weaker ratio bounds} in Lemma \ref{t:can symm vertically 1}.  But, we defer the demonstration that $Q(H_n(e_1, \overline{s}))<\infty$ until Lemma \ref{l:proof of remark Q finite}.
\end{remark}

\begin{definition}(Valleys and Towers)
For $F_k\subset \mathbb{R}^2$, $w_k:\{x_2=0\} \rightarrow [-\infty, \infty]$ as in Remark \ref{r:new assumptions, mid symmetrization}, we make the following definitions.

Let $s^\pm \in \mathbb{R}$ be defined by
\begin{align*}
    s^+ &:= \inf \{s \in [\overline{s}, \infty)\,:\, \mathcal{H}^{1}(F_k^{(1)} \cap \{x_1 = s\} \cap \{x_2<0\}) = 0\}\\
    &= \inf \{s \in [\overline{s}, \infty)\,:\, \min\{u_k(s^-), u_k(s^+)\} \le \overline{s} \}\\
    s^- &:= \sup \{s \in (-\infty,\overline{s}]\,:\, \mathcal{H}^{1}(F_k^{(0)} \cap \{x_1 = s\} \cap \{x_2>0\}) = 0\}\\
    &= \sup \{s \in (-\infty,\overline{s}]\,:\, \min\{u_k(s^-), u_k(s^+)\} \ge \overline{s} \}.
\end{align*}

We define $T_k \subset F_k^{(1)}$ by
\begin{align*}
    T_k := F_k^{(1)} \cap \{x_1 > s^+\} \cap \{x_2 <0\}.
\end{align*}
We shall think of $T_k$ as the collection of \textit{towers} of $F_k$.

We define the subset $V_k \subset F_k^{(0)}$ by 
\begin{align*}
    V_k := F_k^{(0)} \cap \{x_1 < s^-\} \cap \{x_2 >0\}.
\end{align*}
We shall think of $V_k$ as the collection of \textit{valley} of $F_k$. Both towers and valleys are what we should think of as \textit{overhang mass}.  We decompose $F_k$ into 
\begin{align*}
    F_k := G_{k, 1} \cup G_{k,2} \cup T_{k},
\end{align*}
where
\begin{align*}
    G_{k,1}&:= \{(x_1,x_2) \in F_k^{(1)}:  x_2 >0 \}\\
    G_{k,2}&:= \{(x_1,x_2) \in F_k^{(1)} \setminus T_{k}:  x_2 < 0 \}.
\end{align*}
\end{definition}

\begin{lemma}[Strong separation from the $1$-dimensional case]
 \label{t:can symm vertically 1}
    Let $F_k\subset \mathbb{R}^2$ be as in Remark \ref{r:new assumptions, mid symmetrization}.  Then, for any $0<c<1/8$ there exists a $k(c) \in \mathbb{N}$ such that for all $k \ge k(c)$
    \begin{align*}
        \gamma_{A_{1,2}}(T_{k} \cup V_{k}) \le c \gamma_{A_{1,2}}(F_k \Delta H_2(e_1, \overline{s})).
    \end{align*}
\end{lemma}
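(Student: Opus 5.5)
We argue by contradiction. Suppose there are $c\in(0,1/8)$ and a subsequence (not relabeled) with
\[
\gamma_{A_{1,2}}(T_k\cup V_k) > c\,\lambda_k, \qquad \lambda_k:=\gamma_{A_{1,2}}(F_k\Delta H_2(e_1,\overline{s}))=\lambda_{\gamma_A}(E_k)\to 0.
\]
The plan is to show that overhang mass of this size forces a deficit of order $\lambda_k$, linear rather than quadratic. This would contradict \eqref{e:weaker ratio bounds}, which says $\delta_{\gamma_{A_{1,2}}}(F_k)/\lambda_k\to0$ along a subsequence; we pass to that subsequence first. This is the ``difference in sharp powers between $n=1$ and $n\ge2$'' the outline refers to.

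\textbf{Step 1: slicing.} Write $F_k$ via the slice function $m_{F_k}$ and the vertical slices $\{x_1=t\}$. A tower at abscissa $t>s^+\ge\overline{s}$ is a slice set $F_k\cap\{x_1=t\}\cap\{x_2<0\}$ of positive measure where the reference half-space is empty. Similarly, a valley at $t<s^-$ is a slice where $F_k$ misses part of $\{x_2>0\}$ while $H_2$ is full. For each horizontal line $\{x_2=\tau\}$, consider the one-dimensional slice $F_k^\tau\subset\mathbb{R}$ with the weight $e^{-d_1^2x_1^2/2}$. By the definitions of $s^\pm$, this slice contains the interval $(-\infty,s^-)$ up to the valley and contains points beyond $s^+$ in the tower.

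The one-dimensional Gaussian isoperimetric inequality, with minimizers the half-lines, will be used in its sharp quantitative form. In one dimension, a set that is not a single half-line has at least two boundary points. Its perimeter then exceeds that of the half-line of equal mass by an amount bounded below by a constant depending on the mass, \emph{not} by something quadratic in the asymmetry. More precisely, if $F_k^\tau$ contains a component disjoint from the main half-line carrying mass $\mu(\tau)$, then its 1D deficit is at least $c_0\min\{\mu(\tau),\,\cdot\,\}$, and in fact bounded below by a fixed fraction of the extra boundary weight. This holds provided $\tau$ ranges over a compact set where the boundary weights are comparable.

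\textbf{Step 2: from slices to the 2D deficit.} We compare $F_k$ with the horizontally Ehrhard-symmetrized set $S^1_{e_2,-e_1}(F_k)$. Its perimeter is no larger, by the analogue of Theorem \ref{t:symmetrization} in dimension $1$. We also want a quantitative gain: the perimeter drop is at least $\int$ of the sliced 1D deficits (coarea), minus nothing. The sliced 1D deficit on line $\tau$ is at least $c_1\,\mathcal{H}^0$-weight of the extra boundary points. Integrating in $\tau$ over the set of lines that meet $T_k\cup V_k$ would give
\[
\delta_{\gamma_{A_{1,2}}}(F_k)\ge \operatorname{Per}(F_k)-\operatorname{Per}(S^1F_k)\gtrsim \int \mathbf{1}_{\{\tau:\,\text{slice meets }T_k\cup V_k\}}\,e^{-d_2^2\tau^2/2}\,d\tau .
\]
The right side dominates $\gamma_{A_{1,2}}(T_k\cup V_k)$. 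For this we use that each extra component's 1D Gaussian mass is at most a constant times its boundary weight; the 1D Gaussian tail bound $\int_a^\infty e^{-x^2/2}\lesssim e^{-a^2/2}$ holds uniformly for $a\ge \overline{s}$ and $a\le \overline{s}$ alike. Since $s^\pm$ straddle $\overline{s}$, extra components always sit in a half-line away from the bulk, and this tail bound applies.

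\textbf{Conclusion and main obstacle.} Combining Steps 1 and 2 gives $\delta_{\gamma_{A_{1,2}}}(F_k)\ge c_2\,\gamma(T_k\cup V_k)> c_2c\,\lambda_k$, which contradicts \eqref{e:weaker ratio bounds}. The main obstacle is making Step 2 rigorous: we need a perimeter-decrease inequality for $S^1_{e_2,-e_1}$ that is quantitative slice by slice, which requires a coarea/BV-slicing argument for Ehrhard symmetrization. We also need uniformity of the 1D tail constant when the extra component lies near $\overline{s}$ or at large $|x_1|$. For components near $s^\pm$ there is a subtlety: a tower component adjacent to the main half-line might merge with it after horizontal rearrangement. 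Here we use that $s^+\ge\overline{s}$ and $s^-\le \overline{s}$ are defined precisely so that the tower slice is separated from the bulk by a gap on the other side of $x_2=0$. If this fails, we would first try restricting to $|\tau|$ bounded and controlling the remainder by Gaussian tails.
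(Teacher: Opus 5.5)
\textbf{The main gap is in Step 2: the inequality you want there is false in general, and this is not just a matter of rigor.} Your route differs from the paper's. In Step 2 you want the perimeter drop under $S^1_{e_2,-e_1}$ to dominate the integral of the one-dimensional slice deficits ``minus nothing''. Slice by the lines $\{x_2=\tau\}$, and let $p_F(\tau)$ be the weighted boundary count of the slice and $\mu_F(\tau)$ its one-dimensional mass. What actually holds is $\operatorname{Per}(F)\ge\int\sqrt{p_F(\tau)^2+\mu_F'(\tau)^2}\,d\tau$, with the $\tau$-weight and the jump part of $D\mu_F$ suppressed. Equality holds for the symmetrized set, which has the same $\mu_F$. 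Hence you only get
\[
\operatorname{Per}(F)-\operatorname{Per}(S^1_{e_2,-e_1}F)\ \ge\ \int\frac{p_F^2-p_{S^1F}^2}{\sqrt{p_F^2+(\mu_F')^2}+\sqrt{p_{S^1F}^2+(\mu_F')^2}}\,d\tau .
\]
This is far smaller than $\int(p_F-p_{S^1F})\,d\tau$ wherever $|\mu_F'|\gg p_F$, i.e.\ where $\partial F$ is nearly horizontal. Already for Lebesgue measure and Steiner symmetrization, take slices made of two intervals whose four endpoints all move with speed $L$. The gain per unit height is $4\sqrt{1+L^2}-2\sqrt{1+4L^2}=O(1/L)$, although every slice carries two extra boundary points. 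Nothing controls $\mu_F'$ for the unregularized sets $F_k$. In that regime, the first inequality of your chain, $\delta_{\gamma_{A_{1,2}}}(F_k)\ge \operatorname{Per}(F_k)-\operatorname{Per}(S^1F_k)$, discards $\operatorname{Per}(S^1F_k)-\operatorname{Per}(H_2(e_1,\overline{s}))$, which is exactly where the deficit sits. Step 1 also needs more than you say. In one-dimensional Gauss space, half-lines of both orientations are minimizers, so a slice such as $(a,\infty)$ with no bulk has zero deficit despite its ``extra'' endpoint. This is probably repairable using the monotone slice structure that the $(n-1)$-symmetrization gives $F_k$, together with a restriction to bounded $|\tau|$, but it has to be argued.

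\textbf{The missing idea:} no second symmetrization is needed here. What drives the paper's proof is that the Gaussian isoperimetric profile is superlinear at small mass: $I(m)/m\to\infty$ as $m\to0^+$, indeed $I(m)\approx m\sqrt{2\log(1/m)}$. The paper splits the perimeter of $F_k$ into two kinds of pieces. The bulk pieces are $G_{k,1}\cup V_k$ in $\{x_2>0\}$ and $G_{k,2}$ in $\{x_2<0\}$. The overhang pieces are $T_k$ and $V_k$, whose boundaries are disjoint from the bulk's because of the gaps at $s^\pm$. The bulk pieces have masses $\frac{\alpha}{2}+O(\lambda_k)$. So the relative isoperimetric inequality in each half-plane shows they lose at most $O(\lambda_k)$ against $H_2(e_1,\overline{s})$; the normalized differences have $\liminf\ge 0$ and $\ge \overline{s}d_1^2$. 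One of $T_k,V_k$ has mass at least $\frac{c}{2}\lambda_k$, so its perimeter is $\gg\lambda_k$. Hence $\delta_{\gamma_{A_{1,2}}}(F_k)/\lambda_k\to\infty$, contradicting $Q(H_n(e_1,\overline{s}))<\infty$. A merely linear bound on the overhang perimeter, which is all your estimate ``mass $\lesssim$ boundary weight'' gives, would not beat a bulk loss of size $|\overline{s}|d_1^2\lambda_k$ when $c$ is small. Your symmetrization was meant to avoid that loss, but as shown above it does not produce the quantitative gain.

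A smaller point: use that $\delta_{\gamma_{A_{1,2}}}(F_k)/\lambda_k\to0$ along the whole recovery sequence, which holds since $\lambda_k\to0$ and $Q<\infty$. A $\liminf$ is not enough, because the statement must survive passing to the subsequence on which the overhang is large.
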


\begin{proof}
We argue by contradiction.  Suppose that there existed a sequence $\{F_k\}_k$ obtained from a recovery sequence $\{E_k\}_k \subset \mathbb{R}^n$ for $Q(H_n(e_1, \overline{s}))$ by Lemma \ref{l:n-1 symmetrization} such that 
\begin{align*}
       \gamma_{A_{1,2}}(F_k \Delta H_2(e_1, \overline{s}))\ge     \gamma_{A_{1,2}}(T_{k} \cup V_{k}) > c \gamma_{A_{1,2}}(F_k \Delta H_2(e_1, \overline{s}))
\end{align*}
for some $0<c<1/8$ and all sufficiently large $k \in \mathbb{N}$.  

For ease of notation, we let $\lambda_k := \gamma_{A_{1,2}}(F_k \Delta H_2(e_1, \overline{s}))$. Recall that $\gamma_{A_{1,2}}(F_k)=\alpha = \gamma_{A_{1,2}}(H_2(e_1, \overline{s}))$ and, because of our $(n-1)$-symmetrization, $\gamma_{A_{1,2}}(F_k \cap \{x_2 >0\}) \ge \gamma_{A_{1,2}}(F_k \cap \{x_2\le 0\})$.  Therefore, under the above assumption, we make the estimate
\begin{align*}
    \gamma_{A_{1,2}}(G_{k,1} \cup V_{k})\in \left(\frac{\alpha}{2}, \frac{\alpha+2\lambda_k}{2}\right), \qquad 
    \gamma_{A_{1,2}}(G_{k,2}) \in \left(\frac{\alpha-2\lambda_k}{2}, \frac{\alpha}{2}\right). 
\end{align*}
Thus, by the solution to the \textit{relative Isoperimetric problem} in the half-spaces $\{x_2>0\}$ and $\{x_2<0\}$ (using \eqref{unique minimizer} and a standard reflection argument), we see that 
\begin{align}\label{e:G1 and B2}
\operatorname{Per}_{\gamma_{A_{1,2}}}(G_{k,1} \cup V_{k};\{x_2>0\})& \ge \frac{d_1}{2} \min\{e^{-\frac{1}{2}\overline{s}^2}, e^{-\frac{1}{2}\phi^{-1}(\alpha +2\lambda_k)}\}\\ \label{e:G2}
\operatorname{Per}_{\gamma_{A_{1,2}}}(G_{k,2};\{x_2<0\}) & \ge \frac{d_1}{2}\min\{e^{-\frac{1}{2}[\phi^{-1}(\alpha - 2\lambda_k)]^2}, e^{-\frac{1}{2}|\overline{s}|^2}\}\\ \label{e:B1 and B2}
\operatorname{Per}_{\gamma_{A_{1,2}}}( T_{k};\{x_2<0\}) + \operatorname{Per}_{\gamma_{A_{1,2}}}(V_{k};\{x_2>0\}) & \ge d_1 e^{-\frac{1}{2}[\phi^{-1}(1-\frac{c}{2}\lambda_k)]^2}.    
\end{align}

Now, we estimate the following ratio
\begin{align*}
\frac{\delta_{\gamma_{A_{1,2}}}(F_k)}{\lambda_k} & \ge \frac{ \operatorname{Per}_{\gamma_{A_{1,2}}}(G_{k,1} \cup V_{k}; \{x_2>0\}) -  \operatorname{Per}_{\gamma_{A_{1,2}}}(H_2(e_1, \overline{s});\{x_2>0\})}{\lambda_k}\\
    & \qquad + \frac{\operatorname{Per}_{\gamma_{A_{1,2}}}(G_{k,2};\{x_2<0\}) - \operatorname{Per}_{\gamma_{A_{1,2}}}(H_2(e_1, \overline{s});\{x_2<0\})}{\lambda_k}\\
   & \qquad + \frac{\operatorname{Per}_{\gamma_{A_{1,2}}}(T_{k};\{x_2<0\}) + \operatorname{Per}_{\gamma_{A_{1,2}}}(V_{k};\{x_2>0\})}{\lambda_k}.
\end{align*}
We take the limit of the first two terms. By \eqref{e:G1 and B2}
\begin{align*}
    &\liminf_{k \rightarrow \infty}\frac{\operatorname{Per}_{\gamma_{A_{1,2}}}(G_{k,1}\cup V_{k};\{x_2>0\}) - \operatorname{Per}_{\gamma_{A_{1,2}}}(H_2(e_1, \overline{s});\{x_2>0\})}{\lambda_k}\\
    & \qquad \ge \liminf_{k \rightarrow \infty}\frac{d_1}{2}\frac{\left( \min\{e^{-\frac{1}{2}\overline{s}^2}, e^{-\frac{1}{2}\phi^{-1}(\alpha +2\lambda_k)}\} - e^{-\frac{1}{2}\overline{s}^2}\right)}{\lambda_k}.
\end{align*}
If $\alpha<\frac{1}{2}$ then $\overline{s}<0$ and $\min\{e^{-\frac{1}{2}\overline{s}^2}, e^{-\frac{1}{2}\phi^{-1}(\alpha +2\lambda_k)}\} = e^{-\frac{1}{2}\overline{s}^2}$ for all sufficiently large $k \in \mathbb{N}$.  In particular, 
\begin{align*}
    &\liminf_{k \rightarrow \infty}\frac{\operatorname{Per}_{\gamma_{A_{1,2}}}(G_{k,1}\cup V_{k};\{x_2>0\}) - \operatorname{Per}_{\gamma_{A_{1,2}}}(H_2(e_1, \overline{s});\{x_2>0\})}{\lambda_k} \ge 0.
\end{align*} 
If $\alpha =0$, then $\overline{s}=0$ and $\min\{e^{-\frac{1}{2}\overline{s}^2}, e^{-\frac{1}{2}\phi^{-1}(\alpha +2\lambda_k)}\} = e^{-\frac{1}{2}\phi^{-1}(\alpha +2\lambda_k)}$.  Taking derivatives, we immediately see that, again, 
\begin{align*}
    &\liminf_{k \rightarrow \infty}\frac{\operatorname{Per}_{\gamma_{A_{1,2}}}(G_{k,1}\cup V_{k};\{x_2>0\}) - \operatorname{Per}_{\gamma_{A_{1,2}}}(H_2(e_1, \overline{s});\{x_2>0\})}{\lambda_k} \ge 0.
\end{align*} 

To estimate the next term, we note that since $\overline{s}\le 0$, $\min\{e^{-\frac{1}{2}[\phi^{-1}(\alpha - 2\lambda_k)]^2}, e^{-\frac{1}{2}|\overline{s}|^2}\} = e^{-\frac{1}{2}[\phi^{-1}(\alpha - 2\lambda_k)]^2}$.  Therefore, we use \eqref{e:G2} to estimate
\begin{align*}
& \liminf_{k \rightarrow \infty}\frac{\operatorname{Per}_{\gamma_{A_{1,2}}}(G_{k,2};\{x_2<0\}) - \operatorname{Per}_{\gamma_{A_{1,2}}}(H_2(e_1,\overline{s});\{x_2<0\})}{\lambda_k}\\
& \qquad \ge \lim_{k \rightarrow \infty}\frac{d_1}{2}\frac{e^{-\frac{1}{2}[\phi^{-1}(\alpha - 2\lambda_k)]^2}- e^{-\frac{1}{2}[\phi^{-1}(\alpha)]^2}}{\lambda_k}\\
&\qquad = -d_1 \cdot \frac{d}{dt}|_{t=\alpha}\Big(\frac{e^{-\frac{1}{2}[\phi^{-1}(t)]^2}}{2}\Big) = -d_1 \cdot \Big(\frac{e^{-\frac{1}{2}[\phi^{-1}(\alpha)]^2}}{2}\Big)\Big(-2[\phi^{-1}(\alpha)] \cdot \frac{d}{dt}|_{t=\alpha}\phi^{-1}(t) \Big) = \overline{s}d_1^2.
\end{align*}
In particular, since $\alpha \in (0,1/2]$, the quantity $\overline{s}d_1^2$ is finite.

In exactly the same manner, we use \eqref{e:B1 and B2} on the final term, from which we shall derive our contradiction.
\begin{align*}
    & \liminf_{k \rightarrow \infty} \frac{\operatorname{Per}_{\gamma_{A_{1,2}}}(T_{k};\{x_2>0\}) + \operatorname{Per}_{\gamma_{A_{1,2}}}(V_{k};\{x_2<0\})}{\lambda_k}\\
    & \qquad \ge d_1 \cdot \liminf_{k \rightarrow \infty}\frac{e^{-\frac{1}{2}[\phi^{-1}(1-\frac{c}{2}\lambda_k)]^2}}{\lambda_k}\\
    &\qquad =d_1 \cdot \lim_{\lambda \rightarrow 0^+}e^{-\frac{1}{2}[\phi^{-1}(1-\frac{c}{2}\lambda)]^2}\Big(-\phi^{-1}(1-\frac{c}{2}\lambda) \cdot [\phi^{-1}]'(1-\frac{c}{2}\lambda)(-\frac{c}{2})\Big)\\
    &\qquad =\frac{d_1 c}{2} \cdot \lim_{\lambda \rightarrow 0^+}\phi^{-1}(1-\frac{c}{2}\lambda) = +\infty,
\end{align*}
where to obtain the penultimate step we have used L'Hopital's Rule.

Thus, we have a contradiction.  By Remark \ref{r:Q is finite and consequences}, $\lim_{k \rightarrow \infty}\frac{\delta_{\gamma_{A}}(E_k)}{\lambda_{\gamma_A}(E_k)^2} := Q(H_n(e_1, \overline{s}))<\infty$. But, if $\gamma_{A_{1,2}}(T_{k} \cup V_{k}) \ge c\lambda_k,$ then
\begin{align*}
        \lim_{k \rightarrow \infty}\frac{\delta_{\gamma_{A}}(E_k)}{\lambda_{\gamma_A}(E_k)} \ge \liminf_{k \rightarrow \infty}\frac{\delta_{\gamma_{A_{1,2}}}(F_k)}{\gamma_{A_{1,2}}(F_k \Delta H_2(e_1,\overline{s}))}= +\infty.
\end{align*}
\end{proof}

As an immediate corollary, we have the following statement.

\begin{corollary}\label{c:small overhang measure}
    Let $A, A_{1,2},F_k$ be as in Remark \ref{r:new assumptions, mid symmetrization}.  Let $0<c<1/8$.  Then, one of the following holds for all sufficiently large $k \in \mathbb{N}$:
    \begin{enumerate}
        \item Either $\gamma_{A_{1,2}}(F_k \cap \{(x_1,x_2): x_1>\overline{s}, x_2<0\}) < c\lambda_{\gamma_{A}}(E_k)$.
        \item Or, $\gamma_{A_{1,2}}(F_k^c \cap \{(x_1,x_2): x_1<\overline{s}, x_2>0\}) < c\lambda_{\gamma_{A}}(E_k)$.
    \end{enumerate}
\end{corollary}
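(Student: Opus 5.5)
The plan is to compare the two quadrant sets
\[
Q_k^+:=F_k\cap\{x_1>\overline{s},\,x_2<0\},\qquad Q_k^-:=F_k^c\cap\{x_1<\overline{s},\,x_2>0\}
\]
with the towers $T_k$ and valleys $V_k$, and then to invoke Lemma \ref{t:can symm vertically 1}.

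Recall from Remark \ref{r:new assumptions, mid symmetrization} that every vertical slice of $F_k$ is a half-line $\{x_2\le w_k(x_1)\}$. Hence the slice of $Q_k^+$ at $x_1=t>\overline{s}$ is empty exactly when $w_k(t)=-\infty$. This is precisely the threshold defining $s^+$, and similarly $w_k(t)=+\infty$ defines $s^-$. We therefore decompose
\[
Q_k^+=T_k\cup R_k^+,\quad R_k^+:=Q_k^+\cap\{\overline{s}<x_1\le s^+\},\qquad Q_k^-=V_k\cup R_k^-,\quad R_k^-:=Q_k^-\cap\{s^-\le x_1<\overline{s}\}.
\]
Fix $c<1/8$. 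Applying Lemma \ref{t:can symm vertically 1} with $c/2$ in place of $c$ gives $\gamma_{A_{1,2}}(T_k\cup V_k)\le \frac c2\lambda_{\gamma_A}(E_k)$ for large $k$. Here we use $\lambda_k=\lambda_{\gamma_A}(E_k)$, by the choice made in Remark \ref{r:new assumptions, mid symmetrization}. So it suffices to show that, for large $k$, at least one of $\gamma_{A_{1,2}}(R_k^+)$ and $\gamma_{A_{1,2}}(R_k^-)$ is below $\frac c2\lambda_k$.

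We argue by contradiction. Suppose that along a subsequence both $R_k^\pm$ carry mass at least $\frac c2\lambda_k$. Both strip sets are nonempty, so on the whole interval $(s^-,s^+)$, which strictly contains $\overline{s}$, the function $w_k$ is finite; away from that interval it only takes infinite values. Moreover, for $t$ slightly larger than $\overline{s}$ the slice of $F_k$ enters the lower half-plane, and for $t$ slightly smaller than $\overline{s}$ the slice of $F_k^c$ enters the upper half-plane. Consequently $\partial^M F_k$, restricted to $\{s^-<x_1<s^+\}$, is the graph of a finite BV function of $x_1$. Its $\gamma_{A_{1,2}}$-length is then bounded below by a weighted horizontal length. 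Concretely, by the coarea formula applied to projection onto the $x_1$-axis, the length is at least the integral over $x_1\in(s^-,s^+)$ of the Gaussian weight evaluated at the boundary point.

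The competitor $H_2(e_1,\overline{s})$ has perimeter supported on the single vertical line $\{x_1=\overline{s}\}$. The quantitative content we aim for is the following. The masses $\frac c2\lambda_k$ of $R_k^\pm$ force the horizontal extent of the graph on each side of $\overline{s}$ to be at least a definite power of $\lambda_k$. The perimeter then exceeds the half-space value by an amount $\gtrsim \lambda_k$ (or larger). To set this up, we split the perimeter into the parts in $\{x_2>0\}$ and $\{x_2<0\}$, exactly as in the proof of Lemma \ref{t:can symm vertically 1}. We use the relative isoperimetric bounds \eqref{e:G1 and B2}--\eqref{e:G2} for the bulk pieces, and the additional horizontal graph length as the extra term replacing \eqref{e:B1 and B2}.

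This would yield $\liminf_k \delta_{\gamma_{A_{1,2}}}(F_k)/\lambda_k>0$. Via Theorem \ref{t:technical theorem}(1b), that contradicts \eqref{e:weaker ratio bounds} of Remark \ref{r:Q is finite and consequences}, namely $\delta/\lambda\to 0$ along the recovery sequence. Combining the two halves then gives conclusion (1) or (2) with constant $c$.

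I expect the main obstacle to be this lower bound on the extra perimeter. A graph lying close to the vertical line can still carry mass $\sim\lambda_k$ in a thin horizontal strip of height $\sim\lambda_k$ far down or up in $x_2$. There the Gaussian weight is small, so the naive horizontal-length bound may only give an extra perimeter much smaller than $\lambda_k$. If that occurs, I would handle the far region separately. I would bound the Gaussian mass of $R_k^\pm$ with $|x_2|$ large by a one-dimensional isoperimetric estimate in the $x_1$-direction. Near $x_2=0$, where the weight is bounded below, I would use the graph length. The superlinear gain of the one-dimensional profile $e^{-\frac12[\phi^{-1}(1-\lambda)]^2}\gg\lambda$, already exploited in the Lemma, should again beat the linear scale $\lambda_k$.
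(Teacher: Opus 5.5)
Your argument has a genuine gap at its central step. The problem is more basic than the far-field issue you flag. The implication
\[
\gamma_{A_{1,2}}(R_k^\pm)\ge \tfrac c2\lambda_k \ \text{ for both signs}\quad\Longrightarrow\quad \delta_{\gamma_{A_{1,2}}}(F_k)\gtrsim\lambda_k
\]
is false. Let $d_1<d_2$ and $\theta>0$ small, and set $F_\theta:=\{x_1+\theta x_2\le s_\theta\}$ with $s_\theta:=\overline{s}\sqrt{1+\theta^2d_1^2/d_2^2}$, so that $\gamma_{A_{1,2}}(F_\theta)=\alpha$. Then:
\begin{itemize}
\item Every vertical slice of $F_\theta$ is the half-line $\{x_2\le (s_\theta-x_1)/\theta\}$, so $F_\theta\times\mathbb{R}^{n-2}$ is fixed by $S^{n-1}_{e_1,e_2}$.
\item $w$ is finite everywhere, so $s^\pm=\pm\infty$ and there are no towers or valleys.
\item Both of your sets $R^\pm$ are wedges of mass $(\tfrac12+o(1))\,\gamma_{A_{1,2}}(F_\theta\Delta H_2(e_1,\overline{s}))\sim\theta$.
\end{itemize}
Yet
\[
\delta_{\gamma_{A_{1,2}}}(F_\theta)=d_1e^{-\frac{d_1^2\overline{s}^2}{2}}\Big(\sqrt{\tfrac{1+\theta^2}{1+\theta^2d_1^2/d_2^2}}-1\Big)\approx \frac{\theta^2}{2}\Big(1-\frac{d_1^2}{d_2^2}\Big)d_1e^{-\frac{d_1^2\overline{s}^2}{2}},
\]
which is of order $\lambda^2$, not $\lambda$. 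So every hypothesis your contradiction uses holds along $\theta\to 0$: symmetry, mass $\alpha$, no overhang, both $R^\pm$ heavy, and $\delta/\lambda\to0$. In fact the dichotomy (1)/(2) itself fails for this family. Your proposed treatment of thin far-away strips does not touch this, since the obstruction is a gentle tilt.

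The mechanism is double counting. The ``horizontal graph length'' is not extra perimeter. It is part of the same curve whose pieces in $\{x_2>0\}$ and $\{x_2<0\}$ you already charge in \eqref{e:G1 and B2}--\eqref{e:G2}. A curve of slope $\theta$ exceeds its vertical projection in length only by $O(\theta^2)$. The term \eqref{e:B1 and B2} in Lemma \ref{t:can symm vertically 1} is different in kind: towers and valleys carry their own boundary, separate from the bulk, with superlinear cost $e^{-\frac12[\phi^{-1}(1-\frac c2\lambda)]^2}\gg\lambda$. Nothing comparable is available for $R_k^\pm$.

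Your structural description of $w_k$ is also inaccurate. On $(\overline{s},s^+)$ one only knows $w_k>-\infty$, so $w_k=+\infty$ is possible there. Beyond $s^+$, $w_k$ is finite exactly where the towers sit.

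The paper takes a different, purely qualitative route. It asserts that $R_k^+$ and $R_k^-$ cannot both be non-null, because a non-null $R_k^+$ would force $\{x_1=\overline{s},x_2>0\}\subset F_k^{(1)}$ by the $S^{n-1}_{e_1,e_2}$-symmetry. It then applies Lemma \ref{t:can symm vertically 1} to the quadrant consisting only of overhang. Note that $F_\theta$ also defeats this step: both non-overhang pieces are non-null, while $\{x_1=\overline{s},x_2>0\}$ is not contained in $F_\theta^{(1)}$.

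So neither route closes. A proof would have to use that $\{E_k\}$ is a recovery sequence to rule out nearly tilted $F_k$. The information $\delta/\lambda\to0$ from Remark \ref{r:Q is finite and consequences} cannot do this, because tilted half-planes have $\delta_{\gamma_A}/\lambda_{\gamma_A}^2$ bounded, of order $d_2^2-d_1^2$. It is therefore not clear that the statement can be obtained without substantial further input.
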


\begin{proof}
We claim that it is not possible for both
\begin{align*}
    \gamma_{A_{1,2}}\Big((F_k \setminus T_{k}) \cap \{x_1>\overline{s}, x_2<0\} \Big)>0, \quad \text{and }\quad  
    \gamma_{A_{1,2}}\Big((F_k^c\setminus V_{k}) \cap \{x_1<\overline{s}, x_2>0\} \Big)>0.
\end{align*}
Suppose, for example, that $F_k$ satisfies 
    \begin{align*}
        \gamma_{A_{1,2}}\Big((F_k \setminus T_{k}) \cap \{x_1>\overline{s}, x_2<0\} \Big) >0.
    \end{align*}
Then, the set $(F_k \setminus T_{k}) \cap \{x_1>\overline{s}, x_2<0\}$ is not composed of towers of $F_k$ and it must be that 
\begin{align*}
    \mathcal{H}^{n-1}\Big(\partial^*(F_k \setminus T_{k} \cap \{x_1>\overline{s}, x_2<0\}) \cap F_k^{(1)}\Big)>0.
\end{align*}
However, in this case, because $F_k$ is a $S_{e_1,e_2}^{n-1}$-symmetrized set, this implies that
    \begin{align*}
        \{x_1 = \overline{s}, x_2>0\} \subset F_k^{(1)}. 
    \end{align*}
    Thus, in particular,
    \begin{align*}
        F_k^c \cap \{x_1<\overline{s}, x_2>0\} = V_k,
    \end{align*}
and hence 
\begin{align*}
\gamma_{A_{1,2}}\Big((F_k^c \setminus V_{k}) \cap \{x_1>\overline{s}, x_2<0\} \Big) =0.
\end{align*}
A symmetric argument holds in the case $\gamma_{A_{1,2}}\Big((F_k^c \setminus V_{k}) \cap \{x_1>\overline{s}, x_2<0\} \Big)>0,$ implying that  
\begin{align*}
\gamma_{A_{1,2}}\Big((F_k \setminus T_{k}) \cap \{x_1>\overline{s}, x_2<0\} \Big) =0.
\end{align*}
\end{proof}

As a corollary of Lemma \ref{t:can symm vertically 1} and Corollary \ref{c:small overhang measure}, we may symmetrize the $F_k$ without losing too much asymmetry.

\begin{corollary}\label{c:second symmetrization}
Let $A, A_{1,2}, \{F_k\}_k$ be as in Remark \ref{r:new assumptions, mid symmetrization}.  We define the sets
\begin{align*}
    H_k:= S^1_{e_2, -e_1}(F_k).
\end{align*}
The sequence $\{H_k\}_k \subset \mathbb{R}^2$ enjoys the following properties.
\begin{enumerate}
    \item $\delta_{\gamma_A}(E_k) \ge \delta_{\gamma_{A_{1,2}}}(F_k) \ge \delta_{\gamma_{A_{1,2}}}(H_k)$.
    \item $\lambda_{\gamma_A}(E_k) \ge \gamma_{A_{1,2}}(H_k \Delta H_2(e_1, \overline{s})) \ge (\frac{1}{2}-2o(k))\lambda_{\gamma_{A}}(E_k),$ where we use the notation $o(k)$ to denote a function such that vanishes as $k \rightarrow \infty$.
    \item $H_k$ is the subgraph of a monotonically increasing function $h_k:\partial H_2(e_1, \overline{s})\rightarrow \mathbb{R}$.
\end{enumerate}
\end{corollary}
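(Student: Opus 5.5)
We prove the three properties in turn. Property (3) comes from the construction, property (1) from perimeter monotonicity, and property (2) carries the content.

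For (3), recall that $S^1_{e_2,-e_1}$ acts on each horizontal line $\{x_2=\tau\}$. It replaces $F_k\cap\{x_2=\tau\}$ by the half-line $\{x_1\le h_k(\tau)\}$ with the same $\gamma_{A_{1,2}}$-slice mass, so $H_k$ is the subgraph over $\partial H_2(e_1,\overline{s})$ of some function $h_k$. Monotonicity of $h_k$ follows from the $S^{n-1}_{e_1,e_2}$-symmetry of $F_k$. Each vertical slice $F_k\cap\{x_1=t\}$ is a half-line $\{x_2\ge \text{const}\}$ in the $e_2$-direction, so $F_k\cap\{x_2=\tau\}\subset F_k\cap\{x_2=\tau'\}$ whenever $\tau\le\tau'$. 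Since the $x_1$-marginal weight $e^{-d_1^2x_1^2/2}$ does not depend on $\tau$, the slice masses, and hence $h_k(\tau)$, are nondecreasing in $\tau$.

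For (1), the first inequality is Theorem \ref{t:technical theorem}(1b). For the second, Fubini gives $\gamma_{A_{1,2}}(H_k)=\gamma_{A_{1,2}}(F_k)=\alpha$, and Theorem \ref{t:symmetrization}, applied with the roles of $e_1,e_2$ exchanged, gives $\operatorname{Per}_{\gamma_{A_{1,2}}}(H_k)\le\operatorname{Per}_{\gamma_{A_{1,2}}}(F_k)$. The one-dimensional Steiner-type argument only uses that $A_{1,2}$ is diagonal, so the exchange is harmless.

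For (2), the upper bound $\gamma_{A_{1,2}}(H_k\Delta H_2(e_1,\overline{s}))\le\lambda_k:=\gamma_{A_{1,2}}(F_k\Delta H_2(e_1,\overline{s}))=\lambda_{\gamma_A}(E_k)$ follows from the fact that symmetrization along each horizontal line does not increase the one-dimensional symmetric difference with the half-line $\{x_1\le\overline{s}\}$: on a line, a set $B$ and the half-line $L=\{x_1\le\overline{s}\}$ satisfy $|B\Delta L|\ge|B^*\Delta L|$, since $B^*$ and $L$ are nested. Then integrate in $\tau$.

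The lower bound is the main obstacle. Write $\lambda_k=a_k+b_k$, where $a_k=\gamma_{A_{1,2}}(F_k\setminus H_2(e_1,\overline{s}))$ and $b_k=\gamma_{A_{1,2}}(H_2(e_1,\overline{s})\setminus F_k)$; the mass constraint gives $a_k=b_k=\lambda_k/2$. On each horizontal line, the symmetrized symmetric difference equals $|m^+(\tau)-m^-(\tau)|$, where $m^\pm(\tau)$ are the excess and deficit slice masses on that line. So
\begin{align*}
\gamma_{A_{1,2}}(H_k\Delta H_2(e_1,\overline{s}))=\int|m^+-m^-|\ge \int_{\{x_2>0\}}(m^+-m^-)+\int_{\{x_2<0\}}(m^--m^+).
\end{align*}
The cancelling mass on $\{x_2>0\}$ is the deficit there, namely the part of $F_k^c\cap\{x_1<\overline{s},x_2>0\}$. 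Symmetrically, the cancelling mass on $\{x_2<0\}$ is the excess $F_k\cap\{x_1>\overline{s},x_2<0\}$. By Corollary \ref{c:small overhang measure}, one of these two regions has mass $<c\lambda_k$ for large $k$.

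Suppose the excess in $\{x_2<0\}$ is the small one. The vertical half-line structure of $F_k$ then forces the remaining part of the deficit in $\{x_2>0\}$ to be contained in valleys $V_k$, up to a set of mass $<c\lambda_k$. Lemma \ref{t:can symm vertically 1} bounds $\gamma_{A_{1,2}}(T_k\cup V_k)\le c\lambda_k$. Together these give total cancellation at most $2c\lambda_k$, and so
\begin{align*}
\gamma_{A_{1,2}}(H_k\Delta H_2(e_1,\overline{s}))\ge\lambda_k-4c\lambda_k.
\end{align*}
This is already at least $(\tfrac12-2o(k))\lambda_k$. Letting $c=c(k)\to0$ along a diagonal choice of $k(c)$ produces the $o(k)$.

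The delicate point is verifying that all cancelling mass is accounted for by $T_k\cup V_k$ together with the small region from Corollary \ref{c:small overhang measure}. If that bookkeeping leaks, the fallback is the cruder estimate that keeps only the uncancelled half, $a_k$ or $b_k$, which still gives the stated factor $\frac12$.
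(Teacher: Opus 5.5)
Parts (1), (3) and the upper bound in (2) are correct and essentially match the paper. The gap is in the lower bound of (2).

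Write $E^{\pm}$ and $D^{\pm}$ for the $\gamma_{A_{1,2}}$-masses of $F_k\setminus H_2(e_1,\overline{s})$ and $H_2(e_1,\overline{s})\setminus F_k$ inside $\{\pm x_2>0\}$. Then $E^++E^-=D^++D^-=\lambda_k/2$, and your display is $\gamma_{A_{1,2}}(H_k\Delta H_2(e_1,\overline{s}))\ge\lambda_k-2E^--2D^+$. That inequality is fine.

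The next step is not. You claim that once $E^-<c\lambda_k$, the vertical half-line structure forces $D^+\le\gamma_{A_{1,2}}(V_k)+c\lambda_k$; this is false. Corollary \ref{c:small overhang measure} only ever controls one of the two regions. It rests on a dichotomy at $x_1=\overline{s}$: if the slice there is $\{x_2\ge g\}$ with $g\le 0$, all upper deficit is valley; if $g\ge 0$, all lower excess is tower. Nothing controls the other region.

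Here is a concrete counterexample. Take $F=\{x_1\le\overline{s}+u(x_2)\}$ with $u$ smooth, nondecreasing, $|u|\le\epsilon$, $u<0$ on $(-\infty,T)$ and $u>0$ on $(T,\infty)$, for a fixed $T>0$.
\begin{itemize}
\item Its vertical slices are upward half-lines and $E^-=0$.
\item $T_k=V_k=\emptyset$, because $s^-=\overline{s}+u(0)<\overline{s}$. So every conclusion of Lemma \ref{t:can symm vertically 1} and Corollary \ref{c:small overhang measure} holds, and one can also arrange the mass constraint with deficit $O(\epsilon^2)$ against asymmetry of order $\epsilon$.
\item Yet $D^+$, the mass of $\{0<x_2<T,\ \overline{s}+u(x_2)<x_1<\overline{s}\}$, stays a fixed positive fraction of the asymmetry as $\epsilon\to0$ (e.g.\ for $u\approx-\epsilon$ on $(-\infty,T)$).
\end{itemize}
So $E^-+D^+\le 2c\lambda_k$ cannot be derived, and your $(1-4c)$ bound is unsupported. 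Calling all of $D^+$ ``cancelling mass'' also overcounts: in this example nothing cancels, since $F$ is already a left-half-line subgraph.

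What is missing is the bound the paper uses for the region that Corollary \ref{c:small overhang measure} does \emph{not} make small. On each vertical line $\{x_1=t\}$ the slice of $F_k$ is an upward half-line, and the weight $e^{-d_2^2x_2^2/2}$ is even. Hence on that line the deficit above $\{x_2=0\}$ has no more mass than the deficit below it, and the excess below has no more mass than the excess above. Integrating in $t$ gives $D^+\le\frac14\lambda_k$ and $E^-\le\frac14\lambda_k$.

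Combined with Corollary \ref{c:small overhang measure}, this yields $\lambda_k-2(\frac14\lambda_k+c\lambda_k)=(\frac12-2c)\lambda_k$, which is exactly the stated bound and explains the factor $\frac12$. Your fallback (``keep only the uncancelled half'') does not replace this. However one splits the terms, reaching $\frac12$ requires the $\frac14\lambda_k$ bound on whichever of $D^+,E^-$ is not small, and the proposal never states or proves it.
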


\begin{proof}
Claim (1) follows immediately from Theorem \ref{t:symmetrization} and Lemma \ref{l:n-1 symmetrization}.

Claim (2) follows from Corollary \ref{c:small overhang measure}.  That is, we have that for $0<c<<1/128$ and all sufficiently large $k$, either
\begin{align}\label{e:small overhang 1}
\gamma_{A_{1,2}}(F_k \cap (\{x_1>\overline{s}\} \cap \{x_2<0\})) \le c\lambda_{\gamma_{A}}(E_k)    
\end{align}
or 
\begin{align}\label{e:small overhang 2}
\gamma_{A_{1,2}}(F_k^c \cap (\{x_1<\overline{s}\} \cap \{x_2>0\})) \le c\lambda_{\gamma_{A}}(E_k).    
\end{align}

Note that for each such $k \in \mathbb{N}$, we also have that 
    \begin{align*}
        \gamma_{A_{1,2}}(F_k \cap \{x_1 > \overline{s}\}) = \frac{1}{2}\lambda_{\gamma_{A}}(E_k) = \gamma_{A_{1,2}}(F_k^c \cap \{x_1 < \overline{s}\}).
    \end{align*}
    Moreover, because $F_k$ is $S^{n-1}_{e_1, e_2}$-symmetrized, for any $(a,b) \subseteq \mathbb{R}$, the slice function
    \begin{align*}
        m_{F_k}(t; (a,b)):= \int_{\{(x_1, t):x_1 \in \mathbb{R}\} \cap \{x_1 \in (a,b)\}}\chi_{F_k}(x_1, t)e^{-\frac{1}{2}(d_1^2x_1^2 + d_2^2t^2)}dx_1
    \end{align*}
    is an increasing function in $t$.  Thus, we have
    \begin{align*}
        \gamma_{A_{1,2}}\Big(F_k \cap \{x_1>\overline{s}, x_2<0\} \Big) \in [0,\frac{\lambda_{\gamma_{A}}(E_k)}{4}]\\
    \gamma_{A_{1,2}}\Big(F_k^c \cap \{x_1<\overline{s}, x_2>0\} \Big) \in [0,\frac{\lambda_{\gamma_{A}}(E_k)}{4}].
    \end{align*}
By Corollary \ref{c:small overhang measure}, we have that
\begin{align*}
    \gamma_{A_{1,2}}\Big((F_k \setminus T_{k}) \cap \{x_1>\overline{s}, x_2<0\} \Big) \times
    \gamma_{A_{1,2}}\Big((F_k^c\setminus V_{k}) \cap \{x_1<\overline{s}, x_2>0\} \Big) & = 0\\
    \gamma_{A_{1,2}}\Big((F_k \setminus T_{k}) \cap \{x_1>\overline{s}, x_2<0\} \Big),
    \gamma_{A_{1,2}}\Big((F_k^c\setminus V_{k}) \cap \{x_1<\overline{s}, x_2>0\} \Big) & \in [0,\frac{1}{4}\lambda_{\gamma_{A}}(E_k)].
\end{align*}
    
Indeed, without loss of generality, we may assume that for some fixed $k\in \mathbb{N}$ sufficiently large \eqref{e:small overhang 1} holds.  Then,
    \begin{align*}
        \gamma_{A_{1,2}}(F_k \cap (\{x_1>\overline{s}\} \cap \{x_2<0\})) &\le c\lambda_{\gamma_{A}}(E_k)\\
        \gamma_{A_{1,2}}(F_k \cap (\{x_1>\overline{s}\} \cap \{x_2>0\})) &\ge (\frac{1}{2} - c)\lambda_{\gamma_{A}}(E_k)\\
        \gamma_{A_{1,2}}(F_k^c \cap (\{x_1<\overline{s}\} \cap \{x_2<0\})) &\in [\frac{1}{4}\lambda_{\gamma_{A}}(E_k), \frac{1}{2}\lambda_{\gamma_{A}}(E_k)]\\  
        \gamma_{A_{1,2}}(F_k^c \cap (\{x_1<\overline{s}\} \cap \{x_2>0\})) &\in [0, \frac{1}{4}\lambda_{\gamma_{A}}(E_k)].
    \end{align*}
Thus, $H_k$ satisfies,
\begin{align*}
\gamma_{A_{1,2}}(H_k \cap (\{x_1>\overline{s}\} \cap \{x_2>0\})) & \in [(\frac{1}{4}-c)\lambda_{\gamma_{A}}(E_k), (\frac{1}{2}-c)\lambda_{\gamma_{A}}(E_k)]\\
        \gamma_{A_{1,2}}(H_k^c \cap (\{x_1<\overline{s}\} \cap \{x_2<0\})) &\in [(\frac{1}{4}-c)\lambda_{\gamma_{A}}(E_k), (\frac{1}{2}-c)\lambda_{\gamma_{A}}(E_k)].
\end{align*}
Hence, $\gamma_{A_{1,2}}(H_k \Delta H_2(e_1, \overline{s})) \ge (\frac{1}{2}-2c)\lambda_{\gamma_{A}}(E_k)$. A symmetric argument works if \eqref{e:small overhang 2} holds in place of \eqref{e:small overhang 1}. The upper bound is immediate from the definition of $F_k,H_k$ and Theorem \ref{t:technical theorem}(1).

Claim (3) follows immediately from the fact that $m_{F_k}(t; \mathbb{R})$ is an increasing function of $t \in \mathbb{R}$. 
\end{proof}

The functions $\{h_k\}_k$ such that 
\begin{align*}
    H_k = \text{subgraph}_{\partial H_2(e_1, \overline{s})}h_k,
\end{align*}
are monotone, and therefore $BV(\mathbb{R};\gamma_{A_{1,2}'})$ functions.  However, to prove Theorem \ref{t:technical theorem}, we need greater regularity. To obtain functions with greater regularity, we replace the $H_k$ with solutions to a relative volume-constrained $\gamma_A$-perimeter minimization problem.

\begin{definition}\label{d:Ik}(The sets $I_k$)
For convenience, we define 
\begin{align*}
    \Omega_+&:=\{x_2 >0\}\\
    \Omega_-&:=\{x_2 <0\}.
\end{align*}
Now, we let $I_k^+$ be the solution to the following problem. For
\begin{align*}
    \mathcal{C}_{\Omega_+}(k):= \{B \in \mathcal{B}(\mathbb{R}^n)\,:\, & B\Delta H_k \subset \subset \Omega_+,\\
& \gamma_{A_{1,2}}(B \cap \Omega_+) = \gamma_{A_{1,2}}(H_k \cap \Omega_+) \},
\end{align*}
we let $I^+_k$ be the solution to the $\gamma_{A_{1,2}}$-perimeter minimization under $\gamma_{A_{1,2}}$-volume-constraint problem in $\Omega_+ $ given by $\mathcal{C}_{\Omega_+}(k)$. That is, let $I^+_k$ satisfy:
\begin{align*}
\begin{cases}
I^+_k \in \overline{\mathcal{C}_{\Omega_+}(k)}^{L^1(\mathbb{R}^n, \gamma_{A_{1,2}})}\\
\text{Per}_{\gamma_{A_{1,2}}}(I_k) \le \text{Per}_{\gamma_{A_{1,2}}}(B) \quad \forall B \in \mathcal{C}_{\Omega_+}(k). 
\end{cases}
\end{align*}
We then repeat this construction in $\Omega_-$.  That is, for
\begin{align*}
    \mathcal{C}_{\Omega_-}(k):= \{B \in \mathcal{B}(\mathbb{R}^n): & B\Delta I_k^+ \subset \subset \Omega_-,\\
& \gamma_{A_{1,2}}(B \cap \Omega_-) = \gamma_{A_{1,2}}(H_k \cap \Omega_-) \},
\end{align*}
we let $I_k$ be the solution to the $\gamma_{A_{1,2}}$-perimeter minimization under $\gamma_{A_{1,2}}$-volume-constraint problem in $\Omega_-$ given by $\mathcal{C}_{\Omega_-}(k)$. Explicitly, let $I_k$ satisfy:
\begin{align*}
\begin{cases}
I_k \in \overline{\mathcal{C}_{\Omega_-}(k)}^{L^1(\mathbb{R}^n, \gamma_{A_{1,2}})}\\
\text{Per}_{\gamma_{A_{1,2}}}(I_k) \le \text{Per}_{\gamma_{A_{1,2}}}(B) \quad \forall B \in \mathcal{C}_{\Omega_-}(k). \end{cases}
\end{align*}    
\end{definition}

\begin{definition}(Weak Graphicality)
        We shall say that a curve $\Gamma:(a, b) \rightarrow \mathbb{R}^2$ is \textit{weakly graphical} over $\partial H(e_1, \overline{s})$ if for every slice $\{x_2=c\}$, the intersection $\{x_2=c\} \cap \Gamma(a,b)$ is connected.
    \end{definition}

\begin{lemma}[Properties of $I_k$]\label{l:properties of Ik}
Let $A, A_{1,2}, H_k\subset \mathbb{R}^2$ be as in Corollary \ref{c:second symmetrization} and $I_k\subset \mathbb{R}^2$ be as in Definition \ref{d:Ik}.  Then, for all sufficiently large $k \in \mathbb{N}$ the following properties hold.
\begin{enumerate}
    \item $\delta_{\gamma_{A_{1,2}}}(I_k) \le \delta_{\gamma_{A_{1,2}}}(H_k)$.
    \item $\frac{1}{5}\lambda_{\gamma_{A}}(E_k)\le \gamma_{A_{1,2}}(I_k \Delta H_2(e_1, \overline{s}))$.
    \item Let $\Gamma_k^\pm:(0,\infty) \rightarrow \mathbb{R}^2$ be unit-speed parametrizations of $\partial I_k \cap \Omega_+$ and $\partial I_k \cap \Omega_-$, respectively, with $\Gamma_k^{\pm}(0) \cdot e_2 = 0$.  Then, $\Gamma_k^{\pm}$ is smooth and weakly graphical over $\partial H_2(e_1, \overline{s})$ in $\Omega^{\pm}$.
    \item  For any choice of normal $\vec{\eta}^{\pm}_k$, there exists a constant $C_{k, \vec{\eta}}^{\pm}$ such that for any $t \in \mathbb{R}$
    \begin{align}\label{e:ODE}
        \kappa_k^{\pm}(t) := \langle \vec{\eta}_k^\pm(t), (\Gamma_k^{\pm})''(t)\rangle = C_{k,\vec{\eta}}^{\pm} - \langle A\Gamma_k^{\pm}(t), \vec{\eta}_{k}^{\pm}(t)\rangle. 
    \end{align}
    \item  For any $0< \varepsilon$ and $0<<R<\infty$, the following holds for all sufficiently large $k$ (depending upon $\varepsilon, R$).  There exists a function $u_k:\partial H_2(e_1,\overline{s}) \rightarrow \mathbb{R}$ such that
        \begin{align*}
           \norm{u_k}_{C^1([-R, - \varepsilon] \cup [\varepsilon,R])} \le \varepsilon.
        \end{align*}
\end{enumerate}
\end{lemma}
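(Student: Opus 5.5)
We prove the five properties in order; (1) and (4) are essentially formal, and the work is in (2), (3) and (5).

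\textbf{Property (1).} This is immediate from the construction. $H_k$ is itself an admissible competitor in $\mathcal{C}_{\Omega_+}(k)$, and $I_k^+$ is an admissible competitor in $\mathcal{C}_{\Omega_-}(k)$. Since both minimizations preserve the $\gamma_{A_{1,2}}$-mass in $\Omega_\pm$, we get
\[
\gamma_{A_{1,2}}(I_k)=\gamma_{A_{1,2}}(H_k)
\quad\text{and}\quad
\operatorname{Per}_{\gamma_{A_{1,2}}}(I_k)\le \operatorname{Per}_{\gamma_{A_{1,2}}}(I_k^+)\le \operatorname{Per}_{\gamma_{A_{1,2}}}(H_k).
\]
Existence of the minimizers follows from Theorem \ref{t:weak compactness, LSC}.

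\textbf{Property (4).} Once (3) gives smoothness, (4) is Theorem \ref{t:regularity for VCPMs}(3) applied in $U=\Omega_\pm$. Connectedness of $\partial I_k\cap\Omega_\pm$ makes the constant global along each curve rather than local.

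\textbf{Property (3).} Regularity is Theorem \ref{t:regularity for VCPMs}(1) in $U=\Omega_\pm$. For weak graphicality and connectedness, observe that the relative minimization problem in $\Omega_+$ can be solved within the class of subgraphs of monotone functions. Indeed, applying the $S^1_{e_2,-e_1}$-symmetrization to any competitor does not increase $\gamma_{A_{1,2}}$-perimeter (Theorem \ref{t:symmetrization}) and preserves the mass in $\Omega_+$. So we choose $I_k^\pm$ to be symmetrized minimizers, and each horizontal slice $\{x_2=c\}\cap\partial I_k$ is then a single point or a segment. I would also check that the boundary trace on $\{x_2=0\}$ is inherited from $H_k$; this is what justifies starting the unit-speed parametrization at $\Gamma_k^\pm(0)\cdot e_2=0$.

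\textbf{Property (5).} First show $I_k\to H_2(e_1,\overline{s})$ in $L^1(\gamma_{A_{1,2}})$. The reason is that
\[
\delta_{\gamma_{A_{1,2}}}(I_k)\le\delta_{\gamma_A}(E_k)\to0,
\]
so by compactness and lower semicontinuity (Theorem \ref{t:weak compactness, LSC}) any limit is a minimizer of mass $\alpha$. Using the monotone subgraph structure and the uniqueness of minimizers in \eqref{unique minimizer}, that limit must be $H_2(e_1,\overline{s})$; in the isotropic case the reflection normalization fixes the choice. Then apply Corollary \ref{c:tamanini convergence} in the open sets $\{\varepsilon<|x_2|<R\}$, where the $I_k$ are volume-constrained minimizers.

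\textbf{Property (2), the main obstacle.} We must bound the asymmetry lost when replacing $H_k$ by $I_k$, since minimization can move mass across $\{x_1=\overline{s}\}$. My plan is to work separately in each of $\Omega_\pm$. There the mass is fixed, and the monotone-subgraph structure means $I_k\cap\Omega_+$ and $H_k\cap\Omega_+$ differ from $H_2(e_1,\overline{s})\cap\Omega_+$ by a signed one-dimensional profile with prescribed total.

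Corollary \ref{c:second symmetrization}(2) gives $\gamma_{A_{1,2}}(H_k\Delta H_2)\gtrsim\frac12\lambda$, and its proof shows the excess is split as roughly a quarter of $\lambda$ in each half-plane, with opposite signs. Because the masses in $\Omega_+$ and $\Omega_-$ are each preserved, we have
\[
\gamma_{A_{1,2}}(I_k\cap\Omega_\pm)-\gamma_{A_{1,2}}(H_2\cap\Omega_\pm)=\gamma_{A_{1,2}}(H_k\cap\Omega_\pm)-\gamma_{A_{1,2}}(H_2\cap\Omega_\pm),
\]
and the symmetric difference in each half-plane is at least the absolute value of this mass difference. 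Summing these lower bounds gives
\[
\gamma_{A_{1,2}}(I_k\Delta H_2)\ge\Bigl(\tfrac12-O(c)\Bigr)\lambda-o(\lambda)\ge\tfrac15\lambda_{\gamma_A}(E_k).
\]
The delicate point is verifying that the signed mass differences in $\Omega_+$ and $\Omega_-$ are indeed each of order $\lambda/4$ with opposite signs. I would extract this from the quadrant estimates in the proof of Corollary \ref{c:second symmetrization}.
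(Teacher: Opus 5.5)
Your proposal is correct. For (1), (2), (4) and (5) it uses the same mechanism as the paper.
\begin{itemize}
\item (1): admissibility of $H_k$ and of $I_k^+$ as competitors.
\item (2): preservation of $\gamma_{A_{1,2}}(\,\cdot\,\cap\Omega_\pm)$, combined with $\gamma_{A_{1,2}}((I_k\Delta H_2(e_1,\overline{s}))\cap\Omega_\pm)\ge|\gamma_{A_{1,2}}(I_k\cap\Omega_\pm)-\tfrac{\alpha}{2}|$.
\item (4) and (5): Theorem \ref{t:regularity for VCPMs} and Corollary \ref{c:tamanini convergence}.
\end{itemize}
In (2) the paper bounds the signed excess of $H_k$ in $\Omega_+$ by a case split on the sign of $h_k(0)$, and obtains $(\tfrac18-\tfrac c2)\lambda_{\gamma_A}(E_k)$. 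You read the excess off from $F_k$ instead, using
\begin{itemize}
\item $\gamma_{A_{1,2}}(F_k\cap\{x_1>\overline{s},x_2>0\})\ge(\tfrac12-c)\lambda_{\gamma_A}(E_k)$;
\item $\gamma_{A_{1,2}}(F_k^c\cap\{x_1<\overline{s},x_2>0\})\le\tfrac14\lambda_{\gamma_A}(E_k)$;
\item the fact that $S^1_{e_2,-e_1}$ acts slice by slice horizontally, so it preserves the mass in each half-plane.
\end{itemize}
This gives the better bound $(\tfrac14-c)\lambda_{\gamma_A}(E_k)$. You must use these $F_k$ bounds, not the $H_k$ quadrant intervals stated at the end of that proof. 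Inside one half-plane $H_k$ can carry both excess and deficit, so those intervals alone do not control the signed mass.

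The genuine difference is in (3).
\begin{itemize}
\item The paper shows that \emph{every} relative minimizer is weakly graphical: otherwise $S^1_{e_2,-e_1}$ would strictly lower the perimeter. This needs a strict-inequality (rigidity) version of the weighted symmetrization, which the paper only sketches.
\item You instead \emph{select} a symmetrized minimizer. Since $H_k$ is already symmetric, $S^1_{e_2,-e_1}$ maps the competitor class into itself. It only alters the slices through the compact set where $B\neq H_k$, and by a bounded amount; this should be checked explicitly. It also preserves the mass in $\Omega_+$ and does not increase perimeter, so the symmetrized set is again a minimizer.
\end{itemize}
Your route uses only the non-strict inequality already used in Corollary \ref{c:second symmetrization}. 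It also gives more than weak graphicality: every horizontal slice of $I_k$ is a left half-line, which is what lets you exclude the limit $\{x_1\ge-\overline{s}\}$ when $d_1<d_2$. The price is that $I_k$ becomes a particular choice among possibly several minimizers. That is harmless for the rest of the argument.

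There are three small corrections.
\begin{itemize}
\item The symmetrized sets are subgraphs over $\partial H_2(e_1,\overline{s})$, but not of \emph{monotone} functions. Monotonicity would require the vertical symmetrization, which does not preserve $\gamma_{A_{1,2}}(\,\cdot\,\cap\Omega_+)$.
\item Consequently, in (5) with $d_1=d_2$ the subgraph property does not exclude tilted half-planes, since those are subgraphs too. Also, the reflection normalization concerns $F_k$, not $I_k$. A clean argument that works for all $d_1\le d_2$ is as follows. We have $I_k^+\cap\Omega_-=H_k\cap\Omega_-\to H_2(e_1,\overline{s})\cap\Omega_-$, and $\delta_{\gamma_{A_{1,2}}}(I_k^+)\le\delta_{\gamma_{A_{1,2}}}(H_k)\le\delta_{\gamma_A}(E_k)\to 0$. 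Hence every $L^1$-limit of $I_k^+$ is a minimizing half-plane that agrees with $H_2(e_1,\overline{s})$ on $\Omega_-$, so it equals $H_2(e_1,\overline{s})$. Repeating the argument with $\Omega_+$ in place of $\Omega_-$ handles $I_k$.
\item In (4), the constant $C^{\pm}_{k,\vec{\eta}}$ is global because the volume constraint in $\Omega_\pm$ is a single constraint with one Lagrange multiplier. Connectedness of $\partial I_k\cap\Omega_\pm$ is not needed.
\end{itemize}
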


\begin{proof}
Claim (1) follows from the existence of $I_k$ as a relative, volume-constrained perimeter minimizer and the fact that $H_k$ is in the comparison class which defines $I_k$. To see claim (2), we first note that because for all $k \in \mathbb{N}$, $\gamma_{A_{1,2}}(H_k) = \alpha$, we must have
\begin{align*}
    \gamma_{A_{1,2}}(H_k \cap \{x_1 >\overline{s}\}) = \gamma_{A_{1,2}}(H_k^c \cap \{x_1 <\overline{s}\}).
\end{align*}
From Corollary \ref{c:second symmetrization}, we know that $H_k$ are the subgraph of a monotone increasing function, hence 
\begin{align*}
    \gamma_{A_{1,2}}(H_k^c \cap \{x_2>0, x_1<\overline{s}\})\le \frac{1}{2}\gamma_{A_{1,2}}(H_k^c \cap \{x_1 <\overline{s}\})\\
    \gamma_{A_{1,2}}(H_k \cap \{x_2<0,x_1>\overline{s}\})\le \frac{1}{2}\gamma_{A_{1,2}}(H_k \cap \{x_1 >\overline{s}\}).
\end{align*}
In particular, we have that 
\begin{align*}
    \gamma_{A_{1,2}}(H_k \cap \{x_2>0\}) & = \frac{1}{2}\alpha + \gamma_{A_{1,2}}(H_k \cap \{x_1>\overline{s}, x_2>0\})\\
    & \qquad - \gamma_{A_{1,2}}(H_k^c \cap \{x_1 <\overline{s}, x_2>0\}).
\end{align*}
Now, let $h_k$ be as in Corollary \ref{c:second symmetrization}(3). If $h_k(0)\ge 0$, it must be that $\gamma_{A_{1,2}}(H_k^c \cap \{x_1 <\overline{s}, x_2>0\}) =0$ and the estimates in the proof of Corollary \ref{c:second symmetrization} give that
\begin{align}\label{e:excess mass I_k estimate}
    \gamma_{A_{1,2}}(H_k \cap \{x_2>0\}) & \ge \frac{1}{2}\alpha + (\frac{1}{4}-c)\lambda_{\gamma_{A}}(E_k).
\end{align}
On the other hand, if $h_k(0)<0$, by the monotonicity of $h_k$, it must be that $H_k \cap \{x_1>\overline{s}\} = H_k \cap \{x_1>\overline{s}, x_2>0\}$. Therefore,
\begin{align*}
    \gamma_{A_{1,2}}(H_k \cap \{x_2>0\}) & = \frac{1}{2}\alpha + \gamma_{A_{1,2}}(H_k \cap \{x_1>\overline{s}, x_2>0\})\\
    & \qquad - \gamma_{A_{1,2}}(H_k^c \cap \{x_1 <\overline{s}, x_2>0\})\\
    & \ge \frac{1}{2}\alpha + \gamma_{A_{1,2}}(H_k \cap \{x_1>\overline{s}\})\\
    & \qquad - \frac{1}{2}\gamma_{A_{1,2}}(H_k^c \cap \{x_1 <\overline{s}\})\\
    & = \frac{1}{2}\alpha + \frac{1}{2}\gamma_{A_{1,2}}(H_k \cap \{x_1>\overline{s}, x_2>0\}) \ge \frac{1}{2}\alpha + (\frac{1}{8} - \frac{c}{2})\lambda_{\gamma_{A}}(E_k).
\end{align*}
Together, these cases imply that 
\begin{align}\label{e:deficit in mass I_k estimate}
    \gamma_{A_{1,2}}(H_k \cap \{x_2<0\}) & \le \frac{1}{2}\alpha - (\frac{1}{8} - \frac{c}{2})\lambda_{\gamma_{A}}(E_k).
\end{align}
Thus, since $I_k$ is a relative, volume-constrained perimeter-minimizer,
\begin{align*}
\gamma_{A_{1,2}}(I_k \cap \{x_2>0\}) & \ge \frac{1}{2}\alpha + (\frac{1}{8} - \frac{c}{2})\lambda_{\gamma_{A}}(E_k)\\
\gamma_{A_{1,2}}\Big(I_k \Delta H_2(e_1, \overline{s}) \Big) & \ge  \gamma_{A_{1,2}}\Big([I_k \setminus H_2(e_1, \overline{s})]\cap \{x_2>0\} \Big) + \gamma_{A_{1,2}}\Big([H_2(e_1, \overline{s}) \setminus I_k]\cap \{x_2<0\} \Big) \\
& \ge 2(\frac{1}{8} - \frac{c}{2})\lambda_{\gamma_{A}}(E_k).
\end{align*}
This give (2).

To see (3), we note that smoothness follows immediately from Theorem \ref{t:regularity for VCPMs}(1).  The fact that $\Gamma^{\pm}_k$ are always weakly graphical over $\partial H_2(e_1,\overline{s})$ follows from the $I_k$ being relative volume-constrained $\gamma_A$-perimeter minimizers.  If they were not weakly graphical, by the smoothness of $\Gamma^{\pm}_k$ in $\Omega^{\pm}$, there would be an open interval $(a, b) \subset (-\infty, 0)(0,\infty)$ such that the slices $\{x_2 = c\}$ satisfy
    \begin{align*}
        \mathcal{H}^0(\{x_2=c\} \cap \Gamma^\pm_k) > 1 \qquad \text{for all }c\in(a,b).
    \end{align*}
    But, this guarantees that $S_{e_2,-e_1}^{1}$-symmetrization is strictly perimeter decreasing (this may be seen, for example, by following the proof of \cite{Maggi_book12} Theorem 14.4 with weights).  Therefore, the $\Gamma_k^{\pm}$ must be weakly graphical over $\partial H_2(e_1,\overline{s})$.
    
    The final two claims follow from straight-forward applications of Theorem \ref{t:regularity for VCPMs} and Corollary \ref{c:tamanini convergence}, respectively.
\end{proof}

\begin{lemma}[Continuity of the constant]\label{l:continuity of Ck}
    Let $C_{k,\vec{\eta}}^{\pm}$ as in \eqref{e:ODE}.  Let $\vec{\eta}_k$ be the outer unit normal to $I_k$.  Then,
    \begin{align*}
\lim_{k \rightarrow \infty}C_{k,\vec{\eta}}^{\pm} = d_1^2\overline{s}.       
    \end{align*}
\end{lemma}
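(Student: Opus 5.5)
Because $I_k$ is a relative volume-constrained minimizer, the ODE \eqref{e:ODE} holds along all of $\partial I_k\cap\Omega_\pm$. I therefore plan to evaluate it on a fixed compact arc where Lemma \ref{l:properties of Ik}(5) gives $C^1$ convergence to the flat line $\{x_1=\overline{s}\}$, and then read off the constant in the limit. For the limit half-space $H_2(e_1,\overline{s})$, the outer normal is $e_1$ and the boundary has zero curvature. The equation $0=C-\langle A_{1,2}x,e_1\rangle=C-d_1^2\overline{s}$ then gives $C=d_1^2\overline{s}$, which is the expected value.

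Concretely, fix $0<\varepsilon<R$ and work in $\Omega_+$ (the case $\Omega_-$ is identical). For large $k$, Lemma \ref{l:properties of Ik}(5) says that on the strip $\{x_2\in[\varepsilon,R]\}$ the boundary $\partial I_k$ is the graph $x_1=\overline{s}+u_k(x_2)$ with $\|u_k\|_{C^1}\to0$. With the outer normal $\vec\eta_k=(1,-u_k')/\sqrt{1+u_k'^2}$, the quantity $\langle A_{1,2}\Gamma_k,\vec\eta_k\rangle$ converges uniformly on this arc to $d_1^2\overline{s}$. The curvature term $\kappa_k=\pm u_k''/(1+u_k'^2)^{3/2}$ is only known to exist, since we have no $C^2$ convergence. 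To avoid needing it pointwise, I will integrate the ODE against the arclength on the arc $x_2\in[\varepsilon,R]$. The integral of the curvature is the total turning of the tangent angle, which is bounded by $|\arctan u_k'(R)-\arctan u_k'(\varepsilon)|\le 2\sup|u_k'|\to0$. The arc length tends to $R-\varepsilon>0$. Dividing through, I obtain
\[
C_{k,\vec\eta}^{+}=\frac{1}{L_k}\int\big(\kappa_k+\langle A_{1,2}\Gamma_k,\vec\eta_k\rangle\big)\,ds\longrightarrow d_1^2\overline{s}.
\]
Two checks are needed here. First, the outer normal chosen in Lemma \ref{l:properties of Ik}(4) must be consistent with the sign convention $C_{-\vec\eta}=-C_{\vec\eta}$. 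Second, the orientation of $\Gamma_k$ must match the sign of $\kappa$; the sign is immaterial in the turning bound anyway.

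The main obstacle is making sure Lemma \ref{l:properties of Ik}(5) really supplies $C^1$ (and not just $L^1$) convergence on some nontrivial arc lying inside each $\Omega_\pm$. This requires $I_k\to H_2(e_1,\overline{s})$ in $L^1(\gamma_{A_{1,2}})$, which I will justify separately. By Lemma \ref{l:properties of Ik}(2) and Corollary \ref{c:second symmetrization}, $\gamma_{A_{1,2}}(I_k\Delta H_2(e_1,\overline{s}))$ is comparable to $\lambda_{\gamma_A}(E_k)$ up to the volume adjustment. That quantity tends to $0$ because $\{E_k\}$ is a recovery sequence converging to $H_n(e_1,\overline{s})$. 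Corollary \ref{c:tamanini convergence} then applies on each side, since $I_k$ is a relative minimizer in $\Omega_\pm$ and compactly contained boxes avoid $\{x_2=0\}$. If only $C^{1}$ graph convergence is available, the turning-angle argument above suffices. No uniform curvature bound is required.
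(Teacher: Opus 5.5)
Your core argument matches the paper's. You integrate \eqref{e:ODE} along the graphical arc over $x_2\in[\varepsilon,R]$. There the $C^1$ control of Lemma \ref{l:properties of Ik}(5) sends the integrated curvature (the turning of the tangent) to $0$ and gives $\langle A_{1,2}\Gamma_k,\vec{\eta}_k\rangle\to d_1^2\overline{s}$ uniformly. Because the arc length stays bounded below, this forces $C^{\pm}_{k,\vec{\eta}}\to d_1^2\overline{s}$. Your explicit division by arc length is, if anything, slightly more careful than the paper's version.

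The one thing that is wrong is your side justification that $I_k\to H_2(e_1,\overline{s})$ in $L^1(\gamma_{A_{1,2}})$.
\begin{itemize}
\item Lemma \ref{l:properties of Ik}(2) bounds $\gamma_{A_{1,2}}(I_k\Delta H_2(e_1,\overline{s}))$ only from \emph{below}, by $\frac15\lambda_{\gamma_A}(E_k)$.
\item The two-sided bound in Corollary \ref{c:second symmetrization}(2) concerns $H_k$, not $I_k$.
\item Matching the masses in $\Omega_\pm$ does not stop the relative minimizer from being far from $H_k$ in $L^1$.
\end{itemize}
So the results you cite give no convergence for $I_k$.

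You can either take Lemma \ref{l:properties of Ik}(5) as given, as the paper does, or argue through the deficit as follows.
\begin{itemize}
\item $H_k$ is admissible in the first step of Definition \ref{d:Ik}, and $I_k^+$ is admissible in the second. Hence $\delta_{\gamma_{A_{1,2}}}(I_k)\le\delta_{\gamma_{A_{1,2}}}(I_k^+)\le\delta_{\gamma_{A_{1,2}}}(H_k)\le\delta_{\gamma_A}(E_k)$.
\item The right-hand side tends to $0$, because $Q(H_n(e_1,\overline{s}))<\infty$ and $\lambda_{\gamma_A}(E_k)\to0$.
\item By Theorem \ref{t:weak compactness, LSC}, every subsequential $L^1$-limit of $I_k^+$ is therefore a minimizing half-plane of mass $\alpha$.
\item On $\Omega_-$ we have $I_k^+=H_k$, and $H_k\to H_2(e_1,\overline{s})$. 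So the limit half-plane agrees with $H_2(e_1,\overline{s})$ on $\Omega_-$, which forces it to equal $H_2(e_1,\overline{s})$.
\item The second step freezes $I_k^+$ on $\Omega_+$, so the same reasoning applied there gives $I_k\to H_2(e_1,\overline{s})$.
\end{itemize}
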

\begin{proof}
    Note that for all sufficiently large $k(\varepsilon,R) \in \mathbb{N}$, by Lemma \ref{l:properties of Ik}(5) and Theorem \ref{t:regularity for VCPMs}(1), we may assume that $\Gamma^{+}_k\res[\varepsilon, R]=\text{graph}_{\partial H(\overline{s},e_1)}u_k$ for $u_k$ a $C^\infty$ function. Thus, for any such $k \in \mathbb{N}$ and any $0<\rho_1(k)<\rho_2(k)<\infty$ such that 
    \begin{align*}
        \Gamma_k^+([\rho_1, \rho_2]) \subset \{x_2 \in (\varepsilon,R)\}
    \end{align*}
    we can calculate 
    \begin{align*}
    (\Gamma^+_k)'(\rho_2(k)) - (\Gamma_k^+)'(\rho_1(k)) & = \int_{\rho_1(k)}^{\rho_2(k)}(\Gamma^+_k)''(t)dt\\
    & = \int_{\rho_1(k)}^{\rho_2(k)} C_{k,\vec{\eta}_k}^+ -\langle A\Gamma^+_k(t), \vec{\eta}_k(t)\rangle dt .
    \end{align*}
    Now, for any sequence of such $k\in \mathbb{N}$, we see by Lemma \ref{l:properties of Ik}(5) and the fact that $I_k \rightarrow H_2(e_1, \overline{s})$ that
\begin{align*}
    |(\Gamma^+_k)'(\rho_2(k)) - (\Gamma_k^+)'(\rho_1(k))|\overset{k \rightarrow \infty}{\rightarrow} 0\\
    \langle A\Gamma^+_k(t_k), \vec{\eta}_k(t_k) \rangle\overset{k \rightarrow \infty}{\rightarrow} d_1^2\overline{s}
\end{align*}
uniformly for any $t_k \in (\rho_1(k),\rho_2(k))$.  Thus, it must be that $C^{+}_{k,\vec{\eta}_k} \rightarrow d_1^2\overline{s}$, as claimed. A symmetric argument for $C^{-}_{k,\vec{\eta}_k}$ gives the same result.
\end{proof}

It will be convenient to rewrite the constants $C^{\pm}_{k,\vec{\eta}_k}$ in the following way.

\begin{definition}\label{d:ell_k}(The heights $\ell_k^{\pm}$)
    As above, let $\vec{\eta}_k$ be the outer unit normal to $\partial I_k$.  For each $k \in \mathbb{N}$, we define $\ell_k^{\pm}\in \mathbb{R}$ such that 
    \begin{align*}
       d_1^2\ell_k^{\pm} = C_{k,\vec{\eta}_k}^{\pm}.
    \end{align*}
\end{definition}

\section{Comparison Vector Fields}\label{s:preliminaries for gradient control}

Now that we have reduced to the sequence $\{I_k\}_k$, the rest of the proof rests upon analyzing curves $\Gamma$ that satisfy the ordinary differential equation \eqref{e:ODE}. In this section, we prove some preliminary results on such curves, which will be used to obtain gradient control in the next sections.  The main result is Lemma \ref{l:curvature bounds by angle}, which establishes curvature estimates based upon angle of the tangent/normal of $\Gamma$ with certain auxiliary reference vector fields $\vec{F}_k, \vec{G}_k$ (see Definition \ref{d:F,G vector field}).  These auxiliary vector fields are studied in Lemma \ref{r:description of vector fields} and  Lemma \ref{l:G derivatives bound}.

\begin{remark}\label{r:Gamma_k definition}
Following Lemma \ref{l:properties of Ik}(4), we shall use  $\Gamma_k^{\pm}:[0,\infty) \rightarrow \mathbb{R}^2$ to denote unit-speed curves parametrizing $\partial I_k \cap \Omega_-$ and $\partial I_k \cap \Omega_+$, respectively. Because the ordinary differential equations \eqref{e:ODE} which govern $\Gamma_k^{+}$ and $\Gamma_k^-$ are qualitatively identical, in the subsequent lemmata and definitions we shall only concern ourselves with $\Gamma_k^+$ in $\Omega_+= \{x_2>0\}$.  All of these results may be trivially extended to $\Gamma_k^-$ by reflection and identical arguments, \textit{mutatis mutandis} (e.g., with $C_k^-$ in place of $C_k^+$).
\end{remark}

\subsection{The vector fields $\vec{F}_k, \vec{v}_k, \vec{G}_k$}
We begin with the following definitions.

\begin{definition}\label{r:Omega def, existence of two sols}
    Let $k \in \mathbb{N}$ and $0<\varepsilon_0$.  Let $\ell_k^{\pm}$ be as in Definition \ref{d:ell_k}. We define the domains
    \begin{align*}
   \Omega_k^+&:= \{(x_1, x_2): x_2 \ge |\ell_k^+|+1\},\\
   \Omega_k^-&:= \{(x_1, x_2): x_2 \le -(|\ell_k^-|+1)\}\\
   \Omega_k&:= \Omega_k^+ \cup \Omega_k^-,\\
   \Omega_{k,\varepsilon_0}&:= \Omega_k \cap \Big(\{ x_2 >0\, ,\, \frac{|x_1-\ell_k^+|}{|x_2|} \le \varepsilon_0\} \cup \{x_2 <0\,,\,\frac{|x_1-\ell_k^-|}{|x_2|} \le \varepsilon_0\} \Big). 
\end{align*}

We define the function $f:\mathbb{R}^2 \times \mathbb{S}^1 \rightarrow \mathbb{R}$
\begin{align}\label{e:F defining eq}
        f_k(x, \vec{X}):= C_{k,\vec{\eta}_k}^+ - \langle A x, \vec{X}\rangle
\end{align}
\end{definition}

\begin{lemma}\label{l:existence of F, v}
    Let $\Omega_k \subset \mathbb{R}^n$ be as in Definition \ref{r:Omega def, existence of two sols}.  Then, the following properties hold.
    \begin{enumerate}
        \item $f_k:\Omega_k \times \mathbb{S}^1 \rightarrow \mathbb{R}$ is continuous.
        \item For any fixed $x=(x_1,x_2)\in \Omega_k$, $f_k(x, \vec{X})=0$ has exactly two solutions in $\mathbb{S}^1$,  one with $\vec{X} \cdot e_1 >0$ and another with $\vec{X} \cdot e_1 <0$.
    \end{enumerate}
\end{lemma}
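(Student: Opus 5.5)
The plan is to reduce the equation $f_k(x,\vec X)=0$ to a scalar trigonometric equation on $\mathbb{S}^1$ and count its roots. Claim (1) is immediate: $f_k(x,\vec X)=C^+_{k,\vec\eta_k}-\langle Ax,\vec X\rangle$ is affine in $x$ and linear in $\vec X$, so it is continuous on $\mathbb{R}^2\times\mathbb{S}^1$, and in particular on $\Omega_k\times\mathbb{S}^1$. Throughout I write $C:=C^+_{k,\vec\eta_k}=d_1^2\ell_k^+$ (Definition \ref{d:ell_k}).

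For claim (2), fix $x=(x_1,x_2)\in\Omega_k^+$ and write $\vec X=(\cos\theta,\sin\theta)$. Then
\begin{align*}
\langle Ax,\vec X\rangle=d_1^2x_1\cos\theta+d_2^2x_2\sin\theta=|Ax|\cos(\theta-\psi)
\end{align*}
for some angle $\psi$ determined by $x$. The equation $|Ax|\cos(\theta-\psi)=C$ has exactly two solutions $\theta=\psi\pm\arccos(C/|Ax|)$ modulo $2\pi$ precisely when $|C|<|Ax|$. This strict inequality is where the definition of $\Omega_k$ enters. Since $x_2\ge|\ell_k^+|+1$ and $d_2\ge d_1$,
\begin{align*}
|Ax|\ge d_2^2x_2\ge d_2^2(|\ell_k^+|+1)>d_1^2|\ell_k^+|=|C|.
\end{align*}
Hence there are exactly two zeros.

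To locate the zeros, I evaluate $f_k$ at $\pm e_2$: $f_k(x,e_2)=C-d_2^2x_2<0$ and $f_k(x,-e_2)=C+d_2^2x_2>0$, by the same inequality $d_2^2x_2>|C|$. In particular $\pm e_2$ are not zeros, so no zero satisfies $\vec X\cdot e_1=0$. Each of the two open half-circles $\{\vec X\cdot e_1>0\}$ and $\{\vec X\cdot e_1<0\}$ joins $e_2$ to $-e_2$, so by the intermediate value theorem each contains at least one zero. Since there are exactly two zeros in total, each half-circle contains exactly one, which gives (2).

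On $\Omega_k^-$ I will follow Remark \ref{r:Gamma_k definition} and use the constant $C^-_{k,\vec\eta_k}=d_1^2\ell_k^-$ together with $|x_2|\ge|\ell_k^-|+1$. The same computation applies verbatim, with the signs of $f_k(x,\pm e_2)$ swapped, which is harmless for the intermediate value argument. If one insists on the constant $C^+$ on $\Omega_k^-$, the only issue is whether $d_2^2|x_2|>d_1^2|\ell_k^+|$ still holds there. I expect this bookkeeping of which constant is used on which component to be the only real subtlety in the proof. I would handle it by invoking Lemma \ref{l:continuity of Ck}: both $\ell_k^\pm\to\overline{s}$, so $|\ell_k^+-\ell_k^-|<1$ for all sufficiently large $k$, and the inequality holds.
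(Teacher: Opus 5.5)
Your proof is correct. The existence and location step is the same as the paper's: you evaluate $f_k$ at $\pm e_2$ and apply the intermediate value theorem on each open half-circle. Where you genuinely differ is the bound on the number of zeros.

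The paper parametrizes the right half-circle by $X_2$, with $X_1=\sqrt{1-X_2^2}$. It then argues from the monotonicity of the two terms and the location of a critical point of $X_2\mapsto f_k(x,(\sqrt{1-X_2^2},X_2))$. The point is that a second zero would force this function to decrease and then increase, contradicting $f_k(x,e_2)<0$. This is a one-variable calculus argument that must be run on each half-circle separately, and the paper writes it rather loosely.

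You instead use the identity $d_1^2x_1\cos\theta+d_2^2x_2\sin\theta=|Ax|\cos(\theta-\psi)$. This counts all zeros on $\mathbb{S}^1$ at once, with no case analysis on the sign of $x_1$. It also makes explicit that the only input is the strict inequality $|C^{\pm}_{k,\vec{\eta}_k}|<|Ax|$. That pinpoints where the definition of $\Omega_k$ enters: the buffer $|x_2|\ge|\ell_k^\pm|+1$ together with $d_1\le d_2$. Combined with the sign change between $\pm e_2$, this gives exactly one zero in each open half-circle.

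Your primary treatment of $\Omega_k^-$, using $C^-_{k,\vec{\eta}_k}=d_1^2\ell_k^-$, matches the convention the paper adopts in Remark \ref{r:Gamma_k definition}. The fallback via Lemma \ref{l:continuity of Ck} is needed only if one reads the definition of $f_k$ literally with $C^+$ on $\Omega_k^-$. In that reading the conclusion holds only for $k$ sufficiently large, which is harmless for every later use in the paper.
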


\begin{proof}
    That $f_k$ is continuous is obvious from the definition \eqref{e:F defining eq}.  To see that for any $x=(x_1,x_2)\in \Omega_k^+$, $f_k(x, \vec{X})=0$ has at least two solutions in $\mathbb{S}^1$, we simply choose test vectors $\vec{X}=\pm e_2$, we observe that
    \begin{align*}
        f_k(x,e_2)& =\Big(d_1^2\ell_k^+ - d_2^2x_2\Big) <0\\ f_k(x, -e_2)& = \Big(d_1^2\ell_k^++d_2^2x_2\Big)>0.
    \end{align*}
    Hence, by continuity, for any $x\in \Omega_k^+$ there exist two solutions to $f_k(x,\vec{X})=0$, one with $\vec{X} \cdot e_1 >0$ and another with $\vec{X} \cdot e_1 <0$, as claimed.

    To see that for any $x=(x_1,x_2)\in \Omega_k^+$, $f_k(x, \vec{X})=0$ has at most two solutions in $\mathbb{S}^1$, we let $\vec{X}=(X_1, X_2)$ and consider $X_1 = \sqrt{1-X_2^2}$.  Note that as $X_2$ ranges from $-1$ to $1$, the quantity $-d_2^2 x_2X_2$ is strictly monotonically decreasing and the quantity $-d_1^2x_1\sqrt{1-X_2^2}$ is also strictly decreasing on half its domain.  Therefore, if there are multiple solutions on the upper half of the circle, they must occur for $X_2 \in [0,1)$.  In particular, there must be a point where
    \begin{align*}
        \frac{\partial}{\partial X_2}f_k(x, \vec{X}) & = \frac{\partial}{\partial X_2}(d_1^2\ell_k^+ -d_1^2x_1\sqrt{1-X_2^2} - d_2^2x_2X_2)\\
        & = \frac{d_1^2x_1X_2}{\sqrt{1-X_2^2}} - d_2^2x_2=0.
    \end{align*}
In particular, there is only one solution for $X_2 \in [-1, -1]$ and $X_1 > 0$.  Thus, if there is more that one solution in the upper half circle, then $f_k(x,(X_2, \sqrt{1-X_2^2}))$ decreases and then increases as $X_2$ ranges from $-1$ to $1$.  But, this contradicts that $f_k(x,e_2) =\Big(d_1^2\ell_k^+ - d_2^2x_2\Big) <0$.

An exactly similar argument applies for $X_1 = -\sqrt{1-X_2^2}$ and $f$ in $\Omega_k^-$.
\end{proof}

\begin{definition}\label{d:F,G vector field}(Auxiliary vector fields $\vec{F}_k$ and $\vec{G}_k$)
Let $f_k$ be as in \eqref{e:F defining eq} and $\Omega_k$ be as in Definition \ref{r:Omega def, existence of two sols}. For each $k \in \mathbb{N}$, we make the following definitions.
\begin{enumerate}
    \item We define vector fields $\vec{F}_k, \vec{v}_k: \Omega_k\rightarrow \mathbb{S}^1$ such that
    \begin{align}\label{e:F defining eq 1}
f(x, \vec{F}(x))= C_{k,\vec{\eta}_k}^+ - \langle A x, \vec{F}(x)\rangle \equiv 0;\qquad f(x, \vec{v}(x))= C_{k,\vec{\eta}_k}^+ - \langle A x, \vec{v}(x)\rangle\equiv 0.
    \end{align} 
    Moreover, we choose $\vec{F}_k$ to be the solution such that $ \vec{F}_k\cdot e_1 \ge 0$ in $\Omega_k$.  We let $\vec{v}_k$ be the solution such that $\vec{v}_k\cdot e_1 \le 0$ in $\Omega_k$.
    \item Writing $\vec{F}_k = (F_1, F_2)$, we define
    \begin{align*}
        \vec{G}_k(x_1,x_2) := \begin{cases}(-F_2(x_1, x_2), F_1(x_1, x_2)) \quad \text{in} \quad \Omega_k^+\\
        (F_2(x_1, x_2), -F_1(x_1, x_2)) \quad \text{in} \quad \Omega_k^-.\end{cases}
    \end{align*}
\end{enumerate}
\end{definition}

\begin{remark} We define $\vec{G}_k$ as above so that we may prove results on $\vec{G}_k$ by reflection. 

The vector field $\vec{G}_k(x_1, x_2)$ represents the \textit{tangent vector} for a curve satisfying the ODE \eqref{e:ODE} which has zero curvature at $(x_1, x_2)$. The vector fields $\vec{F}_k, \vec{v}_k$ represent \textit{normal vectors} to these curves.  

We will use the vector field $\vec{F}_k, \vec{G}_k$ as a reference frame.  The difference between the frame $(\vec{\eta}_k(t), \Gamma_k'(t))$ and the frame $(\vec{F}_k(\Gamma_k(t)), \vec{G}_k(\Gamma_k(t)))$ will give us a way to estimate the scalar curvature of $\Gamma_k$. See Lemma \ref{l:curvature bounds by angle}.
\end{remark}

\begin{lemma}[Basic Properties of $\vec{F}_k, \vec{G}_k$]\label{r:description of vector fields}
    Let $A, A_{1,2}$ be as in Definition \ref{d:standing assumptions}.  Let $\vec{F}_k(x_1, x_2) = (F_1(x_1,x_2), F_2(x_1,x_2))$ as in Definition \ref{d:F,G vector field}. Let $\Omega_k$ as in Remark \ref{r:Omega def, existence of two sols}. Let $C_{F_2}:=C_{F_2}(x_1,x_2)$ and  $C_{F_1}:=C_{F_1}(x_1,x_2)$ such that $C_{F_1}, C_{F_2}\in [1/2, 1]$ and $\sqrt{1-F_i(x_1,x_2)^2} = 1-C_{F_i}F_i(x_1,x_2)^2$ for $i=1, 2$.
    
    Then, the vector fields $\vec{F}_k$ satisfy the following properties. For $\ell_k^{\pm}$ as in Definition \ref{d:ell_k},
\begin{enumerate}
    \item {\rm (Coordinate formula)}
    \begin{align*}
        F_2(x_1,x_2) &=
            \frac{1- \sqrt{1-4(C_{F_2}\frac{x_1}{x_2}\frac{d_1^4}{d_2^4}\frac{\ell_k^\pm-x_1}{x_2})}}{2C_{F_2}\frac{d_1^2x_1}{d_2^2x_2}}.\\
     F_1(x_1,x_2) &=\frac{\frac{d_1^2}{d_2^2}\frac{x_1}{x_2} + \sqrt{\frac{d_1^4}{d_2^4}\frac{x_1^2}{x_2^2}-4C_{F_1}(\frac{d_1^2}{d_2^2}\frac{\ell_k^\pm}{x_2}-1)}}{2C_{F_1}}.
\end{align*}
\item {\rm (Estimates I)} For any $0<\varepsilon_0\le 10^{-1}$, if we additionally impose that $(x_1, x_2)$ such that $|\frac{\ell_k^+ - x_1}{x_2}| \le \varepsilon_0$, then there is a function $C(\varepsilon_0) \in (1, \infty)$ such that $C(\varepsilon_0) \searrow 1$ as $\varepsilon_0 \rightarrow 0^+$ and 
\begin{align}\label{e:F_2 estimate}
    \frac{d_1^2}{d_2^2} \frac{|\ell_k^+ - x_1|}{x_2}\le |F_2(x_1,x_2)| \le \frac{d_1^2}{d_2^2}\frac{|\ell_k^+ - x_1|}{x_2}C(\varepsilon_0)
\end{align}
\item {\rm(Estimates II)} The following statements hold.
\begin{enumerate}
    \item If $\ell_k^+ <0$, then for all $x \in \Omega_k^+ \cap \{x_1 \in [\ell_k^+, 0]\}$, $\frac{d_1^2}{d_2^2}\frac{\ell_k^+}{x_2} \le F_2(x_1, x_2)$.
    \item For all $x \in \Omega_k^+ \cap \{x_1 \ge 0\}$, we have
\begin{align*}
 \cos\Big(\pi - \left(\frac{-d_1^2\ell_k^+ + d_2^2x_2}{d_1^2x_1 + d_2^2x_2 \frac{\pi}{4}}\right)\Big)\le F_2(x_1, x_2).
\end{align*}
\item For $x \in \Omega_k^+\cap \{x_1 < \ell_k^+\}$, we have
\begin{align*}
F_2(x_1, x_2) \le 
 \cos\left(\frac{d_2^2x_2 - d_1^2\ell_k^+}{d_2^2x_2 \frac{\pi}{4}-d_1^2x_1}\right).
\end{align*} Corresponding estimates with the inequalities reversed hold in $\Omega^-_k$ with $\ell_k^-$ in place of $\ell_k^+$.
\end{enumerate}
\end{enumerate}
\end{lemma}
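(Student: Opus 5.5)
The plan is to work directly from the defining equation $f_k(x,\vec F_k(x))=0$ in $\Omega_k^+$. Writing $\vec F_k=(F_1,F_2)$ and $C^+_{k,\vec\eta_k}=d_1^2\ell_k^+$ (Definition \ref{d:ell_k}), it reads
\begin{align*}
d_1^2\ell_k^+ - d_1^2x_1F_1 - d_2^2x_2F_2 = 0,\qquad F_1^2+F_2^2=1,\qquad F_1\ge 0 .
\end{align*}
Lemma \ref{l:existence of F, v} supplies existence and uniqueness of this solution. Everything then reduces to eliminating one coordinate and analysing the resulting scalar equation. The case $\Omega_k^-$ follows by the reflection $x_2\mapsto -x_2$ built into Definition \ref{d:F,G vector field}, with $\ell_k^-$ in place of $\ell_k^+$.

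\textbf{Step 1 (coordinate formulas).} To get $F_2$, substitute $F_1=\sqrt{1-F_2^2}=1-C_{F_2}F_2^2$ into the equation. After dividing by $d_2^2x_2$, this gives the quadratic
\begin{align*}
C_{F_2}\tfrac{d_1^2x_1}{d_2^2x_2}F_2^2 - F_2 + \tfrac{d_1^2}{d_2^2}\tfrac{\ell_k^+-x_1}{x_2}=0 .
\end{align*}
Solve it with the quadratic formula, choosing the minus branch because $|F_2|\le 1$ and the relevant root must tend to $0$ as $(\ell_k^+-x_1)/x_2\to 0$. For $F_1$, substitute $F_2=\pm\sqrt{1-F_1^2}=\pm(1-C_{F_1}F_1^2)$ instead and solve the analogous quadratic in $F_1$, taking the plus branch so that $F_1\ge 0$. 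The constants $C_{F_i}\in[1/2,1]$ exist by the elementary inequality $1-y\le\sqrt{1-y}\le 1-y/2$ for $y\in[0,1]$. I would state the formulas as implicit identities, since $C_{F_i}$ depend on $x$. The discriminant is positive in $\Omega_k$ because $x_2\ge|\ell_k^+|+1$.

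\textbf{Step 2 (Estimates I).} Set $\tau=(\ell_k^+-x_1)/x_2$ with $|\tau|\le\varepsilon_0$. Then $x_1/x_2=\ell_k^+/x_2-\tau$ is bounded by $1+\varepsilon_0$, since $|\ell_k^+|/x_2<1$. Expand $\frac{1-\sqrt{1-4ab}}{2a}=b\,(1+O(ab))$, with $a=C_{F_2}d_1^2x_1/(d_2^2x_2)$ bounded and $b=\frac{d_1^2}{d_2^2}\tau$ small. This yields $|F_2|=\frac{d_1^2}{d_2^2}|\tau|\,C(\varepsilon_0)$ with $C(\varepsilon_0)\to 1$.

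The lower bound $|F_2|\ge\frac{d_1^2}{d_2^2}|\tau|$ must hold exactly, not just asymptotically, so I will read it off the equation directly instead of the expansion:
\begin{align*}
d_2^2x_2F_2 = d_1^2(\ell_k^+-x_1) + d_1^2x_1(1-F_1) .
\end{align*}
If the correction term has the same sign as $\ell_k^+-x_1$ the bound is immediate. Otherwise I need a sign argument on $x_1$ relative to $\ell_k^+$. \textbf{This sign bookkeeping is the step I expect to be the main obstacle.} The first thing I would try is to compare $x_1(1-F_1)=x_1C_{F_1}F_2^2$, which is $O(\tau^2)$, against the linear term $\ell_k^+-x_1$.

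\textbf{Step 3 (Estimates II).} This is a monotonicity/intermediate value argument on the circle. Parametrize $\vec X=(\sin\theta,-\cos\theta)$, so that $X_2=\cos(\pi-\theta)$, and study $g(\theta)=f_k(x,\vec X)$ on the half-circle with $X_1\ge0$. By the proof of Lemma \ref{l:existence of F, v}, $g$ has a single zero there, and $g>0$ at $-e_2$ and $g<0$ at $e_2$. So to show $F_2\ge \cos(\pi-\theta_0)$ it suffices to show $g(\theta_0)\ge 0$, and the reverse inequality needs the reverse sign.

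\begin{itemize}
\item[(a)] With $\ell_k^+<0$ and $x_1\in[\ell_k^+,0]$, the test vector with $X_2=\frac{d_1^2\ell_k^+}{d_2^2x_2}$ gives $g=d_1^2(\ell_k^+-x_1X_1)-d_1^2\ell_k^+=-d_1^2x_1X_1\ge 0$, which yields (a).
\item[(b)] For $x_1\ge0$, use $\sin\theta\le\theta$ together with the lower bound $1-\cos\theta\ge \theta\cdot\frac{2}{\pi}\cdot\frac{\pi}{4}\cdot$(const) on $[0,\pi/2]$. Then $g(\theta)\ge d_1^2\ell_k^+-d_1^2x_1\theta+d_2^2x_2-d_2^2x_2\frac{\pi}{4}\theta$, which is nonnegative exactly at the stated $\theta_0$.
\item[(c)] For $x_1<\ell_k^+$, run the symmetric estimate near $e_2$ with the opposite signs.
\end{itemize}

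The checks that $\theta_0\in[0,\pi]$, using $x_2\ge|\ell_k^+|+1$, are routine.
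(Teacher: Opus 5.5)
You flagged the exact lower bound $|F_2|\ge\frac{d_1^2}{d_2^2}|\tau|$ in \eqref{e:F_2 estimate} as the main obstacle, and it is a genuine gap. No sign bookkeeping will close it, because the bound is false.

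Your identity $d_2^2x_2F_2=d_1^2(\ell_k^+-x_1)+d_1^2x_1C_{F_2}F_2^2$ (note $1-F_1=C_{F_2}F_2^2$) gives
$|F_2|-\frac{d_1^2}{d_2^2}|\tau|=\mathrm{sgn}(\tau)\,\frac{d_1^2}{d_2^2}\frac{x_1}{x_2}\,C_{F_2}F_2^2$.
So the exact bound holds only when $x_1$ lies between $0$ and $\ell_k^+$, and it fails strictly everywhere else in the cone. For instance, $d_1=d_2$, $\ell_k^+=0$, $x=(\tfrac12,10)$ gives $F_2=-1/\sqrt{401}$. Then $|F_2|<\tfrac1{20}=\frac{|\ell_k^+-x_1|}{x_2}$, even though $|\tau|\le 10^{-1}$.

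Comparing the $O(\tau^2)$ term with the linear one can therefore only give $|F_2|\ge(1-O(\varepsilon_0))\frac{d_1^2}{d_2^2}|\tau|$. The paper's own justification (``follows immediately by Taylor expansion'') likewise yields only
$C(\varepsilon_0)^{-1}\frac{d_1^2}{d_2^2}|\tau|\le|F_2|\le C(\varepsilon_0)\frac{d_1^2}{d_2^2}|\tau|$,
and your expansion already proves this. State the estimate in that form. The one later use of the lower bound is $\bigl|\frac{d_1^2}{d_2^2}\tau/(F_2C_{F_2})\bigr|\le 2$ in Lemma \ref{l:G derivatives bound}; it survives with $2C(\varepsilon_0)$ in place of $2$.

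In Step 3(b), your displayed lower bound on $g$ is right, but two things need fixing. It uses the upper bound $1-\cos\theta\le\frac{\pi}{4}\theta$, not a lower bound. That inequality holds on all of $[0,\pi]$, which matters because the threshold can exceed $\pi/2$.

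More importantly, since $g(0)=d_1^2\ell_k^++d_2^2x_2$, your bound vanishes at $\theta_0'=\frac{d_1^2\ell_k^++d_2^2x_2}{d_1^2x_1+d_2^2x_2\pi/4}$. This is not the stated $\theta_0$, which has $-d_1^2\ell_k^+$ in the numerator. The stated inequality follows from yours only when $\ell_k^+\ge 0$, and it is false for $\ell_k^+<0$. With $d_1=d_2$, $\ell_k^+=-1$, $x=(0,2)$ one has $F_2=-\tfrac12$, while $\cos(\pi-6/\pi)\approx 0.33$. The paper's proof makes the same sign slip. Downstream, in Porism \ref{porism graphicality}, only $F_2>-1$ is used, and your corrected version gives that.

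Mirroring the argument for (c) needs $-x_1\sin\psi\le -x_1\psi$, i.e.\ $x_1\le 0$; there it gives exactly the stated threshold. For $0<x_1<\ell_k^+$ you only get the threshold $\frac{d_2^2x_2-d_1^2\ell_k^+}{d_2^2x_2\pi/4}$, and the stated bound can fail. For example, $d_1=d_2$, $\ell_k^+=9$, $x=(7.5,10)$ gives $F_2\approx 0.16$, while $\cos c\approx -0.95$.

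The rest of the proposal is sound. Your test-vector proof of (a) is correct, works for all $x_1\le 0$, and is cleaner than the paper's unproved monotonicity argument. Your $\pm$ in $F_2=\pm\sqrt{1-F_1^2}$ is also correct: the displayed $F_1$ formula holds only where $F_2\ge 0$.
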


\begin{proof}
By definition of $\ell_k^{\pm}$ and $\vec{F}_k$, in $\Omega_k^{\pm}$
\begin{align*}
    0 & = d_1^2(\ell_k^\pm - x_1F_1) - d_2^2x_2F_2\\
    & = d_1^2(\ell_k^\pm - x_1\sqrt{1-F_2^2}) - d_2^2x_2F_2.
\end{align*}
Now, since $1-x \le \sqrt{1-x} \le 1-\frac{1}{2}x$ for all $x \in [0,1]$, we have that $\sqrt{1-F_2^2} = 1-C_{F_2}F_2^2$ for $C_{F_2}\in [1/2, 1]$.  Therefore, in $\Omega_k^{\pm}$
\begin{align*}
0 & = d_1^2(\ell_k^{\pm} - x_1\sqrt{1-F_2^2}) - d_2^2x_2F_2 \\
    & = d_1^2(\ell_k^{\pm} - x_1) + C_{F_2}d_1^2x_1F_2^2 -     d_2^2x_2F_2\\
   0 & = \frac{d_1^2}{d_2^2}\frac{\ell_k^{\pm}-x_1}{x_2} - F_2 + C_{F_2}\frac{d_1^2x_1}{d_2^2x_2}F_2^2.
\end{align*}
Thus, by the quadratic formula
\begin{align*}
    F_2(x_1, x_2) &= \frac{1\pm \sqrt{1-4(C_{F_2}\frac{d_1^2x_1}{d_2^2x_2}\frac{d_1^2}{d_2^2}\frac{\ell_k^{\pm}-x_1}{x_2})}}{2C_{F_2}\frac{d_1^2x_1}{d_2^2x_2}}.
\end{align*}
Now, since by definition of $\vec{F}_k$ it must be that $F_2 \equiv 0$ on $\Omega_k \cap \big(\{x_2<0\,,\, x_1 =\ell_k^{-}\} \cup \{x_2>0\,,\, x_1 =\ell_k^{+}\}\big)$, we have
\begin{align*}
        F_2(x_1,x_2) &=
            \frac{1- \sqrt{1-4(C_{F_2}\frac{d_1^2x_1}{d_2^2x_2}\frac{d_1^2}{d_2^2}\frac{\ell_k^{\pm}-x_1}{x_2})}}{2C_{F_2}\frac{d_1^2x_1}{d_2^2x_2}}.
\end{align*}

By an exactly similar argument, we calculate for $F_1(x_1, x_2)$
\begin{align*}
    0 & = d_1^2(\ell_k^{\pm} - x_1F_1) - d_2^2x_2F_2\\
 & = d_1^2(\ell_k^{\pm} - x_1F_1) - d_2^2x_2\sqrt{1-F_1^2} \\
    & = d_1^2\ell_k^{\pm} - d_1^2x_1F_1 + C_{F_1}d_2^2x_2F_1^2 - d_2^2x_2\\
  0  & = (\frac{d_1^2}{d_2^2}\frac{\ell_k^{\pm}}{x_2} - 1) -\frac{d_1^2}{d_2^2}\frac{x_1}{x_2}F_1 + C_{F_1}F_1^2.
\end{align*}
Thus, using the quadratic formula and the fact that on $\Omega_k \cap \big(\{x_2<0\,,\, x_1 =\ell_k^{-}\} \cup \{x_2>0\,,\, x_1 =\ell_k^{+}\}\big)$, $F_1 \equiv 1$, we may let $|x_2| \rightarrow 0$ to see that
\begin{align*}
    F_1(x_1,x_2) &=\frac{\frac{d_1^2}{d_2^2}\frac{x_1}{x_2} + \sqrt{\frac{d_1^4}{d_2^4}\frac{x_1^2}{x_2^2}-4C_{F_1}(\frac{d_1^2}{d_2^2}\frac{\ell_k^{\pm}}{x_2}-1)}}{2C_{F_1}}.
\end{align*}

The estimates on $F_2$ in \eqref{e:F_2 estimate}, follow immediately by Taylor expansion.

To see the remaining estimates, we observe that for $(x_1, x_2) \in \Omega_k^+$ and $x_1 \in (\ell_k^{+},0]$ (when this interval is non-empty), $F_2(x_1, x_2)<0$ and is monotonically decreasing as $x_1 \nearrow 0$.  Therefore, we easily estimate that for $x_1 \in [\ell_k^{+},0]$
\begin{align*}
    F_2(x_1, x_2) \ge \frac{d_1^2}{d_2^2} \frac{\ell_k^{+}}{x_2}.
\end{align*}
Now, consider $x_1 \in (0,\infty)$.  Identifying $\mathbb{S}^{1}$ with $[0, 2\pi)$ such that $e_2 \mapsto 0$ and $e_1 \mapsto \pi/2$, etc., we wish to find a $c(x_1, x_2) \in (0,\frac{\pi}{2})$ such that $f((x_1, x_2), \pi - c(x_1, x_2))<0$.  This will ensure that $\cos(\pi-c(x_1, x_2))<F_2(x_1, x_2)$. That is, we want a $c(x_1, x_2)$ such that
\begin{align*}
    f(x, \pi) - f(x, \pi-c) = \int_{\pi-c}^\pi \partial_{\theta}f(x, \theta)\, d\theta = \int_{\pi-c}^\pi -d_1^2 x_1 \cos(\theta) + d_2^2x_2 \sin(\theta) \, d\theta \le f(x, \pi).
\end{align*}
Now, we calculate
\begin{align*}
    \int_{\pi-c}^\pi -d_1^2 x_1 \cos(\theta) + d_2^2x_2 \sin(\theta) \, d\theta & \le d_1^2x_1 c + d_2^2x_2 \int_{\pi-c}^\pi \pi-\theta\, d\theta \\
    & \le d_1^2x_1 c + d_2^2x_2 \frac{1}{2}c^2\\
    & \le d_1^2x_1 c + d_2^2x_2 \frac{\pi}{4}c,
\end{align*}
where we have used our assumption that $c \le \frac{\pi}{2}$ in the last line. Now, we observe that
\begin{align*}
d_1^2x_1 c + d_2^2x_2 \frac{\pi}{4}c \le -d_1^2x_1\ell_k^{+} + d_2^2x_2 \quad \longleftrightarrow \quad  c \le \frac{-d_1^2x_1\ell_k^{+} + d_2^2x_2}{d_1^2x_1 + d_2^2x_2 \frac{\pi}{4}}.
\end{align*}
In particular, we may take $c= \frac{-d_1^2x_1\ell_k^{+} + d_2^2x_2}{d_1^2x_1 + d_2^2x_2 \frac{\pi}{4}}$ and obtain that $f(x, \pi-c)>0$. This gives the claim.

The final claim follows by the same argument.  We wish to find a $c(x_1, x_2) \in (0,\frac{\pi}{2})$ such that
\begin{align*}
 f(x, c) - f(x, 0)= \int_{\pi-c}^\pi \partial_{\theta}f(x, \theta)\, d\theta = \int_{\pi-c}^\pi -d_1^2 x_1 \cos(\theta) + d_2^2x_2 \sin(\theta) \, d\theta \le -f(x, 0).
\end{align*}
This will guarantee that $f(x, c)\le 0$ and give us a lower bound on $F_2(x)$.  Proceeding as above, we calculate
\begin{align*}
    \int_{0}^c -d_1^2 x_1 \cos(\theta) + d_2^2x_2 \sin(\theta) \, d\theta & \le -d_1^2x_1 c + d_2^2x_2 \int_{0}^c \theta\, d\theta \\
    & \le -d_1^2x_1 c + d_2^2x_2 \frac{1}{2}c^2 \le -d_1^2x_1 c + d_2^2x_2 \frac{\pi}{4}c.
\end{align*}
Hence, we may take $c(x_1, x_2) \le \frac{d_2^2x_2 - d_1^2\ell_k^{+}}{d_2^2x_2 \frac{\pi}{4}-d_1^2x_1}$. 

The arguments for $ \Omega_k^-$ are symmetric, with inequalities reversed.
\end{proof}

\begin{lemma}[Technical Lemma on $\vec{G}_k$]\label{l:G derivatives bound}
    Let $A, A_{1,2}$ as in Definition \ref{d:standing assumptions}.  Let $0< \varepsilon_0 \le 10^{-1}$ and $\Omega_k, \Omega_{k,\varepsilon_0}$ be as in Definition \ref{r:Omega def, existence of two sols}.  Let $\vec{G}_k$ be the vector field defined in Definition \ref{d:F,G vector field}.  
    
    For all $k\in \mathbb{N}$ sufficiently large and for all $0<\varepsilon_0$ sufficiently small, we have that for all $\vec{w} \in \mathbb{S}^1$ and $x= (x_1, x_2) \in \Omega_{k, \varepsilon_0}$
\begin{align}\label{e:G derivative estimate}
    \Big|\frac{\partial}{\partial w} \vec{G}_k(x_1, x_2)\Big| \le  5 \frac{d_1^2}{d_2^2}\left(\frac{1}{x_2}+C(d_1, d_2)\frac{|x_1|}{|x_2|}\Big|\frac{\ell_k - x_1}{x_2}\Big|\right),
\end{align}
where we may take $C(d_1, d_2) \le 4(\frac{d_1^4}{d_2^4} + \frac{d_1^6}{d_2^6})(1+ 2\frac{1}{|x_2|})$.
\end{lemma}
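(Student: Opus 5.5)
Since $\vec{G}_k$ is a rotation of $\vec{F}_k$ by $\pm\pi/2$, we have $|\partial_w \vec{G}_k| = |\partial_w \vec{F}_k|$, so it suffices to bound the directional derivatives of $\vec{F}_k$. Because $\vec{F}_k$ takes values in $\mathbb{S}^1$, $\partial_w\vec{F}_k$ is perpendicular to $\vec{F}_k$, i.e. parallel to $\vec{G}_k$, so $|\partial_w\vec{F}_k| = |\partial_w\theta|$ where $\vec{F}_k = (\cos\theta,\sin\theta)$ in a suitable angular coordinate. We compute $\partial_w\theta$ by implicit differentiation of the defining equation $f_k(x,\vec{F}_k(x)) = d_1^2\ell_k^+ - d_1^2x_1F_1 - d_2^2x_2F_2 \equiv 0$ from Definition \ref{d:F,G vector field}. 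Differentiating along $w=(w_1,w_2)$ gives
\begin{align*}
    0 = -\big(d_1^2 w_1F_1 + d_2^2 w_2 F_2\big) - \big\langle Ax, \partial_w\vec{F}_k\big\rangle ,
\end{align*}
and writing $\partial_w\vec{F}_k = (\partial_w\theta)\,\vec{G}_k$ (up to the sign convention in $\Omega_k^\pm$) we get
\begin{align*}
    |\partial_w\theta| = \frac{|d_1^2 w_1F_1 + d_2^2w_2F_2|}{|\langle Ax, \vec{G}_k\rangle|}.
\end{align*}
This formula is legitimate because, by Lemma \ref{l:existence of F, v}, the zero of $f_k(x,\cdot)$ is simple, so the implicit function theorem applies.

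For the denominator, note $\langle Ax,\vec{G}_k\rangle = -d_1^2x_1F_2 + d_2^2x_2F_1$ in $\Omega_k^+$. On $\Omega_{k,\varepsilon_0}$, Lemma \ref{r:description of vector fields}(2) gives $|F_2|\le C(\varepsilon_0)\frac{d_1^2}{d_2^2}\frac{|\ell_k^+-x_1|}{x_2}\le C(\varepsilon_0)\frac{d_1^2}{d_2^2}\varepsilon_0$. Hence $F_1\ge 1-F_2^2$ is close to $1$, and $|x_1|\le|\ell_k^+|+\varepsilon_0 x_2\le (1+\varepsilon_0)x_2$ since $x_2\ge|\ell_k^+|+1$. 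So the denominator is at least $d_2^2x_2\big(1-O(\varepsilon_0\tfrac{d_1^4}{d_2^4})\big)\ge \tfrac12 d_2^2x_2$ for $\varepsilon_0$ small. For the numerator, bound $|d_1^2w_1F_1|\le d_1^2$ and $|d_2^2w_2F_2|\le d_2^2|F_2|\le C(\varepsilon_0) d_1^2\frac{|\ell_k^+-x_1|}{x_2}$. Dividing yields a bound of the shape $2\frac{d_1^2}{d_2^2}\big(\frac{1}{x_2} + C\frac{|\ell_k^+-x_1|}{x_2^2}\big)$.

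The main obstacle is matching the precise form of \eqref{e:G derivative estimate}, whose second term carries the factor $\frac{|x_1|}{|x_2|}$ and an explicit $C(d_1,d_2)$ with the $(1+2/|x_2|)$ correction. To recover it, I would keep the $x_1F_2$ contribution in the denominator rather than discarding it. I would also expand $F_1 = 1 - C_{F_1}F_2^2$ using the coordinate formula of Lemma \ref{r:description of vector fields}(1), producing terms like $\frac{d_1^4}{d_2^4}\frac{|x_1|}{x_2}\frac{|\ell_k-x_1|}{x_2}$ and higher-order $\frac{d_1^6}{d_2^6}$ terms from $F_2^2$. The $1/|x_2|$ correction arises from $|\ell_k|\le x_2$ when replacing $\ell_k$ by $x_1$. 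Lemma \ref{l:continuity of Ck} keeps $\ell_k$ bounded for large $k$, which fixes the smallness thresholds uniformly. Bounding each term crudely by the stated constants, with the slack $5$ absorbing $2$ and the $C(\varepsilon_0)$ factors, gives the claim; $\Omega_k^-$ follows by the reflection built into Definition \ref{d:F,G vector field}.
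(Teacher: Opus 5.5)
Your argument is correct, and it takes a genuinely different and much shorter route than the paper.

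\textbf{The paper's route.} The paper never obtains a closed formula. It proceeds in three reductions:
\begin{enumerate}
\item It uses $|\vec G_k|=1$ and $|G_1|=|F_2|\ll 1$ to reduce $|\partial_w\vec G_k|$ to $2|\partial_w G_1|$.
\item It uses the isoclines of $F_2$ (directions with $d_1^2w_1F_1+d_2^2w_2F_2=0$) to reduce further to $2|\partial_{e_1}G_1|$.
\item It differentiates the quadratic-formula expression for $F_2$ from Lemma~\ref{r:description of vector fields} in $x_1$. This includes the implicitly defined coefficient $C_{F_2}$, which makes the identity self-referential and forces a Taylor expansion of four auxiliary terms.
\end{enumerate}
The term $C(d_1,d_2)\frac{|x_1|}{|x_2|}\bigl|\frac{\ell_k-x_1}{x_2}\bigr|$ in the statement is an artifact of that expansion. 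The requirement $\Omega_{k,\varepsilon_0}\subset\{|x_2|\ge|x_1|\}$ is where ``$k$ large'' enters the paper's proof.

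\textbf{Your route.} Implicitly differentiating $f_k(x,\vec F_k(x))=0$ on $\mathbb{S}^1$ gives the exact identity
\[
|\partial_w\vec G_k|=\frac{|d_1^2w_1F_1+d_2^2w_2F_2|}{|d_2^2x_2F_1-d_1^2x_1F_2|}.
\]
It needs only $|F_2|\le C(\varepsilon_0)\frac{d_1^2}{d_2^2}\varepsilon_0$ and $|x_1|\le(1+\varepsilon_0)x_2$, and the latter holds for every $k$. Moreover, the $\theta$-derivative of $f_k(x,\cdot)$ at $\vec F_k(x)$ is exactly minus this denominator. So your lower bound $\frac12 d_2^2x_2$ already supplies the nondegeneracy for the implicit function theorem, without appealing to the count of zeros.

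\textbf{Your last paragraph addresses an obstacle that does not exist.} Since $|\ell_k^+-x_1|\le\varepsilon_0x_2$ on $\Omega_{k,\varepsilon_0}$, your bound already gives
\[
|\partial_w\vec G_k|\le 2\bigl(1+C(\varepsilon_0)\varepsilon_0\bigr)\frac{d_1^2}{d_2^2}\frac{1}{x_2}\le 5\frac{d_1^2}{d_2^2}\frac{1}{x_2}.
\]
This is at most the right-hand side of the statement, because the second term there is nonnegative. So you can simply stop there and drop the rather vague plan of keeping the $x_1F_2$ term and expanding $F_1=1-C_{F_1}F_2^2$.

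What your route buys is a sharper, direction-uniform bound of order $\frac{d_1^2}{d_2^2}\frac{1}{x_2}$. That suffices for the later comparison arguments and avoids all of the paper's bookkeeping.
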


\begin{proof}
Fix $k \in \mathbb{N}$ and write $\vec{G}_k = (G_1, G_2)$.  Since $|G_1(x_1,x_2)| = |F_2(x_1,x_2)|$ in $\Omega_k^\pm$ by Definition \ref{d:F,G vector field}, we have by Lemma \ref{r:description of vector fields} that for all $0<\varepsilon_0 \le 10^{-1}$ sufficiently small and any $x \in \Omega_{k, \varepsilon_0}$
\begin{align}\label{e:techical lemma for G, G1 F2 small est}
     \frac{d_1^2}{d_2^2}|\frac{\ell_k^{\pm} - x_1}{x_2}|\le |G_1(x_1,x_2)| \le \frac{d_1^2}{d_2^2}\varepsilon_0C(\varepsilon_0)<2\varepsilon_0< \frac{1}{5}.
\end{align}
Since $|\vec{G}_k|\equiv 1$ in $\Omega_k$, we have that for any $\vec{w} \in \mathbb{S}^{1}$, $\frac{\partial}{\partial \vec{w}}\vec{G}_k \cdot \vec{G}_k = 0$.  In particular, 
\begin{align*}
    \frac{|\left(\frac{\partial}{\partial \vec{w}}G_2\right)|}{|\left(\frac{\partial}{\partial \vec{w}}G_1\right)|}<2\varepsilon_0< \frac{1}{5}
\end{align*}
for all $\vec{w}\in \mathbb{S}^{1}$, all $x \in \Omega_{k, \varepsilon_0}$, and all $0<\varepsilon_0<10^{-1}$ sufficiently small.  Therefore, under these conditions, we may estimate 
\begin{align*}
    |\frac{\partial}{\partial \vec{w}}\vec{G}_k(x_1, x_2)| \le 2|\frac{\partial}{\partial \vec{w}}G_1(x_1,x_2)| \qquad \forall \vec{w} \in \mathbb{S}^{1}.
\end{align*}

Now, we reduce to considering $e_1$, instead of any $\vec{w}\in \mathbb{S}^1$. Let $x \in \Omega_{k, \varepsilon_0}$ and $\vec{w}(x, t) \in \mathbb{S}^1$ such that for all sufficiently small $0<|t|$
\begin{enumerate}
    \item $x+t\vec{w}\in \Omega_{k, \varepsilon_0}$,
    \item $\vec{F}_k(x+t\vec{w}) = \vec{F}_k(x)$.
\end{enumerate}
Then, we calculate $\vec{w}(x) := \lim_{t \rightarrow 0}\vec{w}(x,t)$ as follows.  Since
\begin{align*}
    0 & = d_1^2(\ell_k - x_1F_1(x)) - d_2^2x_2F_2(x)\\
    & = d_1^2(\ell_k - (x_1 + tw_1(x,t))F_1(x)) - d_2^2(x_2+t w_2(x,t))F_2(x),
\end{align*}
we may take the difference quotient and let $t \rightarrow 0$ to obtain
\begin{align*}
0 & = \Big[d_1^2w_1(x)F_1(x) + d_2^2 w_2(x)F_2(x)\Big].
\end{align*}
Since isoclines of $\vec{F}_k$ are the isoclines of $F_2$, we have that the isocline of $G_1$ at $x \in \Omega_{k,\varepsilon_0}$ is tangent to $\vec{w}(x)$. Therefore, using our previous estimates on the ratio of $F_1$ and $F_2$, by taking $0<\varepsilon_0$ sufficiently small (now depending upon $\frac{d_2}{d_1}$), we may further guarantee that 
\begin{align*}
    \frac{|w_1(x)|}{|w_2(x)|} &= \frac{d_2^2|F_2(x)|}{d_1^2|F_1(x)|} \le \frac{d_2^2}{d_1^2} \varepsilon_0C(\varepsilon_0)\frac{1}{\sqrt{1-4\varepsilon_0^2}} \le \frac{1}{2}.
\end{align*}
In particular, 
\begin{align*}
    \frac{|\vec{w}^{\perp}(x) \cdot e_2|}{|\vec{w}^{\perp}(x) \cdot e_1|} = \frac{|w_1(x)|}{|w_2(x)|} \le \frac{1}{2}.
\end{align*}
This, and the fact that $\vec{w}$ is tangent to the isoclines, gives that for any $\vec{u} \in \mathbb{S}^1$ 
\begin{align*}
    |\frac{\partial}{\partial \vec{u}}G_1(x)| \le |\frac{\partial}{\partial \vec{w}^{\perp}(x)}G_1(x)| \le 2|\frac{\partial}{\partial e_1}G_1(x)|.
\end{align*}

Now that we have reduced our problem to calculating $|\frac{\partial}{\partial e_1}G_1(x_1,x_2)|$, we make the following preliminary calculation.
\begin{align}\nonumber
    \frac{\partial}{\partial e_1}C_{F_2} & = \frac{\partial}{\partial e_1}\Big(\frac{1-\sqrt{1-F_2^{2}}}{F_2^2}\Big)\\ \nonumber
    & =\frac{\frac{F_2}{\sqrt{1-F_2^2}}F_2^{2} - (1-\sqrt{1-F_2^2})2F_2}{F_2^{4}}\frac{\partial}{\partial e_1}F_2\\ \label{e:CF derivative}
    & = \frac{1}{F_2\sqrt{1-F_2^2}} + \frac{2C_{F_{2}}}{F_2}\frac{\partial}{\partial e_1}G_1.
\end{align}
Next, we calculate directly that
\begin{align*}
        \frac{\partial}{\partial e_1}G_1(x_1,x_2) &=
            \frac{\partial}{\partial x_1}\Big(-\frac{1- \sqrt{1-4(C_{F_2}\frac{x_1}{x_2}\frac{d_1^4}{d_2^4}\frac{\ell_k^{\pm}-x_1}{x_2})}}{2C_{F_2}\frac{x_1}{x_2}\frac{d_1^2}{d_2^2}}\Big)\\
            & = \frac{\frac{1}{2}(1-4C_{F_2}\frac{x_1}{x_2}\frac{d_1^4}{d_2^4}(\frac{\ell_k^{\pm}-x_1}{x_2}))^{-\frac{1}{2}}\Big[-4\frac{d_1^4}{d_2^4}C_{F_2}(\frac{1}{x_2}(\frac{\ell_k^{\pm} - x_1}{x_2}) - \frac{x_1}{x_2^2}) + (-4\frac{d_1^4}{d_2^4}\frac{x_1}{x_2}(\frac{\ell_k^{\pm}-x_1}{x_2}))\frac{\partial}{\partial x_1}C_{F_2}\Big]}{2C_{F_2}\frac{x_1}{x_2}\frac{d_1^2}{d_2^2}}\\
            & \qquad - \frac{-(1- \sqrt{1-4(C_{F_2}\frac{x_1}{x_2}\frac{d_1^4}{d_2^4}\frac{\ell_k^{\pm}-x_1}{x_2})})(2C_{F_2}\frac{1}{x_2}\frac{d_1^2}{d_2^2} + 2\frac{\partial}{\partial x_1}C_{F_2}\frac{x_1}{x_2}\frac{d_1^2}{d_2^2})}{4C_{F_2}^2\frac{x_1^2}{x_2^2}\frac{d_1^4}{d_2^4}}.
\end{align*}
Recalling \eqref{e:CF derivative}, we have 
\begin{align*}
   \frac{\partial}{\partial e_1}G_1(x_1,x_2) & = \text{A}(\frac{2C_{F_2}}{F_2})\frac{\partial}{\partial e_1}G_1(x_1,x_2) + \text{A}(\frac{1}{F_2\sqrt{1-F_2^2}}) + \text{B}\\
   & \qquad + \text{C}(\frac{2C_{F_2}}{F_2})\frac{\partial}{\partial e_1}G_1(x_1,x_2) + \text{C}(\frac{1}{F_2\sqrt{1-F_2^2}})+ \text{D},
\end{align*}
where
\begin{align*}
   \text{A}& := \frac{-4\frac{d_1^4}{d_2^4}\frac{x_1}{x_2}(\frac{\ell_k^{\pm}-x_1}{x_2})}{4C_{F_2}\frac{x_1}{x_2}\frac{d_1^2}{d_2^2}\sqrt{1-4C_{F_2}\frac{x_1}{x_2}\frac{d_1^4}{d_2^4}(\frac{\ell_k^{\pm}-x_1}{x_2})}} ,\\
   \text{B}& :=\frac{-4\frac{d_1^4}{d_2^4}C_{F_2}(\frac{1}{x_2}(\frac{\ell_k^{\pm} - x_1}{x_2}) - \frac{x_1}{x_2^2})}{4C_{F_2}\frac{x_1}{x_2}\frac{d_1^2}{d_2^2}\sqrt{1-4C_{F_2}\frac{x_1}{x_2}\frac{d_1^4}{d_2^4}(\frac{\ell_k^{\pm}-x_1}{x_2})}},\\
   \text{C}& := \frac{(1- \sqrt{1-4(C_{F_2}\frac{x_1}{x_2}\frac{d_1^4}{d_2^4}\frac{\ell_k^{\pm}-x_1}{x_2})})(2\frac{x_1}{x_2}\frac{d_1^2}{d_2^2})}{4C_{F_2}^2\frac{x_1^2}{x_2^2}\frac{d_1^4}{d_2^4}},\\
   \text{D}& := \frac{(1- \sqrt{1-4(C_{F_2}\frac{x_1}{x_2}\frac{d_1^4}{d_2^4}\frac{\ell_k^{\pm}-x_1}{x_2})})(2C_{F_2}\frac{1}{x_2}\frac{d_1^2}{d_2^2})}{4C_{F_2}^2\frac{x_1^2}{x_2^2}\frac{d_1^4}{d_2^4}}
\end{align*}

Thus, elementary algebra gives that
\begin{align*}
    \frac{\partial}{\partial e_1}G_1(x_1,x_2) = \frac{(\text{A}+\text{C})(\frac{1}{F_2\sqrt{1-F_2^2}}) + (\text{B} + \text{D})}{1-(\text{A}+\text{C})(\frac{2C_{F_2}}{F_2})}.
\end{align*}
Thus, we reduce to estimating each term directly. 

By Taylor expansion, we estimate that for all $0<\varepsilon_0$ sufficiently small
\begin{align*}
    \left|\frac{\text{A} + \text{C}}{F_2}\right| & = \left|\frac{-\frac{d_1^2}{d_2^2}(\frac{\ell_k^{\pm}-x_1}{x_2})}{F_2C_{F_2}\sqrt{1-4C_{F_2}\frac{x_1}{x_2}\frac{d_1^4}{d_2^4}(\frac{\ell_k^{\pm}-x_1}{x_2})}} + \frac{(1- \sqrt{1-4(C_{F_2}\frac{x_1}{x_2}\frac{d_1^4}{d_2^4}\frac{\ell_k^{\pm}-x_1}{x_2})})}{2F_2C_{F_2}^2\frac{x_1}{x_2}\frac{d_1^2}{d_2^2}}\right|\\
    & \le \Big|\frac{-\frac{d_1^2}{d_2^2}(\frac{\ell_k^{\pm}-x_1}{x_2})}{F_2C_{F_2}}\Big(1+2C_{F_2}\frac{x_1}{x_2}\frac{d_1^4}{d_2^4}(\frac{\ell_k^{\pm}-x_1}{x_2})\Big) + \frac{\frac{d_1^2}{d_2^2}(\frac{\ell_k^{\pm}-x_1}{x_2})}{F_2C_{F_2}}\Big(1 + 2C_{F_2}\frac{x_1}{x_2}\frac{d_1^6}{d_2^6}(\frac{\ell_k^{\pm}-x_1}{x_2})\Big)\Big|\\
    & \qquad + \frac{\frac{d_1^2}{d_2^2}\Big|\frac{\ell_k^{\pm}-x_1}{x_2}\Big|}{|F_2|C_{F_2}}\Big(\Big|\frac{x_1^2}{x_2^2}(\frac{\ell_k^{\pm}-x_1}{x_2})(2\varepsilon_0)\Big| + \Big|\frac{x_1}{x_2}(\frac{\ell_k^{\pm}-x_1}{x_2})(2\varepsilon_0)\Big|\Big)\\
    & \le \Big|\frac{\frac{d_1^2}{d_2^2}(\frac{\ell_k^{\pm}-x_1}{x_2})}{F_2C_{F_2}}\Big| \Big(\frac{|x_1|}{|x_2|}\frac{d_1^4}{d_2^4}|\frac{\ell_k^{\pm}-x_1}{x_2}| + \frac{|x_1|}{|x_2|}\frac{d_1^6}{d_2^6}|\frac{\ell_k^{\pm}-x_1}{x_2}|+2(\frac{|x_1|}{|x_2|} +\frac{|x_1|^2}{|x_2|^2})|\frac{\ell_k^{\pm}-x_1}{x_2}|(2\varepsilon_0)\Big).
\end{align*}
Now, we use Lemma \ref{r:description of vector fields}\eqref{e:F_2 estimate} to estimate that for all $0<\varepsilon_0$ sufficiently small
\begin{align*}
\Big|\frac{\frac{d_1^2}{d_2^2}(\frac{\ell_k^{\pm}-x_1}{x_2})}{F_2C_{F_2}}\Big| \le 2.
\end{align*}

Thus, we estimate that for all $k \in \mathbb{N}$ sufficiently large (because $\Omega_{k,\varepsilon_0} \subset \{|x_2| \ge |x_1|\}$ for all $k$ sufficiently large) and all sufficiently small $0<\varepsilon_0$ 
\begin{align}\label{e:denominator estimate technical lemma on G}
    1-\text{A}(\frac{2C_{F_2}}{F_2}) -\text{C}(\frac{2C_{F_2}}{F_2}) \ge 1 - 4C_{F_2}\varepsilon_0.
\end{align}

Similarly, by \eqref{e:techical lemma for G, G1 F2 small est} and our previous estimates, we have 
\begin{align*}
    \left|\frac{\text{A+C}}{F_2\sqrt{1-F_2^2}}\right|\le \frac{2(\frac{|x_1|}{|x_2|})|\frac{\ell_k^{\pm}-x_1}{x_2}|(\frac{d_1^4}{d_2^4} + \frac{d_1^6}{d_2^6}) (1+ C\varepsilon_0)}{\sqrt{1-F_2^2}} \le 3\frac{|x_1|}{|x_2|}|\frac{\ell_k^{\pm}-x_1}{x_2}|(\frac{d_1^4}{d_2^4} + \frac{d_1^6}{d_2^6}).
\end{align*}

For $|\text{B} +\text{D}|$, we calculate that for all $0<\varepsilon_0$ sufficiently small Taylor expansion gives 
\begin{align*}
    |\text{B} + \text{D}| & = \left|\frac{-\frac{d_1^2}{d_2^2}(\frac{1}{x_1}(\frac{\ell_k^{\pm} - x_1}{x_2}) - \frac{1}{x_2})}{\sqrt{1-4C_{F_2}\frac{x_1}{x_2}\frac{d_1^4}{d_2^4}(\frac{\ell_k^{\pm}-x_1}{x_2})}} + \frac{(1- \sqrt{1-4(C_{F_2}\frac{x_1}{x_2}\frac{d_1^4}{d_2^4}\frac{\ell_k^{\pm}-x_1}{x_2})})(\frac{1}{x_2})}{2C_{F_2}\frac{x_1^2}{x_2^2}\frac{d_1^2}{d_2^2}}\right|\\
    & \le \Big|-\frac{d_1^2}{d_2^2}(\frac{1}{x_1}(\frac{\ell_k^{\pm} - x_1}{x_2}) - \frac{1}{x_2})\Big(1+2C_{F_2}\frac{x_1}{x_2}\frac{d_1^4}{d_2^4}(\frac{\ell_k^{\pm}-x_1}{x_2}) + o(\varepsilon_0)\Big)\\
    & \qquad +\frac{1}{x_1}\frac{d_1^2}{d_2^2}\frac{\ell_k^{\pm} - x_1}{x_2} + \frac{1}{2C_{F_2}\frac{x_1^2}{x_2^2}\frac{d_1^2}{d_2^2}}(\frac{1}{x_2})o(\varepsilon_0)\Big|\\
    & \le \Big|\frac{d_1^2}{d_2^2}\frac{1}{x_2} + 2C_{F_2}\frac{d_1^6}{d_2^6}\frac{x_1}{x_2^2}(\frac{\ell_k^{\pm} - x_1}{x_2}) +o(\varepsilon_0)\Big|.
\end{align*}
Combining these estimates, we obtain the claim of the lemma.
\end{proof}

\subsection{Curvature Estimates}

\begin{definition}\label{d:pos neg curvature sets quantitative}
Let $\Omega_k\subset \mathbb{R}^2$ be as in Remark \ref{r:Omega def, existence of two sols} and $\vec{F}_k, \vec{v}_k$ as in Definition \ref{d:F,G vector field}. For a point $x= (x_1, x_2) \in \Omega_k \cap \{x_2 \ge 4|x_1|\}$, we define 
    \begin{enumerate}
        \item $\text{Pos}_x$ to be the component of $\mathbb{S}^1\setminus\{\vec{F}(x),\vec{v}(x)\}$ that contains $-e_2$.
        \item $\text{Neg}_x$ to be the component of $\mathbb{S}^1\setminus\{\vec{F}_k(x),\vec{v}_k(x)\}$ that contains $e_2$.
    \end{enumerate}
    For $\tau_0>0$, we define
    \begin{enumerate}
        \item $\text{Pos}_{x,\tau_0}:= \Big\{\vec{w} \in \text{Pos}_x: \min\{|w_2 - v_2(x_1,x_2)|, |w_2 - F_2(x_1,x_2)|\}\ge \tau_0\Big\}$.
        \item $\text{Neg}_{x,\tau_0}:= \Big\{\vec{w} \in \text{Neg}_x: \min\{|w_2 - v_2(x_1,x_2)|, |w_2 - F_2(x_1,x_2)|\}\ge \tau_0\Big\}$.
    \end{enumerate}
\end{definition}

This notation is justified by the following lemma.

\begin{lemma}[Curvature Estimates]\label{l:curvature bounds by angle}
Let $A, A_{1,2}$ be as in Definition \ref{d:standing assumptions}.  Let $0<\delta$ and $\Gamma:(-\delta, \delta)\rightarrow \Omega_k^+$ be any smooth curve which satisfies the following properties:
    \begin{itemize}
        \item $|\Gamma'|\equiv 1$.
        \item $\Gamma'(t) \cdot e_2 \ge 0$ for all $t \in (-\delta,\delta)$. Moreover, we choose $\vec{\eta}$ to be the choice of normal vector to $\Gamma$ such that $\vec{\eta}(t)\cdot e_1 \ge 0$.
        \item $\Gamma$ satisfies \eqref{e:ODE} for $C_{k, \vec{\eta}}^{+} = d_1^2\ell_k$.
    \end{itemize}
Then, $\Gamma$ satisfies the following qualitative behavior.
    \begin{enumerate}
        \item If $\vec{\eta}(t) \in \emph{Pos}_{\Gamma(t)}$, then $\Gamma''(t) >0$.
        \item If $\vec{\eta}(t) \in \emph{Neg}_{\Gamma(t)}$, then $\Gamma''(t) <0$.
    \end{enumerate}

Moreover, if we further assume that $\Gamma \subset \Omega_{k, \varepsilon_0} \cap \{x_2 \ge M|x_1|\}$ for $M\ge 4$, then the following quantitative estimates hold.
    \begin{enumerate}
        \item If $\vec{\eta}(t)\in \emph{Pos}_{\Gamma(t)}$, \begin{align}\label{e:quantitative curvature bounds positive}
    \Gamma''(t) & \ge (1-4/M)d_2^2 (\Gamma(t))_{e_2}\Big|\eta_2(t)-F_2(\Gamma(t))\Big|
\end{align}
\item If $\vec{\eta}(t) \in \emph{Neg}_{\Gamma(t)}$ then \begin{align}\label{e:quantitative curvature bounds negative}
    \Gamma''(t) & \le -(1-4/M)d_2^2 (\Gamma(t))_{e_2}\Big|\eta_2(t)-F_2(\Gamma(t))\Big|.
\end{align}
    \end{enumerate}
\end{lemma}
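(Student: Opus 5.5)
The plan is to read ``$\Gamma''(t)$'' in the statement as the scalar curvature $\kappa(t)=\langle\vec\eta(t),\Gamma''(t)\rangle$, consistent with \eqref{e:ODE}. With this reading, $\Gamma''(t)$ is just the function $f_k$ of Definition \ref{r:Omega def, existence of two sols} evaluated along the curve:
\begin{align*}
\kappa(t)=d_1^2\ell_k-\langle A\Gamma(t),\vec\eta(t)\rangle=f_k(\Gamma(t),\vec\eta(t)).
\end{align*}
Both parts of the lemma then become statements about the sign and size of $f_k(x,\cdot)$ on $\mathbb{S}^1$ for fixed $x\in\Omega_k^+$.

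\textbf{Qualitative part.} By Lemma \ref{l:existence of F, v}, $f_k(x,\cdot)$ is continuous on $\mathbb{S}^1$ and vanishes exactly at $\vec F_k(x)$ and $\vec v_k(x)$. On each of the two arcs of $\mathbb{S}^1\setminus\{\vec F_k(x),\vec v_k(x)\}$ it therefore has a constant sign. The proof of Lemma \ref{l:existence of F, v} computes
\begin{align*}
f_k(x,-e_2)=d_1^2\ell_k+d_2^2x_2>0,\qquad f_k(x,e_2)=d_1^2\ell_k-d_2^2x_2<0 \quad\text{on }\Omega_k^+.
\end{align*}
Hence $f_k>0$ on $\mathrm{Pos}_x$ and $f_k<0$ on $\mathrm{Neg}_x$. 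Taking $x=\Gamma(t)$ and the vector $\vec\eta(t)$ gives (1) and (2).

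\textbf{Quantitative part.} Since $f_k(x,\vec F_k(x))=0$, we can subtract it and get
\begin{align*}
\kappa(t)=f_k(x,\vec\eta)-f_k(x,\vec F_k)=-d_2^2x_2(\eta_2-F_2)-d_1^2x_1(\eta_1-F_1),\qquad x=\Gamma(t).
\end{align*}
The first term is the main term and the second is an error. To control the error, I use that $\eta_1\ge0$ (by the choice of normal) and $F_1\ge0$, so both vectors lie on the right half-circle. The identity
\begin{align*}
(\eta_1-F_1)(\eta_1+F_1)=-(\eta_2-F_2)(\eta_2+F_2)
\end{align*}
then applies. In $\Omega_{k,\varepsilon_0}$, estimate \eqref{e:F_2 estimate} gives $|F_2|\le 2\varepsilon_0$, so $F_1\ge 1/2$. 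This yields $\eta_1+F_1\ge1/2$ and $|\eta_2+F_2|\le 2$, hence
\begin{align*}
|\eta_1-F_1|\le 4|\eta_2-F_2|.
\end{align*}
Combining this with $d_1\le d_2$ and $|x_1|\le x_2/M$ bounds the error by
\begin{align*}
\frac{4}{M}\,d_2^2x_2|\eta_2-F_2|.
\end{align*}

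\textbf{Signs.} It remains to fix the sign of the main term. Because $\eta_1\ge0$, the vector $\vec\eta$ lies in $\mathrm{Pos}_x$ exactly when it is on the right arc from $-e_2$ to $\vec F_k(x)$, i.e. when $\eta_2<F_2$. In that case $-d_2^2x_2(\eta_2-F_2)=d_2^2x_2|\eta_2-F_2|$, and subtracting the error bound gives \eqref{e:quantitative curvature bounds positive}. In $\mathrm{Neg}_x$ we have $\eta_2>F_2$, and the same argument gives \eqref{e:quantitative curvature bounds negative}.

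\textbf{Main obstacle.} The only delicate point is this identification of the $\mathrm{Pos}$ and $\mathrm{Neg}$ arcs with $\eta_2<F_2$ and $\eta_2>F_2$. It needs the normal to be compared with $\vec F_k$ rather than $\vec v_k$, which is where the normalization $\vec\eta\cdot e_1\ge0$ is used. It also needs $F_1$ to stay bounded away from zero, so that $|\eta_1-F_1|$ is controlled linearly by $|\eta_2-F_2|$ without degenerating near $\eta_2=\pm1$.
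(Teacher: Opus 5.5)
Your proposal is correct and follows the paper's proof. Both subtract $f_k(\Gamma(t),\vec F_k)=0$ to split the curvature into the main term $-d_2^2x_2(\eta_2-F_2)$ and the error $-d_1^2x_1(\eta_1-F_1)$. Both then bound $|\eta_1-F_1|\le 4|\eta_2-F_2|$ using $\eta_1,F_1\ge 0$ and the smallness of $F_2$ from \eqref{e:F_2 estimate}, and absorb the error via $|x_1|\le x_2/M$ and $d_1\le d_2$. Your sign argument for the qualitative part (exactly two zeros of $f_k(x,\cdot)$, plus its values at $\pm e_2$) simply spells out what the paper calls immediate from the definition of $\mathrm{Pos}_x$ and $\mathrm{Neg}_x$.
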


\begin{proof}
    Since the quantitative behavior follows immediately from the definition of $\text{Pos}_x, \text{Neg}_x$. Now, we prove the quantitative inequalities.  We plug in to \eqref{e:ODE}.
    \begin{align}\nonumber
        \Gamma''(t) & = C_{k\vec{\eta}}^{+} - \langle A \Gamma(t), \vec{\eta}(t)\rangle\\ \nonumber
        & = d_1^2 \ell_k - d_1^2 (\Gamma(t))_{e_1} \eta_1(t) - d_2^2 (\Gamma(t))_{e_2}\eta_2(t)\\ \nonumber
        & = d_1^2 \ell_k - d_1^2 (\Gamma(t))_{e_1}F_1(\Gamma(t)) - d_1^2 (\Gamma(t))_{e_1} \Big[\eta_1(t) - F_1(\Gamma(t))\Big]\\ \nonumber
        & \qquad - d_2^2 (\Gamma(t))_{e_2}F_2(\Gamma(t)) - d_2^2 (\Gamma(t))_{e_2}\Big[\eta_2(t) - F_2(\Gamma(t))\Big]\\ \label{e:curvature estimate 1}
        & = - d_1^2 (\Gamma(t))_{e_1} \Big[\eta_1(t) - F_1(\Gamma(t))\Big] - d_2^2 (\Gamma(t))_{e_2}\Big[\eta_2(t)-F_2(\Gamma(t))\Big].
    \end{align}
    
    Now, we invoke our first assumption.  Suppose that $\vec{\eta}(t) \in \text{Pos}_{\Gamma(t)}$, then by assumption $-1\le \eta_2(t) \le F_2(\Gamma(t))$, and hence 
    \begin{align*}
        -d_2^2(\Gamma(t))_{e_2}[\eta_2(t) - F_2(\Gamma(t))] = d_2^2(\Gamma(t))_{e_2}\Big|\eta_2(t) - F_2(\Gamma(t))\Big|.
    \end{align*}
On the other hand, we may write   
\begin{align*}
|\eta_1(t) - F_1(\Gamma(t))| & = \Big|\sqrt{1-\eta_2(t)^2} - \sqrt{1-F_2(\Gamma(t))^2}\Big|\\
&  \le C|\eta_2(t)^2 - F_2(\Gamma(t))^2| \le 2C|\eta_2(t) - F_2(\Gamma(t))|,
\end{align*}    
where $C \le 2$ is a constant such that if $x \in [0,\frac{1}{10}]$ and $y \in [0,1]$, then
\begin{align*}
    \Big|\sqrt{1-x} - \sqrt{1-y}\Big| \le C |x-y|. 
\end{align*}
This is admissible, because $x \in \Omega_{k,\varepsilon_0}$ allows us to take $\varepsilon_0$ sufficiently small in \eqref{e:F_2 estimate} to give $|F_2(\Gamma(t))|\le \frac{1}{10}$.  Thus, we have from \eqref{e:curvature estimate 1} that 
\begin{align*}
    \Gamma''(t) \ge  -4d_1^2 |(\Gamma(t))_{e_1}|\Big|\eta_2(t) - F_2(\Gamma(t))\Big| + d_2^2 (\Gamma(t))_{e_2}\Big|\eta_2(t)-F_2(\Gamma(t))\Big|.
\end{align*}
Imposing the further constraint that we only consider curves in $\{x_2 \ge M|x_1|\}$ for $M\ge 4$, the last computation tells us that
\begin{align*}
    \Gamma''(t) & \ge  -4d_1^2 |(\Gamma(t))_{e_1}|\Big|\eta_2(t) - F_2(\Gamma(t))\Big| + d_2^2 (\Gamma(t))_{e_2}\Big|\eta_2(t)-F_2(\Gamma(t))\Big|\\
    & \ge (1-4/M)d_2^2 (\Gamma(t))_{e_2}\Big|\eta_2(t)-F_2(\Gamma(t))\Big|.
\end{align*}

By an exactly analogous argument, if $\vec{\eta}(t) \in \text{Neg}_{\Gamma(t)}$, then $\eta_2(t) \ge F_2(\Gamma(t))$ and 
\begin{align*}
    \Gamma''(t) \le  4d_1^2 |(\Gamma(t))_{e_1}|\Big|\eta_2(t) - F_2(\Gamma(t))\Big| - d_2^2 (\Gamma(t))_{e_2}\Big|\eta_2(t)-F_2(\Gamma(t))\Big|\le 0.
\end{align*}
And, under the same condition that $\Gamma \subset \{x_2 \ge M|x_1|\}$ for $M\ge 4$,
\begin{align*}
    \Gamma''(t) &\le  4d_1^2 |(\Gamma(t))_{e_1}|\Big|\eta_2(t) - F_2(\Gamma(t))\Big| - d_2^2 (\Gamma(t))_{e_2}\Big|\eta_2(t)-F_2(\Gamma(t))\Big|\\
    & \le -(1-4/M)d_2^2 (\Gamma(t))_{e_2}\Big|\eta_2(t)-F_2(\Gamma(t))\Big|.
\end{align*}
\end{proof}

\begin{remark}
    By reflection $(x_1, x_2) \mapsto(x_1, -x_2)$, we obtain analogous results in $\Omega_k^-$.
\end{remark}

\section{Comparison Results and Partial Gradient Control}\label{s:gradient control}

In this section, we use the comparison vector fields $\vec{F}_k, \vec{G}_k$ and the curvature estimates in Lemma \ref{l:curvature bounds by angle} to obtain some limited control on the boundaries $\partial I_k$.

\subsection{The Center Strip}

We begin with obtaining Lipschitz control in the center strip $\{(x_1,x_2): |x_2| \le \varepsilon_0\}$.

\begin{theorem}[Full Lipschitz control in the center strip]\label{l:center strip control}
Let $A, A_{1,2}$ be as in Definition \ref{d:standing assumptions}. Let $I_k$ be as in Definition \ref{d:Ik}.  For any $0<\varepsilon_0$, there exists a $k(\varepsilon_0,\overline{s})\in\mathbb{N}$ such that for all $k \ge k(\varepsilon_0, \overline{s})$,
    \begin{align*}
        \partial I_k \cap \{x_2 \in (-\varepsilon_0,\varepsilon_0)\}& = \emph{graph}_{\partial H_2(e_1,\overline{s}) \cap \{x_2 \in (-\varepsilon_0,\varepsilon_0)\}}u_k,\\
        \emph{Lip}(u_k\res (-\varepsilon_0,\varepsilon_0)) & \le \varepsilon_0.
    \end{align*}
\end{theorem}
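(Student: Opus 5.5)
Our plan is to combine the $C^1$-convergence away from the axis $\{x_2=0\}$ given by Lemma \ref{l:properties of Ik}(5) with the constant mean curvature ODE \eqref{e:ODE} and the convergence $C^{\pm}_{k,\vec{\eta}}\to d_1^2\overline{s}$ from Lemma \ref{l:continuity of Ck}. Near the strip the curves $\Gamma_k^\pm$ live in a bounded region, since by weak graphicality (Lemma \ref{l:properties of Ik}(3)) and $I_k\to H_2(e_1,\overline{s})$ they cannot stray far in $x_1$ without creating $\gamma_{A_{1,2}}$-mass of order one in $I_k\Delta H_2(e_1,\overline{s})$. There the curvature is uniformly bounded:
\begin{align*}
|\kappa_k^\pm(t)|\le |C^\pm_{k,\vec{\eta}}|+|A\Gamma_k^\pm(t)|\le C(\overline{s},d_1,d_2).
\end{align*}
First, fix $\varepsilon_0$ and choose an auxiliary $\varepsilon_1\ll\varepsilon_0$. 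By Lemma \ref{l:properties of Ik}(5), for $k$ large, $\partial I_k\cap\{x_2\in[\varepsilon_1,2\varepsilon_0]\}$ is the graph of $u_k$ with $\|u_k\|_{C^1}\le\varepsilon_1$, and the same holds on $\{x_2\in[-2\varepsilon_0,-\varepsilon_1]\}$. In particular, the unit tangent of $\Gamma_k^+$ at the parameter where $x_2=\varepsilon_1$ is within $\varepsilon_1$ of $e_2$, and its position is within $\varepsilon_1$ of $(\overline{s},\varepsilon_1)$.

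Next, we continue $\Gamma_k^+$ downward from the level $x_2=\varepsilon_1$ toward $x_2=0$, which is the endpoint $\Gamma_k^+(0)$. Integrating $|\kappa|\le C$ over arc length shows that the tangent direction changes by at most $C\cdot\text{length}$. A bootstrap then controls both. As long as the tangent stays within angle $2\varepsilon_1$ of $e_2$, the arc length needed to descend height $\varepsilon_1$ is at most $\varepsilon_1(1+O(\varepsilon_1))$. Over that length the angle changes by at most $C\varepsilon_1$, which is at most $\varepsilon_0/4$ provided $\varepsilon_1\le\varepsilon_0/(4C)$. So the curve stays a graph over $\{x_2\in[0,\varepsilon_1]\}$ with slope at most $\varepsilon_0/2$, and by weak graphicality there is no other component to worry about. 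The same argument applies to $\Gamma_k^-$.

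The main obstacle is the gluing across $\{x_2=0\}$. The sets $I_k$ were minimized separately in $\Omega_+$ and $\Omega_-$, so $\Gamma_k^+(0)$ and $\Gamma_k^-(0)$ might not coincide, which would leave a jump or a corner on the axis. We will argue by monotonicity. $H_k$ is the subgraph of a monotone $h_k$, and the competitors in Definition \ref{d:Ik} only modify $H_k$ compactly inside $\Omega_\pm$, so the trace of $I_k$ on $\{x_2=0\}$ is that of $H_k$. We then use Tamanini's Theorem \ref{t:tamanini} in a neighborhood of $(\overline{s},0)$, or alternatively a cut-and-paste argument, to show that the two endpoints converge to $(\overline{s},0)$ and that the normals match. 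The cut-and-paste version goes as follows. A vertical jump of size $j$ at the axis costs perimeter of order $j$. Replacing it by a slanted segment saves perimeter of order $j$, at a mass cost of order $j^2$, and that mass can be restored by translating the graph slightly in a region where $x_2\in(\varepsilon_1,R)$. This contradicts minimality, so $j=0$, and an angle mismatch is ruled out in the same way. If this step fails for the relative minimizers as defined, we would instead modify the construction of $I_k$ by a single constrained minimization in the strip. We expect this gluing and endpoint control to be the hardest part.
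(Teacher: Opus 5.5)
Your main step is the paper's argument. On each side of the axis, start from the level where Lemma \ref{l:properties of Ik}(5) gives $C^1$ control, and run a continuity argument down to $x_2=0$ using \eqref{e:ODE} to bound how fast the tangent turns. The paper sets this up through the last time $t_{k,CS}$ at which the slope ratio equals $\epsilon'$, starting from the level $x_2\approx\frac{1}{100}|\epsilon'|^2$. It then proves the sharper bound $|(\Gamma_k^+)''|\le C|\epsilon'|^2$ on its bootstrap set, where the curve is close to $\{x_1=\ell_k^+\}$ and nearly vertical. Your crude bound $|\kappa_k^\pm|\le C(\overline{s},d_1,d_2)$ suffices, because the descent has length $O(\varepsilon_1)$.

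Two repairs are needed:
\begin{enumerate}
\item The bootstrap hypothesis must be the property you propagate, for example ``tangent within angle $\varepsilon_0/2$ of $e_2$''. Then the descent has length at most $2\varepsilon_1$, the turning is at most $2C\varepsilon_1$, and the argument closes once $(1+2C)\varepsilon_1<\varepsilon_0/2$. With your hypothesis ``within $2\varepsilon_1$'' and turning up to $C\varepsilon_1$, it closes only if $C<1$.
\item The boundedness behind the curvature bound comes for free inside the bootstrap, since the position moves by $O(\varepsilon_1)$. Your claim that straying in $x_1$ costs mass of order one is false for thin layers, and it is not needed.
\end{enumerate}

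The gluing across $\{x_2=0\}$ is something the paper does not address: it passes from the one-sided bounds on $(0,\varepsilon_0)$ and $(-\varepsilon_0,0)$ straight to the conclusion. Your concern is legitimate, but parts of your fix fail.
\begin{itemize}
\item Theorem \ref{t:tamanini} cannot be used near $(\overline{s},0)$, because $I_k$ is not an almost-minimizer in any neighborhood crossing the axis.
\item Agreement of competitors with $H_k$ near the axis does not give gluing. The function $h_k$ is only monotone and may jump at $x_2=0$. If $I_k^+$ and $I_k$ are limits of minimizing sequences in $\mathcal{C}_{\Omega_+}(k)$ and $\mathcal{C}_{\Omega_-}(k)$, then $\Gamma_k^\pm$ attach at $(\overline{s}+h_k(0^\pm),0)$, and the segment between them survives. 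Your slanted competitor is not in $\mathcal{C}_{\Omega_-}(k)$, and its approximants in that class reinstate the segment. So under the literal Definition \ref{d:Ik} there is no contradiction with minimality.
\end{itemize}

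Your corner cut does work under a different reading of the definition, which you should adopt explicitly: take $I_k$ to minimize actual perimeter among all sets that agree with $I_k^+$ off $\Omega_-$ and have the prescribed mass in $\Omega_-$. Under this reading $I_k^+$ stays admissible, so Lemma \ref{l:properties of Ik}(1) survives, and interior regularity and \eqref{e:ODE} are unchanged. Suppose there is a segment of length $j>0$ between $\Gamma_k^-(0)$ and $\Gamma_k^+(0)$. Cut it inside $\overline{\Omega_-}$ along the nearly vertical $\Gamma_k^-$. This saves $\gtrsim\min\{j,\varepsilon_1\}$ of perimeter at a mass cost $O(\min\{j,\varepsilon_1\}^2)$. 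Restore that mass in $\{x_2\in(-R,-\varepsilon_1)\}$, not in $(\varepsilon_1,R)$ as you wrote.

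Finally, drop the claim that an angle mismatch is excluded. Separate minimization on each side cannot rule out a kink at the axis, and you do not need it: continuity at $0$ together with the one-sided bounds already gives $\operatorname{Lip}(u_k\res(-\varepsilon_0,\varepsilon_0))\le\varepsilon_0$.
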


\begin{proof}
As above, we reduce to arguing in $\Omega^+$ by reflection. Let $0<\varepsilon_0$ be given. We choose $0<\epsilon'< \frac{1}{2}\varepsilon_0$.  We will show that for $\epsilon'$ sufficiently small, $|u_k'| \le 2\epsilon'$, which will give the claim of the lemma.

For each $k\in \mathbb{N}$ sufficiently large, we define:
\begin{enumerate}
    \item $t_{k,1} \in \text{Domain}(\Gamma_k^+)$ such that $\Gamma^+_k(t_{k,1}) \cdot e_2 = 1$.
    \item $t_{k,CS}\in \text{Domain}(\Gamma_k^+)$ by
\begin{align*}
    t_{k,CS}:= \begin{cases}
        0 & \text{if}\quad \{t \in (0,t_{k,1}): \frac{|(\Gamma_k^+)'(t)\cdot e_1|}{|(\Gamma_k^+)'(t)\cdot e_2|}\ge \epsilon'\} = \emptyset\\
        \sup \{t \in (0,t_{k,1}): \frac{|(\Gamma_k^+)'(t)\cdot e_1|}{|(\Gamma_k^+)'(t)\cdot e_2|}\ge \epsilon'\} & \text{otherwise}.
    \end{cases}
\end{align*}
\end{enumerate}
Note that for $k \ge k_0(\frac{1}{100}|\epsilon'|^2, 2)$ from Lemma \ref{l:properties of Ik}(5) we have that $\norm{u_{k}}_{C^1([\frac{1}{100}|\epsilon'|^2, 2])}\le \frac{1}{100}|\epsilon'|^2$, so $t_{k,1}, t_{k, CS}$ are well-defined. Further, we may assume that there exists a sequence $k_j \rightarrow \infty$ such that $0<t_{k_j,CS}<t_{k_j,1}$, since otherwise there is nothing to prove. 

Consider the collection $ A_{k_j} \subset (0,t_{k_j,CS}] \subset \text{Domain}(\Gamma_{k_j}^+)$ of all $t$ which satisfy the following three conditions:
\begin{enumerate}
\item $t$ satisfies 
\begin{align}\label{e:preliminary height bounds}
    \Gamma_{k_j}^+(t) \in \overline{B_{\frac{1}{25}|\epsilon'|^2}(\{x_1 = \ell_{k_j}\})} \cap \{x_2 \in (0,\frac{1}{100}|\epsilon'|^2]\}
\end{align}
\item $|t-t_{k,CS}|\le \epsilon'$.
\item $\frac{|\eta_{k_j,2}(t)|}{|\eta_{k_j,1}(t)|} \in [\frac{1}{2}\epsilon', 2\epsilon'].$
\end{enumerate}

\textbf{Claim 1:} We will show that $t_{k_j,CS} \in A_{k_j}$.  Trivially $t = t_{j_k, CS}$ satisfies (2) and (3) by the smoothness of $\Gamma_{k_j}^{\pm}$ in $\Omega_{k_j}^{\pm}$ in $\Omega_{k_j}^{\pm}$ respectively (Lemma \ref{l:properties of Ik}(3)).  To check property (3), we note that for all $k_j \ge k_0(\frac{1}{100}|\epsilon'|^2, 2)$ from Lemma \ref{l:properties of Ik}(5), we must have $\Gamma_{k_j}^+(t_{k_j, CS}) \cdot e_2 \in (0, \frac{1}{100}|\epsilon'|^2)$. But, by definition, $\partial I_k$ is graphical over $\{\Gamma_k^{+}(t_{k,CS}) \cdot e_2 \le x_2 \le 2 \}$, so we may extend $u_k$ to this range. Therefore, we may estimate by the fundamental theorem of calculus and the triangle inequality 
\begin{align}\nonumber
    |(\Gamma_{k_j}^+(t_{k_j, CS})\cdot e_1) - \ell_{k_j}^+| & \le |\ell_{k_j}^+ - u_k(\frac{1}{100}|\epsilon'|^2)| + \int_{\pi_{2}(\Gamma_{k_j}(t_{k_j, CS}))}^{\frac{1}{100}|\epsilon'|^2} |\tilde{u}_{k_j}'(x_2)|\,dx_2\\
    & \le |\overline{s} - u_{k_j}(\frac{1}{100}|\epsilon'|^2)| + |\overline{s} - \ell_{k_j}^+| + \int^{0}_{\frac{1}{100}|\epsilon'|^2} \epsilon' \,dt\\ \label{e:center strip height control}
    & \le \frac{1}{100}|\epsilon'|^2 (1+ \epsilon')+ |\overline{s} - \ell_{k_j}^+|. 
\end{align}
Since $\ell_k^+ \rightarrow \overline{s}$ by Lemma \ref{l:continuity of Ck}, $t_{k_j, CS}$ satisfies (3) and hence $t_{k_j,CS} \in A_{k_j}$ for all sufficiently large $k \in \mathbb{N}$.

\textbf{Claim 2:} We claim that $A_{k_j} =(0,t_{k_j,CS}]$ for all sufficiently large $k_j \in \mathbb{N}$ and sufficiently small $\epsilon'$. Note that this is equivalent to the claims of the theorem, since in this case we will have shown that $\frac{|(\Gamma_{k_j}^{+})'(t) \cdot e_2|}{|(\Gamma_{k_j}^{+})'(t) \cdot e_1|}= \frac{|\eta_{k_j,1}(t)|}{|\eta_{k_j,2}(t)|} \le 2\epsilon'$ for all $t \in (0,t_{k,1}]$. Thus, for all sufficiently large $k \in \mathbb{N}$, the function $u_k$ from Lemma \ref{l:properties of Ik} may be extended to $(0,1]$.  The smoothness of $\Gamma_{k}^{+}$ in $\Omega_{k}^{+}$ from Lemma \ref{l:properties of Ik}(3) then implies that $u_k$ is smooth in $(0,1)$. The uniform control on the gradient in $(0, \varepsilon_0]$ given by the claim implies a one-sided Lipschitz condition at $x_2=0$.  Obtaining a symmetric result by reflection for $\Gamma_k^-$ on $[-1, 0)$ give the theorem. 

To prove Claim 2, we will show that the component $U \subset A_{k_j}$ which contains $t_{k_j,CS}$ is both relatively open and relatively closed in $(0,t_{k_j,CS}]$. Since $\Gamma_{k_j}^+$ is smooth in $\{x_2 >0\}$ and the three conditions are closed conditions, $U$ is relatively closed. Now, we show that $U$ is relatively open. By the smoothness of the $\Gamma_{k_j}^{+}$, we have $\frac{|(\Gamma_{k_j}^+)'(t_{k_j, CS}) \cdot e_1|}{|(\Gamma_{k_j}^+)'(t_{k_j, CS}) \cdot e_2|} = \epsilon'$. Therefore, by that same smoothness and fact that condition (3) which defines $A_{k_j}$ allows $\frac{|(\Gamma_{k_j}^+)'(t_{k_j, CS}) \cdot e_1|}{|(\Gamma_{k_j}^+)'(t_{k_j, CS}) \cdot e_2|} \in [\frac{1}{2}\epsilon', 2\epsilon']$, there exists some $0<r$ such that $B_r(t_{k_j,CS}) \cap (0,t_{k_j,CS}] \subset A_{k_j}$.  Thus, by our ODE \eqref{e:ODE}, for any $t\in U$
\begin{align*}
|(\Gamma_{k_j}^+)''(t)|& =\Big|d_1^2(\ell_{k_j}^+ - \Gamma_{k_j}^+(t)\cdot e_1) + d_1^2(\Gamma_{k_j}^+(t)\cdot e_1)(1-\eta_{k_j,1}(t)) + d_2^2 (\Gamma_{k_j}^+(t)\cdot e_2) \eta_{k_j,2}(t)\Big|\\
& \le d_1^2\frac{1}{25}|\epsilon'|^2 + d_1^2(|\overline{s}|+1)(2|\epsilon'|^2) + d_2^2\frac{1}{100}|\epsilon'|^2(2\epsilon')\\
& \le C(d_1, d_2, |\overline{s}|)|\epsilon'|^2,
\end{align*}
as long as $0<\epsilon'$ is sufficiently small and $k_j$ is sufficiently large so that $B_{\frac{1}{25}|\epsilon'|^2}(\ell_k) \subset B_1(\overline{s})$.

With this uniform curvature estimate, we calculate that for any $t \in U \subset A_{k_j}$ 
\begin{align*}
    \Big|(\Gamma_k^+)'(t_{k,CS}) - (\Gamma_k^+)'(t)\Big| & = \Big|\int_t^{t_{k,CS}}(\Gamma_k^+)''(r)\vec{\eta}(r)dr\Big|\\
    & \le |t-t_{k,CS}| \times C(d_1, d_2, \overline{s})|\epsilon'|^{2}\le C(d_1, d_2, \overline{s})|\epsilon'|^{3}.
\end{align*}
In particular, by possibly taking $\epsilon'$ smaller, we have that 
\begin{align*}
    \frac{|(\Gamma_k^+)'(t)\cdot e_1|}{(\Gamma_k^+)'(t)\cdot e_2}\in \Big[\epsilon' - C(d_1, d_2, \overline{s})|\epsilon'|^{3},\epsilon' +C(d_1, d_2, \overline{s})|\epsilon'|^{3}\Big] \subset \Big[\frac{3}{4}\epsilon', \frac{3}{2}\epsilon'\Big].
\end{align*}
for all such $t$. To finish the argument, we use another argument as in \eqref{e:center strip height control} to obtain the required height control.

In particular, it cannot be that $\min \{t: t \in U\}>0$, because if such a $t$ existed, there would be a small ball around $\Gamma_{k_j}^+(t)$ such that $B_{r}(\Gamma_{k_j}^+(t))\subset B_{\frac{1}{25}|\epsilon'|^2}(\{x_1 = \ell_k^+\})$ and $|(\Gamma_{k_j}^+)'(\tau)|\le 2\epsilon'$ for all $\tau \in B_r(t)$. Furthermore, the gradient control shows that $|t-t_{k_j, CS}|\le \frac{1}{50}|\epsilon'|^2$.  Thus, $A_{k_j}=U = (0,t_{k_j, CS}]$ for $0<\epsilon'$ sufficiently small (depending only on $d_1, d_2, |\overline{s}|$) and all $k \in \mathbb{N}$ sufficiently large. Applying this argument to $\Gamma_k^-$ by reflection gives the theorem.
\end{proof}

\subsection{The Unbounded Regions} In this subsection, we obtain some limited results on the derivatives $(\Gamma_k^\pm)'$.

\begin{lemma}[First Comparison Result]\label{l:preliminary gradient control Comparison}
Let $A\in S^+_{2\times2}(\mathbb{R})$ and $\alpha \in (0,1)$.  Let $\Gamma_k^{\pm}:\mathbb{R} \rightarrow \mathbb{R}^2$ be as in Lemma \ref{l:properties of Ik}. For all $0<\varepsilon_0$ sufficiently small there exists an $0<R_0(\varepsilon_0,\overline{s})<\infty$ such that if $B_2(\Gamma_k^\pm(t)) \subset \Omega_{k,\varepsilon_0} \cap \{|x_2| \ge R\}$, then
    \begin{align*}
        |\eta_{k,2}(t) - F_2(\Gamma_k^\pm(t))| \le \frac{3}{|x_2|}
    \end{align*}
    for all sufficiently large $k \in \mathbb{N}$.
\end{lemma}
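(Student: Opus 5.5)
We argue by contradiction with a barrier comparison, working in $\Omega_k^+$ with $\Gamma=\Gamma_k^+$ (the case of $\Omega_k^-$ follows by reflection, Remark \ref{r:Gamma_k definition}). Orient $\Gamma$ so that $\Gamma'\cdot e_2\ge 0$ near $t$, with normal $\vec\eta$ satisfying $\vec\eta\cdot e_1\ge 0$, as in Lemma \ref{l:curvature bounds by angle}. Suppose that at $t_0$ we have $|\eta_2(t_0)-F_2(\Gamma(t_0))|>3/x_2$, where $x=\Gamma(t_0)$. Assume first that $\eta_2(t_0)<F_2(x)-3/x_2$, so that $\vec\eta(t_0)\in\mathrm{Pos}_x$; the Neg case is symmetric. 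Choose $R_0$ large so that $\Omega_{k,\varepsilon_0}\cap\{x_2\ge R_0\}\subset\{x_2\ge M|x_1|\}$ with $M\ge 8$. This is possible because $\ell_k^+\to\overline{s}$ (Lemma \ref{l:continuity of Ck}), so $|x_1|\le |\ell_k^+|+\varepsilon_0x_2\le x_2/8$ once $\varepsilon_0\le 1/16$ and $x_2\ge R_0(\overline{s})$. Then the quantitative curvature bound \eqref{e:quantitative curvature bounds positive} applies: as long as $\Gamma$ stays in $B_2(x)$ and $\vec\eta\in\mathrm{Pos}$, we get $\kappa(s)\ge \tfrac12 d_2^2 x_2\,|\eta_2(s)-F_2(\Gamma(s))|$.

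The key step is to show that the gap $g(s):=F_2(\Gamma(s))-\eta_2(s)$ cannot close along $\Gamma$ within unit arclength. Differentiating, $\eta_2'(s)=-\kappa(s)\,\Gamma_2'(s)$ up to sign convention, so $\eta_2$ moves away from $F_2$ at a rate comparable to $\kappa|\Gamma'_2|\gtrsim x_2 g$. Meanwhile $F_2(\Gamma(s))$ changes at rate at most $|\nabla\vec G_k|$, which by Lemma \ref{l:G derivatives bound} is bounded by $5\frac{d_1^2}{d_2^2}(1/x_2+C\varepsilon_0|x_1|/x_2)\lesssim 1/x_2+\varepsilon_0$. Since $g(t_0)>3/x_2$, the self-reinforcing term $\sim x_2 g\ge 3$ dominates the drift $\lesssim 1/x_2$ for $x_2\ge R_0$. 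Hence $g$ is nondecreasing, and $\vec\eta$ stays in $\mathrm{Pos}$ while its angle increases. Integrating $\theta'=\kappa\ge c\,x_2 g\ge 3c$, the tangent rotates by a definite amount, at least $\pi$, over arclength of order $1$, and the curve therefore stays in $B_2(x)\subset\Omega_{k,\varepsilon_0}$. I will track the lower bound on the rotation rate through the $\Gamma'_2$ factor carefully; where $\Gamma'_2$ is small, I will use the angle variable directly rather than $\eta_2$.

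A tangent that rotates through $\pi$ inside $B_2(x)$ makes $\Gamma$ turn back, so some horizontal slice $\{x_2=c\}$ meets $\Gamma$ in a disconnected set, or $\Gamma'\cdot e_2$ changes sign in a way that violates weak graphicality from Lemma \ref{l:properties of Ik}(3). This is a contradiction. For the Neg case the argument is the same, with the tangent rotating the other way by \eqref{e:quantitative curvature bounds negative}.

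The main obstacle is the middle step: justifying monotonicity of the gap. This requires a clean identity relating $\frac{d}{ds}\eta_2$ to $\kappa$, together with control of $\frac{d}{ds}F_2(\Gamma(s))$ via Lemma \ref{l:G derivatives bound}, uniformly for $k$ large. It also requires ensuring that $\Gamma$ does not exit $\Omega_{k,\varepsilon_0}$ before completing the turn, which is where the hypothesis $B_2(\Gamma(t))\subset\Omega_{k,\varepsilon_0}\cap\{|x_2|\ge R\}$ and the choice of the constant $3$ enter.
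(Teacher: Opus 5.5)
Your proposal follows the paper's proof essentially step for step: contradiction, then the curvature lower bound of Lemma~\ref{l:curvature bounds by angle} combined with the bound on $|\nabla F_2|=|\nabla G_1|$ from Lemma~\ref{l:G derivatives bound} to show the gap $|\eta_{k,2}-F_2|\ge 3/x_2$ persists, then the tangent turning past the horizontal inside $B_2(\Gamma_k^+(t))$, contradicting the weak graphicality of Lemma~\ref{l:properties of Ik}(3); the paper's two cases, $(\Gamma_k^+)'\cdot e_2\le\frac12$ (gap trivially at least $\frac14$) and $(\Gamma_k^+)'\cdot e_2\ge\frac12$ (gap increasing), are exactly your two regimes. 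One bookkeeping fix: with $M\ge 8$ your bound is only $\kappa\ge\frac32 d_2^2$, which does not by itself give a rotation of $\pi$ within arclength $2$, so either take $M\sim\varepsilon_0^{-1}$ so that $1-4/M\to 1$ (this is how the paper gets $|(\Gamma_k^+)''|\ge 3(1-2\varepsilon_0)>2$, tacitly with $d_2\ge 1$), or note that in the Pos (resp.\ Neg) case the tangent starts within $\frac{\pi}{2}+O(\varepsilon_0)$ of $e_1$ (resp.\ $-e_1$), so a rotation of about $\frac{\pi}{2}$ already suffices.
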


\begin{proof}
As usual, we shall reduce to arguing for $\Gamma_k^+$ and $\Omega_+$ by reflection.

We take $0<\varepsilon_0$ sufficiently small such that $C(\varepsilon_0) \le \frac{4}{3}$ in \eqref{e:F_2 estimate}.
We let $R_0:=\frac{|\overline{s}|+4}{\varepsilon_0}$. Note that this implies $\Omega_{k,\varepsilon_0} \cap \{x_2 \ge R_0\} \subset \{\frac{|x_1|}{|x_2|} \le 2 \varepsilon_0\}.$

We argue by contradiction.  Suppose that, for $\varepsilon_0, R_0$ as above, there exists a $\Gamma_k^+(t)$ such that $B_2(\Gamma_k^+(t)) \subset \Omega_{k,\varepsilon_0} \cap \{x_2 \ge R\}$ and 
    \begin{align*}
        |\eta_{k,2}(t) - F_2(\Gamma_k^+(t))| \ge \frac{3}{x_2}.
    \end{align*}

Then, for all $t' \in [t, t+2)$ such that $\vec{\eta}_k(t') \in \text{Pos}_{\Gamma_k^+(t'), \frac{3}{x_2}} \cup \text{Neg}_{\Gamma_k^+(t'), \frac{3}{x_2}}$, we have by \eqref{e:quantitative curvature bounds positive} and \eqref{e:quantitative curvature bounds negative} of Lemma \ref{l:curvature bounds by angle}
    \begin{align}\label{e:curvature pay off}
        |(\Gamma_k^+)''(t')| \ge 3(1-2\varepsilon_0)>2,
    \end{align}
and, from Lemma \ref{l:G derivatives bound}, that for all $(x_1, x_2)\in B_2(\Gamma_k^{+}(t)) \subset \Omega_{k,\varepsilon_0}$
    \begin{align*}
|\nabla F_2| = |\nabla G_1| & \le 5 \frac{d_1^2}{d_2^2}\left(\frac{1}{x_2}+C(d_1, d_2)\frac{|x_1|}{|x_2|}\Big|\frac{\ell_k^+ - x_1}{x_2}\Big|\right)\\
        & \le 5\frac{d_1^2}{d_2^2}\left(\frac{1}{R_0} + C\varepsilon_0^2 \right) << 2,
\end{align*}
where we have used that $\Omega_{k,\varepsilon_0} \cap \{x_2 \ge R_0\} \subset \{\frac{|x_1|}{|x_2|} \le 2 \varepsilon_0\}.$

We claim that for all $t'\in [t, t+2)$, $$\vec{\eta}_k(t') \in \text{Pos}_{\Gamma_k^+(t'), \frac{3}{x_2}} \cup\text{Neg}_{\Gamma_k^+(t'), \frac{3}{x_2}}.$$ The cases $\vec{\eta}_k(t) \in \text{Pos}_{\Gamma_k^+(t), \frac{3}{x_2}}$ and $\vec{\eta}_k(t) \in \text{Neg}_{\Gamma_k^+(t), \frac{3}{x_2}}$ are symmetric.  We shall only argue for the case $\vec{\eta}_k(t) \in \text{Pos}_{\Gamma_k^+(t), \frac{3}{x_2}}$.

\textbf{Case 1:} Suppose that $t' \in [t, t+2)$ and $(\Gamma_k^+)'(t') \cdot e_2 \le \frac{1}{2}$.  In this case, by \eqref{e:F_2 estimate}, it must be that $|F_2(\Gamma_k^+(t'))-\eta_{k,2}(t')| \ge \frac{1}{4}$.  Thus, $\vec{\eta}_k(t') \in \text{Pos}_{\Gamma_k^+(t'), \frac{3}{x_2}}$ holds trivially.

\textbf{Case 2:} Suppose that $t' \in [t, t+2)$ and $(\Gamma_k^+)'(t') \cdot e_2 \ge \frac{1}{2}$.  If $\vec{\eta}_k(t) \in \text{Pos}_{\Gamma_k^+(t), \frac{3}{x_2}}$, then we note that
\begin{align*}
    \frac{d}{dt}|F_2(\Gamma_k^+(t))-\eta_{k,2}(t)|\Big|_{t=t'} & = \frac{d}{dt}(F_2(\Gamma_k^+(t'))-\eta_{k,2}(t'))\Big|_{t=t'} \\
    & \ge \left(-5\frac{d_1^2}{d_2^2}\left(\frac{1}{R_0} + C\varepsilon_0^2 \right)-(-\Gamma_k''(t'))(\Gamma_k^+)'(t')\cdot e_2\right)\\
    & \ge \left(1-5\frac{d_1^2}{d_2^2}\left(\frac{\varepsilon_0}{|\overline{s}|+4} + C\varepsilon_0^2\right)\right), 
\end{align*}
where we have used \eqref{e:curvature pay off} to estimate $\Gamma_k''(t')(\Gamma_k^+)'(t')\cdot e_2 \ge 1$. In particular, for $0<\varepsilon_0$ sufficiently small 
\begin{align*}
\frac{d}{dt}|F_2(\Gamma_k^+(t'))-\eta_{k,2}(t')|\Big|_{t=t'} >0.
\end{align*}
Putting it all together, if $(\Gamma_k^+)'(t) \cdot e_2 \ge \frac{1}{2}$ then $|F_2(\Gamma_k^+(t)) - \eta_{k,2}(t)|$ is increasing until $t' \ge t$ such that $(\Gamma_k^+)'(t') \cdot e_2 = \frac{1}{2}$.  And for all $t' \in [t, t+2)$ such that $\Gamma_k^+(t') \in B_2(\Gamma_k^+(t))$ and $(\Gamma_k^+)'(t') \cdot e_2 = \frac{1}{2}$, $t' \in \text{Pos}_{\Gamma_k^+(t'), \frac{3}{x_2}}$.  This proves the claim.

Now, we obtain our contradiction.  By our claim and \eqref{e:curvature pay off}, for all $t' \in [t,t+2)$, we have that $(\Gamma_k^+)''(t')\ge 2$. But, if this is the case, then
\begin{align*}
    \Big|\angle((\Gamma_k^{+})'(t'), (\Gamma_k^+)'(t))\Big| = | \int_{t_1}^{t}(\Gamma_k^+)''(s)\,ds| \ge 2(t'-t).
\end{align*}
We see that the angle $\angle((\Gamma_k^{+})'(t'), (\Gamma_k)^+)'(t))$ must exceed $\pi$ for some $t' \in (t, t+2)$.  That is, $\Gamma_k^+$ must violate weak graphicality (Lemma \ref{l:properties of Ik}(3)).  Therefore, no such $t$ can exist. This proves the lemma.
\end{proof}

Now, we list some easy corollaries and porisms of Lemma \ref{l:preliminary gradient control Comparison}.

\begin{corollary}[Lipschitz control in cones]\label{c:lipschitz control in cones}
 Let $A \in S^+_{2\times 2}(\mathbb{R})$ and $\alpha \in (0,1)$.  For all $0<\varepsilon_0$ sufficiently small and all $k\in \mathbb{N}$ sufficiently large, $$\partial I_k \cap \Big[\Big(B_{\varepsilon_0}(\{x_1 = \ell_k^+\}) \cap \Omega_+\Big) \cup \Big(B_{\varepsilon_0}(\{x_1 = \ell_k^-\}) \cap \Omega_-\Big)  \cup \Omega_{k,\varepsilon_0}\Big]$$ is contained in the graph of a function $u_k:\mathbb{R}\rightarrow \mathbb{R}$ and $\emph{Lip}(u_k) \le 3\varepsilon_0$.
\end{corollary}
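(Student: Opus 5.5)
The corollary packages three pieces of information: the center-strip control of Theorem \ref{l:center strip control}, the $C^1$ convergence of Lemma \ref{l:properties of Ik}(5) on compact strips, and the comparison estimate of Lemma \ref{l:preliminary gradient control Comparison} in the far cone. The plan is to split the region
\[
\Bigl(B_{\varepsilon_0}(\{x_1=\ell_k^+\})\cap\Omega_+\Bigr)\cup\Bigl(B_{\varepsilon_0}(\{x_1=\ell_k^-\})\cap\Omega_-\Bigr)\cup\Omega_{k,\varepsilon_0}
\]
into horizontal bands: the center strip $\{|x_2|<\varepsilon_0\}$, the compact band $\{\varepsilon_0\le |x_2|\le R\}$, and the unbounded band $\{|x_2|>R\}$. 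We work in $\Omega_+$; the case of $\Omega_-$ follows by reflection, as in Remark \ref{r:Gamma_k definition}.

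\textbf{Step 1 (center strip).} Theorem \ref{l:center strip control} already makes $\partial I_k\cap\{|x_2|<\varepsilon_0\}$ a graph over $\partial H_2(e_1,\overline{s})$ with Lipschitz constant at most $\varepsilon_0$.

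\textbf{Step 2 (compact band).} Fix $R\ge R_0(\varepsilon_0,\overline{s})+|\ell_k^\pm|+4$; this is uniform in $k$, since $\ell_k^\pm\to\overline{s}$ by Lemma \ref{l:continuity of Ck}. By Lemma \ref{l:properties of Ik}(5), for $k$ large the set $\partial I_k\cap\{\varepsilon_0\le x_2\le R\}$ is the graph of $u_k$ with $\|u_k\|_{C^1}\le\varepsilon_0$. Near $\{x_1=\ell_k^+\}$ the graph therefore has slope at most $\varepsilon_0$.

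\textbf{Step 3 (unbounded band).} For $x_2>R$ and $\Gamma_k^+(t)$ lying in the region, there are two cases.

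First suppose $\Gamma_k^+(t)$ lies in the interior part of the cone, that is, $B_2(\Gamma_k^+(t))\subset\Omega_{k,\varepsilon_0}\cap\{x_2\ge R_0\}$. Then Lemma \ref{l:preliminary gradient control Comparison} gives $|\eta_{k,2}-F_2|\le 3/x_2$. Combining this with \eqref{e:F_2 estimate}, namely $|F_2|\le \tfrac{d_1^2}{d_2^2}\varepsilon_0 C(\varepsilon_0)\le \tfrac43\varepsilon_0$, we get
\[
|\eta_{k,2}|\le \tfrac43\varepsilon_0+\tfrac{3}{R}.
\]
Choosing $R$ large, say $3/R\le\varepsilon_0$, makes $|\eta_{k,2}|/|\eta_{k,1}|\le 3\varepsilon_0$. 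This is exactly the slope $|u_k'|\le 3\varepsilon_0$ of the graph over the $x_2$-axis.

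Second, there are points near the lateral edge of the cone, where $B_2$ may leave $\Omega_{k,\varepsilon_0}$. For these I would run the lemma with the slightly larger aperture $2\varepsilon_0$. Since the constants in the lemma are only required to be small, $\Omega_{k,\varepsilon_0}$ is contained in the set of centers whose $2$-balls lie in $\Omega_{k,2\varepsilon_0}\cap\{x_2\ge R\}$, once $R\ge 2/\varepsilon_0$. This yields the bound $\tfrac83\varepsilon_0+3/R$. To recover the stated constant $3\varepsilon_0$, I would apply the whole argument with $\varepsilon_0/2$ in place of $\varepsilon_0$, which is harmless because $\varepsilon_0$ is arbitrary and small. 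The strip $B_{\varepsilon_0}(\{x_1=\ell_k^+\})\cap\{x_2>R\}$ sits inside $\Omega_{k,\varepsilon_0}$ whenever $x_2\ge 1$, so it is covered by the same argument.

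\textbf{Step 4 (graphicality).} The bounds above show that the tangent is nowhere horizontal inside the region. Together with the weak graphicality of Lemma \ref{l:properties of Ik}(3), this means each horizontal slice meets $\partial I_k$ in a single point of the region. Concretely, connectedness of the slice together with a transversal tangent excludes horizontal segments. Hence $\partial I_k$ there is a graph of a single function $u_k$, glued consistently across the bands because each band's description agrees with the $C^1$ graph on the overlaps.

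The main obstacle is Step 3 near the boundary of the cone and in the transition where $x_2\approx R_0$. There the hypothesis $B_2\subset\Omega_{k,\varepsilon_0}\cap\{|x_2|\ge R\}$ of Lemma \ref{l:preliminary gradient control Comparison} fails, and one must check carefully that enlarging the aperture and the threshold $R$ keeps all the constants in Lemmas \ref{l:G derivatives bound} and \ref{l:curvature bounds by angle} valid, uniformly in $k$.
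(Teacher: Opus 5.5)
Your proposal is correct and follows essentially the same route as the paper. The paper treats $|x_2|\le 8R_0$ in one step via Lemma~\ref{l:properties of Ik}(5) and Theorem~\ref{l:center strip control}. For $|x_2|\ge 8R_0$ it uses exactly your enlarged-aperture trick, $B_2(\Omega_{k,\varepsilon_0}\cap\{|x_2|\ge 8R_0\})\subset\Omega_{k,2\varepsilon_0}\cap\{|x_2|\ge R_0\}$, and then applies Lemma~\ref{l:preliminary gradient control Comparison}.

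The only wrinkle is your constant bookkeeping. The point itself lies in $\Omega_{k,\varepsilon_0}$, so the $F_2$ estimate still gives $|F_2|\le\frac43\varepsilon_0$ there; only the lemma's hypothesis needs the $2\varepsilon_0$-cone. Passing to $\varepsilon_0/2$ is therefore unnecessary, and it would not help anyway, since it shrinks the region along with the bound.
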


\begin{proof}
Without loss of generality, we may assume that $0<\varepsilon_0$ is sufficiently small so that Lemma \ref{l:preliminary gradient control Comparison} holds for all $0<\varepsilon \le 2\varepsilon_0$. Recall that this implies $C(\varepsilon_0) \le \frac{4}{3}$ in \eqref{e:F_2 estimate}. Let $R_0(\varepsilon_0,\overline{s})=\frac{|\overline{s}|+4}{\varepsilon_0}$.   

For this $\varepsilon_0, R_0$ Lemma \ref{l:properties of Ik} and Lemma \ref{l:center strip control} give that for all sufficiently large $k \in \mathbb{N}$ we have that
    \begin{align*}
        \partial I_k \cap \Big(([-8R_0, 8R_0]) \times \mathbb{R}\Big) = \text{graph}_{\partial H(e_1, \overline{s}) \cap \Big(([-8R_0, 8R_0]) \times \mathbb{R}\Big)}u_k
    \end{align*}
and $\text{Lip}(u_k; [-8R_0, 8R_0]) \le \frac{1}{2}\varepsilon_0$ and $\norm{u_k}_{L^{\infty}([-8R_0, 8R_0])} \le \frac{1}{2}\varepsilon_0$.

Now, we see that
\begin{align*}
    B_2(\Omega_{k,\varepsilon_0} \cap \{|x_2| \ge 8R_0\}) \subset \Omega_{k,2\varepsilon_0}.
\end{align*}
Thus, for all points $x\in \partial I_k \cap \Omega_{k,\varepsilon_0} \cap \{|x_2| \ge 8R_0\}$, $B_2(x) \subset \Omega_{k,2\varepsilon_0} \cap \{|x_2| \ge R_0\}$.  Therefore, for $x\in \partial I_k \cap \Omega_{k,\varepsilon_0} \cap \{|x_2| \ge 8R_0\}$ and the $t \in \mathbb{R}$ satisfying $\Gamma_k^{\pm}(t) = x$, Lemma \ref{l:preliminary gradient control Comparison} gives 
\begin{align*}
        |\eta_{k,2}(t) - F_2(\Gamma_k^{\pm}(t))| \le \frac{3}{|x_2|}
    \end{align*}
for all sufficiently large $k \in \mathbb{N}$.  That is, by \eqref{e:F_2 estimate} for all such $t$ 
\begin{align*}
    |(\Gamma_k^+)'(t)\cdot e_1| \le \frac{4}{3}\frac{d_1^2}{d_2^2}\varepsilon_0 + \frac{3}{4}\varepsilon_0 <3\varepsilon_0,
\end{align*}
where we have used the fact that $0<\varepsilon_0$ is chosen to be sufficiently small so that in $\Omega_{k,\varepsilon_0}$, $C(\varepsilon_0)\le \frac{4}{3}$ and $x_2 \ge R_0 =\frac{|\overline{s}|+4}{\varepsilon_0}$.  This gives the corollary.
\end{proof}

\begin{remark}
    Note that in the case that the matrix $A$ is strictly anisotropic ($d_1^2 <d_2^2$), we may choose $\varepsilon_0$ sufficiently small so that $\frac{d_1^2}{d_2^2}C(\varepsilon_0)<1$. Therefore, choosing $R(\varepsilon_0, \overline{s})$ sufficiently large so that $\frac{d_1^2}{d_2^2}C(\varepsilon_0)\varepsilon_0 + \frac{3}{R}<\varepsilon_0$, we see that in the strictly anisotropic case we obtain global Lipschitz control on $\partial I_k$, since if $|u_k'|<\varepsilon_0$, then the graph cannot escape the control region $\Omega_{k,\varepsilon_0}$.  This argument is not available in the isotropic case $d_1^2 = d_2^2$.
\end{remark}

\begin{porism}[Second Comparison Result]\label{l:second comparison easy cases}
    Let $A\in S^+_{2\times 2}(\mathbb{R})$ and $\alpha \in (0,1)$.  Let $\Gamma_k^{\pm}:\mathbb{R} \rightarrow \mathbb{R}^2$ be as in Lemma \ref{l:properties of Ik}. For all $0<\varepsilon_0$ sufficiently small there exists an $0<R_0(\varepsilon_0, \alpha)<\infty$ such that the following statement holds for all sufficiently large $k \in \mathbb{N}$.
    \begin{enumerate}
        \item If $\Gamma_k^{\pm}(t) \in \{|x_2| \ge R_0\} \setminus \Omega_{k, \varepsilon_0}$ and $\Gamma_k^{\pm}(t)\cdot e_1 \ge \ell_k^{\pm}$, then 
    \begin{align*}
    (\Gamma_k^{+})'(t)\cdot e_1 > 0, \qquad -(\Gamma_k^{-})'(t)\cdot e_1 > 0.
    \end{align*}
    \item If $\Gamma_k^{\pm}(t) \in \{|x_2| \ge R_0\} \setminus \Omega_{k,\varepsilon_0}$ and $\Gamma_k^{\pm}(t)\cdot e_1 \le \ell_k^{\pm}$, then 
    \begin{align*}
       (\Gamma_k^{+})'(t)\cdot e_1 < 0, \qquad -(\Gamma_k^{-})'(t)\cdot e_1 < 0.
    \end{align*}
    \end{enumerate} 
\end{porism}

\begin{proof}
By reflection, we reduce to arguing for $\Gamma^+_k$.  Let $0<\varepsilon_0$ be sufficiently small so that the argument of Lemma \ref{l:preliminary gradient control Comparison} holds. Let $R:= \frac{|\overline{s}|+4}{\varepsilon_0}$. Assume for the sake of contradiction that there exists a $t \in (0,\infty)$ such that
    \begin{align*}
        B_2(\Gamma_k^{+}(t)) \in \{|x_2| \ge R\} \setminus \Omega_{k,\varepsilon_0} \quad \text{and} \quad \Gamma_k^{+}(t)\cdot e_1 \ge \ell_k^+,
    \end{align*} and $(\Gamma_k^{+})'(t)\cdot e_1 \le 0$.   By the smoothness of $\Gamma_k^{\pm}$ in $\{|x_2| \ge R_0\}$, we may take the infimum of all such $t$ and reduce to considering $(\Gamma_k^{+})'(t)\cdot e_1 = 0$.  Furthermore, since $R_0= \frac{|\overline{s}|+4}{\varepsilon_0}$, the assumption that $B_2(\Gamma_k^{+}(t)) \in \{|x_2| \ge R_0\} \setminus \Omega_{k,\varepsilon_0}$ immediately implies that for all $t' \in [t, t+2)$,  $\Gamma_k^+(t') \in B_2(\Gamma_k^{+}(t))$ and $\Gamma_k^+(t') \cdot e_1 \ge 4$.  
    
    Combining these conditions, we see that for all sufficiently large $k \in \mathbb{N}$, we have $\ell_k^+ \le 1$ by Lemma \ref{l:continuity of Ck}, and
\begin{align*}
    (\Gamma_k^+)''(t') & = d_1^2\ell_k^+ - d_1^2(\Gamma_k^+(t)\cdot e_1)\eta_{k, 1}(t) - d_2^2(\Gamma_k^+(t)\cdot e_2)\eta_{k,2}(t) \\
    & \le d_1^2(1 - 4\eta_{k,1}(t')).
\end{align*}
In particular, for $t'\in [t, t+2)$ such that $\eta_{k,1}(t') \ge \frac{3}{4}$, recalling our normalizing assumption that $d_1 =1$, we obtain $(\Gamma_k^+)''(t') \le -2$.  On the other hand, for $t'\in [t, t+2)$ such that $\eta_{k,1}(t') \le \frac{3}{4}$, we have $\eta_{k,2}(t') \ge \frac{\sqrt{7}}{4}$, so  
\begin{align*}
    (\Gamma_k^+)''(t') & = d_1^2\ell_k^+ - d_1^2(\Gamma_k^+(t)\cdot e_1)\eta_{k, 1}(t) - d_2^2(\Gamma_k^+(t)\cdot e_2)\eta_{k,2}(t) \\
    & \le d_1^2(1 - R_0\frac{\sqrt{7}}{4}),
\end{align*}
where we have used the weak graphicality of $\partial I_k$ (Lemma \ref{l:properties of Ik}) to see that $\eta_{k, 1}(t)\ge 0$. By possibly taking $\varepsilon_0$ smaller so that $\varepsilon_0 \le 1/2$, we have that for these $t'\in [t, t+2)$, $(\Gamma_k^+)''(t') \le 1 - \frac{|\overline{s}|+4}{\varepsilon_0}\frac{\sqrt{7}}{4}\le -2$.

Thus, for all $t'\in [t, t+2)$, $(\Gamma_k^+)''(t') \le -2$.  Therefore, we obtain the same contradiction as in Lemma \ref{l:preliminary gradient control Comparison}.

In the case that 
\begin{align*}
        \Gamma_k^{+}(t) \in \{|x_2| \ge R_0\} \setminus \Omega_{k,\varepsilon_0} \quad \text{and} \quad \Gamma_k^{+}(t)\cdot e_1 \le \ell_k^+,
    \end{align*} and $(\Gamma_k^{+})'(t)\cdot e_1 \ge 0$, the argument is symmetric.
\end{proof}

\begin{porism}[Graphicality]\label{porism graphicality}
Let $A\in S^+_{2\times 2}(\mathbb{R})$ and $\alpha \in (0,1)$.  Let $\Gamma_k^{\pm}:\mathbb{R} \rightarrow \mathbb{R}^2$ be as in Lemma \ref{l:properties of Ik}. For all sufficiently large $k \in \mathbb{N}$, there exists a function $u_k: \mathbb{R}\rightarrow \overline{\mathbb{R}}$ such that 
\begin{align*}
    \emph{Image}(\Gamma_k^{\pm})=\partial I_k = \emph{graph}_{\partial H(e_1, \overline{s})}u_k.
\end{align*}
\end{porism}

\begin{remark}
    The content of Porism \ref{porism graphicality} is that $\partial I_k$ is graphical, not merely weakly graphical.
\end{remark}

\begin{proof}
    Let $0<\varepsilon_0$ be sufficiently small and $0<R_0(\varepsilon_0, \alpha)<\infty$ be as in the proof of Porism \ref{l:second comparison easy cases}.  Without loss of generality, we may assume that $0<\varepsilon_0$ is sufficiently small that we may invoke the results of Corollary \ref{c:lipschitz control in cones} for all sufficiently large $k \in \mathbb{N}$.  Thus, it is sufficient to show that $(\Gamma_k^+)'(t) \cdot e_2 >0$ in the region
    \begin{align*}
        \{|x_2|\ge R_0\} \setminus \Omega_{k,\varepsilon_0}.
    \end{align*}
    By reflection, we reduce to arguing for $\Gamma_k^+$.
    
    Recall that in Porism \ref{l:second comparison easy cases}, we took $R_0 = \frac{|\overline{s}+4|}{\varepsilon_0}$.  In particular, 
    \begin{align*}
        \{x_2\ge R_0\} \setminus \Omega_{k,\varepsilon_0}  \subset \{x_2 \ge R_0, x_1 \ge 4\} \cup \{x_2 \ge R_0, x_1 \le 2\overline{s} - 4\}.
    \end{align*}

But, for any point $\Gamma_k^+(t) = (x_1, x_2) \in \{x_2 \ge R_0, x_1 \ge 4\}$,  $-e_2 \in \text{Pos}_{\Gamma_k^+(t)}$.  Moreover, by Lemma \ref{r:description of vector fields}, we have for all sufficiently large $k \in \mathbb{N}$
\begin{align*}
    F_2(x_1, x_2) \ge \cos\left(\pi - \frac{(-d_1^2\ell_k^+ + d_2^2x_2)}{d_1^2 x_1 + d_2^2 x_2 \frac{\pi}{4}}\right) >-1.
\end{align*}
Therefore, if $(\Gamma_k^+)'(t) = e_1$, it must be that $(\Gamma_k^+)''(t)>0$.  But, this gives a contradiction with weak graphicality (Lemma \ref{l:properties of Ik}). 

The argument for $\Gamma_k^+(t) = (x_1, x_2) \in \{x_2 \ge R, x_1 \ge 1\}$ is symmetric, since $e_2 \in \text{Neg}_{\Gamma_k^+(t)}$.
\end{proof}

\begin{theorem}[Third Comparison Result]\label{t:finer estimates on gradient}
Let $A \in S^+_{2\times 2}(\mathbb{R})$ as in Definition \ref{d:standing assumptions}.  Let $0<\varepsilon_0$ be sufficiently small so that Corollary \ref{c:lipschitz control in cones} holds for all $0<\varepsilon_1 \le \varepsilon_0$. Let $k \in \mathbb{N}$ be sufficiently large so that $\partial I_k = \emph{graph}(u_k)$ as in Porism \ref{porism graphicality}.

Let $1<C$ and $0<h_0<h_1$. For $0<h_1<<\varepsilon_0$ sufficiently small, there exists a $k_2(d_1, d_1, h_0,h_1,C) \in \mathbb{N}$ such that for all $k \ge k_2$, the following properties hold.
\begin{enumerate}
       \item If $0<x_2<\infty$, then
       \begin{enumerate}
           \item $u_k(x_2)+\overline{s} \in [\ell_k^{+} + h_0, \ell_k^{+} + h_1]$ implies  
\begin{align}\label{e:fine grad upper est}
       u_k'(x_2) \le C\frac{d_1^2}{d_2^2}\frac{(u_k(x_2) + \overline{s}) - \ell_k^+}{x_2}.
   \end{align}
   \item $u_k(x_2)+\overline{s} \in [\ell_k^+ - h_0, \ell_k^+ - h_1],$ then 
\begin{align}\label{e:fine grad lower est 2}
       u_k'(x_2) \ge C\frac{d_1^2}{d_2^2}\frac{(u_k(x_2) + \overline{s}) - \ell_k^+}{x_2}.
\end{align}
       \end{enumerate}
   \item If $-\infty<x_2<0$, then
       \begin{enumerate}
           \item $u_k(x_2)+\overline{s} \in [\ell_k^{-} + h_0, \ell_k^{-} + h_1]$ implies  
\begin{align}\label{e:fine grad upper est 3}
       -u_k'(x_2) \le C\frac{d_1^2}{d_2^2}\frac{(u_k(x_2) + \overline{s}) - \ell_k^-}{|x_2|}.
   \end{align}
   \item $u_k(x_2)+\overline{s} \in [\ell_k^- - h_0, \ell_k^- - h_1],$ then 
\begin{align}\label{e:fine grad lower est 4}
       -u_k'(x_2) \ge C\frac{d_1^2}{d_2^2}\frac{(u_k(x_2) + \overline{s}) - \ell_k^-}{|x_2|}.
\end{align}
       \end{enumerate}
   \end{enumerate}
\end{theorem}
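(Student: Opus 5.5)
The plan is a comparison argument along $\Gamma_k^+$, modelled on the proof of Lemma \ref{l:preliminary gradient control Comparison}. I only treat (1a); (1b) follows by the symmetric argument, and (2) follows by the reflection $(x_1,x_2)\mapsto(x_1,-x_2)$ as in Remark \ref{r:Gamma_k definition}.

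\textbf{Step 1: reduce to the cone far out.} Fix $R$ large, with $R\gg h_1/\varepsilon_0$ and $R\ge R_0(\varepsilon_0,\overline{s})$. By Lemma \ref{l:properties of Ik}(5), Theorem \ref{l:center strip control} and Lemma \ref{l:continuity of Ck}, for $k$ large we have $\sup_{|x_2|\le R}|u_k|\to0$ and $\ell_k^+\to\overline{s}$. Hence the hypothesis $u_k(x_2)+\overline{s}\ge \ell_k^++h_0$ can only hold at points with $x_2>R$. At such points $|x_1-\ell_k^+|\le h_1\ll\varepsilon_0 x_2$, so the point, and a ball of radius of order one around it, lies in $\Omega_{k,\varepsilon_0}\cap\{x_2\ge M|x_1|\}$ with $M$ large. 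Consequently Corollary \ref{c:lipschitz control in cones} ($|u_k'|\le3\varepsilon_0$), the curvature estimates of Lemma \ref{l:curvature bounds by angle}, and the derivative bound $|\nabla F_2|\lesssim 1/x_2+\varepsilon_0^2$ of Lemma \ref{l:G derivatives bound} are all available there.

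\textbf{Step 2: a violation forces a normal gap and hence definite curvature.} Suppose that at some $x_2^*>R$, with $x_1^*=u_k(x_2^*)+\overline{s}\in[\ell_k^++h_0,\ell_k^++h_1]$, the inequality \eqref{e:fine grad upper est} fails, so $u_k'(x_2^*)> C\frac{d_1^2}{d_2^2}\frac{x_1^*-\ell_k^+}{x_2^*}$. Choose $\varepsilon_0$ so small that $C(\varepsilon_0)$ in \eqref{e:F_2 estimate} satisfies $C(\varepsilon_0)<\frac{1+C}{2}$. The zero-curvature slope $|G_1|/G_2$ at $(x_1^*,x_2^*)$ is at most $C(\varepsilon_0)\frac{d_1^2}{d_2^2}\frac{x_1^*-\ell_k^+}{x_2^*}(1+O(\varepsilon_0^2))$. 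Since $|u_k'|\le 3\varepsilon_0$, we have $|\eta_2|\approx|u_k'|$ to first order. Putting these together gives
\[
|\eta_{k,2}-F_2|\ge c_1\,(C-C(\varepsilon_0))\frac{d_1^2}{d_2^2}\frac{h_0}{x_2^*},\qquad c_1>0 \text{ universal},
\]
with $\vec\eta_k$ lying in the set $\mathrm{Pos}$ or $\mathrm{Neg}$ whose curvature sign pushes the slope further away from $F$. The quantitative bounds \eqref{e:quantitative curvature bounds positive} and \eqref{e:quantitative curvature bounds negative} then give
\[
|(\Gamma_k^+)''|\ge (1-4/M)\,d_2^2x_2^*\,|\eta_{k,2}-F_2|\ge c_0:=\tfrac{c_1}{2}(C-C(\varepsilon_0))\,d_1^2h_0>0 ,
\]
and $c_0$ is independent of $k$ and of $x_2^*$.

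\textbf{Step 3 (the main obstacle): propagate the gap along the curve and contradict the cone Lipschitz bound.} Follow $\Gamma_k^+$ forward in arclength over an interval of length $L=4\varepsilon_0/c_0+1$. During this interval $x_2$ changes by at most $L$ and $x_1$ by at most $3\varepsilon_0L$, so the curve stays inside the good region of Step 1 once $R$ is taken large depending on $L$. I then want a bootstrap in the spirit of Cases 1 and 2 in Lemma \ref{l:preliminary gradient control Comparison}. The quantity $|\eta_{k,2}-F_2(\Gamma_k^+)|$ changes at rate at least $|(\Gamma_k^+)''|\,(\Gamma_k^+)'\!\cdot e_2-|\nabla F_2|$, which is at least $c_0/2-O(1/R+\varepsilon_0^2)>0$, because $(\Gamma_k^+)'\cdot e_2\approx1$ in the cone. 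So the gap never shrinks, the normal stays in the same $\mathrm{Pos}$ or $\mathrm{Neg}$ set, and the curvature bound $\ge c_0$ persists. The delicate point is making $c_0$ dominate the error $\varepsilon_0^2$ and $1/R$ terms uniformly, which is why $h_1\ll\varepsilon_0$ and $R$ must be chosen after $h_0$ and $C$. It also uses that the sign of the curvature increases $|u_k'|$, i.e.\ that the curve is bending away from the $\vec G_k$ direction rather than towards it. Granting this, the slope changes monotonically by at least $c_0L/2>2\varepsilon_0$ over the interval, so $|u_k'|>3\varepsilon_0$ somewhere in $\Omega_{k,\varepsilon_0}$. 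This contradicts Corollary \ref{c:lipschitz control in cones}; weak graphicality (Lemma \ref{l:properties of Ik}(3)) gives a backup contradiction if the slope instead turns sharply. The threshold $k_2$ is the $k$ beyond which Steps 1 and 3 are valid for the chosen $\varepsilon_0,R,h_0,h_1,C$.
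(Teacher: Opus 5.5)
Your architecture is the same as the paper's. You restrict to $x_2>R$, show that a violation of \eqref{e:fine grad upper est} puts $\vec{\eta}_k\in\mathrm{Pos}$ with curvature $\gtrsim (C-1)d_1^2h_0$, propagate the gap via $\frac{d}{dt}(F_2-\eta_{k,2})\ge(\Gamma_k^+)''\,(\Gamma_k^+)'\cdot e_2-|\nabla F_2|$, and contradict a cone Lipschitz bound. However, the step you flag as delicate fails with your choice of parameters. Your curvature floor is $c_0\simeq (C-1)d_1^2h_0$. The error you import from Lemma \ref{l:G derivatives bound} is $O(1/R+\varepsilon_0^2)$, with $\varepsilon_0$ fixed independently of $h_0$. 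Choosing $R$ after $h_0$ and $C$ handles $1/R$, but nothing forces $\varepsilon_0^2\ll (C-1)h_0$. Indeed $h_0\in(0,h_1)$ is arbitrary, and the requirement $h_1\ll\varepsilon_0$ pushes the wrong way: it only makes $h_0$ smaller relative to $\varepsilon_0$, so $h_0\lesssim\varepsilon_0^2/(C-1)$ is certainly allowed. In that regime the positivity of $c_0/2-O(1/R+\varepsilon_0^2)$ is unjustified, and with it the persistence of the gap and of the curvature floor. So Step 3 does not close as written.

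The missing idea is to tie the cone aperture to $h_0$. The paper sets $\varepsilon_1=\frac{d_1}{5}\sqrt{\frac{C-1}{C(d_1,d_2)}}\,\sqrt{h_0}$, which is $\le\varepsilon_0$ precisely because $h_1\ll\varepsilon_0$; that is the real role of that hypothesis. Then the term $C(d_1,d_2)\varepsilon_1^2$ in Lemma \ref{l:G derivatives bound} is a fixed fraction of $(C-1)d_1^2h_0$, and $R_1$ is taken with $100/R_1\le d_1^2(C-1)h_0$. The contradiction is then with the sharper bound $|u_k'|\le 3\varepsilon_1\sim\sqrt{h_0}$ from Corollary \ref{c:lipschitz control in cones} in $\Omega_{k,\varepsilon_1}$, reached after arclength $Mh_0^{-1/4}$.

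Alternatively, you can keep $\varepsilon_0$ and drop the region-wide bound. The second term in Lemma \ref{l:G derivatives bound} is $C(d_1,d_2)\frac{|x_1|}{x_2}\frac{|x_1-\ell_k^+|}{x_2}$, and along your trajectory $|x_1-\ell_k^+|\le h_1+3\varepsilon_0L$. So this term is $O((1+L)^2/R^2)$ and is absorbed by taking $R$ large after $h_0$ and $C$. One of these two inputs is needed.

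A minor point: starting from $u_k'(x_2^*)>0$, a slope increase of $c_0L/2>2\varepsilon_0$ only gives $u_k'>2\varepsilon_0$. Enlarge $L$ so the slope actually exceeds $3\varepsilon_0$.
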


\begin{proof}
\textbf{Preliminary reductions and definitions.} The arguments for $\Omega_-$ follow from the argument for $\Omega_+$ by reflection.  Further, the $(u_k(x_2) + \overline{s}) - \ell_k^{\pm}<0$ and $(u_k(x_2) + \overline{s}) - \ell_k^{\pm}>0$ cases are identical.  We therefore only present the $(u_k(x_2) + \overline{s}) - \ell_k^+>0$ argument.

First, let the constant $C(d_1, d_2)$ be as in Lemma \ref{l:G derivatives bound}. Next, we choose $h_1<<\varepsilon_0$ such that
\begin{align*}
    \frac{d_1}{5}\sqrt{\frac{C-1}{C(d_1, d_2)}}\sqrt{h_1} \le \varepsilon_0.
\end{align*}
For convenience, we write $C(d_1, d_2, C) := \frac{d_1}{5}\sqrt{\frac{C-1}{C(d_1, d_2)}}$ and define \begin{align*}
    0<\varepsilon_1 := C(d_1, d_2, C)\sqrt{h_0} \le  C(d_1, d_2, C)\sqrt{h_1} \le \varepsilon_0
\end{align*}
For this $\varepsilon_1$, Corollary \ref{c:lipschitz control in cones} gives that for all sufficiently large $k \in \mathbb{N}$, 
\begin{align*}
   |u_k'|\le 3C(d_1, d_2,C)\sqrt{h_0} \quad \text{in} \quad  B_{\varepsilon_1}(\{x_1 = \ell_k^+\})\cup \Omega_{k,\varepsilon_1}.
\end{align*}
Note that this implies that for for all $0<t$ such that $\Gamma_k^+(t) = (x_2, \overline{s}+\tilde{u}_k(x_2))\in B_{\varepsilon_1}(\{x_1 = \ell_k^+\})\cup \Omega_{k,\varepsilon_1},$
\begin{align*}
    (\Gamma_k^+)'(t) \cdot e_1 \ge \tilde{u}_k'(x_2) \sqrt{1-9\varepsilon_1^2}. 
\end{align*}

Now, by possibly taking $0<h_1$ smaller, we can ensure that 
\begin{align}\label{e:third comparison ensures pos curvature}
    C(\varepsilon_1) \le \sqrt{2}, \quad \text{and}\quad  C\sqrt{1-9\varepsilon_1^2} - C(\varepsilon_1) \ge \frac{1}{2}(C-1)>0, 
\end{align}
where $C(\varepsilon_1)$ is the function appearing in \eqref{e:F_2 estimate} of Lemma \ref{r:description of vector fields} and $C>1$ is the constant in the hypotheses of the theorem.

Finally, we let $M:= \frac{10}{(C-1)^{\frac{1}{2}}}\sqrt{3C(d_1, d_2, C)}$ and $0<R_1=R_1(C, h_0, h_1, d_1, d_2)<\infty$ be such that 
\begin{align}\label{e:third comparison large M containment}
\Big(B_{Mh^{-1/4}_0 + h_1}(\{x_1 = \ell_k^+\}) \cap \{x_2\ge R_1\}\Big) &\subset \Omega_{k,\varepsilon_1} \cap \{\frac{|x_1|}{|x_2|}\le  \varepsilon_1\}\\ \label{e:third comparison R large compared to h_0}
\frac{100}{R_1} & \le d_1^2(C-1)h_0.
\end{align}

\textbf{The argument.} Now, we take $k \in \mathbb{N}$ sufficiently large such that, $\partial I_k$ is a graph (Porism \ref{porism graphicality}) and $|u_k(x_2)-\ell_k^+| < \frac{1}{2}h_0$ for all $x_2 \in (-R_1, R_1)$ (Lemma \ref{l:properties of Ik}(5), Lemma \ref{l:continuity of Ck}, and Corollary \ref{c:lipschitz control in cones}), where $1<<R_1<\infty$ is as above. For these $k\in \mathbb{N}$, we assume for the sake of contradiction that at some $x_2\ge R_1$, $u_k(x_2) \in [\ell_k^+ + h_0, \ell_k^+ + h_1]$ and  
    \begin{align*}
        u_k'(x_2) \ge C\frac{d_1^2}{d_2^2}\frac{(u_k(x_2)+\overline{s}) - \ell_k^+}{x_2}.
    \end{align*}

Since $\partial I_k = \Gamma_k^\pm(\mathbb{R}) = \text{graph}_{\partial H_2(e_1, \overline{s})}u_k$, we let $t_{x_2} \in (R_1, \infty)$ be the unique time such that $\Gamma_k^+(t_{x_2}) = (x_2, u_k(x_2))$.  Then, by our assumptions on $R_1$ and $k$, we have 
\begin{enumerate}
    \item By \eqref{e:third comparison large M containment}, $\Gamma_k^+(t_{x_2}) \in \Omega_{k,\varepsilon_1} \cap \{\frac{|x_1|}{x_2}\le \varepsilon_1\}$. 
    \item And, $\vec{\eta}_k(t_{x_2}) \in \text{Pos}_{\Gamma_{k}^+(t_{x_2})}$, since by \eqref{e:third comparison ensures pos curvature}
    \begin{align*}
    (\Gamma_k^+)'(t_{x_2}) \cdot e_1 \ge C\frac{d_1^2}{d_2^2}\frac{(u_k(x_2)+\overline{s}) - \ell_k^+}{x_2}\sqrt{1-9\varepsilon_1^2} > C(\varepsilon_1)\frac{d_1^2}{d_2^2}\frac{(u_k(x_2)+\overline{s}) - \ell_k^+}{x_2}>0. 
\end{align*}
\end{enumerate} 
Combining these two with the curvature estimates of Lemma \ref{l:curvature bounds by angle}, \eqref{e:third comparison ensures pos curvature} again gives
\begin{align} \nonumber
(\Gamma_k^+)''(t_{x_2}) & \ge (1-4\varepsilon_1)d_2^2x_2\Big|\eta_{k,2}(t_{x_2}) - F_2(\Gamma_k^+(t_{x_2}))\Big|\\ \nonumber
    & = (1-4\varepsilon_1)d_2^2x_2\Big|(\Gamma_k^+)'(t_{x_2}) \cdot e_1 + F_2(\Gamma_k^+(t_{x_2}))\Big|\\\nonumber
    & \ge (1-4\varepsilon_1)d_2^2x_2\left|C\sqrt{1-9\varepsilon_1^2} - C(\varepsilon_1)\right|\frac{d_1^2}{d_2^2}\frac{(u_k(x_2)+\overline{s}) - \ell_k^+}{x_2}\\ \label{e: third comparison curvature bound at tx2}
    & \ge (1-4\varepsilon_1)d_1^2\frac{1}{2}(C-1)\Big((u_k(x_2)+\overline{s})-\ell_k^+\Big) \ge (1-4\varepsilon_1)d_1^2\frac{1}{2}(C-1)h_0.
\end{align}

Now, by \eqref{e:third comparison large M containment}, we may apply Lemma \ref{l:G derivatives bound} to all $t \ge t_{x_2}$ such that $\Gamma_k^{+}(t) \in B_{Mh_0^{-\frac{1}{4}}}(\Gamma_k^+(t_{x_2}))$. Thus, obtain 
\begin{align}\nonumber
    |\frac{\partial}{\partial v}G_1(\Gamma_k^+(t))| & \le  5 \frac{d_1^2}{d_2^2}\left(\frac{1}{\Gamma_k^+(t) \cdot x_2}+C(d_1, d_2)\frac{|\Gamma_k^+(t) \cdot x_1|}{| \Gamma_k^+(t) \cdot x_2|}\Big|\frac{\ell_k^+ - \Gamma_k^+(t) \cdot x_1}{\Gamma_k^+(t) \cdot x_2}\Big|\right)\\ \nonumber
    & \le 5(\frac{1}{R_1} + C(d_1, d_2)\varepsilon_1^2)\\ \label{e:third comparison Gderivative bound}
    & \le (\frac{d_1^2}{20}(C-1)h_0 +\frac{d_1^2}{20}(C-1)h_0) \qquad \qquad \text{by }\eqref{e:third comparison R large compared to h_0} 
\end{align}
for all $v\in \mathbb{S}^1$.

Thus, in particular, combining \eqref{e: third comparison curvature bound at tx2} and \eqref{e:third comparison Gderivative bound} we have that
\begin{align*}
(\Gamma_{k}^+)''(t_{x_2}) - \frac{\partial}{\partial (\Gamma_k^+)'(t_{x_2})}\vec{G}(\Gamma_{k}^+(t_{x_2})) > (\frac{d_1^2}{3} - \frac{d_1^2}{10})(C-1)h_0 > \frac{1}{5}d_1^2(C-1)h_0>0.
\end{align*}

Now, we claim that for all $t \ge t_{x_2}$ such that 
\begin{align}\label{e:containment for 3rd comparison}
    \Gamma_k^+(t) \in B_{Mh_0^{-\frac{1}{4}}}(\{x_1 = \ell_k^+\}) \cap \{x_2 \ge R\} \subset \Big(\Omega_{k,\frac{1}{2}\varepsilon_1} \cap \{\frac{|x_1|}{|x_2|}< \varepsilon_1\}\Big),
\end{align}
we have 
\begin{align}\label{e:curvature claim for 3rd comparison}
    (\Gamma_{k}^+)''(t) & > \frac{1}{5}d_1^2(C-1)h_0 (1+\frac{1}{2}(t-t_{x_2})).
\end{align}
Let $U \subset [t_{x_2}, \infty)$ be the collection of all points which satisfy \eqref{e:containment for 3rd comparison} and \eqref{e:curvature claim for 3rd comparison}. Since both are open conditions, by smoothness of $\Gamma_k^{\pm}$ this set contains a relatively open neighborhood $U$ around $t_{x_2}$.  Let $t_0 \ge t_{x_2}$ be the supremum of $U$. Then, by Lemma \ref{l:curvature bounds by angle} we see
\begin{align*}
    (\Gamma_k^+)''(t_0) & \ge (1-4\varepsilon_0)d_2^2(\Gamma_k^+)_{e_2}\Big((\Gamma_k^+)'(t_0)\cdot e_1 - G_1(\Gamma_k^+(t_0))\Big)\\
    & \ge (1-4\varepsilon_0)d_2^2x_2\Big((\Gamma_k^+)'(t_{x_2})\cdot e_1 - G_1(\Gamma_k^+(t_{x_2})) + \int_{t_{x_2}}^{t_0}(\Gamma_k^+)''(t')\cdot e_1 - \partial_{(\Gamma_k^+)'(t')}G_1(\Gamma_k^+(t'))dt'\Big)\\
    & \ge (1-4\varepsilon_0)d_1^2\frac{1}{2}(C-1)h_0\\
    & \quad + (1-4\varepsilon_0)d_2^2x_2\int_{t_{x_2}}^{t_0} (\frac{1}{5}d_1^2(C-1)h_0 (1+\frac{1}{2}(t-t_{x_2}))) - \frac{1}{10}d_1^2(C-1)h_0 dt'\\
    & \ge (1-4\varepsilon_0)d_1^2\frac{1}{2}(C-1)h_0\\
    & \quad + (1-4\varepsilon_0)d_2^2x_2(\frac{1}{5}d_1^2(C-1)h_0(t_0- t_{x_2})  + \frac{1}{20}d_1^2(C-1)h_0(t_0-t_{x_2})^2).
\end{align*}
In particular, we see that all $t \ge t_{x_2}$ such that $\Gamma_k^+(t) \in B_{Mh_0^{-\frac{1}{4}}}(\Gamma_k^+(t_{x_2}))$ satisfy \eqref{e:curvature claim for 3rd comparison}, as claimed.

In particular, since by assumption, $\Gamma_k^+(t_{x_2})\cdot e_1 \in [\ell_k^+ + h_0, \ell_k^+ + h_1]$, the curvature estimates of \eqref{e:curvature claim for 3rd comparison} must hold for all $t \in [t_{x_2}, t_{x_2} + Mh_0^{-\frac{1}{4}})$. Therefore, we calculate for $t \in [t_{x_2}, t_{x_2} + Mh_0^{-\frac{1}{4}})$
\begin{align*}
    \angle((\Gamma_k^+)'(t_{x_2}), (\Gamma_{k}^+)'(t)) & = \int_{t_{x_2}}^t (\Gamma_k^+)''(s)\, ds\\
    & \ge \int_{t_{x_2}}^t \frac{1}{5}d_1^2(C-1)h_0 (1+\frac{1}{2}(t-t_{x_2}))\, ds\\
    & \ge \frac{1}{5}d_1^2(C-1)h_0  ((t-t_{x_2})+\frac{1}{4}(t-t_{x_2})^2).
\end{align*}
Letting $t \rightarrow t_{x_2}+ Mh_0^{-\frac{1}{4}}$, we see 
\begin{align*}
    \angle((\Gamma_k^+)'(t_{x_2}), (\Gamma_{k}^+)'(t_{x_2} + Mh_0^{-\frac{1}{4}})) & \ge \frac{1}{5}d_1^2(C-1)h_0  (Mh_0^{-\frac{1}{4}}+\frac{1}{4}M^2h_0^{-\frac{1}{2}})\\
    & \ge \frac{1}{5}d_1^2(C-1)(h_0^{\frac{3}{4}}+\frac{1}{4}M^2h_0^{\frac{1}{2}}).
\end{align*}
But, by choice of $M$, $$\frac{1}{5}d_1^2(C-1)\frac{1}{4}M^2h_0^{\frac{1}{2}} > 3C(d_1, d_2, C)h_0^{\frac{1}{2}},$$ a contradiction with the gradient bounds obtained from our choice of $\varepsilon_1$. Thus, the lemma holds.
\end{proof}

\section{Truncation}\label{s:truncation}

In the previous section, we showed that $\partial I_k$ is the graph of a function $u_k:\mathbb{R}\rightarrow \overline{\mathbb{R}}$.  However, we did not obtain global Lipschitz bounds on $u_k$ or global height bounds.  To achieve both global Lipschitz bounds and global height bounds -- thereby producing a sequence of sets $\tilde{E}_k$ in the Small Perturbation Regime -- we must truncate the sets $I_k$.  Throughout this section, we shall use Lemma \ref{l:rescaling d_1 is ok} to reduce to the normalized case
\begin{align*}
    1=d_1 \le d_2.
\end{align*}
We shall still keep track of the dependence on $d_1, d_2$.

\begin{definition}\label{d:truncation functions}(Symmetric Truncations)
Let $A\in S^+_{2\times 2}(\mathbb{R})$ and $\alpha \in (0,1/2]$ be fixed.  Let $u_k:\mathbb{R}\rightarrow \overline{\mathbb{R}}$ as in Porism \ref{porism graphicality} so that $I_k = E_{u_k}$ for all sufficiently large $k \in \mathbb{N}$. 

For any $0<h$, we define the following truncations of $u_k$.
\begin{align*}
    T_h(u_{k}) := \begin{cases} u_k & \text{where } u_k \in [-h,h]\\
    h \frac{u_k}{|u_k|} & \text{where } u_k \not\in [-h,h]
\end{cases}
\end{align*}
\end{definition}

The main result of this section is to show the following theorem. 

\begin{theorem}[Truncation]\label{t:full truncation result}
Let $A\in S^+_{n\times n}(\mathbb{R})$, $A_{1,2}$ be as in Definition \ref{d:standing assumptions}(4), and $\alpha \in (0,1/2]$ be fixed.  Let $u_k:\mathbb{R}\rightarrow \overline{\mathbb{R}}$ as in Porism \ref{porism graphicality} so that $I_k = E_{u_k}$ for all sufficiently large $k \in \mathbb{N}$. 

There exists a sequence $h_k \searrow 0^+$ such that
\begin{align*}
    \lim_{k \rightarrow \infty}\frac{\gamma_{A_{1,2}}(I_k \Delta E_{T_{h_k}(u_k)})}{\gamma_{A_{1,2}}(I_k \Delta H_2(e_1, \overline{s}))^8} = 0,\\
    \lim_{k \rightarrow \infty}\frac{\left|\emph{Per}_{\gamma_{A_{1,2}}}(I_k) - \emph{Per}_{\gamma_{A_{1,2}}}(E_{T_{h_k}(u_k)})\right|}{\gamma_{A_{1,2}}(I_k \Delta H_2(e_1, \overline{s}))^8} = 0.
\end{align*}
\end{theorem}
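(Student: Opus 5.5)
We will choose the truncation height $h_k$ by a diagonal argument. Set $\lambda_k:=\gamma_{A_{1,2}}(I_k\Delta H_2(e_1,\overline{s}))$; by Lemma \ref{l:properties of Ik}(2), $\lambda_k\ge \frac15\lambda_{\gamma_A}(E_k)>0$, and $\lambda_k\to0$. The plan is to show that for every fixed small $h>0$ both error quantities
\begin{align*}
m_k(h):=\gamma_{A_{1,2}}(I_k\Delta E_{T_h(u_k)}),\qquad p_k(h):=\bigl|\operatorname{Per}_{\gamma_{A_{1,2}}}(I_k)-\operatorname{Per}_{\gamma_{A_{1,2}}}(E_{T_h(u_k)})\bigr|
\end{align*}
are at most $e^{-c(h)R_k(h)^2}$, up to harmless factors, where $R_k(h)$ is the first height $|x_2|$ at which $|u_k|$ reaches $h$. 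We will then show that $R_k(h)\to\infty$ fast enough relative to $\lambda_k$. Finally, we will pick $h_k\searrow0$ slowly so that $m_k(h_k),p_k(h_k)=o(\lambda_k^8)$.

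\textbf{Step 1 (where truncation acts).} Fix $h<h_1\ll\varepsilon_0$. Theorem \ref{l:center strip control}, Lemma \ref{l:properties of Ik}(5) and Lemma \ref{l:continuity of Ck} give $|u_k|<h/2$ on $\{|x_2|\le R\}$ for any $R$ once $k$ is large. Hence $I_k\Delta E_{T_h(u_k)}\subset\{|x_2|\ge R_k(h)\}\cap\{|x_1-\overline{s}|\ge h\}$, with $R_k(h)\to\infty$. Because the $x_2$-marginal of $\gamma_{A_{1,2}}$ is a Gaussian of variance $d_2^{-2}$, we get $m_k(h)\lesssim e^{-d_2^2R_k(h)^2/2}$. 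For the perimeter, $\partial E_{T_h(u_k)}$ differs from $\partial I_k$ only on the same region. There the replacing curve consists of vertical segments $x_1=\overline{s}\pm h$ together with horizontal pieces, and its weighted length is controlled by the weight $e^{-d_2^2x_2^2/2}$ integrated in $x_2$. The removed part of $\partial I_k$ has weighted length bounded by the same Gaussian tail, since $\partial I_k$ is a graph (Porism \ref{porism graphicality}). Thus $p_k(h)\lesssim e^{-cR_k(h)^2}$ as well.

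\textbf{Step 2 (linking $R_k(h)$ to $\lambda_k$).} This is the main obstacle. Theorem \ref{t:finer estimates on gradient} gives, on the band where $u_k+\overline{s}-\ell_k^+\in[h_0,h_1]$, the differential inequality $u_k'\le C\frac{d_1^2}{d_2^2}(u_k+\overline{s}-\ell_k^+)/x_2$, and the symmetric bounds below $\ell_k^\pm$. Integrating this auxiliary ODE from $x_2=r$, where the deviation is still $\le h_0$, gives growth at most like $(x_2/r)^{Cd_1^2/d_2^2}$: polynomial in $x_2$. Exiting the band at level $h$ therefore forces $R_k(h)\ge r\,(h/h_0)^{d_2^2/(Cd_1^2)}$; this is the separation of exit times. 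On the region $|x_2|\le r$, the graph has deviation $\le h_0$ from $\ell_k^\pm$. We will then choose $h_0=h_0(k)$ tied to $\lambda_k$. The set $I_k$ already carries mass $\gtrsim\lambda_k$ in its excursion, so the deviation of $u_k$ must reach size comparable to a power of $\lambda_k$ only at heights where the Gaussian weight $e^{-d_2^2x_2^2/2}$ is still $\gtrsim\lambda_k$. This forces $r\gtrsim\sqrt{\log(1/\lambda_k)}$ for the starting point, so that $R_k(h)$ exceeds a large multiple of $\sqrt{\log(1/\lambda_k)}$, and hence $e^{-cR_k(h)^2}\le\lambda_k^{9}$. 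I expect the delicate part to be the bookkeeping of the two small parameters $h_0<h$ against $\lambda_k$, together with making sure the band hypotheses of Theorem \ref{t:finer estimates on gradient} hold uniformly in $k$ along the chosen sequence.

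\textbf{Step 3 (diagonalization).} For each $j$ take $h=1/j$. Using Steps 1 and 2, find $k_j$ increasing so that $m_k(1/j)+p_k(1/j)\le \frac1j\lambda_k^8$ for all $k\ge k_j$. Setting $h_k=1/j$ for $k_j\le k<k_{j+1}$ gives $h_k\searrow0$ and both limits in Theorem \ref{t:full truncation result} equal zero.
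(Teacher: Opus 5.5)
Your architecture matches the paper's. The truncation error lives in $\{|x_2|\ge R_k(h)\}$ and is a Gaussian tail, Theorem \ref{t:finer estimates on gradient} integrated as an ODE gives a multiplicative separation of exit times (this is Lemma \ref{l:separation of exit times}), and $h_k$ is chosen diagonally.

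Step 2 does not close as written. You cannot take $h_0=h_0(k)$ tied to $\lambda_k$. The threshold $k_2(d_1,d_2,h_0,h_1,C)$ in Theorem \ref{t:finer estimates on gradient} depends on $h_0$: its proof needs $R_1\gtrsim 1/h_0$, and $k$ so large that the deviation from $\ell_k^\pm$ is below $h_0/2$ on $(-R_1,R_1)$. So the band inequality is unavailable along a sequence $h_0(k)\to 0$.

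Your sentence about where the deviation can reach a power of $\lambda_k$ is also not the inequality you need, since large deviations far out cost almost no mass. What you need is a \emph{lower} bound on $\lambda_k$ produced by the excursion itself. The fix:
\begin{itemize}
\item Keep $h_0$ a fixed fraction of $h$; the paper takes $h_0=H/(50d_2^3)$.
\item Let $r$ be the last height before $R=R_k(h)$ at which the deviation equals $h_0$. Integrating the band inequality on $[r,R]$ gives $R\ge Mr$, with $M$ depending only on $h/h_0,C,d_1,d_2$ and $M\to\infty$ as $h/h_0\to\infty$.
\item For large $k$ you have $R\ge r+1$ and $u_k\ge h_0/2$ on $[r,R]$. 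So the box $\{\overline{s}<x_1<\overline{s}+h_0/2,\ r\le x_2\le r+1\}\subset I_k\setminus H_2(e_1,\overline{s})$ gives $\lambda_k\ge c\,h_0\,e^{-d_2^2(r+1)^2/2}$ with $c=c(d_1,d_2,\overline{s})>0$.
\end{itemize}
Hence
\[
\frac{m_k(h)}{\lambda_k^{8}}\lesssim\frac{e^{-d_2^2R^2/2}}{\lambda_k^{8}}\le\frac{1}{c^{8}h_0^{8}}\exp\Big(-\frac{d_2^2}{2}\big(M^2r^2-8(r+1)^2\big)\Big)\longrightarrow 0
\]
once $h/h_0$ is fixed large enough that $M^2>8$, because $r\to\infty$ by Lemma \ref{l:properties of Ik}(5) and Lemma \ref{l:continuity of Ck}. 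No $k$-dependent $h_0$ is needed.

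The perimeter half of Step 1 is also under-justified. Over the set $J$ of truncated heights, the removed piece of $\partial I_k$ has weighted length comparable to $\int_J\sqrt{1+|u_k'|^2}\,e^{-\frac12(d_1^2(\overline{s}+u_k)^2+d_2^2x_2^2)}\,dx_2$. Graphicality over $\{x_1=\overline{s}\}$ only controls the part with $1$ in place of $\sqrt{1+|u_k'|^2}$. A graph may oscillate in $x_1$ arbitrarily fast and carry large weighted length in $\{|x_2|\ge R\}$.

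You need slope control there:
\begin{itemize}
\item inside $\Omega_{k,\varepsilon_0}$, the bound $|u_k'|\le 3\varepsilon_0$ from Corollary \ref{c:lipschitz control in cones};
\item outside the cones, the monotonicity of $x_1$ along $\Gamma_k^\pm$ from Porism \ref{l:second comparison easy cases}, which lets you bound $\int|u_k'|\,e^{-d_1^2(\overline{s}+u_k)^2/2}\,dx_2$ by projecting onto the $x_1$-axis.
\end{itemize}
This is the $G_k/B_k$ splitting in Lemma \ref{l:perimeter error bound}. With it, $p_k(h)\lesssim e^{-c'R^2}$ for some $c'>0$, and the comparison above goes through after enlarging $M$ relative to $c'$.
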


The proof of Theorem \ref{t:full truncation result} relies upon estimates on the change in mass and perimeter under truncations. For clarity of argument, we shall reduce to proving these estimates in detail for upper truncations in the quadrant $\{x_2 \ge 0\} \cap \{x_1 \ge \overline{s}\}$. The arguments are exactly the same for lower truncations in $\{x_2 \ge 0\} \cap \{x_1 \le \overline{s}\}$.  As usual, we obtain identical results in $\{x_2 \le 0\}$ by reflection.

\begin{definition}\label{d:truncation functions 1-sided}(One-sided truncations)
Let $A\in S^+_{n\times n}(\mathbb{R})$, $A_{1,2}$ be as in Definition \ref{d:standing assumptions}(4), and $\alpha \in (0,1/2]$ be fixed.  Let $u_k:\mathbb{R}\rightarrow \overline{\mathbb{R}}$ as in Porism \ref{porism graphicality} for all sufficiently large $k \in \mathbb{N}$.  

Assume that there exists a $0<h_2$ such that 
    \begin{align*}
        \mathcal{H}^1(\{u_k>h_2\} \cap \{x_2 \ge 0\})>0
    \end{align*}
for all $k \in \mathbb{N}$ sufficiently large.
\begin{enumerate}
    \item  For any $0<H<\frac{1}{2}h_2$, we define the right $H$-\textit{upper truncation} of $u_k$ as follows
\begin{align*}
    \tilde{u}_{k,H,} := \begin{cases}
    u_k & \text{in } \{x_2 \le 0\}\\
    u_k & \text{in } \{x_2 \ge 0\}\text{ where } u_k \le H\\
    H & \text{in } \{x_2 \ge 0\}\text{ where } u_k > H.
\end{cases}
\end{align*}
\item For any $0<H<\frac{1}{2}h_2$, we define
\begin{align*}
    x_{2,k,H} & =
        \inf \{0<x_2 : u_k(x_2) = H\}\\
    x_2^{k,H} & =
        \sup \{0<x_2<x_{2,k,2H} : u_k(x_2) = H\}.
\end{align*}  
\end{enumerate} 
\end{definition}

Now, we use Theorem \ref{t:finer estimates on gradient} to obtain some estimates on the relationship between $x_{2}^{k,H}$ and $x_{2,k,CH}$ for $0<C<1/2$.

\begin{lemma}\label{l:separation of exit times}
Let $A, A_{1,2}$ be as in Definition \ref{d:standing assumptions}.  Let $u_k:\mathbb{R}\rightarrow \overline{\mathbb{R}}$ be as in Porism \ref{porism graphicality}.  Let $0<H<<1$ be sufficiently small such that we may apply Theorem \ref{t:finer estimates on gradient} with $H=h_1$.

Assume that $0<x_{2, k, H} <\infty$ for all sufficiently large $k \in \mathbb{N}$. Then, for all sufficiently large $k \in \mathbb{N}$, we have that $0<x_2^{k, \frac{1}{50d_2^3}H} <\infty$ and
\begin{align*}
    \frac{x_{2,k, H}}{x_2^{k, \frac{1}{50d_2^3}H}} \ge 10d_2^2.
\end{align*}
\end{lemma}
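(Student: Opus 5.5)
We will prove the lemma by integrating the differential inequality of Theorem \ref{t:finer estimates on gradient} along the graph $u_k$ over the range of $x_2$ in which $u_k+\overline{s}$ climbs from about $\ell_k^+ + \frac{1}{50d_2^3}H$ to $\ell_k^+ + H$. We work in the normalization $d_1=1\le d_2$ (Lemma \ref{l:rescaling d_1 is ok}) and reduce to the upper truncation in $\{x_2\ge 0\}$, as announced before Definition \ref{d:truncation functions 1-sided}. Throughout we write $v_k(x_2):=u_k(x_2)+\overline{s}-\ell_k^+$, which measures the height relative to $\ell_k^+$. By Lemma \ref{l:continuity of Ck}, $\ell_k^+\to\overline{s}$, so for $k$ large the levels $u_k=H$ and $u_k=\frac{1}{50d_2^3}H$ correspond to $v_k$ lying within $o(1)$ of $H$ and $\frac{1}{50d_2^3}H$, respectively.

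\textbf{Step 1: finiteness of $x_2^{k,H'}$.} Set $H':=\frac{1}{50d_2^3}H$. Lemma \ref{l:properties of Ik}(5) together with Theorem \ref{l:center strip control} gives $|u_k|$ arbitrarily small on $[0,R]$ for $k$ large. Since $x_{2,k,H}<\infty$, $u_k$ is continuous by Porism \ref{porism graphicality}, and $u_k$ is bounded by $H<2H$ before $x_{2,k,H}$, the intermediate value theorem shows that the set defining $x_2^{k,H'}$ is nonempty and bounded by $x_{2,k,2H}$. Hence $0<x_2^{k,H'}<\infty$. It also follows that $x_2^{k,H'}\to\infty$ as $k\to\infty$, so it exceeds the threshold $R_1$ from the proof of Theorem \ref{t:finer estimates on gradient}.

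\textbf{Step 2: the ODE comparison.} On the interval $[x_2^{k,H'},x_{2,k,H}]$ the quantity $v_k$ stays in $[h_0,h_1]$, with $h_0\approx H'/2$ and $h_1=H$. This holds because $x_2^{k,H'}$ is the last crossing of level $H'$ before $x_{2,k,2H}$ and $x_{2,k,H}$ is the first crossing of $H$. I expect this ordering to need a short check: one must show $x_2^{k,H'}<x_{2,k,H}$, or else work with the last crossing of $H'$ before $x_{2,k,H}$, which only makes the ratio smaller and therefore suffices. On this interval, \eqref{e:fine grad upper est} with a fixed $C\in(1,2)$ gives
\begin{align*}
\frac{v_k'(x_2)}{v_k(x_2)}\le \frac{C}{d_2^2}\,\frac{1}{x_2}.
\end{align*}
Integrating from $a:=x_2^{k,H'}$ to $b:=x_{2,k,H}$ yields $\log\frac{v_k(b)}{v_k(a)}\le \frac{C}{d_2^2}\log\frac{b}{a}$, that is, $\frac{b}{a}\ge\left(\frac{v_k(b)}{v_k(a)}\right)^{d_2^2/C}$.

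\textbf{Step 3: conclusion and main obstacle.} For $k$ large, $v_k(b)/v_k(a)\ge (1-o(1))\,50d_2^3$. Hence $b/a\ge (40d_2^3)^{d_2^2/2}$. We must check that this is at least $10d_2^2$ for every $d_2\ge1$. At $d_2=1$ the left side is $\sqrt{40}\approx 6.3<10$, so the exponent $d_2^2/2$ is too weak there. The remedy is to take $C$ closer to $1$ in Theorem \ref{t:finer estimates on gradient}, which is allowed at the cost of shrinking $h_1$, hence $H$; the hypothesis "$H$ sufficiently small" covers this. With $C\le 1.1$ the bound becomes $(40d_2^3)^{d_2^2/1.1}$, which is at least $40^{0.9}\approx 27.6>10$ at $d_2=1$. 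For larger $d_2$ it is clearly at least $10d_2^2$, since $(40d_2^3)^{d_2^2/1.1}\ge 40 d_2^{3}\ge 10d_2^2$ once $d_2^2\ge 1.1$, and the intermediate range is handled by monotonicity.

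The main obstacle is bookkeeping rather than analysis. First, the uniform containment hypotheses of Theorem \ref{t:finer estimates on gradient}, namely $x_2\ge R_1$ and $h_0$ fixed before $k$ is chosen, must hold on the entire interval $[a,b]$. Second, the normalization $\ell_k^+\neq\overline{s}$ must not spoil the ratio $v_k(b)/v_k(a)$. Both points are handled by fixing $h_0=H'/2$, $h_1=H$ and $C$ first, and then taking $k\ge k_2$.
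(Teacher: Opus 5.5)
Your argument is essentially the paper's: integrate the inequality $v_k'\le \frac{C}{d_2^2}\frac{v_k}{x_2}$ from Theorem \ref{t:finer estimates on gradient} between the last crossing of level $H'$ and the first crossing of level $H$ to get a power-law lower bound on the ratio. The paper takes $C=3/2$ and discards the favorable factor $d_2^2$ in the exponent; since $40^{2/3}>10$, your retuning to $C\le 1.1$ is harmless but unnecessary.

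One correction: in Definition \ref{d:truncation functions 1-sided} the cutoff for $x_2^{k,H'}$ is $x_{2,k,2H'}$, not $x_{2,k,2H}$. Because $2H'<H$, this gives $0<x_2^{k,H'}<x_{2,k,2H'}\le x_{2,k,H}<\infty$ at once. That settles both the finiteness in Step 1, which was not justified as you wrote it since $x_{2,k,2H}$ may be infinite, and the ordering you flagged in Step 2.
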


\begin{proof}
Observe that for any function $f:\mathbb{R}^+\rightarrow \mathbb{R}$ satisfying
    \begin{align*}
        f'(x) \le C\frac{f(x)}{x},
    \end{align*}
    then for any $0<\rho<R<\infty$
    \begin{align*}
        \ln(\frac{f(R)}{f(\rho)}) \le C\ln(R/\rho)\\
        \frac{f(R)}{f(\rho)} \le \left(\frac{R}{\rho}\right)^C.
    \end{align*}
Therefore, consider $f(x_2)=u_k(x_2)+ \overline{s}-\ell_k^+$ with $C = \frac{3}{2}$, $h_1 =H$, and $h_0 = \frac{1}{50d_2^3}H \le \frac{1}{2}H$.  This last estimate is where we use the normalizing assumption that $1 \le d_2$. Now, we see that the assumption that $x_{2, k, H}< \infty$ immediately implies that $x_2^{k, \frac{1}{50d_2^3}H}$ exists, is finite, and satisfies
\begin{align*}
    10d_2^2\le (50d_2^3)^{\frac{2}{3}} \le \frac{x_{2,k, H}}{x_{2}^{k, \frac{1}{25d_2^3}H}}
\end{align*}
for all sufficiently large $k \in \mathbb{N}$.
\end{proof}

Note that Lemma \ref{l:separation of exit times} gives us \textit{multiplicative} growth in $x_2$ to obtain a multiplicative height increase (or decrease) in $u_k - \ell_k$. Lemma \ref{l:preliminary gradient control Comparison} is not strong enough to obtain this multiplicative factor, as
\begin{align*}
    f'(x) \le C\frac{f(x)}{x} + \frac{3}{x} \quad \longrightarrow \quad f(R) \le \left(\frac{R}{r}\right)^{\frac{1}{C}}f(r) + \frac{3}{C}(R-r)R^{\frac{1}{C}}.
\end{align*}

\begin{lemma}[Right upper $H$-truncation area estimates]\label{l:mass error is negligible}
Let $A, A_{1,2}$ be as in Definition \ref{d:standing assumptions}.  Let $k \in \mathbb{N}$ be sufficiently large such that $u_k:\mathbb{R}\rightarrow \overline{\mathbb{R}}$ exists as in Porism \ref{porism graphicality}.  Let $0<H<<1$ be sufficiently small such that we may apply Theorem \ref{t:finer estimates on gradient} with $h_1 =H$. Let $\tilde{u}_{k,H}$ be the function in Definition \ref{d:truncation functions 1-sided}. 
    
    Then, 
    \begin{align*}
        \lim_{k \rightarrow \infty}\frac{\gamma_{A_{1,2}}(E_{u_k} \Delta E_{\tilde{u}_{k,H}})}{\gamma_{A_{1,2}}(I_k \Delta H_2(e_1, s))^8} =0.
    \end{align*}
\end{lemma}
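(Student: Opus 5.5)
The plan is to localize the truncation error to the unbounded region $\{x_2 \ge x_{2,k,H}\}$ and show that the Gaussian weight there is super-polynomially small in $\lambda_k:=\gamma_{A_{1,2}}(I_k\Delta H_2(e_1,\overline{s}))$. By Definition \ref{d:truncation functions 1-sided}, the sets $E_{u_k}$ and $E_{\tilde u_{k,H}}$ agree outside $\{x_2\ge x_{2,k,H}\}\cap\{x_1 > \overline{s}+H\}$, so that
\begin{align*}
\gamma_{A_{1,2}}(E_{u_k}\Delta E_{\tilde u_{k,H}})\le \frac{d_1d_2}{2\pi}\int_{x_{2,k,H}}^\infty e^{-\frac{d_2^2x_2^2}{2}}\,dx_2\int_{\overline{s}+H}^\infty e^{-\frac{d_1^2x_1^2}{2}}dx_1 \lesssim e^{-\frac{d_2^2}{2}x_{2,k,H}^2}.
\end{align*}
If $x_{2,k,H}=\infty$ along a subsequence, there is nothing to prove for those $k$. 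It therefore suffices to show $e^{-\frac{d_2^2}{2}x_{2,k,H}^2}=o(\lambda_k^8)$.

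The second step is a lower bound on $\lambda_k$ in terms of $x_{2,k,H}$. By Lemma \ref{l:separation of exit times}, the point $x_2^{k,\frac{1}{50d_2^3}H}$ is finite, and
\begin{align*}
x_2^{k,\frac{1}{50d_2^3}H}\le \frac{x_{2,k,H}}{10d_2^2}.
\end{align*}
On the interval $[x_2^{k,H/(50d_2^3)},\,x_{2,k,H}]$ the graph stays above height $\overline{s}+\frac{H}{50d_2^3}$. Here I would use the definition of $x_2^{k,\cdot}$ as the last crossing before $x_{2,k,2H}$, together with the monotone growth estimate from Theorem \ref{t:finer estimates on gradient}. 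So $I_k\setminus H_2(e_1,\overline{s})$ contains a strip of $x_1$-width $\gtrsim H$ over an $x_2$-interval of length comparable to $x_{2,k,H}$, starting at $x_2\le x_{2,k,H}/(10d_2^2)$. This gives
\begin{align*}
\lambda_k\gtrsim_H e^{-\frac{d_2^2}{2}\left(x_{2,k,H}/(10d_2^2)\right)^2}\cdot(\text{polynomial factor}),
\end{align*}
and hence, since $d_2\ge1$,
\begin{align*}
\lambda_k^8\gtrsim e^{-\frac{8}{200d_2^2}\, d_2^2 x_{2,k,H}^2}\ge e^{-\frac{d_2^2}{25}x_{2,k,H}^2}.
\end{align*}
The ratio is then bounded by $e^{-c\,x_{2,k,H}^2}$ with $c=\frac{d_2^2}{2}-\frac{d_2^2}{25}>0$.

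The third step is to show $x_{2,k,H}\to\infty$. This holds because, by Lemma \ref{l:properties of Ik}(5), Lemma \ref{l:continuity of Ck}, and Corollary \ref{c:lipschitz control in cones}, $|u_k|<H/2$ on any $[0,R]$ for $k$ large. Together with the second step, the ratio tends to $0$.

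The main obstacle is the second step. One must justify that between the two crossing times the graph really stays above the lower level $\frac{H}{50d_2^3}$, and does not dip back down, so that the strip is genuinely contained in $I_k\Delta H_2$. I would first try to exploit that, by the choice of $x_2^{k,\cdot}$ as a supremum below $x_{2,k,2H}$, any later return below that level contradicts its definition. The intermediate value theorem on the continuous graph (Porism \ref{porism graphicality}) then excludes dips below $\frac{H}{50d_2^3}$ before $x_{2,k,H}$. If this fails, the fallback is to restrict to a subinterval of length about $1$ just left of $x_{2,k,H}$, where the Lipschitz bound of Theorem \ref{t:finer estimates on gradient} keeps $u_k\ge H/2$. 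That subinterval yields the weaker bound $\lambda_k\gtrsim e^{-\frac{d_2^2}{2}(x_{2,k,H})^2}$, which is not sufficient for the eighth power. This is exactly why the multiplicative separation of Lemma \ref{l:separation of exit times} is needed.
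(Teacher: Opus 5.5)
Your overall strategy is the paper's: bound the truncation error by the Gaussian tail beyond $x_{2,k,H}$, bound $\gamma_{A_{1,2}}(I_k\Delta H_2(e_1,\overline{s}))$ from below by a strip starting at $x_2^{k,h_0}$ with $h_0=\frac{1}{50d_2^3}H$, and compare exponents using Lemma \ref{l:separation of exit times}. Your first and third steps are fine. There is, however, a genuine gap in your second step.

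\textbf{The gap.} The claim that $u_k\ge h_0$ on all of $[x_2^{k,h_0},x_{2,k,H}]$ does not follow from what you cite.
\begin{itemize}
\item $x_2^{k,h_0}$ is the last crossing of level $h_0$ \emph{before} $x_{2,k,2h_0}$. So the supremum/intermediate-value argument only gives $u_k>h_0$ on $(x_2^{k,h_0},x_{2,k,2h_0})$.
\item A return below $h_0$ after $x_{2,k,2h_0}$ does not contradict the definition of $x_2^{k,h_0}$. Note that $x_{2,k,2h_0}\le x_{2,k,H}$, since $2h_0=\frac{H}{25d_2^3}<H$.
\item Theorem \ref{t:finer estimates on gradient} does not exclude such a dip. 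It is a one-sided bound on $u_k'$ that limits how fast $u_k-\ell_k^+$ can grow, not a monotonicity statement.
\item Even on $(x_2^{k,h_0},x_{2,k,2h_0})$ you would still need a lower bound on the length of the interval, and you do not supply one.
\end{itemize}
Your fallback looks at the wrong end of the interval (near $x_{2,k,H}$), which is why it appeared insufficient.

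\textbf{The repair.} You never need a long strip, because your lower bound only uses its left endpoint.
\begin{itemize}
\item Take the fixed-length interval $[x_2^{k,h_0},\,x_2^{k,h_0}+\frac{h_0}{8\varepsilon_0}]$.
\item There the graph stays, by a continuity argument, in the region around $\{x_1=\ell_k^+\}$ covered by Corollary \ref{c:lipschitz control in cones}. Hence $|u_k'|\le 3\varepsilon_0$ and $u_k\le h_0+\frac{3}{8}h_0<2h_0$ on it.
\item So this interval lies to the left of $x_{2,k,2h_0}$, and your supremum/intermediate-value argument then gives $u_k\ge h_0$ on it.
\item This yields $\gamma_{A_{1,2}}(I_k\Delta H_2(e_1,\overline{s}))\gtrsim e^{-\frac12 d_2^2(x_2^{k,h_0}+\frac{h_0}{8\varepsilon_0})^2}\ge e^{-d_2^2(x_2^{k,h_0})^2}$ for large $k$.
\item Compare with your tail bound $e^{-\frac12 d_2^2 x_{2,k,H}^2}$. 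Using $x_{2,k,H}\ge 10d_2^2\,x_2^{k,h_0}$ and $x_2^{k,h_0}\to\infty$, the ratio is at most $e^{-(50d_2^6-8d_2^2)(x_2^{k,h_0})^2}\to 0$.
\end{itemize}
This is exactly how the paper closes the argument.
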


\begin{proof}
We may assume that $|x_{2,k, H}|<\infty$ for a sequence of $k \rightarrow \infty$, since otherwise $u_k\le H$, $\tilde{u}_{k,H} = u_k$ for all sufficiently large $k \in \mathbb{N}$, and there is nothing to prove.  

We begin estimating $\gamma_{A_{1,2}}(E_{u_k} \Delta E_{\tilde{u}_{k,H}})$, as follows.
\begin{align}\nonumber
    \gamma_{A_{1,2}}(E_{u_k} \Delta E_{\tilde{u}_{k,H}}) & = \frac{d_1d_2}{2\pi}\int_{\mathbb{R}}\int_{\tilde{u}_{k,H}(x_2)}^{u_k(x_2)} e^{-\frac{1}{2}(d_1^2 x_1^2 + d_2^2 x_2^2)}dx_1dx_2\\\nonumber
    & = \frac{d_1d_2}{2\pi}\int_{x_{2,k,H}}^{\infty}\int_{\tilde{u}_{k,H}(x_2)}^{u_k(x_2)} e^{-\frac{1}{2}(d_1^2 x_1^2 + d_2^2 x_2^2)}dx_1dx_2\\\nonumber
    & \le  \frac{d_2}{\sqrt{2\pi}}\int_{x_{2,k,H}}^{\infty} e^{-\frac{1}{2}(d_2^2 x_2^2)}dx_2\\\label{e:mass estimate upper bound}
    & \le  e^{-\frac{1}{2}(\frac{x_{2,k,H}}{d_2})^2},
    \end{align}
where in the last line we have used change of variables and the Chernoff bound
\begin{align}\label{e:Chernoff bound} 
    \frac{1}{\sqrt{2\pi}}\int_t^{\infty}e^{-\frac{1}{2}x^2}\,dx \le e^{-\frac{1}{2}t^2}.
\end{align}

Now, we estimate the denominator. Let $h_0 := \frac{1}{50d_2^{3}}H$ and fix $0<\varepsilon_0$ be sufficiently small so that we may invoke the results of Corollary \ref{c:lipschitz control in cones}.  By our normalizing assumption that $1\le d_2$, we have
\begin{align*}
    h_0:=\frac{1}{50d_2^3}H \le H =:h_1.
\end{align*}
Therefore, since we assume that $x_{2,k,H}<\infty$ for all sufficiently large $k \in \mathbb{N}$, $x_{2}^{k,h_0}<\infty$ and, by Lemma \ref{l:properties of Ik}(5), $x_{2,k,h_0} \rightarrow \infty$ as $k\rightarrow \infty$.  Therefore, we have that for all sufficiently large $k \in \mathbb{N}$,
\begin{align*}
    |u_k'| \le 3\varepsilon_0 \quad \text{in}\,\Omega_{k,\varepsilon_0},\qquad
    B_{2h_0}(\{x_1 = \ell_k^+\}) \cap \{|x_2| \ge x_{2}^{k,h_0}\} \subset \Omega_{k, \varepsilon_0}.
\end{align*}
Then, for all sufficiently large $k \in \mathbb{N}$ such that $|\ell_k^+ - \overline{s}|\le \frac{1}{2}h_0$, we have 
\begin{enumerate}
    \item $u_{k}([x_{2}^{k,h_0}, x_{2}^{k,h_0} + \frac{h_0}{8\varepsilon_0}]) \subset B_{2h_0}(\{x_1 = \ell_k^+\})$.
    \item $\tilde{u}_k \res [x_{2}^{k,h_0}, x_{2}^{k,h_0} + \frac{h_0}{8\varepsilon_0}] \ge h_0$. This uses the definition of $x_2^{k, h_0}$ as a supremum.
\end{enumerate}  
Therefore, we estimate for these $k \in \mathbb{N}$
\begin{align*}
    \gamma_{A_{1,2}}(I_k \Delta H_2(e_1, \overline{s})) & \ge \frac{d_1d_2}{2\pi}\int_{\mathbb{R}}\int_{\min\{\overline{s}, \tilde{u}_{k,H}(x_2)\}}^{\max\{\overline{s}, \tilde{u}_{k,H}(x_2)\}} e^{-\frac{1}{2}(d_1^2 x_1^2 + d_2^2 x_2^2)}dx_1dx_2\\
    & \ge \frac{d_1d_2}{2\pi}\int_{x_{2}^{k,h_0}}^{x_{2}^{k,h_0}+\frac{h_0}{8\varepsilon_0}} h_0 e^{-\frac{1}{2}(d_1^2|\overline{s}|^2 + d_2^2 x_2^2)}\, dx_2 \ge  \frac{d_1d_2}{2\pi}\frac{h_0^2}{8\varepsilon_0}e^{-\frac{1}{2}d_1^2 |\overline{s}|^2}e^{-\frac{1}{2}d_2^2 |x_{2}^{k,h_0}+\frac{h_0}{8\varepsilon_0}|^2}.
\end{align*}
Now, we recall Lemma \ref{l:properties of Ik}(5), again, to take $k \rightarrow \infty$ sufficiently large so that $|x_{2}^{k,h_0}+\frac{h_0}{8\varepsilon_0}|\le \sqrt{2}|x_{2}^{k,h_0}|$.  Thus, we see that
\begin{align}\label{e:mass estimate lower bound}
    \gamma_{A_{1,2}}(I_k \Delta H_2(e_1, \overline{s}))
    & \ge \frac{d_1d_2}{2\pi}\frac{h_0^2}{8\varepsilon_0}e^{-\frac{1}{2}d_1^2 |\overline{s}|^2}e^{-d_2^2 |x_{2}^{k,h_0}|^2}.
\end{align}

Putting \eqref{e:mass estimate upper bound} and \eqref{e:mass estimate lower bound} together, we have
\begin{align}\nonumber
    \lim_{k \rightarrow \infty}& \frac{\gamma_{A_{1,2}}(E_{u_k} \Delta E_{\tilde{u}_{k,H}})}{\gamma_{A_{1,2}}(I_k \Delta H_2(e_1, s))^8}\\ \nonumber
    & \le C(d_1, d_2, \varepsilon_0, h_0, \overline{s}) \liminf_{k \rightarrow \infty}\frac{e^{-\frac{1}{2}(\frac{x_{2,k,h_1}}{d_2})^2}}{\left(e^{-d_2^2 |x_{2}^{k,h_0}|^2}\right)^8}\\ \nonumber
& \le C(d_1, d_2, \varepsilon_0, h_0, \overline{s}) \liminf_{k \rightarrow \infty}\frac{e^{-50d_2^2|x_{2}^{k,h_0}|^2}}{e^{-8d_2^2|x_{2}^{k,h_0}|^2}}= 0 \qquad \text{by Lemma } \ref{l:separation of exit times}.
\end{align}
\end{proof}

\begin{lemma}[Right upper $H$-truncation perimeter estimates]\label{l:perimeter error bound}
Let $A, A_{1,2}$ be as in Definition \ref{d:standing assumptions}.  Let $u_k:\mathbb{R}\rightarrow \overline{\mathbb{R}}$ be as in Porism \ref{porism graphicality}.  Let $\tilde{u}_{k,H}$ be the right upper truncation in Definition \ref{d:truncation functions 1-sided}.

Then, for all $0<H<<1$ sufficiently small and all $k \in \mathbb{N}$ sufficiently large,
\begin{align*}
\lim_{k\rightarrow \infty}\frac{\Big|\operatorname{Per}_{\gamma_{A_{1,2}}}(I_k) -\operatorname{Per}_{\gamma_{A_{1,2}}}(E_{\tilde{u}_{k,H}}) \Big|}{\gamma_{A_{1,2}}(I_k\Delta H_2(e_1, \overline{s}))^8} =0.
\end{align*}
\end{lemma}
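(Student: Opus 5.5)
The strategy copies the mass estimate of Lemma \ref{l:mass error is negligible}. The perimeter change from the truncation lives entirely in the region $\{x_2 \ge x_{2,k,H}\}$, where the Gaussian weight is at most $e^{-\frac12 d_2^2 x_{2,k,H}^2}$. The denominator is bounded below by \eqref{e:mass estimate lower bound}, roughly $e^{-d_2^2|x_2^{k,h_0}|^2}$ with $h_0=\frac{1}{50d_2^3}H$. By Lemma \ref{l:separation of exit times}, $x_{2,k,H}\ge 10d_2^2\, x_2^{k,h_0}$, so any numerator of the form $P(x_{2,k,H})\,e^{-c\,x_{2,k,H}^2}$, with $P$ polynomial and $c>0$ comparable to $d_2^2$, beats the eighth power of the denominator. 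If $x_{2,k,H}=\infty$ along a subsequence, then $\tilde u_{k,H}=u_k$ and there is nothing to prove.

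\textbf{Step 1 (splitting the perimeter).} On $\{x_2\ge x_{2,k,H}\}$, the boundary $\partial E_{\tilde u_{k,H}}$ consists of pieces of $\operatorname{graph}(u_k)$ where $u_k\le H$, together with horizontal segments $\{x_1=\overline s+H\}$ over $\{u_k>H\}$. I would bound
\[
\big|\operatorname{Per}_{\gamma_{A_{1,2}}}(I_k)-\operatorname{Per}_{\gamma_{A_{1,2}}}(E_{\tilde u_{k,H}})\big|\le \operatorname{Per}_{\gamma_{A_{1,2}}}(I_k;\{x_2>x_{2,k,H}\})+\operatorname{Per}_{\gamma_{A_{1,2}}}(E_{\tilde u_{k,H}};\{x_2>x_{2,k,H}\}).
\]
The second term is at most $\int_{x_{2,k,H}}^\infty \sqrt{1+|u_k'|^2}\,e^{-\frac12 d_2^2x_2^2}\,dx_2$ on the graph part plus $\frac{d_2}{\sqrt{2\pi}}\int_{x_{2,k,H}}^\infty e^{-\frac12 d_2^2x_2^2}\,dx_2$ on the flat part. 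Since the graph part only involves $\{u_k \le H\}$, both pieces are dominated by the first term plus a Chernoff tail \eqref{e:Chernoff bound}. It therefore suffices to bound the tail perimeter $\operatorname{Per}_{\gamma_{A_{1,2}}}(I_k;\{x_2>x_{2,k,H}\})$.

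\textbf{Step 2 (tail perimeter of $I_k$: the main obstacle).} We have no global Lipschitz bound on $u_k$, so the arclength of $\Gamma_k^+$ beyond $x_{2,k,H}$ is not controlled directly. Two tools are available.

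First, use graphicality (Porism \ref{porism graphicality}) together with Porism \ref{l:second comparison easy cases}. Outside $\Omega_{k,\varepsilon_0}$, the sign of $u_k'$ is fixed by the side of $\ell_k^+$, so $u_k$ is monotone there. Inside the cones, Corollary \ref{c:lipschitz control in cones} gives $|u_k'|\le 3\varepsilon_0$. Hence $\{x_2\ge x_{2,k,H}\}$ splits into intervals where $|u_k'|\le 3\varepsilon_0$ and at most finitely many monotone excursions.

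Second, on a monotone excursion, bound the weighted length by the coarea formula in $x_1$. The arclength is at most $\int(1+|u_k'|)\,dx_2$, and the $|u_k'|$ part equals $\int |dx_1|$ along the curve. Its weight is at most $e^{-\frac12 d_2^2 x_{2,k,H}^2}$ times $\int_{\mathbb R} e^{-\frac12 d_1^2x_1^2}\,dx_1$, counted with the multiplicity of crossings. The crossing multiplicity is bounded because the graph re-enters the cone only through the region $[\ell_k^+-h_1,\ell_k^++h_1]$, where Theorem \ref{t:finer estimates on gradient} forces a definite sign of $u_k'-u_k/x_2$. This prevents oscillation, so each level $x_1$ is crossed $O(1)$ times.

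An alternative for the same step is a calibration. Apply the divergence theorem with the field $e_1 e^{-\frac12\langle Ax,x\rangle}$ restricted to $\{x_2>x_{2,k,H}\}$ to compare $I_k$ with its truncation. Minimality of $I_k$ from Definition \ref{d:Ik} then bounds the tail perimeter by the flat replacement plus $\Lambda\cdot$(mass error), where $\Lambda\approx|C^+_{k,\vec\eta}|$ is bounded by Lemma \ref{l:continuity of Ck}. The truncation is not volume-preserving, so I would restore the volume by a small perturbation in the center strip $\{|x_2|\le\varepsilon_0\}$. There the boundary is a Lipschitz graph by Theorem \ref{l:center strip control}, so the cost of this correction is linear in the mass error, which Lemma \ref{l:mass error is negligible} already shows to be negligible.

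\textbf{Step 3 (conclusion).} Either route gives a numerator of size $\lesssim (1+x_{2,k,H})\,e^{-\frac12 d_2^2x_{2,k,H}^2}$, up to constants depending on $d_1,d_2,\overline s,H$. Dividing by the eighth power of \eqref{e:mass estimate lower bound} and using Lemma \ref{l:separation of exit times} as in the proof of Lemma \ref{l:mass error is negligible} sends the ratio to $0$. I would try the comparison argument first, since it avoids counting crossings.
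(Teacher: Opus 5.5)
Your first route is essentially the paper's proof. It splits the tail into the part in $\Omega_{k,\varepsilon_0}$, where Corollary \ref{c:lipschitz control in cones} gives $|u_k'|\le 3\varepsilon_0$, and the part outside, where the steep pieces are controlled by projecting onto the $x_1$-axis. However, your reason for bounded crossing multiplicity is wrong. At height $x_2$ the cone boundary is $\{x_1=\ell_k^+\pm\varepsilon_0x_2\}$, not the band $|x_1-\ell_k^+|\le h_1$. Also, Theorem \ref{t:finer estimates on gradient} is only a one-sided bound on $u_k'$ inside that narrow band, not a sign condition on $u_k'-u_k/x_2$. The correct argument uses what you already cited. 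By Porism \ref{l:second comparison easy cases}, on each excursion outside the cone, $x_1$ moves strictly monotonically away from $\ell_k^+$. Consecutive right excursions are separated by a return to the line $x_1=\ell_k^++\varepsilon_0x_2$, which increases in $x_2$, and similarly on the left. So distinct excursions project onto disjoint $x_1$-intervals and the multiplicity is one, which is what the paper asserts. No finiteness of the number of excursions is needed.

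Your preferred comparison route is genuinely different from the paper and does work, after three adjustments.
\begin{enumerate}
\item Minimality controls only the part of $\partial I_k$ lying over $\{u_k>H\}$, not the whole tail $\operatorname{Per}_{\gamma_{A_{1,2}}}(I_k;\{x_2>x_{2,k,H}\})$. So skip the Step 1 reduction and bound the difference directly. Minimality against the truncated, volume-corrected competitor gives $\operatorname{Per}_{\gamma_{A_{1,2}}}(I_k)-\operatorname{Per}_{\gamma_{A_{1,2}}}(E_{\tilde{u}_{k,H}})\le C\,\gamma_{A_{1,2}}(E_{u_k}\Delta E_{\tilde{u}_{k,H}})$. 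The reverse difference is at most the weighted length of the flat segment, which is a Chernoff tail.
\item Competitors in Definition \ref{d:Ik} must agree with $I_k$ outside a compact subset of $\Omega_+$ and preserve the $\Omega_+$-volume. So the volume correction cannot sit in $\{|x_2|\le\varepsilon_0\}$, which straddles $\{x_2=0\}$. Place it over, e.g., $x_2\in[1,2]$, where Lemma \ref{l:properties of Ik}(5) makes $u_k$ uniformly $C^1$-small. There, restoring mass $m$ costs at most $Cm$ with $C$ independent of $k$, by the elementary bound $|\sqrt{1+(a+b)^2}-\sqrt{1+a^2}|\le|b|$. No calibration by $e_1e^{-\frac12\langle Ax,x\rangle}$ or Lagrange multiplier is needed.
\item The truncation alters $I_k$ on an unbounded set. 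So cut off at $x_2=R$, and at $x_1=M$ where $u_k$ is huge or infinite, and let $R,M\to\infty$. The extra segments cost $\lesssim e^{-\frac12 d_2^2R^2}+e^{-\frac12 d_1^2M^2}$.
\end{enumerate}
The payoff is that the perimeter lemma becomes a corollary of the estimate in Lemma \ref{l:mass error is negligible}, with none of the outside-cone geometry used at this step. The paper's route is instead a direct computation. It needs Porism \ref{l:second comparison easy cases} and the cone Lipschitz bounds, but makes no further use of minimality, volume fixing, or approximation by compactly supported competitors.
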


\begin{proof}
As in Lemma \ref{l:mass error is negligible}, we may assume that for some $0<H'$ there exists a sequence (also indexed by $k \rightarrow \infty$) such that $$J_{k,H'} := \{x_2: u_k(x_2)\not = \tilde{u}_{k,H'}(x_2)\} \not = \emptyset$$ for all $k$, otherwise there is nothing to prove.  We let $0<H$ satisfy 
\begin{enumerate}
    \item $0<H$ is sufficiently small so that we may apply Theorem \ref{t:finer estimates on gradient} with $h_1 = H$.
    \item $0<H \le \frac{1}{3}H'$.
\end{enumerate}
Note that the second assumption guarantees that $x_2^{k,H}<\infty$ for all $k \in \mathbb{N}$.

Additionally, we let $0<\varepsilon_0$ be fixed, sufficiently small so that the results of Corollary \ref{c:lipschitz control in cones} and Porism \ref{porism graphicality} hold with this $\varepsilon_0$ for all sufficiently large $k \in \mathbb{N}$.  

Consider the following sets
\begin{align*}
    G_k &:= \{x_2: (x_2, u_k(x_2)) \in \Omega_{k,\varepsilon_0}\}, \\
    B_k & := \{x_2: (x_2, u_k(x_2)) \not \in \Omega_{k,\varepsilon_0}\}.
\end{align*}
Using the convention that if $u_k = +\infty$ on $[x_{2}, \infty)$ then $\nabla u_k \res [x_{2}, \infty) = 0$, we estimate that for sufficiently large $k \in \mathbb{N}$ as in Porism \ref{porism graphicality},
\begin{align*}
    \text{Per}_{\gamma_{A_{1,2}}}(I_k; J_{k, H} \times \mathbb{R}) & = \frac{d_1d_2}{(2\pi)^{\frac{1}{2}}}\int_{J_{k,H}}\sqrt{1+|u_k'(x_2)|^2}e^{-\frac{1}{2}(d_1^2|\overline{s}+u_{k}(x_2)|^2 + d_2^2x_2^2)}dx_2\\
    & \le \frac{d_1d_2}{(2\pi)^{\frac{1}{2}}} \int_{x_{2,k,H}}^{\infty} \sqrt{1+|u_k'(x_2)|^2} e^{-\frac{1}{2}(d_1^2|\overline{s}+u_{k}(x_2)|^2 + d_2^2x_2^2)}dx_2\\
    & \le \frac{d_1d_2}{(2\pi)^{\frac{1}{2}}}\int_{G_k \cap (x_{2,k,H},\infty)} \sqrt{1+9\varepsilon_0^2} e^{-\frac{1}{2}(d_1^2|\overline{s}+u_{k}(x_2)|^2 + d_2^2x_2^2)}dx_2\\
& \qquad + \frac{d_1d_2}{(2\pi)^{\frac{1}{2}}}\int_{B_k \cap (x_{2,k,H},\infty)} \sqrt{1+|u_k'(x_2)|^2} e^{-\frac{1}{2}(d_1^2|\overline{s}+u_{k}(x_2)|^2 + d_2^2x_2^2)}dx_2,
\end{align*}
where we have used Lemma \ref{c:lipschitz control in cones} to obtain the last line.

Now, we decompose the last term and use projection.
\begin{align} \nonumber
    &\frac{d_1d_2}{(2\pi)^{\frac{1}{2}}}\int_{B_k \cap (x_{2,k,H},\infty)}  \sqrt{1+|u_k'(x_2)|^2} e^{-\frac{1}{2}(d_1^2|\overline{s}+u_{k}(x_2)|^2 + d_2^2x_2^2)}dx_2\\ \nonumber
    & = \frac{d_1d_2}{(2\pi)^{\frac{1}{2}}}\int_{B_k \cap (x_{2,k,H},\infty) \cap \{|u_k'|\le 1\}}  \sqrt{1+|u_k'(x_2)|^2} e^{-\frac{1}{2}(d_1^2|\overline{s}+u_{k}(x_2)|^2 + d_2^2x_2^2)}dx_2\\\nonumber
    & \qquad  + \frac{d_1d_2}{(2\pi)^{\frac{1}{2}}}\int_{B_k \cap (x_{2,k,H},\infty)\cap \{|u_k'|>1\}}  \sqrt{1+|u_k'(x_2)|^2} e^{-\frac{1}{2}(d_1^2|\overline{s}+u_{k}(x_2)|^2 + d_2^2x_2^2)}dx_2\\ \label{e:truncation perimeter upper estimate}
    & \le  d_1\left(\sqrt{2}e^{-\frac{1}{2}(|\frac{x_{2,k,H}}{d_2}|^2)}  + \sqrt{2}e^{-\frac{1}{2}(d_2^2|x_{2,k,H}|^2)}\right),
\end{align}
where the first term is estimated by the gradient bound and \eqref{e:Chernoff bound} and the second estimate comes from projection onto the vertical line $\mathbb{R} \times \{x_{2,k,H'}\}$ (by Lemma \ref{l:second comparison easy cases} the multiplicity function in the projection is always $1$) and the gradient bound.

Now, we obtain a lower bound on $\text{Per}_{\gamma_{A_{1,2}}}(I_k; J_{k, H} \times \mathbb{R})$. Note that for all $k \in \mathbb{N}$ sufficiently large, $$[x_{2}^{k,H}, x_{2}^{k,H} +1] \times [\overline{s} + H, \overline{s} + H + 3\varepsilon_0] \subset \Omega_{k,\varepsilon_0}.$$ Therefore, by the definition of $x_{2}^{k,H}$ as the supremum and the Lipschitz estimates of Corollary \ref{c:lipschitz control in cones}, for all such $k \in \mathbb{N}$
\begin{align*}
    [x_2^{k,H}, x_2^{k,H} +1] \subset G_k \cap J_{k,H}.
\end{align*}

Thus, we may obtain our underestimate, as follows. By integration,
\begin{align}\nonumber
    \text{Per}_{\gamma_{A_{1,2}}}(I_k; J_{k, H} \times \mathbb{R})& \ge \frac{d_1d_2}{(2\pi)^{\frac{1}{2}}}\int_{[x_2^{k,H}, x_2^{k,H} +1]}\sqrt{1+|u_k'(x_2)|^2}e^{-\frac{1}{2}(d_1^2|\overline{s}+u_{k}(x_2)|^2 + d_2^2x_2^2)}dx_2\\ \label{e:perimeter truncation est lower perimeter}
    & \ge \frac{d_1d_2}{(2\pi)^{\frac{1}{2}}}e^{-\frac{1}{2}d_1^2|\overline{s}-H|^2}e^{-\frac{1}{2}d_2^2|x_2^{k,H} +1|^2}.
\end{align}

Now, we use a similar argument to see that
\begin{align}\label{e:perimeter truncation est lower perimeter 2}
    \frac{d_1d_2}{(2\pi)^{\frac{1}{2}}}e^{-\frac{1}{2}d_1^2|\overline{s}+H|^2}e^{-\frac{1}{2}d_2^2|x_2^{k,H} +1|^2} & = \text{Per}_{\gamma_{A_{1,2}}}(E_{\tilde{u}_{k,H}}; [x_2^{k,H, +},x_2^{k,H} +1] \times \mathbb{R})\\
    & \le \text{Per}_{\gamma_{A_{1,2}}}(E_{\tilde{u}_{k,H}}; J_{k,H} \times \mathbb{R})\\
    & \le \frac{d_1d_2}{(2\pi)^{\frac{1}{2}}}\int_{[x_{2,k,H}, \infty)} e^{-\frac{1}{2}d_2^2|x_2|^2} \,dx_2\\ \label{e:perimeter truncation est upper perimeter 2}
    & \le d_1 e^{-\frac{1}{2}|\frac{x_{2,k,H}}{d_2}|^2},
\end{align}
where, again, we have used the Chernoff bound \eqref{e:Chernoff bound}.

Thus, combining \eqref{e:truncation perimeter upper estimate}\eqref{e:perimeter truncation est lower perimeter}\eqref{e:perimeter truncation est lower perimeter 2}\eqref{e:perimeter truncation est upper perimeter 2}, we immediately see that
\begin{align*}
    |\text{Per}_{\gamma_{A_{1,2}}}(I_k) - \text{Per}_{\gamma_{A_{1,2}}}(E_{\tilde{u}_{k,H}})| \le C(d_1, d_2, \overline{s},\varepsilon_0)e^{-\frac{1}{2}|\frac{x_{2,k,H}}{d_2}|^2},
\end{align*}
where, again, we have used our normalizing assumption that $1 \le d_2$.  Therefore, borrowing the denominator estimates in Lemma \ref{l:mass error is negligible} shows
\begin{align}\nonumber
    \lim_{k \rightarrow \infty}& \frac{\text{Per}_{\gamma_{A_{1,2}}}(I_k) - \text{Per}_{\gamma_{A_{1,2}}}(E_{\tilde{u}_{k,H}})}{\gamma_{A_{1,2}}(I_k \Delta H_2(e_1, s))^8}\\ \nonumber
    & \le C(d_1, d_2, \overline{s},\varepsilon_0) \liminf_{k \rightarrow \infty}\frac{e^{-\frac{1}{2}\left|\frac{x_{2,k,H}}{d_2}\right|^2}}{e^{-8d_2^2 \left|x_{2}^{k,\frac{1}{50d_2^3}H}\right|^2}}\\ \nonumber
& \le C(d_1, d_2, \overline{s},\varepsilon_0) \liminf_{k \rightarrow \infty}\frac{e^{-50d_2^2\left|x_{2}^{k,\frac{1}{50d_2^3}H}\right|^2}}{e^{-8d_2^2\left|x_{2}^{k,\frac{1}{50d_2^3}H}\right|^2}}= 0 \qquad \text{ by Lemma \ref{l:separation of exit times}},
\end{align}
where we have used Lemma \ref{l:properties of Ik} to give $\lim_{k \rightarrow \infty}x_{2}^{k,H} = +\infty$.
\end{proof}

\subsection{Proof of Theorem \ref{t:full truncation result}}
First, we note that if for all $0<h$,
\begin{align*}
    \mathcal{H}^1(\{ |u_k|>h\}) =0
\end{align*}
for all sufficiently large $k \in \mathbb{N}$, then we may take $h_k$ for any sequence $h_k \rightarrow 0^+$ such that $h_k \ge \sup_{\mathbb{R}} u_k$ for all sufficiently large $k \in \mathbb{N}$.  Thus, $T_{h_k}(u_k) = u_k$ and there is nothing to prove.

Therefore, we shall assume that there exists an $0<h$
\begin{align*}
    \mathcal{H}^1(\{ |u_k|>h\}) >0
\end{align*}
for all sufficiently large $k \in \mathbb{N}$. We consider 
\begin{align*}
    h_2^+ := \sup_{0\le h<\infty}\{h: \mathcal{H}^1(\{ u_k>h\} \cap \{x_2 \ge 0\}) >0
\text{ for all } k\in \mathbb{N} \}\\
    h_2^- := \sup_{0\le h<\infty}\{h: \mathcal{H}^1(\{ u_k>h\} \cap \{x_2 \le 0\})>0 \text{ for all } k\in \mathbb{N} \},
\end{align*}
where we define $h_2^\pm = 0$ if the relevant set is empty. And,
\begin{align*}
    h_3^+ := \sup_{0\le h<\infty}\{h: \mathcal{H}^1(\{ u_k<-h\} \cap \{x_2 \ge 0\}) >0
\text{ for all } k\in \mathbb{N} \}\\
    h_3^- := \sup_{0\le h<\infty}\{h: \mathcal{H}^1(\{ u_k<-h\} \cap \{x_2 \le 0\})>0 \text{ for all } k\in \mathbb{N} \},
\end{align*}
where we define $h_3^\pm = 0$ if the relevant set is empty. 

Let $V \subset \{h_2^+, h_2^-, h_3^+, h_3^-\}$ be the collection of all non-zero entries.  We define $h_4 := \min V$.  Then, for all $h_j < \frac{1}{3}h_4$, we may apply Lemma \ref{l:perimeter error bound} \textit{mutatis mutandis} for right lower truncations and left upper and lower truncations.  Extending Lemma \ref{l:mass error is negligible} similarly, we obtain the results of Lemma \ref{l:mass error is negligible} and Lemma \ref{l:perimeter error bound} for all sufficiently large $k\in \mathbb{N}$.  

Now, let $h_j \searrow 0^+$ be  any sequence satisfying $h_j < \frac{1}{3}h_4$ for all $j \in \mathbb{N}$.  We may assume further that all $0<h_j$ are sufficiently small so that the Lipschitz control in Corollary \ref{c:lipschitz control in cones} holds with with $\varepsilon_0 =2h_j$. Therefore, for all $j,k \in \mathbb{N}$ we define
\begin{align*}
    k_j:= \max\Big\{k, \inf\big\{k \in \mathbb{N}:& \quad \text{for all }k'\ge k \\
    &\quad \text{Lip}(u_{k'}; [-100,100]) + \norm{u_{k'}}_{L^{\infty}([-100,100])} \le \frac{1}{2}h_j\\
    & \quad \text{Lip}(u_{k'})\le 6h_j \text{ when } (x_2, u_k(x_2)) \in \Omega_{k,2h_j}, \text{ or when }\\
    & \qquad  (x_2, u_k(x_2)) \in \left((B_{2h_j}(\{x_1 = \ell_{k'}^+\}) \cap \Omega_+) \cup (B_{2h_j}(\{x_1 = \ell_{k'}^-\}) \cap \Omega_- )\right).\\
    & \quad \frac{\gamma_{A_{1,2}}(E_{T_{h_j}(u_{k'})}\Delta I_{k'})}{\gamma_{A_{1,2}}(I_{k'} \Delta H_2(e_1, \overline{s}))^8} \le 2^{-j} \\
    & \quad \frac{|\operatorname{Per}_{\gamma_{A_{1,2}}}(E_{T_{h_j}(u_{k'})}) -\operatorname{Per}_{\gamma_{A_{1,2}}}(I_{k'})|}{\gamma_{A_{1,2}}(I_{k'} \Delta H_2(e_1, \overline{s}))^8}\le 2^{-j}\big\}\Big\}.
\end{align*}
By Lemma \ref{l:properties of Ik}, Corollary \ref{c:lipschitz control in cones}, and Lemma \ref{l:mass error is negligible} and Lemma \ref{l:perimeter error bound} (extended \textit{mutatus mutandis} to the truncations $T_{h_j}$), we have that $k_j<\infty$.  Choosing a reindexing of $h_j$ such that 
\begin{align*}
    h_k \in \{h_j\}_j \quad \text{and} \quad h_k = h_j \quad  \text{implies} \quad k \ge k_j,
\end{align*}
the fact that $k_j<\infty$ for all $j \in \mathbb{N}$ implies that there is a way to choose $h_k \rightarrow 0^+$ such that $T_{h_k}(u_k)$ satisfies the claims of the theorem.
\qed

\subsection{Proof of Theorem \ref{t:technical theorem}(2abcd)}\label{ss:Ttheorem 2abcd}

Finally, we are ready to define the sets $\tilde{E}_{k}$. Let $A_{1,2} \in S^+_{2\times 2}(\mathbb{R})$ be as in Definition \ref{d:standing assumptions}(4). Let $\alpha \in (0,\frac{1}{2}]$ be fixed.  Let $I_k$ be as in Lemma \ref{l:properties of Ik}.  Let $\{h_k\}_k$ be the sequence guaranteed by Theorem \ref{t:full truncation result}.  We define
\begin{align*}
    \{\tilde{E}_k\}_k := \{
        E_{T_{h_k}(u_{k})}\}_{k \in \mathbb{N}}.
\end{align*}

\medskip

\textbf{Proof of (2d)}:
By assumption, $h_k$ is sufficiently small so that the Lipschitz control of Corollary \ref{c:lipschitz control in cones} holds with $\varepsilon_0= 2h_k$.  Therefore, since $$\partial E_{T_{h_k}(u_{k})} \subset \left((B_{2h_j}(\{x_1 = \ell_{k'}^+\}) \cap \Omega_+) \cup (B_{2h_j}(\{x_1 = \ell_{k'}^-\}) \cap \Omega_- )\right),$$ 
$\text{Lip}(T_{h_k}(u_{k}))\le 6h_k$.  This immediately gives Theorem \ref{t:technical theorem}(2d).

\medskip

\textbf{Proof of (2c)}:
We estimate
    \begin{align*}
        \lambda_{\gamma_A}(E_k) & = \gamma_{A_{1,2}}(F_k\Delta H_2(e_1, \overline{s})) & \text{Lemma } \ref{l:n-1 symmetrization}\\
        & \le (2+o(k)) \gamma_{A_{1,2}}(H_k\Delta H_2(e_1, \overline{s}))& \text{Corollary } \ref{c:second symmetrization}\\
        & \le 2(2+o(k))\gamma_{A_{1,2}}(I_k\Delta H_2(e_1, \overline{s}))& \text{Lemma } \ref{l:properties of Ik}\\
        & \le 2(2+o(k))\Big(\gamma_{A_{1,2}}(\tilde{E}_{k}\Delta H_2(e_1, \overline{s})) + 2^{-k}\gamma_{A_{1,2}}(I_k\Delta H_2(e_1, \overline{s}))^8\Big) & \text{Lemma } \ref{l:mass error is negligible},
    \end{align*}
    where in the last inequality we have also used the triangle inequality for the total variation norm on measurable sets
    \begin{align*}
        \gamma_{A_{1,2}}(I_k\Delta H_2(e_1, \overline{s})) \le \gamma_{A_{1,2}}(\tilde{E}_{k}\Delta H_2(e_1, \overline{s})) + \gamma_{A_{1,2}}(\tilde{E}_{k}\Delta I_k).
    \end{align*}
Absorbing the last term, we obtain
\begin{align*}
    (1-2^{-k})\lambda_{\gamma_A}(E_k) \le (4 + o(k))(1-2^{-k}) \gamma_{A_{1,2}}(I_k \Delta H_2(e_1, \overline{s}))\le (4 + o(k))\gamma_{A_{1,2}}(\tilde{E}_{k} \Delta H_2(e_1, \overline{s})).
\end{align*}
Therefore, Theorem \ref{t:technical theorem}(2c) holds for all $k \in \mathbb{N}$ sufficiently large.

\medskip

\textbf{Proof of (2b)}:
We estimate,
    \begin{align*}
        \delta_{\gamma_A}(E_k) & \ge \delta_{A_{1,2}}(H_k \Delta H_2(e_1, \overline{s}))& \text{Corollary \ref{c:second symmetrization}(1)}\\ & \ge \delta_{A_{1,2}}(I_k \Delta H_2(e_1, \overline{s})) & \text{Lemma \ref{l:properties of Ik}(1)}\\
        & \ge \delta_{\gamma_{A_{1,2}}}(\tilde{E}_k) - 2^{-k}\lambda_{\gamma_{A_{1,2}}}(I_k \Delta H_2(e_1, \overline{s}))^8 & \text{Lemma \ref{l:perimeter error bound}}.
        \end{align*}
This gives Theorem \ref{t:technical theorem}(2b).

\medskip

\textbf{Proof of (2a)}: Since $\gamma_{A_{1,2}}(I_k) = \gamma_{A}(E_k) = \alpha$ for all $k \in \mathbb{N}$, Theorem \ref{t:technical theorem}(2a) follows from Theorem \ref{t:full truncation result}.
\qed

\section{The Small Perturbation Regime}\label{s:The Small Perturbation Regime}

In this section, we provide Fuglede-type estimates for the Small Perturbation Regime.  Proofs of this type of estimate in \cite{CianchiFuscoMaggiPratelli11,CicaleseLeonardi2012_selectionprinciple,BarchiesiBrancoliniJulin17} rely upon some form of reflectional symmetry to improve the constant in the Sobolev-type inequality.  Our reduction step (Theorem \ref{t:technical theorem}), however, uses $(n-1)$-symmetrization, which eliminates all hope of this kind of argument.  It is notable, however, that the ODE from \eqref{e:ODE} nevertheless allows us to obtain this type of improvement, \textit{despite} our initial $(n-1)$-symmtrization. Thus, this demonstrates that reliance on reflectional symmetry, as is customary in the existing literature, is not essential to the proof.

This section is organized as follows.  First, we develop some elementary Taylor expansion results for $Q(\tilde{E}_k)$ (Lemma \ref{l:fuglede-type estimate}). This gives a crude upper bound on $Q(H_2(e_1, \overline{s}))$ (Lemma \ref{l:proof of remark Q finite}), which we need in our reduction step in Remark \ref{r:Q is finite and consequences}. Next, we develop some results around what happens if $\tilde{u}_k$ is asymptotically linear. The key result, is that if $\tilde{u}_k$ is very close to linear, then $\gamma_{A_{1,2}}(\tilde{E}_k \Delta H_2(e_1, \overline{s})) \simeq \lambda_{\gamma_{A}}(E_k)$ by universal constants (see Lemma \ref{l:close to linear and comparability of lambdas}).  This comparability allows us to prove that for all sufficiently large $k \in \mathbb{N}$, the $\tilde{u}_k$ are ``far" from a linear function (Lemma \ref{l:separation from linear case}).  This quantitative control gives an improved Sobolev-type inequality, which immediately gives the desired results of Theorem \ref{t:small perturbation regime}.

\subsection{Taylor Expansion} For the rest of this subsection, we fix $A \in S^+_{2\times 2}(\mathbb{R})$ such that
\begin{align*}
    A = \begin{pmatrix}
        d_1^2 & 0\\
        0& d_2^2
    \end{pmatrix}.
\end{align*}
Further, we fix $\alpha\in (0,\frac{1}{2}]$ and $\overline{s} \in (-\infty,0]$ such that $\gamma_{A}(H_2(e_1, \overline{s})) = \alpha$.

We begin by establishing asymptotic estimates for both the perimeter deficit and the anisotropic Gaussian asymmetry.

\begin{proposition}[Taylor Expansion Estimates]\label{p:taylor expansion}
Let $A, \alpha, \overline{s}$ be as above.  Let $0<\epsilon$. Let $u\in W^{1, \infty}(\mathbb{R})$ with $\norm{u}_{W^{1,\infty}(\mathbb{R})}<\epsilon$.  Then,
\begin{align*}
\Big|e&^{-\frac{1}{2}\langle A(\overline{s}, x_2), (u(x_2),0)\rangle - \langle A(u(x_2),0), (\overline{s},x_2)\rangle  - \langle A(u(x_2),x_2), (u(x_2),0)\rangle}\\
        & - \Big(1 - (d_1^2\overline{s})u(x_2) +\left(- \frac{d_1^2}{2} + \frac{(d_1^2\overline{s})^2}{2}\right)|u(x_2)|^2\Big)\Big| \le O_1(\varepsilon^3), 
\end{align*}
where the function $O_1(\varepsilon^3)$ is given by
\begin{align*}
    O_1(\varepsilon^3):= C\frac{(d_1^2\overline{s})^2}{2}|u(x_2)|^2 (\varepsilon+ o(\varepsilon)).
\end{align*}
Furthermore, we also have 
\begin{align*}
\sqrt{1+ |\nabla u(x_2)|^2}=1 + \frac{1}{2}|\nabla u(x_2)|^2 -\frac{1}{8}|\nabla u(x_2)|^4+O_2(\varepsilon^6)
\end{align*}
The function $O_1(\epsilon^3)$ depends upon $\overline{s}$, $u(x_2)$, and $d_1$.  Moreover, it is continuous in these variables.  The function $O_2(\epsilon^6)$ is independent of $\overline{s}$ and $A$.
\end{proposition}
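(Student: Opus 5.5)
The plan is to reduce both claims to one-variable Taylor expansions with explicit remainders, exploiting that $\|u\|_{W^{1,\infty}}<\epsilon$ makes $|u(x_2)|<\epsilon$ and $|\nabla u(x_2)|<\epsilon$ pointwise.

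For the exponential, write the exponent as a scalar. Since $A$ is diagonal,
\[
\langle A(\overline{s},x_2),(u,0)\rangle=d_1^2\overline{s}u,\qquad \langle A(u,x_2),(u,0)\rangle=d_1^2u^2 .
\]
So the exponent reduces to a quantity of the form $z(x_2):=-(d_1^2\overline{s})u(x_2)-\frac{d_1^2}{2}u(x_2)^2$. This is exactly $-\frac12\big(\langle A(\overline{s}+u,x_2),(\overline{s}+u,x_2)\rangle-\langle A(\overline{s},x_2),(\overline{s},x_2)\rangle\big)$, i.e.\ the ratio of the Gaussian weights on $\partial E_u$ and on $\partial H_2(e_1,\overline{s})$. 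I will read the stated exponent with that normalization, since it is the one consistent with the claimed expansion.

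Then apply $e^{z}=1+z+\frac{z^2}{2}+R(z)$ with $|R(z)|\le \frac{|z|^3}{6}e^{|z|}$. Substituting $z$ gives
\[
1-(d_1^2\overline{s})u+\Big(-\frac{d_1^2}{2}+\frac{(d_1^2\overline{s})^2}{2}\Big)u^2
\]
plus the cross term $\frac12(d_1^2\overline{s})d_1^2u^3$, the quartic term $\frac{d_1^4}{8}u^4$, and $R(z)$.

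Each of these leftover terms is bounded by $C(d_1,\overline{s})|u|^2\big(|u|+|u|^2+\dots\big)\le C|u|^2(\epsilon+o(\epsilon))$. Here we use $|z|\le C(d_1,\overline{s})(\epsilon+\epsilon^2)$, which controls $e^{|z|}$ uniformly. This is precisely the form $O_1(\epsilon^3)$. Continuity in $\overline{s},u,d_1$ follows because the remainder is an explicit continuous function of these quantities, namely $e^{z}$ minus a polynomial in $z$.

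The one bookkeeping point is the constant. If $\overline{s}=0$ the prefactor $(d_1^2\overline{s})^2$ vanishes, yet the $u^4$ term survives. I would therefore allow the constant $C$ in $O_1$ to depend on $d_1$ (and absorb a $d_1^4$ factor), in line with the statement's remark that $O_1$ depends on $d_1$.

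For the square root, set $y=|\nabla u|^2\le\epsilon^2$ and use $\sqrt{1+y}=1+\frac y2-\frac{y^2}{8}+\frac{y^3}{16}(1+\xi)^{-5/2}$ for some $\xi\in(0,y)$. The remainder is at most $\frac{y^3}{16}\le\frac{\epsilon^6}{16}$, which is $O_2(\epsilon^6)$. This bound is visibly independent of $\overline{s}$ and $A$.

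The only genuinely delicate step is getting the exponent's algebra right against the garbled inner products in the statement; after that everything is routine Taylor's theorem with Lagrange remainder.
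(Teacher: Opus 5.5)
Your proposal is correct and follows essentially the same route as the paper. You reduce the exponent to the scalar $z=-(d_1^2\overline{s})u-\frac{d_1^2}{2}u^2$ and expand $e^{z}$ to second order, so the leftover terms are the cubic cross term $\frac{d_1^4\overline{s}}{2}u^3$, the quartic term $\frac{d_1^4}{8}u^4$ and the series tail; you then Taylor-expand $\sqrt{1+|\nabla u|^2}$, and your Lagrange remainders simply make the paper's ``power series'' step explicit. Your caveat about $\overline{s}=0$ is also right: the prefactor $(d_1^2\overline{s})^2$ in the stated $O_1$ cannot be literally correct there, and your corrected form $C(d_1,\overline{s})|u|^2(\epsilon+o(\epsilon))$ is what the paper effectively uses downstream, for example through the factor $C(1+\overline{s}^2)$ in $O_3$.
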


\begin{proof}
   \emph{Estimates on the error for the graph term.}  By Taylor expansion $\sqrt{1+x^2}=1+\frac12 x^2-\frac{1}{8}x^4+O(x^6)$ and $\|\nabla u\|_{L^\infty(\R)}\leq \varepsilon$, we have
\begin{align*}
\sqrt{1+ |\nabla u(x_2)|^2}=1 + \frac{1}{2}|\nabla u(x_2)|^2 -\frac{1}{8}|\nabla u(x_2)|^4+O_2(\varepsilon^6)
\end{align*}
for all $x_2 \in \mathbb{R}$.\\
\noindent \emph{Estimates on the weight term.}  We estimate by the power series expansion and then by the orthogonality of $A$ and $A^T=A$.
\begin{align*}
    & e^{\frac{-\langle A(\overline{s},x_2), ( u(x_2), 0)\rangle - \langle A(u(x_2), 0), (\overline{s}, x_2)\rangle  - \langle A(u(x_2),0), (u(x_2), 0)\rangle}{2}}\\
    & = 1+ \frac{-\langle A(\overline{s}, x_2), ( u(x_2), 0)\rangle - \langle A(u(x_2),0), (\overline{s}, x_2)\rangle  - \langle A(u(x_2),0), (u(x_2), 0)\rangle}{2}\\
    & \qquad + \sum_{n=2}^{\infty} \frac{\Big(\frac{-\langle A(\overline{s}, x_2), (u(x_2),0)\rangle - \langle A(u(x_2), 0), (\overline{s}, x_2)\rangle  - \langle A(u(x_2),0), (u(x_2),0)\rangle}{2}\Big)^n}{n!}\\
    & = 1+ \frac{-\langle A(\overline{s}, 0), ( u(x_2),0)\rangle - \langle A(u(x_2), 0), (\overline{s},0)\rangle  - \langle A(u(x_2),0), (u(x_2),0)\rangle}{2}\\
    & \qquad + \sum_{n=2}^{\infty} \frac{\Big(\frac{-\langle A(\overline{s},0), (u(x_2),0)\rangle - \langle A(u(x_2),0), (\overline{s}, 0)\rangle  - \langle A(u(x_2),0), (u(x_2),0)\rangle}{2}\Big)^n}{n!}
    \end{align*}
Now, we note that 
$$\frac{-\langle A(\overline{s},0), (u(x_2),0)\rangle - \langle A(u(x_2),0), (\overline{s},0)\rangle  - \langle A(u(x_2),0), (u(x_2),0)\rangle}{2} =  -d_1^2\overline{s}u(x_2) - \frac{d_1^2}{2}|u(x_2)|^2$$
and
\begin{align*}
&\frac{1}{2!}\left(\frac{-\langle A(\overline{s},0), (u(x_2),0)\rangle - \langle A(u(x_2),0), (\overline{s},0)\rangle  - \langle A(u(x_2),0), (u(x_2),0)\rangle}{2}\right)^2 \\
&= \frac{(d_1^2\overline{s})^2}{2}|u(x_2)|^2 + \frac{d_1^4\overline{s}}{2}u^3(x_2)+ \frac{d_1^4}{8}|u(x_2)|^4.
\end{align*}
Therefore, when $\|u\|_{L^\infty(\R)}\leq \varepsilon$,
\begin{align*}
&\Big|e^{\frac{-\langle A(\overline{s},x_2), ( u(x_2),0)\rangle - \langle A(u(x_2),0), (\overline{s},x_2)\rangle  - \langle A(u(x_2),0), (u(x_2),0)\rangle}{2}}\\
        & - \Big(1 - (d_1^2\overline{s})u(x_2) +\left(- \frac{d_1^2}{2} + \frac{(d_1^2\overline{s})^2}{2}\right)|u(x_2)|^2\Big)\Big| \le O_1(\varepsilon^3). 
\end{align*}  
\end{proof}

\begin{lemma}\label{l:first order estimate}
Let $A, \alpha,\overline{s}$ be as above. Let $0<\epsilon$, and $u\in W^{1, \infty}(\mathbb{R})$ with $\norm{u}_{W^{1,\infty}(\mathbb{R})}<\epsilon$.  Let $E= E_u \subset \mathbb{R}^2$ as in Definition \ref{d:standing assumptions}(9) .  Assume further that $\gamma_{A}(E_u) = \gamma_{A}(H_2(e_1, \overline{s}))$.

Then,
\begin{align*}
\Big|\int_{\mathbb{R}}u(x_2)d\gamma_{A',\overline{s}}(x_2) - \frac{d_1^2\bar{s}}{2}\int_{\mathbb{R}}|u(x_2)|^2d\gamma_{A',\overline{s}}(x_2)\Big|\le O_3(\varepsilon^3).
\end{align*}
Moreover, the error function $O_3(\epsilon^3)$ may be chosen such that 
\begin{align*}
    O_3(\epsilon^3) := C\left(1 + \overline{s}^2\right)\int_{\R}|u(x_2)|^2d\gamma_{A',\overline{s}}(x_2)\Big(\epsilon + o(\epsilon)\Big),
\end{align*}
where the constant $C$ depends continuously on the eigenvalues of $A$ and $\overline{s}$.
\end{lemma}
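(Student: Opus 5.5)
The plan is to write the volume constraint as an integral in $x_2$ of an inner integral in $x_1$, and then Taylor expand the inner integral. Since $E_u=\{(x_1,x_2): x_1\le \overline{s}+u(x_2)\}$, Fubini gives
\begin{align*}
0=\gamma_A(E_u)-\gamma_A(H_2(e_1,\overline{s})) = \frac{d_1d_2}{2\pi}\int_{\mathbb{R}}\int_{\overline{s}}^{\overline{s}+u(x_2)} e^{-\frac12(d_1^2x_1^2+d_2^2x_2^2)}\,dx_1\,dx_2 .
\end{align*}
I substitute $x_1=\overline{s}+\tau$ with $\tau\in[0,u(x_2)]$ (or $[u(x_2),0]$) and factor the integrand as
\begin{align*}
e^{-\frac12(d_1^2\overline{s}^2+d_2^2x_2^2)}\,e^{-d_1^2\overline{s}\tau-\frac{d_1^2}{2}\tau^2}.
\end{align*}
Up to the normalizing constant $d_1d_2/(2\pi)$, which is irrelevant because the left side is zero, the first factor is exactly the density of $\gamma_{A',\overline{s}}$ from Definition \ref{d:standing assumptions}(5).

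For the second factor I use the expansion of Proposition \ref{p:taylor expansion}, now with $\tau$ in place of $u(x_2)$; this is legitimate since $|\tau|\le|u(x_2)|<\varepsilon$:
\begin{align*}
e^{-d_1^2\overline{s}\tau-\frac{d_1^2}{2}\tau^2}=1-d_1^2\overline{s}\tau+R(\tau),\qquad |R(\tau)|\le C(d_1,\overline{s})\,\tau^2 .
\end{align*}
A crude second-order bound is enough here, with $C$ continuous in $d_1$ and $\overline{s}$. Integrating in $\tau$ from $0$ to $u(x_2)$ gives
\begin{align*}
u(x_2)-\frac{d_1^2\overline{s}}{2}u(x_2)^2+\rho(x_2),\qquad |\rho(x_2)|\le C|u(x_2)|^3\le C\varepsilon\,|u(x_2)|^2 .
\end{align*}
The $\varepsilon+o(\varepsilon)$ form in the statement absorbs the fact that the true remainder is $\tfrac{(d_1^2\overline{s})^2-d_1^2}{6}u^3+O(u^4)$. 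Integrating against $d\gamma_{A',\overline{s}}$ and using the vanishing of the left side yields
\begin{align*}
\Big|\int u\,d\gamma_{A',\overline{s}}-\frac{d_1^2\overline{s}}{2}\int u^2\,d\gamma_{A',\overline{s}}\Big|\le C(1+\overline{s}^2)\,\varepsilon\int u^2\,d\gamma_{A',\overline{s}},
\end{align*}
which is the claimed $O_3(\varepsilon^3)$.

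The only genuine care point is making the remainder pointwise proportional to $u^2$, so that it integrates against the same measure, and checking that its constant depends continuously on $d_1,d_2,\overline{s}$. Both follow from the explicit Lagrange remainder of the exponential on the compact range $|\tau|\le\varepsilon\le 1$. Integrability is automatic because $u$ is bounded and $\gamma_{A',\overline{s}}$ is finite. I expect no serious obstacle here; the argument is a one-variable Taylor expansion inside Fubini.
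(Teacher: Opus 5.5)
Your proposal is correct and follows essentially the same route as the paper. Both write the mass constraint via Fubini as $\int_{\mathbb{R}}\int_0^{u(x_2)} e^{-\frac12\langle A(\overline{s}+t,x_2),(\overline{s}+t,x_2)\rangle}\,dt\,dx_2=0$, Taylor expand the weight in $t$ about $\overline{s}$, and integrate against $\gamma_{A',\overline{s}}$. The only difference is cosmetic: the paper expands to second order via Proposition \ref{p:taylor expansion} and then bounds the resulting cubic term $\frac{(d_1^2\overline{s})^2-d_1^2}{6}\int u^3\,d\gamma_{A',\overline{s}}$ separately using $|u|\le\varepsilon$, whereas you stop at first order with a quadratic Lagrange remainder, which gives the pointwise bound $C|u|^3\le C\varepsilon u^2$ in a single step.
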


\begin{proof}
Since $\gamma_{A_{1,2}}(E_u)=\gamma_{A_{1,2}}(H_2(e_1, \overline{s}))$, we obtain the following:
\begin{align*}
    \frac{d_1d_2}{(2\pi)} \int_{\mathbb{R}}\int_{0}^{u(x_2)}e^{\frac{-\langle A(\overline{s}+t, x_2), (\bar{s}+t,x_2)\rangle}{2}}dtdx_2=0.
\end{align*}
From Proposition \ref{p:taylor expansion}, we estimate
\begin{align*}
&\Big|e^{\frac{-\langle A(\bar{s}+t, x_2), (\bar{s}+t, x_2)\rangle}{2}}- \Big(1 - (d_1^2\overline{s})t +(- \frac{d_1^2}{2} + \frac{(d_1^2\overline{s})^2}{2})t^2\Big)e^{\frac{-\langle A(\bar{s},x_2), (\bar{s},x_2)\rangle}{2}}\Big| \le O_1(\varepsilon^3)e^{\frac{-\langle A(\bar{s}, x_2), (\bar{s},x_2)\rangle}{2}},
\end{align*}
for any $|t|\leq |u(x_2)|\leq \varepsilon$. Hence,
\begin{align*}
    \Big| & \int_{\mathbb{R}}\int_{0}^{u(x_2)}e^{\frac{-\langle A(\bar{s}+t,x_2), (\bar{s}+t, x_2)\rangle}{2}}dtdx_2 - \int_{\mathbb{R}}\int_0^{u(x_2)}\Big(1 - (d_1^2\overline{s})t +(- \frac{d_1^2}{2} + \frac{(d_1^2\overline{s})^2}{2})t^2\Big)dtd\gamma_{A',\overline{s}}(x_2)\Big|\\
    &= \Big|\int_{\mathbb{R}}u(x_2)d\gamma_{A',\overline{s}}(x_2)-\frac{d_1^2\bar{s}}{2}\int_{\R}|u(x_2)|^2d\gamma_{A',\overline{s}}(x_2)+\left(- \frac{d_1^2}{6} + \frac{(d_1^2\overline{s})^2}{6}\right)\int_{\R}u^3(x_2)d\gamma_{A',\overline{s}}(x_2)\Big|\\
    & \le \epsilon \int_{\mathbb{R}}O_1(\varepsilon^3)d\gamma_{A',\overline{s}}(x_2).
\end{align*}
By the triangle inequality, we obtain that
\begin{align*}
   \Big|&\int_{\mathbb{R}}u(x_2)d\gamma_{A',\overline{s}}(x_2)-\frac{d_1^2\bar{s}}{2}\int_{\R}|u(x_2)|^2d\gamma_{A',\overline{s}}(x_2)\Big|\\
    & \le \Big| \left(- \frac{d_1^2}{6} + \frac{(d_1^2\overline{s})^2}{6}\right)\int_{\R}u^3(x_2)d\gamma_{A',\overline{s}}(x_2) \Big| + \epsilon \int_{\mathbb{R}}O_1(\varepsilon^3)d\gamma_{A',\overline{s}}(x_2).
\end{align*}
Recalling the definition of $O_1(\varepsilon^3)$ and the assumption that $|u|\le \epsilon$ gives the claim of the lemma.
\end{proof}

\begin{lemma}[Estimates on the Perimeter Deficit]\label{l:deficit estimates}
Let $A,\alpha,\overline{s}$ be as above.  Let $0<\epsilon$, and $u\in W^{1, \infty}(\mathbb{R})$ with $\norm{u}_{W^{1,\infty}(\mathbb{R})}<\epsilon$.  Let $E= E_u$ as in Definition \ref{d:standing assumptions}(9), and assume further that $\gamma_{A}(E_u) = \gamma_A(H_2(e_1, \overline{s}))$.

Then,
\begin{align*}
    \Big|& \delta_{\gamma_A}(E_u) - \\
    & \left( \frac{d_1d_2}{2\pi}\int_{\mathbb{R}}\frac{1}{2}|\nabla u(x_2)|^2d\gamma_{A',\overline{s}}(x_2) + \frac{d_1d_2}{2\pi}\int_{\mathbb{R}}\Big(-\frac{1}{2}d_1^2|u(x_2)|^2 \Big)d\gamma_{A',\overline{s}}(x_2)\right) \Big| \le  O_3(\varepsilon^3),
   \end{align*}
   where we have possibly increased the constant $C$ in the definition of $O_3(\varepsilon^3)$.
\end{lemma}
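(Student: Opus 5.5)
We plan to compute $\operatorname{Per}_{\gamma_A}(E_u)$ directly in graph coordinates, expand it to second order with Proposition \ref{p:taylor expansion}, and remove the linear term using the volume constraint of Lemma \ref{l:first order estimate}. Since $\partial E_u=\{(\overline{s}+u(x_2),x_2)\}$, we have
\begin{align*}
\operatorname{Per}_{\gamma_A}(E_u)=c_A\int_{\mathbb{R}}\sqrt{1+|\nabla u(x_2)|^2}\,e^{-\frac12\langle A(\overline{s}+u,x_2),(\overline{s}+u,x_2)\rangle}\,dx_2 ,
\end{align*}
with $c_A$ the normalizing constant in \eqref{e:gamma A perimeter def}. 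The same formula with $u\equiv 0$ gives the perimeter of $H_2(e_1,\overline{s})$, namely $c_A\int d\gamma_{A',\overline{s}}$. Because $\gamma_A(E_u)=\gamma_A(H_2(e_1,\overline{s}))$, the deficit equals the difference of these two integrals.

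Next, we factor the weight as $e^{-\frac12\langle A(\overline{s},x_2),(\overline{s},x_2)\rangle}$ times the cross-term exponential from Proposition \ref{p:taylor expansion}. The first factor is exactly the density of $\gamma_{A',\overline{s}}$. The second factor is $1-d_1^2\overline{s}u+\bigl(-\frac{d_1^2}{2}+\frac{(d_1^2\overline{s})^2}{2}\bigr)u^2+O_1(\varepsilon^3)$. For the graph factor we use $1+\frac12|\nabla u|^2+O(|\nabla u|^4)$. Multiplying and keeping all terms of order at most two in $(u,\nabla u)$, the integrand minus $1$ is
\begin{align*}
-d_1^2\overline{s}u+\frac12|\nabla u|^2+\Bigl(-\frac{d_1^2}{2}+\frac{(d_1^2\overline{s})^2}{2}\Bigr)u^2
\end{align*}
plus a remainder. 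That remainder is bounded pointwise by $C\varepsilon(|u|^2+|\nabla u|^2)$: the mixed term $u|\nabla u|^2$ is at most $\varepsilon|\nabla u|^2$, and $|\nabla u|^4\le\varepsilon^2|\nabla u|^2$.

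The linear term is handled by Lemma \ref{l:first order estimate}, which gives $\int u\,d\gamma_{A',\overline{s}}=\frac{d_1^2\overline{s}}{2}\int u^2\,d\gamma_{A',\overline{s}}+O_3(\varepsilon^3)$. Hence
\begin{align*}
-d_1^2\overline{s}\int u\,d\gamma_{A',\overline{s}}=-\frac{(d_1^2\overline{s})^2}{2}\int u^2\,d\gamma_{A',\overline{s}}+O(\varepsilon^3).
\end{align*}
This exactly cancels the $\frac{(d_1^2\overline{s})^2}{2}u^2$ contribution and leaves $\frac12|\nabla u|^2-\frac{d_1^2}{2}u^2$, which is the claimed expression. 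The cancellation itself is straightforward algebra.

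The step needing the most care is bookkeeping the error. The remainder involves $\varepsilon\int|\nabla u|^2\,d\gamma_{A',\overline{s}}$, whereas $O_3$ as defined only carries $\int|u|^2$. So we must either enlarge the constant and read $O_3$ as including a $\varepsilon\int|\nabla u|^2\,d\gamma_{A',\overline{s}}$ term, or note that the statement allows the constant $C$ in $O_3$ to be increased. We will state explicitly that the final error is $C(1+\overline{s}^2)\bigl(\int u^2+\int|\nabla u|^2\bigr)d\gamma_{A',\overline{s}}\,(\varepsilon+o(\varepsilon))$, with $C$ depending continuously on $d_1,d_2,\overline{s}$. All integrals converge because $u\in W^{1,\infty}$ and the Gaussian weight is integrable.
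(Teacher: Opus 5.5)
Your proposal is the paper's proof. It uses the same factorization of the weight into the density of $\gamma_{A',\overline{s}}$ times the cross-term exponential, the expansions of Proposition~\ref{p:taylor expansion}, and Lemma~\ref{l:first order estimate} to replace $-d_1^2\overline{s}\int u\,d\gamma_{A',\overline{s}}$ by $-\frac{(d_1^2\overline{s})^2}{2}\int u^2\,d\gamma_{A',\overline{s}}$, which cancels the $\frac{(d_1^2\overline{s})^2}{2}u^2$ term. Two points of bookkeeping need comment; on both you are more careful than the paper.

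\textbf{The error term.} Your worry is justified, but only your first option works. Enlarging $C$ cannot help: with $O_3(\varepsilon^3)=C(1+\overline{s}^2)\int|u|^2\,d\gamma_{A',\overline{s}}\,(\varepsilon+o(\varepsilon))$, the lower bound on $\delta_{\gamma_A}(E_u)$ fails for every $C$, and that is the half used later.
\begin{itemize}
\item Take $u(x_2)=\frac{\varepsilon}{2}\eta\sin(x_2/\eta)+c_\eta$, with $c_\eta=O(\varepsilon^2\eta^2)$ chosen to fix the mass.
\item All odd terms integrate to zero against the even weight. Up to the normalizing constant, the deficit minus the main term is therefore $-\frac{1}{8}\int|\nabla u|^4\,d\gamma_{A',\overline{s}}+O(\varepsilon^3\eta^2+\varepsilon^6)$.
\item As $\eta\to0$ this tends to $-\frac{3}{64}(\varepsilon/2)^4\gamma_{A',\overline{s}}(\mathbb{R})+O(\varepsilon^6)$, whereas $\varepsilon\int|u|^2\,d\gamma_{A',\overline{s}}=O(\varepsilon^3\eta^2)\to0$.
\end{itemize}
The paper's proof passes over this point: its intermediate error $C\int(\varepsilon^2u-O_1(\varepsilon^3))\,d\gamma_{A',\overline{s}}$ is not of the form $O_3(\varepsilon^3)$ either. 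Your enlarged error, including $\varepsilon\int|\nabla u|^2\,d\gamma_{A',\overline{s}}$, is the correct statement. It is harmless downstream: Theorem~\ref{l:fuglede-type estimate} uses only the lower bound on the deficit, where the extra term just multiplies the Dirichlet integral by $1-C\varepsilon$, and $\varepsilon\to0$ at the end.

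\textbf{The normalizing constant.} The constant you get is $c_A=d_1d_2/\sqrt{2\pi}$ from \eqref{e:gamma A perimeter def}. This is the value for which $c_A\gamma_{A',\overline{s}}(\mathbb{R})=d_1e^{-d_1^2\overline{s}^2/2}$, the minimal perimeter. It is not the $\frac{d_1d_2}{2\pi}$ printed in the statement and used in the paper's proof. That factor is a normalization slip in the paper; the proof of Theorem~\ref{l:fuglede-type estimate} itself uses $\frac{d_1d_2}{(2\pi)^{1/2}}$. So your ``which is the claimed expression'' holds only with the corrected constant.
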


\begin{proof}
Note that
\begin{align*}
e^{\frac{-\langle A(\bar{s}+u(x_2), x_2), (\bar{s}+u(x_2), x_2)\rangle}{2}}=\left(e^{\frac{-\langle A(\overline{s},x_2), (u(x_2), 0)\rangle - \langle A(u(x_2),0), (\overline{s}, x_2)\rangle  - \langle A(u(x_2),0), (u(x_2),0)\rangle}{2}}\right)e^{\frac{-\langle A(\bar{s}, x_2), (\bar{s}, x_2)\rangle}{2}}
\end{align*}
and
$$\delta_{\gamma_A}(E_u)=\text{Per}_{\gamma_A}(E_u)-\text{Per}_{\gamma_A}(H_2(e_1,\bar{s}))$$
Putting the estimates from our Taylor expansion together, we obtain the following.
\begin{align}\nonumber
    \text{Per}_{\gamma_A}(E_u) &= \frac{d_1d_2}{2\pi}\int_{\mathbb{R}}\sqrt{1+|\nabla u|^2}d\gamma_{A',\overline{s}+u(x_2)}(x_2)\\ \nonumber
    & \ge \frac{d_1d_2}{2\pi}\int_{\mathbb{R}}\Big(1+\frac{1}{2}|\nabla u|^2-\frac{1}{8}|\nabla u|^4+O_2(\varepsilon^6)\Big)\\ \nonumber
    &\quad \cdot \Big(1 - (d_1^2\overline{s})u(x_2)+ \left(- \frac{d_1^2}{2} + \frac{(d_1^2\overline{s})^2}{2}\right)|u(x_2)|^2 -O_1(\varepsilon^3) \Big)d\gamma_{A',\overline{s}}(x_2)\\ \label{e:deficit estimate 1}
    & = \text{Per}_{\gamma_A}(H_2(e_1, \overline{s})) - \frac{d_1d_2}{2\pi}(d_1^2\overline{s})\int_{\mathbb{R}}u(x_2) d\gamma_{A',\overline{s}}(x_2)\\ \nonumber
    & \quad + \frac{d_1d_2}{2\pi}\int_{\mathbb{R}}\frac{1}{2}|\nabla u(x_2)|^2d\gamma_{A',\overline{s}}(x_2)\\ \nonumber
    & \quad + \frac{d_1d_2}{2\pi}\left(- \frac{d_1^2}{2} + \frac{(d_1^2\overline{s})^2}{2}\right)\int_{\mathbb{R}}|u(x_2)|^2d\gamma_{A',\overline{s}}(x_2) + O(\varepsilon^3),
\end{align}
where we may take 
\begin{align}\label{e:error estimate deficit}
    O(\varepsilon^3) = C \Big(\int \epsilon^2 u(x_2) - O_1(\epsilon^3)d\gamma_{A',\overline{s}}(x_2)\Big)
\end{align}
for some constant $C$ which depends continuously on $\overline{s}$ and the eigenvalues of $A$.

From Lemma \ref{l:first order estimate}, we have
\begin{align*}
-(d_1^2\bar{s})\int_{\mathbb{R}}u(x_2)d\gamma_{A',\overline{s}}(x_2)&\ge-\frac{(d_1^2\bar{s})^2}{2}\int_{\mathbb{R}}|u(x_2)|^2d\gamma_{A',\overline{s}}(x_2)-O_3(\varepsilon^3)
\end{align*}
Therefore, applying this bound to both \eqref{e:error estimate deficit} and \eqref{e:deficit estimate 1}, we obtain
\begin{align*}
\text{Per}_{\gamma_A}(E_u)&\ge\text{Per}_{\gamma_A}(H_2(\e_1, \overline{s}))\\
&\quad+ \frac{d_1d_2}{2\pi}\int_{\mathbb{R}}\frac{1}{2}|\nabla u(x_2)|^2d\gamma_{A',\overline{s}}(x_2)-\frac{d_1d_2}{2\pi} \frac{d_1^2}{2} \int_{\mathbb{R}}|u(x_2)|^2d\gamma_{A',\overline{s}}(x_2)- O_3(\varepsilon^3),
\end{align*}
where we have possibly increased the constant $C$ in the definition of $O_3(\varepsilon^3)$.

The opposite inequality follows in an identical manner, using $+O_1(\varepsilon^3)$ in place of $-O_1(\varepsilon^3)$ in the Taylor expansion above \eqref{e:deficit estimate 1} and using the corresponding inequality from Lemma \ref{l:first order estimate}.
\end{proof}

\begin{lemma}[Estimate on the $\gamma_A$-asymmetry]\label{l:fraenkel symmetry}
Let $A,\alpha,\overline{s}$ be as above. Let $0<\epsilon$, and $u \in W^{1,\infty}(\mathbb{R})$ with $\norm{u}_{W^{1,\infty}(\mathbb{R})}\le \epsilon$.  Let $E_u$ as in Definition \ref{d:standing assumptions}, and assume that $\gamma_{A}(E_u) = \gamma_A(H_2(e_1, \overline{s}))$.  Then,
    \begin{align*}
    \Big|\gamma_A\left(E_u \Delta H_2\left(e_1, \bar{s}\right)\right) -  \frac{d_1d_2}{2\pi}\int_{\mathbb{R}}|u(x_2)|d\gamma_{A',\overline{s}}(x_2)\Big| \le  O_4(\varepsilon^2).
\end{align*}
Moreover, we may take $O_5(\epsilon^2)$ to be of the form
\begin{align*}
    O_4(\epsilon^2) = C (1 + \frac{d_1^{2}|\overline{s}|}{2} )\frac{d_1d_2}{2\pi} \int_{\mathbb{R}}|u(x_2)|^2d\gamma_{A',\overline{s}}(x_2) + O_3(\varepsilon^3).
\end{align*}
In particular, $O_4(\epsilon^2)$ depends continuously on $\overline{s}$ and the eigenvalues of $A$.
\end{lemma}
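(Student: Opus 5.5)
Since $E_u$ is the subgraph $\{x_1\le \overline{s}+u(x_2)\}$ and $H_2(e_1,\overline{s})=\{x_1\le \overline{s}\}$, the symmetric difference is, fibrewise in $x_2$, the segment between $\overline{s}$ and $\overline{s}+u(x_2)$. Fubini therefore gives the exact formula
\begin{align*}
\gamma_A(E_u\Delta H_2(e_1,\overline{s})) = \frac{d_1d_2}{2\pi}\int_{\mathbb{R}}\int_0^{|u(x_2)|} e^{-\frac{1}{2}\langle A(\overline{s}+\sigma(x_2) t,x_2),(\overline{s}+\sigma(x_2) t,x_2)\rangle}\,dt\,dx_2,
\end{align*}
where $\sigma(x_2)=\operatorname{sgn}u(x_2)$. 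I will factor the weight as in the proof of Lemma \ref{l:deficit estimates}, namely as $e^{-\frac{1}{2}\langle A(\overline{s},x_2),(\overline{s},x_2)\rangle}$ times the perturbative factor. The first factor, integrated in $x_2$, is exactly $d\gamma_{A',\overline{s}}(x_2)$.

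For the perturbative factor I will invoke Proposition \ref{p:taylor expansion} with $u(x_2)$ replaced by $\sigma t$, where $|t|\le|u(x_2)|\le\epsilon$. It gives $1-d_1^2\overline{s}\,\sigma t+O(t^2)$, with the quadratic coefficient $-\frac{d_1^2}{2}+\frac{(d_1^2\overline{s})^2}{2}$ plus an $O_1$ remainder. Integrating $t$ from $0$ to $|u(x_2)|$ yields
\begin{align*}
|u(x_2)| - \frac{d_1^2\overline{s}}{2}\sigma |u(x_2)|^2 + O(|u(x_2)|^3).
\end{align*}
The leading term reproduces $\frac{d_1d_2}{2\pi}\int |u|\,d\gamma_{A',\overline{s}}$. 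The second term is bounded in absolute value by $\frac{d_1^2|\overline{s}|}{2}\int|u|^2\,d\gamma_{A',\overline{s}}$. The cubic term and the $O_1$ contributions, since $|u|\le\epsilon$, are bounded by $C\epsilon\int|u|^2\,d\gamma_{A',\overline{s}}$, which is absorbed into the $O_3(\varepsilon^3)$ term (or into the $C$ in front of the quadratic integral). Collecting these bounds gives the stated $O_4(\epsilon^2)$, with $C$ depending continuously on $\overline{s},d_1,d_2$ because every Taylor coefficient does.

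The mass constraint $\gamma_A(E_u)=\gamma_A(H_2)$ is not actually needed; it is only used to match the form of $O_3$ from Lemma \ref{l:first order estimate}. The one point requiring care is that the expansion in Proposition \ref{p:taylor expansion} is stated for the function $u$, not for the variable $t\in[0,|u|]$. This is harmless, because that expansion is pointwise in the scalar value and only uses $|t|\le\epsilon$. Beyond this check, the argument is a routine computation with no real obstacle.
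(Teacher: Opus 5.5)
Your proposal is correct and follows essentially the same route as the paper's proof: Fubini over the fibres between $\overline{s}$ and $\overline{s}+u(x_2)$, the pointwise Taylor expansion of Proposition \ref{p:taylor expansion} applied to the integration variable $t$ with $|t|\le|u(x_2)|\le\epsilon$, integration in $t$, and bounding the quadratic and cubic terms by multiples of $\int_{\mathbb{R}}|u|^2\,d\gamma_{A',\overline{s}}$. Your observation that the mass constraint is never actually used agrees with the paper's argument as well.
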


\begin{proof}
First, we write
\begin{align*}
    \gamma_A\left(E_u \Delta H_2\left(e_1, \bar{s}\right)\right)& = \frac{d_1d_2}{2\pi}\int_{\mathbb{R}}\int_{\min\{u(x_2), 0\}}^{\max\{u(x_2),0\}} e^{-\frac{\langle A(\bar{s}+t, x_2), (\bar{s}+t, x_2)\rangle}{2}}dt d\mathcal{H}^{1}(x_2).
\end{align*}
Now, we estimate from Proposition \ref{p:taylor expansion},
\begin{align*}
    \Big|\frac{d_1d_2}{2\pi}&\int_{\mathbb{R} }\int_{\min\{u(x_2), 0\}}^{\max\{u(x_2),0\}} e^{-\frac{\langle A(\bar{s}+t, x_2), (\bar{s}+t,x_2)\rangle}{2}}dt d\mathcal{H}^{1}(x_2)\\
    & - \frac{d_1d_2}{2\pi}\int_{\mathbb{R} }\int_{\min\{u(x_2), 0\}}^{\max\{u(x_2),0\}} 1 - (d_1^2\overline{s})t - \frac{1}{2}d_1^2t^2 + \frac{(d_1^2\overline{s})^2}{2}t^2dt d\gamma_{A',\bar{s}}(x_2) \Big|\\
    & \le  \frac{d_1d_2}{2\pi}\int_{\mathbb{R} }\int_{\min\{u(x_2), 0\}}^{\max\{u(x_2),0\}} O_1(\varepsilon^3) dt d\gamma_{A',\bar{s}}(x_2)\\
    & = O_3(\varepsilon^3).
\end{align*}
In particular, integrating in $t$ and using the triangle inequality,
\begin{align*}
    \Big|& \gamma_A\left(E_u \Delta H_2\left(e_1, \bar{s}\right)\right) - \frac{d_1d_2}{2\pi}\int_{\mathbb{R}} |u(x_2)| d\gamma_{A',\bar{s}}(x_2) \Big| \\
    & \le  \frac{d_1d_2}{2\pi}\int_{\mathbb{R}}(\frac{d_1^2\overline{s}}{2})|u(x_2)|^2 - \frac{1}{6}d_1^2|u(x_2)|^3 + \frac{(d_1^2\overline{s})^2}{6}|u(x_2)|^3d\gamma_{A',\bar{s}}(x_2) + O_3(\varepsilon^3)\\
    & \le O_4(\varepsilon^2),
\end{align*}
where we have used $|u|\le \varepsilon$.
\end{proof}

\begin{theorem}[Lower bound in the small perturbation regime]\label{l:fuglede-type estimate}
Let $A,\alpha,\overline{s}$ be as above.  There exists a function $\epsilon: (0,1/2)\times \mathbb{R} \rightarrow (0,1]$ with the following properties.
    \begin{enumerate}
        \item $\epsilon(\delta, \overline{s})$ is continuous in $\overline{s} \in \mathbb{R}$.
        \item For any $0<\delta$ and any set $E_u$ as in \eqref{d:E_u def} with 
        \begin{enumerate}
            \item $\|u\|_{W^{1,\infty}(\R)}<\varepsilon_0(\delta, \overline{s})$,
            \item $\gamma_A\left(E_u\right)=\gamma_A(H_2(e_1,\bar{s}))$,
            \item $\lambda_{\gamma_A}\left(E_u\right)^2 \neq 0$,
        \end{enumerate}
        the following inequality holds
    \begin{align*}
    \frac{\delta_{\gamma_A}(E_u)}{\lambda^2_{\gamma_A}(E_u)} &\ge  \frac{2\pi e^{\frac{d_1^2\overline{s}^2}{2}}}{d_1}\left[\frac{\int_{\mathbb{R}}\frac{1}{2}|\nabla u(x_2)|^2 d\gamma_{A',\overline{s}}(x_2)}{\Big(1+O_5(\varepsilon)\Big)\int_{\mathbb{R}}|u(x_2)|^2d\gamma_{A',\overline{s}}(x_2)} -\frac{d_1^2}{2\Big(1+O_5(\varepsilon)\Big)} -O_6(\varepsilon)\right],
\end{align*}
where
\begin{align*}
    |O_6(\epsilon)| \le C(1+|\overline{s}|^2)(\epsilon +o(\epsilon)).
\end{align*}
\end{enumerate}
\end{theorem}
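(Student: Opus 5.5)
The plan is to combine the deficit expansion of Lemma \ref{l:deficit estimates} with the asymmetry expansion of Lemma \ref{l:fraenkel symmetry}, and to control the square of the $L^1$-type asymmetry by an $L^2$ quantity via Cauchy--Schwarz. Note that here $\lambda_{\gamma_A}(E_u)$ is interpreted (as in the rest of this section) through $\gamma_A(E_u\Delta H_2(e_1,\overline{s}))$. Any other admissible half-space only lowers $\lambda_{\gamma_A}$, which makes the ratio larger, so an upper bound on $\gamma_A(E_u\Delta H_2(e_1,\overline s))$ suffices for the lower bound on the ratio.

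\emph{Numerator.} By Lemma \ref{l:deficit estimates},
\begin{align*}
\delta_{\gamma_A}(E_u)\ge \frac{d_1d_2}{2\pi}\left[\int_{\mathbb R}\tfrac12|\nabla u|^2\,d\gamma_{A',\overline s}-\frac{d_1^2}{2}\int_{\mathbb R}|u|^2\,d\gamma_{A',\overline s}\right]-O_3(\varepsilon^3).
\end{align*}
Here $O_3(\varepsilon^3)=C(1+\overline s^2)(\varepsilon+o(\varepsilon))\int|u|^2\,d\gamma_{A',\overline s}$.

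\emph{Denominator.} By Lemma \ref{l:fraenkel symmetry},
\begin{align*}
\lambda_{\gamma_A}(E_u)\le \frac{d_1d_2}{2\pi}\int|u|\,d\gamma_{A',\overline s}+O_4(\varepsilon^2).
\end{align*}
Cauchy--Schwarz gives $\left(\int|u|\,d\gamma_{A',\overline s}\right)^2\le \gamma_{A',\overline s}(\mathbb R)\int|u|^2\,d\gamma_{A',\overline s}$, and
\begin{align*}
\gamma_{A',\overline s}(\mathbb R)=e^{-d_1^2\overline s^2/2}\int_{\mathbb R}e^{-d_2^2x_2^2/2}\,dx_2=e^{-d_1^2\overline s^2/2}\frac{\sqrt{2\pi}}{d_2}.
\end{align*}
The term $O_4(\varepsilon^2)$ is $\int|u|^2$ times a constant, and $\int|u|^2\le \varepsilon\int|u|$. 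Hence $O_4(\varepsilon^2)\le C\varepsilon\cdot\frac{d_1d_2}{2\pi}\int|u|$ up to the $O_3$ part, which is handled the same way. Squaring therefore yields
\begin{align*}
\lambda^2_{\gamma_A}(E_u)\le \Big(\frac{d_1d_2}{2\pi}\Big)^2\frac{\sqrt{2\pi}}{d_2}e^{-d_1^2\overline s^2/2}\big(1+O_5(\varepsilon)\big)\int|u|^2\,d\gamma_{A',\overline s},
\end{align*}
where $O_5(\varepsilon)$ is continuous in $\overline s$ and linear in $\varepsilon$.

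\emph{Dividing.} Divide the numerator bound by the denominator bound. The common factor $\int|u|^2$ cancels in the second term and in the error term. The error contributes $O_6(\varepsilon)=C(1+\overline s^2)(\varepsilon+o(\varepsilon))$ times the prefactor. The prefactor is $\frac{2\pi}{d_1d_2}\cdot\frac{d_2}{\sqrt{2\pi}}e^{d_1^2\overline s^2/2}$. This does not literally match the stated $\frac{2\pi}{d_1}e^{d_1^2\overline s^2/2}$, so the normalization of $\gamma_{A',\overline s}$ and the $\frac{d_1d_2}{2\pi}$ versus $\frac{d_1d_2}{\sqrt{2\pi}}$ convention must be tracked carefully. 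I expect the final constant to come out in the stated form once the paper's normalizations of the perimeter (factor $(2\pi)^{-1/2}$) and of the mass (factor $(2\pi)^{-1}$) are used consistently.

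Finally, choose $\varepsilon_0(\delta,\overline s)$ small enough that $O_5<1$ and the $o(\varepsilon)$ terms are dominated. Continuity in $\overline s$ follows because all constants depend continuously on $\overline s$. The main obstacle is bookkeeping: showing that the $O_4$ error can be made relative to $\int|u|^2$. This uses $\int|u|^2\le\varepsilon\int|u|$, together with the reverse comparison of $\left(\int|u|\right)^2$ against $\int|u|^2$ only through Cauchy--Schwarz in the favorable direction.
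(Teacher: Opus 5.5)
Your proposal is correct and follows the paper's proof essentially step for step: bound $\lambda_{\gamma_A}(E_u)$ by $\gamma_A(E_u\Delta H_2(e_1,\overline{s}))$, expand the numerator with Lemma \ref{l:deficit estimates}, and bound the denominator with Lemma \ref{l:fraenkel symmetry} plus Cauchy--Schwarz. The only bookkeeping difference is that the paper squares first and absorbs the cross terms into $O_5$, whereas you absorb $O_4$ before squaring via $\int|u|^2\le\varepsilon\int|u|$, which is equivalent. The normalization mismatch resolves exactly as you predict: the paper's proof uses the perimeter factor $\frac{d_1d_2}{(2\pi)^{1/2}}$ in the numerator, consistent with \eqref{e:gamma A perimeter def}, rather than the $\frac{d_1d_2}{2\pi}$ written in Lemma \ref{l:deficit estimates}, and indeed $\frac{d_1d_2(2\pi)^{-1/2}}{(d_1d_2/2\pi)^2\,\gamma_{A',\overline{s}}(\mathbb{R})}=\frac{2\pi}{d_1}e^{d_1^2\overline{s}^2/2}$.
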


\begin{proof}
Note that trivially, $\lambda_{\gamma_{A}}(E_u) \le \gamma_A\left(E_u \Delta H_2\left(e_1, \bar{s}\right)\right)$.  Thus, we observe that from Lemma \ref{l:deficit estimates}, Lemma \ref{l:fraenkel symmetry}, and H\"older's inequality that 
\begin{align*}
    \frac{\delta_{\gamma_A}(E_u)}{\lambda^2_{\gamma_A}(E_u)} & \ge \frac{\frac{d_1d_2}{(2\pi)^{1/2}}\int_{\mathbb{R}}\frac{1}{2}|\nabla u(x_2)|^2 -\frac{1}{2}d_1^2|u(x_2)|^2 d\gamma_{A',\overline{s}}(x_2) - O_3(\varepsilon^3)}{\Big(\frac{d_1d_2}{2\pi}\int_{\mathbb{R}}|u(x_2)|d\gamma_{A',\overline{s}}(x_2)+O_4(\varepsilon^2)\Big)^2}\\ 
    &\ge \frac{\frac{d_1d_2}{(2\pi)^{1/2}}\int_{\mathbb{R}}\frac{1}{2}|\nabla u(x_2)|^2 -\frac{1}{2}d_1^2|u(x_2)|^2  d\gamma_{A',\overline{s}}(x_2) - O_3(\varepsilon^3)}{\frac{d_1^2d_2^2}{(2\pi)^{2}}\gamma_{A',\overline{s}}(\mathbb{R})\left(\int_{\mathbb{R}}|u(x_2)|^2d\gamma_{A',\overline{s}}(x_2)+ O(\varepsilon^3)\right)},
\end{align*}
where $O(\epsilon^3)$ in the denominator is of the form
    \begin{align*}
    O(\epsilon^3)= (\gamma_{A',\overline{s}}(\mathbb{R}))^{-1} \cdot \Big(\int_{\mathbb{R}}|u(x_2)|d\gamma_{A',\overline{s}}(x_2) \cdot O_4(\varepsilon^2) + O_4(\epsilon^2)^2\Big).    
    \end{align*}
Now, we rewrite the denominator, claiming that
\begin{align}\label{e:problem for continuity}
    \int_{\mathbb{R}}|u(x_2)|^2d\gamma_{A',\overline{s}}(x_2)+O(\varepsilon^3) \le \Big(1+O_5(\varepsilon)\Big)\int_{\mathbb{R}}|u(x_2)|^2d\gamma_{A',\overline{s}}(x_2),
\end{align}
where $O_5(\epsilon)= O(\epsilon^3) \cdot (\int_{\mathbb{R}}|u(x_2)|^2d\gamma_{A',\overline{s}}(x_2))^{-1}$.  Recalling the definition of $O_4(\epsilon^2)$ in Lemma \ref{l:fraenkel symmetry} and $O_3(\epsilon^3)$ in Lemma \ref{l:first order estimate}, gives that there exists a $C<\infty$ such that for all sufficiently small $\epsilon$ and all nearby $\overline{s}$ we may take $O_5(\epsilon) = C\epsilon \gamma_{A',\overline{s}}(\mathbb{R})$.  This function is obviously continuous in $\overline{s}$, the eigenvalues of $A$, and, by possibly choosing a larger $C$, we may ensure that the bounds from above hold with this new $O_5(\epsilon)$.

Thus, we continue with our estimates obtaining
\begin{align*}
    \frac{\delta_{\gamma_A}(E_u)}{\lambda^2_{\gamma_A}(E_u)} &\ge \frac{\frac{d_1d_2}{(2\pi)^{\frac{1}{2}}}}{\frac{d_1^2d_2^2}{(2\pi)^{2}}\gamma_{A',\overline{s}}(\mathbb{R})}\left[\frac{\int_{\mathbb{R}}\frac{1}{2}|\nabla u(x_2)|^2 -\frac{1}{2}d_1^2|u(x_2)|^2  d\gamma_{A',\overline{s}}(x_2) - O_3(\varepsilon^3)}{\Big(1+O_5(\varepsilon)\Big)\int_{\mathbb{R}}|u(x_2)|^2d\gamma_{A',\overline{s}}(x_2)}\right]\\ 
    &\ge \frac{\frac{d_1d_2}{(2\pi)^{\frac{1}{2}}}}{\frac{d_1^2d_2^2}{(2\pi)^{2}}\gamma_{A',\overline{s}}(\mathbb{R})} \left[\frac{\int_{\mathbb{R}}\frac{1}{2}|\nabla u(x_2)|^2 d\gamma_{A',\overline{s}}(x_2)}{\Big(1+O_5(\varepsilon)\Big)\int_{\mathbb{R}}|u(x_2)|^2d\gamma_{A',\overline{s}}(x_2)} -\frac{d_1^2}{2\Big(1+O_5(\varepsilon)\Big)} -O_6(\varepsilon)\right],
\end{align*}
where $O_6(\epsilon) = C\frac{O_3(\epsilon^3)}{\int_{\mathbb{R}}|u(x_2)|^2d\gamma_{A',\overline{s}}(x_2)}$. Recalling the definition of $O_3(\epsilon^3)$ from Lemma \ref{l:first order estimate}, this gives
\begin{align*}
    O_6(\epsilon) = C(1+|\overline{s}|^2)\frac{\int_{\mathbb{R}}|u(x_2)|^2d\gamma_{A',\overline{s}}(x_2)(\epsilon + o(\epsilon))}{\int_{\mathbb{R}}|u(x_2)|^2d\gamma_{A',\overline{s}}(x_2)} \le C(1+|\overline{s}|^2)(\epsilon +o(\epsilon)),
\end{align*}
where $C$ is a constant that depends continuously on $A$. In particular, $O_6(\epsilon)$ is continuous in $\overline{s}$. Finally, we note that
\begin{align*}
    \frac{\frac{d_1d_2}{(2\pi)^{\frac{1}{2}}}}{\frac{d_1^2d_2^2}{(2\pi)^{2}}\gamma_{A',\overline{s}}(\mathbb{R})} &= \frac{\frac{1}{(2\pi)^{\frac{1}{2}}}}{\frac{d_1d_2}{(2\pi)^{2}}\gamma_{A',\overline{s}}(\mathbb{R})}= \frac{\frac{1}{(2\pi)^{\frac{1}{2}}}}{\frac{d_1}{(2\pi)^{\frac{2+1}{2}}}e^{-\frac{d_1^2\overline{s}^2}{2}}} =\frac{(2\pi)e^{\frac{d_1^2\overline{s}^2}{2}}}{d_1}.
\end{align*}
This gives the claim.
\end{proof}

Now, we justify Remark \ref{r:Q is finite and consequences}.
\begin{lemma}\label{l:proof of remark Q finite}
    Let $n \ge 2$, $A\in S^+_{n\times n}(\mathbb{R})$ be as in Definition \ref{d:standing assumptions}(3), and $\overline{s} \in(-\infty, 0]$.  Then, $Q(H_n(e_1,\overline{s}))<\infty$.
\end{lemma}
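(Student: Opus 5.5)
To prove $Q(H_n(e_1,\overline{s}))<\infty$, it suffices to exhibit one admissible sequence $E_k\to H_n(e_1,\overline{s})$ in $L^1(\mathbb{R}^n,\gamma_A)$ with $\gamma_A(E_k)=\alpha$, $\lambda_{\gamma_A}(E_k)>0$, and $\limsup_k \delta_{\gamma_A}(E_k)/\lambda_{\gamma_A}(E_k)^2<\infty$. The natural choice is a cylindrical set $E_k=\{x_1\le \overline{s}+u_k(x_2)\}\times\mathbb{R}^{n-2}$, built from a graph perturbation in the $(x_1,x_2)$-plane. By Fubini, the $\gamma_A$-mass and $\gamma_A$-perimeter of such a cylinder coincide with the $\gamma_{A_{1,2}}$-mass and perimeter of the planar set $E_{u_k}\subset\mathbb{R}^2$. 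Hence $\delta_{\gamma_A}(E_k)=\delta_{\gamma_{A_{1,2}}}(E_{u_k})$, and the computation reduces to $n=2$, where Lemmas \ref{l:deficit estimates}, \ref{l:first order estimate} and \ref{l:fraenkel symmetry} apply.

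Concretely, take $u_k=\epsilon_k(v+c_k)$ with $v$ a fixed smooth, bounded, Lipschitz function, $\epsilon_k\to0$, and $c_k$ a small constant chosen by the intermediate value theorem so that $\gamma_{A_{1,2}}(E_{u_k})=\alpha$. Lemma \ref{l:first order estimate} gives $c_k=O(\epsilon_k)$, so $\|u_k\|_{W^{1,\infty}}=O(\epsilon_k)$. To keep $\lambda_{\gamma_A}$ from degenerating, I will pick $v$ odd in $x_2$, say $v(x_2)=\tanh(x_2)$. Then $\int v\,d\gamma_{A',\overline{s}}=0$ and $c_k=O(\epsilon_k^2)$. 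Lemma \ref{l:deficit estimates} yields
\begin{align*}
\delta_{\gamma_A}(E_k)\le C\epsilon_k^2\int\big(|v'|^2+|v|^2\big)\,d\gamma_{A',\overline{s}}+O(\epsilon_k^3)\le C'\epsilon_k^2 .
\end{align*}

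The main obstacle is the lower bound $\lambda_{\gamma_A}(E_k)\ge c\,\epsilon_k$. This requires that $E_k$ is at distance $\gtrsim\epsilon_k$ from \emph{every} admissible half-space $H_n(\omega,\overline{s})$ with $\omega\in V_{d_1}(\sqrt A)\cap\mathbb{S}^{n-1}$, not merely from $H_n(e_1,\overline{s})$. In the strictly anisotropic case the only competitors are $H_n(\pm e_1,\pm\overline{s})$. The half-space $H_n(-e_1,-\overline{s})$ is at distance bounded below uniformly in $k$, and Lemma \ref{l:fraenkel symmetry} gives $\gamma_A(E_k\Delta H_n(e_1,\overline{s}))\ge \frac{d_1d_2}{2\pi}\epsilon_k\int|v|\,d\gamma_{A',\overline{s}}-O(\epsilon_k^2)$.

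In the degenerate case ($d_1=d_2$, or a larger eigenspace), I would argue as follows. Since $\omega\mapsto\gamma_A(E_k\Delta H_n(\omega,\overline{s}))$ is continuous, any minimizer $\omega_k$ converges to $e_1$, because $E_k\to H_n(e_1,\overline{s})$. Write $\omega_k$ as a small rotation of $e_1$. Modulo an $O(\theta_k^2)$ error, $\partial H_n(\omega_k,\overline{s})$ is, in the $(x_1,x_2)$-plane and to first order, the graph of an affine function $\theta_k x_2$ (plus components in the $x_j$, $j\ge3$, directions). The symmetric difference is then at least $c\int|\epsilon_k v-\theta_k x_2-(\text{other linear terms})|\,d\gamma-O(\epsilon_k^2+\theta_k^2)$. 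Because $\tanh$ is not linear, $\inf_{a}\int|v-a x_2|\,d\gamma>0$, while the other linear terms only add mass orthogonally. So if $|\theta_k|\lesssim\epsilon_k$ the bound $\lambda\ge c\epsilon_k$ follows, and if $|\theta_k|\gg\epsilon_k$ the symmetric difference is already $\gtrsim|\theta_k|$. Combining the two estimates gives
\begin{align*}
\delta_{\gamma_A}(E_k)/\lambda_{\gamma_A}(E_k)^2\le C''<\infty ,
\end{align*}
which proves the lemma.
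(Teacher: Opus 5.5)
Your proposal is correct. The two points where it differs from the paper are how dimension is handled and how the lower bound on $\lambda_{\gamma_A}$ is obtained when $d_1$ is not a simple eigenvalue.

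\textbf{The paper's route.} The paper first asserts $Q(H_{n+1}(e_1,\overline{s}))\le Q(H_n(e_1,\overline{s}))$ via product sets and then works only in $\mathbb{R}^2$, using the tent profile $\frac1k\max\{1-|x|,0\}+c_k$. Its strictly anisotropic case is the same Taylor-expansion bookkeeping you do. For $d_1=d_2$ it proves $\lambda_{\gamma_A}(E_{f_k})\approx\gamma_A(E_{f_k}\Delta H_2(e_1,\overline{s}))$ by an ad hoc planar contradiction argument. That argument finds points of $E_{f_k}^c$ outside the competitor half-plane on both sides of $\{x_2=0\}$, then uses convexity of the complement of a half-plane and the two pieces $P_k^\pm$ of the tent.

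\textbf{What your route buys.} You work with cylinders directly in $\mathbb{R}^n$ and treat every competitor at once, by linearization plus the fact that the profile is not affine. This covers an eigenspace $V_{d_1}$ of any dimension. It also never requires comparing asymmetries across dimensions, which the paper's product reduction implicitly does: when $d_{n+1}=d_1$, adding a coordinate enlarges the family of competitors. The paper's argument, by contrast, is fully explicit in the plane.

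\textbf{A cleaner linearization.} Your linearization can be made tighter than you state it. Once $\omega_k\cdot e_1>0$, the boundary $\partial H_n(\omega_k,\overline{s})$ is \emph{exactly} the graph $x_1=\overline{s}+b_k-a_k\cdot x'$ of an affine function, with $a_k=\omega_k'/(\omega_k\cdot e_1)$, $b_k=\overline{s}/(\omega_k\cdot e_1)-\overline{s}$, and $(a_k,b_k)\to 0$. So there is no $O(\theta_k^2)$ geometric error. Bound the $x_1$-density from below on a fixed box $\{|x_j|\le R,\ j\ge 2\}$. This gives $\gamma_A(E_k\Delta H_n(\omega_k,\overline{s}))\ge c\int_{\mathrm{box}}|\epsilon_k\tanh(x_2)+c_k+a_k\cdot x'-b_k|\,d\mu(x')$. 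Since $\tanh(x_2)$, $x_2,\dots,x_n$ and $1$ are linearly independent on the box, this integral is a norm in the finitely many parameters. Hence it is at least $c'\epsilon_k$, with no case split on $\theta_k$ and no Jensen step needed.

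\textbf{A small slip.} Your stated orders are $c_k=O(\epsilon_k)$ in general and $c_k=O(\epsilon_k^2)$ for odd $v$. For these to hold, the parametrization should be $u_k=\epsilon_k v+c_k$ rather than $\epsilon_k(v+c_k)$.
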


\begin{proof}
    By taking product sets we see $Q(H_{n+1}(e_1,\overline{s}))\le Q(H_{n}(e_1,\overline{s}))$ for all $n \ge 2$.  Therefore, we reduce to proving the $n=2$ case.  First, we let $A$ be anisotropic.  Let $f_k\in W^{1,\infty}(\mathbb{R}) \cap L^{\infty}(\mathbb{R})$ by given by
    \begin{align*}
        f_k(x):= 
            \frac{1}{k}\max\{(1-|x|), 0\} + c_k,
    \end{align*}
    where $c_k$ is chosen such that $\gamma_{A}(E_{f_k}) = \gamma_{A}(H_2(e_1,\overline{s}))$. Clearly, then $Q(H_2(e_1,\overline{s})) \le \lim_{k \rightarrow \infty}\frac{\delta_{\gamma_A}(E_{f_k})}{\lambda_{\gamma_A}(E_{f_k})^2}.$  Recalling that for anisotropic $A$ the only admissible half-planes are $H_2(e_1,\overline{s}), H_2(-e_1,|\overline{s}|)$ by \eqref{unique minimizer}, and recalling the definition of $E_{f_k}$ in Definition \ref{d:standing assumptions}(9), we see that for all sufficiently large $k \in \mathbb{R}^n$, $\lambda_{\gamma_A}(E_{f_k}) = \gamma_A(E_{f_k} \Delta H_2(e_1,\overline{s}))$.  Thus, we have that $Q(H_2(e_1,\overline{s})) \le \lim_{k \rightarrow \infty}\frac{\delta_{\gamma_A}(E_{f_k})}{\gamma_A(E_{f_k} \Delta H_2(e_1,\overline{s}))^2}$.

    Now, by direct calculations, we estimate
    \begin{align*}
        \norm{f_k \circ \frac{1}{d_2}}_{L^1(\mathbb{R}, \gamma_1)} \approx_{d_1, d_2, \overline{s}} \left(\frac{1}{k}e^{-\frac{1}{2}|\overline{s}|^2}\right),\qquad \norm{f_k \circ \frac{1}{d_2}}_{L^2(\mathbb{R}, \gamma_1)} \approx_{d_1, d_2, \overline{s}} \left(\frac{1}{k^2}e^{-|\overline{s}|^2}\right)^{\frac{1}{2}}.
    \end{align*}
    Hence, $\norm{f_k \circ \frac{1}{d_2}}_{L^1(\mathbb{R}, \gamma_1)} \simeq_{d_1, d_2, \overline{s}} \norm{f_k\circ \frac{1}{d_2}}_{L^2(\mathbb{R}, \gamma_1)}$ by constants independent of $k \in \mathbb{N}$.  

    Therefore, following through the argument of Lemma \ref{l:fuglede-type estimate}, we have
\begin{align*}
    \frac{\delta_{\gamma_A}(E_{f_k})}{\lambda_{\gamma_A}(E_{f_k})^2} &\approx_{d_1, d_2, \overline{s}}  \frac{2\pi e^{\frac{d_1^2\overline{s}^2}{2}}}{d_1}\left[\frac{\int_{\mathbb{R}}\frac{1}{2}|\nabla f_k(x_2)|^2 d\gamma_{A',\overline{s}}(x_2)}{\Big(1+O_5(\varepsilon)\Big)\int_{\mathbb{R}}|f_k(x_2)|^2d\gamma_{A',\overline{s}}(x_2)} -\frac{1}{2\Big(1+O_5(\varepsilon)\Big)}d_1^2 -O_6(\varepsilon)\right].
\end{align*}
Computing that 
\begin{align*}
    \norm{\nabla f_k}_{L^2(\mathbb{R}),\gamma_{A',\overline{s}}}^2 \approx_{d_1, d_2, \overline{s}} (\frac{1}{k})^2e^{|\overline{s}|^2}
\end{align*}
gives $\frac{\delta_{\gamma_A}(E_{f_k})}{\lambda_{\gamma_A}(E_{f_k})}^2 \lesssim_{d_1, d_2, \overline{s}} 1$.

The argument for isotropic $A$ follows analogously, if we can show that $$\lambda_{\gamma_A}(E_{f_k}) \approx_{d_1, \overline{s}} \gamma_{A}(E_{f_k}\Delta H_2(e_1, \overline{s})).$$  To see that these are comparable, first let $1<C_1<\infty$ be such that for all sufficiently large $k \in \mathbb{N}$,
\begin{align*}
    \gamma_{A}(E_{f_k}\Delta H_2(e_1, \overline{s})) \in (\frac{1}{C_1k}e^{-\frac{1}{2}d_1^2|\overline{s}|^2}, \frac{C_1}{k}e^{-\frac{1}{2}d_1^2|\overline{s}|^2}).
\end{align*}
Next, we let $1<C_2<\infty$ such that for all sufficiently large $k \in \mathbb{N}$, we may estimate
\begin{align*}
    \gamma_{A}(\{(x_1, x_2) \in E_{f_k}: x_1 \in (\overline{s} + c_k + \frac{1}{2k}, \overline{s} + c_k + \frac{1}{k}\}) \in (\frac{1}{C_2k}e^{-\frac{1}{2}d_1^2|\overline{s}|^2}, \frac{C_2}{k}e^{-\frac{1}{2}d_1^2|\overline{s}|^2}).
\end{align*}
Finally, we fix $0<\epsilon_3<<1$ and let $1<C_3<\infty$ be such that for any $0<\epsilon \le \epsilon_3$ and all sufficiently large $k \in \mathbb{N}$,
\begin{align*}
    \gamma_A(\{(x_1, x_2) \not\in E_{f_k}: x_1 \in (\overline{s} + c_k, \overline{s} + c_k + \frac{\epsilon}{k}\}) \in ( \frac{\epsilon}{C_3k}e^{-\frac{1}{2}d_1^2|\overline{s}|^2} e^{-\frac{1}{2}d_1^2}, \frac{\epsilon C_3}{k}e^{-\frac{1}{2}d_1^2|\overline{s}|^2} e^{-\frac{1}{2}d_1^2}).
\end{align*}
Since by definition as an infimum, $\lambda_{\gamma_A}(E_{f_k})\le \gamma_{A}(E_{f_k}\Delta H_2(e_1, \overline{s}))$, it only remains to show that $\lambda_{\gamma_A}(E_{f_k}) \ge \frac{1}{200 C_2C_3}\frac{1}{k}e^{-\frac{1}{2}d_1^2|\overline{s}|^2} e^{-\frac{1}{2}d_1^2}$.  

Suppose for the sake of contradiction that $\lambda_{\gamma_A}(E_{f_k}) < \frac{1}{200 C_2C_3}\frac{1}{k}e^{-\frac{1}{2}d_1^2|\overline{s}|^2} e^{-\frac{1}{2}d_1^2}$.  Let $H_2(\omega, \overline{s})$ be the minimizing half-plane.  We claim that there exist two points $$(x_1', x_2'), (y_1', y_2') \in (B_{\frac{1}{100C_3k}}(\{x_1 = \overline{s} + c_k\}) \setminus E_{f_k}) \cap H_2(\omega,\overline{s})^c$$ such that $x_2<0$ and $x_2'>0$.  Indeed, if no such points exists, then $$B_{\frac{1}{100C_3k}} (\{x_1 = \overline{s} + c_k\}) \setminus (E_{f_k} \cup \{x_1 >0\}) \subset H_2(\omega, \overline{s}),$$ and similarly for $\{x_1<0\}$, and hence
\begin{align*}
    \lambda_{\gamma_A}(E_{f_k}) & = \gamma_A(E_{f_k} \Delta H_2(\omega, \overline{s})) \ge \gamma_{A}(B_{\frac{1}{100C_2C_3k}} (\{x_1 = \overline{s} + c_k\}) \setminus (E_{f_k} \cup \{x_1>0\})\\
    & \ge \frac{1}{100C_2C_3k}e^{-\frac{1}{2}d_1^2|\overline{s}|^2}e^{-\frac{1}{2}d_1^2},
\end{align*}
a contradiction. Therefore, the two points $(x_1', x_2'), (y_1', y_2')$ exist, as claimed.

Now, we simply observe that $H_2(\omega, \overline{s})^c$ is convex, and therefore the line connecting $(x_1', x_2')$ and $(y_1', y_2')$ must be contained in $H_2(\omega, \overline{s})^c$.  Moreover, the whole infinite half-strip $\{(x_1, x_2) : x_1 \ge \max\{x_1', y_1'\} \text{ and } x_2 \in x_2', y_2')\} \subset H_2(\omega, \overline{s})^c$, since any half-plane $H_2(\omega', \overline{s})$ which intersects this strip must fail to intersect either
\begin{align*}
P^-_k:= E_{f_k} \cap \{x_1 >\overline{s}+c_k\} \cap \{x_2<0\}
\end{align*}
or 
\begin{align*}
P^+_k := E_{f_k} \cap \{x_1 >\overline{s}+c_k\} \cap \{x_2>0\}. 
\end{align*}

Since these sets satisfy $\gamma_A(P_k^{\pm}) \ge \frac{1}{2C_2}\frac{1}{k} e^{-\frac{1}{2}d_1^2|\overline{s}|^2}$, by our assumption on $\lambda_{\gamma_A}(E_{f_k})$, the half-infinite strip must be in $H_{2}(\omega', \overline{s})$.

In particular the set $E_{f_k} \cap \{(x_1, x_2) : x_1 \ge \max\{x_1', y_1'\} \text{ and } x_2 \in (x_2', y_2')\} \subset H_2(\omega, \overline{s})^c$ and hence $\lambda_{\gamma_2}(E_{f_k}) \ge \frac{1}{C_1k}e^{-\frac{1}{2}d_1^2|\overline{s}|^2}$, a contradiction.  

Thus, $\lambda_{\gamma_A}(E_{f_k}) \approx_{d_1, \overline{s}} \gamma_A(E_{f_k} \Delta H_{2}(e_1, \overline{s}))$. Following the argument in the anisotropic case, now gives the claim of the lemma.
\end{proof}

\subsection{A Study of the Asymptotically Linear Case}

We shall use the following notation for the rest of this section. Recall that the Hermite polynomials form an orthonormal basis of
$L^2(\mathbb{R},\gamma_{1})$ (see, for example, \cite[Definition 1.3.1 and Corollary 1.3.3]{MR1642391}). As a normalizing assumption, by change of variables we may assume that $d_1=1$. Therefore, for any $w \in L^2(\mathbb{R},\gamma_{1})$ we may write
\[
w(y)
=
\sum_{k=0}^{\infty}
a_{i} H_{i}(y),
\]
where $H_{i}(y)$ is the standard Hermite polynomial of degree $i$. By orthonormality, we have
$$a_{i}=\int_{\mathbb{R}}
w(y)H_{i}(y)\,d\gamma_{1}(y).$$

We shall apply this decomposition to the functions $w_k:= \tilde{u}_k \circ \frac{1}{d_2}$, for $\tilde{u}_k$ as given by Theorem \ref{t:technical theorem}. For these $\{w_k\}_k$, we will study, $a_{1, k}$, the $1$st order coefficient for $w_k$.

\begin{lemma}[Convergence]\label{l:convergence to linear case}
Let $A\in S^+_{n\times n}(\mathbb{R})$, $A_{1,2}$ as in Definition \ref{d:standing assumptions}(4), $\alpha\in (0,1)$ and $\overline{s} \in \mathbb{R}$ be related as in Definition \ref{d:standing assumptions}.  For $\tilde{u}_k\in L^\infty(\mathbb{R}) \cap W^{1,\infty}(\mathbb{R})$ the functions given by Theorem \ref{t:technical theorem} such that
    \begin{align*}
        \partial \tilde{E}_k = \emph{graph}_{\partial H_2(e_1, \overline{s})}\tilde{u}_k.
    \end{align*}
    Define $w_k:= \tilde{u}_k \circ \frac{1}{d_2}$ and $v_k:= \frac{w_k}{\norm{w_k}_{L^2(\mathbb{R}, \gamma_1)}} \in L^2(\mathbb{R}, \gamma_1)$.
    
    If there exists a subsequence such that $|a_{1, k'}| \rightarrow 1$ as $k' \rightarrow \infty$, then 
    \begin{enumerate}
        \item $v_{k'}$ converges strongly to the function $H_1(x)=x$ in $L^2(\mathbb{R},\gamma_1)$.
        \item For all $i \in \mathbb{N}$, $D^iv_{k'} \rightarrow D^iH_1$ strongly in $L^2([-100d_2, -100^{-1}d_2]\cup[100^{-1}d_2,100d_2], \gamma_1)$.
    \end{enumerate}
\end{lemma}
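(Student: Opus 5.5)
Part (1) is a Hilbert-space argument in $L^2(\mathbb{R},\gamma_1)$, and part (2) bootstraps it to derivatives using the curvature ODE \eqref{e:ODE}.

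For (1), by construction $\norm{v_{k'}}_{L^2(\gamma_1)}=1$. Since the Hermite polynomials are an orthonormal basis, Parseval gives $\sum_i a_{i,k'}^2=1$, where $a_{i,k'}$ are the coefficients of $v_{k'}$. I read the hypothesis $|a_{1,k'}|\to1$ as referring to these normalized coefficients. Then $\sum_{i\neq1}a_{i,k'}^2=1-a_{1,k'}^2\to0$. Passing to a further subsequence (or fixing the sign of $H_1$ by the orientation convention), we may assume $a_{1,k'}\to1$. This yields
\[
\norm{v_{k'}-H_1}_{L^2(\gamma_1)}^2=(a_{1,k'}-1)^2+\sum_{i\neq1}a_{i,k'}^2\to0,
\]
which is (1).

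For (2), I would use the structure of $\tilde u_k$ on the compact annular region. For large $k$, on $\{|x_2|\in[100^{-1},100]\}$ the truncation $T_{h_k}$ is inactive, because $\norm{u_k}_{C^1}$ is small there by Lemma \ref{l:properties of Ik}(5) and $h_k$ was chosen accordingly. Hence $\tilde u_k=u_k$ there, and $\partial I_k$ is a smooth graph solving the weighted CMC equation of Theorem \ref{t:regularity for VCPMs}(3) with constant $C_k=d_1^2\ell_k^\pm$. In graph form this reads
\[
\frac{u_k''}{(1+|u_k'|^2)^{3/2}}=d_1^2(\ell_k^\pm-\overline{s}-u_k)\frac{1}{\sqrt{1+|u_k'|^2}}+d_2^2x_2\frac{u_k'}{\sqrt{1+|u_k'|^2}}.
\]
Dividing by $\norm{w_k}_{L^2}$, this becomes a quasilinear ODE for $v_k$ whose principal part is $v_k''-d_2^2x_2v_k'+d_1^2v_k+c_k$, where $c_k=d_1^2(\overline{s}-\ell_k^\pm)/\norm{w_k}$. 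The remaining error terms are $O(\norm{u_k}_{W^{1,\infty}}^2)$ times $v_k$-quantities. After rescaling by $1/d_2$, this linear part is the Hermite operator $L$ with $LH_1=0$. Interior $L^2$ estimates for this uniformly elliptic one-dimensional equation on the slightly larger region $[-101d_2,-(101)^{-1}d_2]\cup[(101)^{-1}d_2,101d_2]$, iterated by differentiating the equation, control $\norm{v_{k'}-H_1}_{H^m}$ on the target region by $\norm{v_{k'}-H_1}_{L^2}$, the constant mismatch $|c_k|$, and the vanishing nonlinear errors. Combined with (1), this gives convergence of every derivative.

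The main obstacle is the constant $c_k$. I must show that it converges to a value compatible with $H_1$, that is, to $0$, or else that it is bounded so it can be extracted along a subsequence and identified by testing the limiting ODE against $H_0$. I would first try to bound $c_k$ by integrating the ODE against a cutoff, which expresses $c_k$ through $L^2$ norms of $v_k$ and hence bounds it uniformly. In the limit, the equation becomes $Lv=-c$ with $v=H_1$, and this forces $c=0$. A second subtlety is that the nonlinear terms carry factors of $\norm{w_k}$ times powers of $v_k'$. These must be handled by first obtaining a uniform $H^1$ bound on $v_k$ through an energy estimate on the compact region.
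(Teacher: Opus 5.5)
\textbf{The gap is in (2).} After the rescaling $y=d_2x_2$, your principal part becomes $d_2^2(D^2-yD)+d_1^2$. This operator maps $H_1$ to $(d_1^2-d_2^2)H_1$, so it annihilates $H_1$ only when $d_1=d_2$. Geometrically, a tilted line is not a weighted-CMC curve for an anisotropic weight.

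Two of your steps depend on the false claim $LH_1=0$. First, for $d_1<d_2$ the difference $v_{k'}-H_1$ solves your equation with the fixed forcing $(d_2^2-d_1^2)y$, so no interior estimate can make $\norm{v_{k'}-H_1}_{H^m}$ small. Second, the limiting relation on the annular region is $(d_1^2-d_2^2)y\equiv -c$. This does not force $c=0$; it has no solution at all.

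The repair is to estimate $v_{k'}$ itself rather than the difference. Your cutoff-plus-energy idea does close:
\begin{itemize}
\item Write the equation as $\bigl(u_k'/\sqrt{1+|u_k'|^2}\bigr)'=\cdots$ and test it against $\phi$ with $\int\phi=1$. This bounds $|c_k|$ by $\norm{v_k'}_{L^1}+\norm{v_k}_{L^1}$ on $\mathrm{supp}\,\phi$.
\item Then test against $v_k\psi^2$ with $\psi\equiv1$ on $\mathrm{supp}\,\phi$. In the resulting inequality the $c_k$-term is at most $C(\norm{v_k'\psi}_{L^2}+\norm{v_k}_{L^2})\norm{v_k}_{L^2}$, so it can be absorbed.
\item This gives $\sup_k|c_k|<\infty$ and a uniform $H^1$ bound. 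The ODE bootstraps these to uniform $H^m$ bounds on the target region for every $m$.
\item Rellich compactness, with the limit identified by (1), then gives (2). Nothing here requires $H_1$ to solve the limiting equation.
\end{itemize}
With these bounds, passing to the limit in the equation shows that for $d_1<d_2$ the hypothesis $|a_{1,k'}|\to1$ never occurs. So the statement is vacuous in the strictly anisotropic case, and your identification $c=0$ is valid only when $d_1=d_2$. A minor point: the ODE is available only where $\tilde u_k=u_k$, which the choice of $h_k$ guarantees only for $|x_2|\le 100$. Your enlarged region $[1/101,101]$ therefore needs that range enlarged slightly, which is harmless.

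\textbf{A smaller issue in (1).} Your Parseval computation is a cleaner substitute for the paper's argument, which takes a weak limit $v$, uses equality in Cauchy--Schwarz to get $v=\pm H_1$, and then upgrades to strong convergence. However, the sign is not yours to choose. A subsequence only gives $a_{1,k'}\to\pm1$, and $-1$ would give $v_{k'}\to -H_1$. You need the input the paper uses: by construction $\gamma_{A_{1,2}}(\tilde E_k\cap\Omega_+)>\gamma_{A_{1,2}}(\tilde E_k\cap\Omega_-)$. After the Taylor expansion of Lemma~\ref{l:fraenkel symmetry} and division by $\norm{w_k}$, this gives $\int v_{k'}\,\mathrm{sgn}(y)\,d\gamma_1\ge -o(1)$, which excludes $-H_1$.

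\textbf{Comparison with the paper on (2).} The paper does not use the equation. It obtains convergence of $D^iv_{k'}$ against $C^1_c$ test functions by integration by parts and treats this as weak $L^2$ convergence. It then invokes Banach--Steinhaus for uniform bounds and applies Arzel\`a--Ascoli. That upgrade to weak $L^2$ convergence presupposes a uniform $L^2$ bound on $D^iv_{k'}$, and such a bound has to come from the equation. Once repaired as above, your ODE-based route supplies exactly the ingredient that argument needs.
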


\begin{proof}
To see the claim, since $v_k$ is bounded in $L^2(\R, \gamma_1)$, there exists a subsequence $v_k$ and a function $v \in L^1(\mathbb{R},\gamma_1)$ such that $v_k\to v$ weakly in $L^2(\R,\gamma_1)$. Note also that, after possibly taking a further subsequence, we may assume that
$$\left(\int_{\R}v_k(x)xd\gamma_1(x)\right)^2> \left(1-\frac1k\right).$$
Moreover, $v_k$ and $x$ are both in $L^2(\R, \gamma_1)$. Taking $k\to \infty$, we have
\begin{equation}\label{xv_k conv}
\lim_{k\to \infty}\int_{\R}v_k(x)xd\gamma_1(x)=\int_{\R}v(x)xd\gamma_1(x)
\end{equation}
and
$$1\leq \left(\int_{\R}v(x)xd\gamma_1(x)\right)^2\leq \left(\int_{\R}v^2(x)d\gamma_1(x)\right)\left(\int_{\R}x^2d\gamma_1(x)\right)=\int_{\R}v(x)^2d\gamma_1(x).$$
Thus, by lower-semicontinuity of the norm under weak convergence, $\norm{v}_{L^2(\mathbb{R},\gamma_1)}=1$ and equality holds in the Cauchy-Schwarz inequality. Hence,
$$v(x)=cx\mbox{\quad a.e.}$$
for some constant $c$. Then
$$c^2=\int_{\R}c^2x^2d\gamma_1(x)=\int_{\R}v(x)^2d\gamma_1(x)=1\implies c=\pm1.$$
Since, by construction, $\gamma_{A_{1,2}}(\tilde{E}_k \cap \Omega_+)> \gamma_{A_{1,2}}(\tilde{E}_k \cap \Omega_-)$, it must be that $c=1$. In particular, by \eqref{xv_k conv}, we have $\int_{\R}v_k v\, d\gamma_1\to \|v\|_{L^2(\R,\gamma_1)}^2=1$. Therefore,
$$\|v_k- v\|^2_{L^2(\R,\gamma_1)}\to 0,$$
i.e., $v_k\to v$ in $L^2(\R,\gamma_1)$ and hence $v_k\to v$ a.e. in $\R$. 

Strong convergence to $v= H_1$ immediately implies the following weak convergence.  Since $\tilde{u}_k\in W^{1,\infty}(\R)$, $\tilde{u}_k$ is Lipschitz on $\R$ and hence $v_k$ is also Lipschitz on $\R$.
By strong convergence in $L^2(\R,\gamma_1)$, we have for any $\phi \in C^1_c(\mathbb{R})$
$$\int_{\R}Dv_k\varphi d\gamma_1=-\int_{\R}v_kD\left(\varphi \frac{1}{\sqrt{2\pi}}e^{-x^2/2}\right) dx\to -\int_{\R}vD\left(\varphi \frac{1}{\sqrt{2\pi}}e^{-x^2/2}\right) dx=\int_{\R}Dv\varphi d\gamma_1.$$
Now, consider the set 
\begin{align*}
    K:= [-100d_2,-100^{-1}d_2]\cup[100^{-1}d_2, 100d_2],  
\end{align*}
and recall by our construction of $\tilde{u}_k = T_{h_k}(u_k)$ and the definition of $h_k, k_j$, that $\tilde{u}_k = u_k$ on $[-100, 100]$. Therefore, by smoothness of $u_k$ (Lemma \ref{l:properties of Ik}(3)), $D^iv_k$ exists on $A$ for all $i \in \mathbb{N}$. Thus, for any $\phi \in C^1_c(K)$, the argument above works with $D^iv_k, D^iv$ in place of $v_k, v$. Thus, $D^iv_k \rightharpoonup 0$ in $L^2(K,\gamma_1) \simeq L^2(K, \mathcal{L}^1)$.  Therefore, since by a corollary to the Banach-Steinhaus theorem any weakly convergence subsequence is norm-bounded, 
$$\sup_{k'}\norm{D^iv_{k'}}_{L^{2}(K)}<\infty.$$  
Thus, since by the continuity of each $D^v_k$ on $K$, for all sufficiently large $k \in \mathbb{N}$ there must exists some point $x_k^i$ in each component of $K$ such that $|D^iv_k(x_k^i)| \le 1$.  Thus, by the fundamental theorem of calculus, we may easily obtain that 
\begin{align*}
\norm{D^iv_k}_{L^\infty(K)} \le 1 + 10\sup_{k'}\norm{D^iv_{k'}}_{L^{2}(K)}<\infty.
\end{align*}
In particular, for each $i \in \{1, 2, 3, ...\}$ we see that $\{D^{i-1}v_{k'}\}_{k'}$ are uniformly Lipschitz on $K$.  Therefore, up to choosing a subsequence,  $D^iv_k \rightarrow D^iv$ in $L^\infty(K)$ for all $i \ge 1$.  This gives the claim.
\end{proof}

\begin{lemma}[Continuity]\label{l:close to linear and comparability of lambdas}
    Let $A, \alpha, \overline{s}$ be as in Lemma \ref{l:convergence to linear case}, above.  Recall that $d\gamma_{A_{1,2}', 0} = e^{-\frac{1}{2}d_2^2x_2^2}d\mathcal{H}^1$, as in Definition \ref{d:standing assumptions}(5). 
    
    Let $\tilde{u}_k, w_k, v_k$ be as in Lemma \ref{l:convergence to linear case}. There exists a constant $0<\delta_0<1$ with the following property.
    \begin{enumerate}
        \item If $a_{1, k}>1-\delta_0$, then
\begin{align*}
    \tilde{u}_k(x_2) & = \norm{\tilde{u}_k}_{L^2(\mathbb{R},\gamma_{A_{1,2}', 0})} x_2 + f_k(x_2) \qquad \text{on }\mathbb{R}\\
    \tilde{u}_k'(x_2) & = \norm{\tilde{u}_k}_{L^2(\mathbb{R},\gamma_{A_{1,2}', 0})} + f'_k(x_2) \qquad \quad \text{on } [-100, -100^{-1}]\cup [100^{-1}, 100],\\
    \tilde{u}''_k & = f''_k(x_2) \qquad \quad \text{on } [-100, -100^{-1}]\cup [100^{-1}, 100],
\end{align*}
where \begin{enumerate}
    \item[a.] $\norm{f_k}_{L^2(\mathbb{R},\gamma_{A_{1,2}', 0})}, \norm{Df_k}_{L^2(\mathbb{R},\gamma_{A_{1,2}', 0})},  \norm{D^2f_k}_{L^2(\mathbb{R},\gamma_{A_{1,2}', 0})} \le \frac{1}{10,000}\norm{\tilde{u}_k}_{L^2(\mathbb{R},\gamma_{A_{1,2}', 0})}$.
    \item[b.] $\norm{D^2f}_{L^\infty([-100, -100^{-1}]\cup[100^{-1}, 100])} \le \frac{1}{10,000}\norm{\tilde{u}_k}_{L^2(\mathbb{R},\gamma_{A_{1,2}', 0})}$.
\end{enumerate} 
\item $\frac{1}{5}\lambda_{\gamma_{A}}(E_k) \le \gamma_{A_{1,2}}(\tilde{E}_k \Delta H_2(e_1, \overline{s})) \le 2\lambda_{\gamma_{A}}(E_k)$.
    \end{enumerate}
\end{lemma}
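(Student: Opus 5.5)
Part (1) is a quantitative sharpening of Lemma \ref{l:convergence to linear case}, and I would prove it by contradiction plus compactness. Suppose no $\delta_0$ works. Then there is a sequence (reindexed as $k$) with $a_{1,k}\to 1$ for which one of (a), (b) fails. Lemma \ref{l:convergence to linear case} applies and gives $v_k\to H_1$ strongly in $L^2(\mathbb{R},\gamma_1)$. It also gives convergence of all derivatives on $K=[-100d_2,-100^{-1}d_2]\cup[100^{-1}d_2,100d_2]$, and this is uniform on $K$ by the Arzel\`a--Ascoli step in that proof.

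Next I rescale back. Since $w_k=\tilde u_k\circ\frac1{d_2}$ and $d_1=1$, the norms $\norm{w_k}_{L^2(\gamma_1)}$ and $\norm{\tilde u_k}_{L^2(\gamma_{A_{1,2}',0})}$ agree up to a fixed factor depending only on $d_2$. I write
\begin{align*}
\tilde u_k(x_2)=\norm{\tilde u_k}_{L^2(\gamma_{A_{1,2}',0})}\,x_2+f_k(x_2),
\end{align*}
with the slope normalization chosen so that $f_k/\norm{\tilde u_k}$ is the rescaled $v_k-H_1$. Strong $L^2$ convergence then gives $\norm{f_k}_{L^2}=o(1)\norm{\tilde u_k}_{L^2}$, which is (a) for $f_k$. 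The convergence of $D^iv_k$ on $K$ gives the $L^\infty$ bound (b) and the pointwise formulas for $\tilde u_k'$ and $\tilde u_k''$ on $[-100,-100^{-1}]\cup[100^{-1},100]$.

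The difficulty is the global $L^2$ bounds on $Df_k$ and $D^2f_k$ over all of $\mathbb{R}$. Lemma \ref{l:convergence to linear case} does not control these outside $K$. For $Df_k$ I would use the Gaussian integration-by-parts identity $\int|Dg|^2\,d\gamma_1=\sum_i i\,a_i^2$. The deficit controls $\int|\nabla\tilde u_k|^2$ through Lemma \ref{l:deficit estimates}, since $Q<\infty$ and $\tilde u_k$ is a near-recovery sequence. Combined with $a_{1,k}\to1$, this should force $\sum_{i\ge2}i\,a_{i,k}^2\to0$. The part of the norm outside $K$ and near $0$ is handled by Theorem \ref{l:center strip control}, together with $\norm{\tilde u_k}_{W^{1,\infty}}\to 0$ and the Gaussian tail. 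For $D^2f_k$, $\tilde u_k$ is only Lipschitz after truncation, so I would interpret the bound away from the truncation set. There I would use the ODE \eqref{e:ODE}, which expresses $u_k''$ in terms of $u_k$ and $u_k'$ with $C_k\to d_1^2\overline s$ (Lemma \ref{l:continuity of Ck}), to reduce $D^2f_k$ to lower-order quantities already controlled. I expect this global second-derivative control to be the main obstacle.

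For (2), the lower bound is already Theorem \ref{t:technical theorem}(2c). For the upper bound, I use the structure from (1): $\tilde u_k$ is essentially the linear tilt $c\,x_2$, with $c>0$ because $\gamma_{A_{1,2}}(\tilde E_k\cap\Omega_+)>\gamma_{A_{1,2}}(\tilde E_k\cap\Omega_-)$. Then $\gamma_{A_{1,2}}(\tilde E_k\Delta H_2(e_1,\overline s))\approx\frac{d_1d_2}{2\pi}\int|u|\,d\gamma_{A',\overline s}$ by Lemma \ref{l:fraenkel symmetry}. On the other side, $\lambda_{\gamma_A}(E_k)=\gamma_{A_{1,2}}(F_k\Delta H_2)$, and the chain $F_k\to H_k\to I_k\to\tilde E_k$ loses at most a factor $(2+o(1))$ in each step by Corollary \ref{c:second symmetrization} and Lemma \ref{l:properties of Ik}. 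I would sharpen this using the monotonicity of $h_k$: near-linearity makes the overhang cancellations negligible, so that $\gamma_{A_{1,2}}(\tilde E_k\Delta H_2)\le(1+o(1))\gamma_{A_{1,2}}(F_k\Delta H_2)\le2\lambda_{\gamma_A}(E_k)$. The truncation error is $o(\lambda^8)$ by Theorem \ref{t:full truncation result}.
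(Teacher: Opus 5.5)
Your lower bound in (2) and the core of (1), contradiction plus compactness via Lemma \ref{l:convergence to linear case}, are what the paper does. The upper bound in (2), however, has a genuine gap.

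The factor losses you cite along $F_k\to H_k\to I_k\to\tilde{E}_k$ are lower bounds, and Lemma \ref{l:properties of Ik}(2) gives no upper bound at all on $\gamma_{A_{1,2}}(I_k\Delta H_2(e_1,\overline{s}))$. The step $F_k\to H_k$ needs no sharpening: Corollary \ref{c:second symmetrization}(2) already gives $\gamma_{A_{1,2}}(H_k\Delta H_2(e_1,\overline{s}))\le\lambda_{\gamma_A}(E_k)$, so overhang cancellations only help. Moreover, near-linearity of $\tilde{u}_k$ is a property of $I_k$ and says nothing about $F_k$.

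The symmetric difference can grow only in the re-minimization $H_k\to I_k$. That step keeps nothing but the masses $\gamma_{A_{1,2}}(H_k\cap\Omega_\pm)$ and may open holes. The missing step is the bookkeeping
\begin{align*}
\gamma_{A_{1,2}}\big((\tilde{E}_k\Delta H_2(e_1,\overline{s}))\cap\Omega_+\big)=C_k\lambda_{\gamma_A}(E_k)+2\,\gamma_{A_{1,2}}\big((H_2(e_1,\overline{s})\setminus\tilde{E}_k)\cap\Omega_+\big).
\end{align*}
Here $C_k\lambda_{\gamma_A}(E_k)=\gamma_{A_{1,2}}(H_k\cap\Omega_+)-\frac{\alpha}{2}$ up to the negligible truncation error. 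Also $C_k\le\frac12$, because this excess is at most $\gamma_{A_{1,2}}(H_k\setminus H_2(e_1,\overline{s}))=\frac12\gamma_{A_{1,2}}(H_k\Delta H_2(e_1,\overline{s}))$.

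Near-linearity is needed only for the hole term. Since $\tilde{u}_k>0$ on $[100^{-1},100]$, the holes lie over $(0,100^{-1})\cup(100,\infty)$. Lemma \ref{l:fraenkel symmetry} together with \eqref{e:L1 and L2 comparable for large a1} then bounds them by $\frac1{10}$ of the left-hand side. Absorbing gives $\frac58\lambda_{\gamma_A}(E_k)$ in $\Omega_+$, and the same holds in $\Omega_-$; this is the paper's argument. It yields a fixed factor, not your $1+o(1)$, since $a_{1,k}>1-\delta_0$ does not make the hole fraction vanish.

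For (1), the paper's proof consists only of the compactness step. It does not separately address global bounds on $Df_k$ and $D^2f_k$, and later arguments use only $\norm{f_k}_{L^2}$ and bounds on $[-100,-100^{-1}]\cup[100^{-1},100]$. Both of your routes to the global bounds would fail.

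First, finiteness of $Q$ bounds $\sum_i i\,a_{i,k}^2$ by a multiple of $\sum_i a_{i,k}^2$. Combined with $a_{1,k}\to1$, this still does not force $\sum_{i\ge2}i\,a_{i,k}^2\to0$, because a fixed amount of Dirichlet energy can sit in high Hermite modes carrying vanishing $L^2$ mass.

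Second, the ODE \eqref{e:ODE} cannot make $D^2f_k$ small. In graph form it gives $\tilde{u}_k''\approx d_1^2(\ell_k^+-\overline{s})+(d_2^2-d_1^2)m_kx_2-d_1^2f_k+d_2^2x_2f_k'$, where $m_k>0$ is the slope. Lemma \ref{l:continuity of Ck} only gives $\ell_k^+-\overline{s}\to0$. In fact \eqref{e:LB on ell_k height} bounds $\ell_k^+-\overline{s}$ below by a multiple of $\lambda_{\gamma_A}(E_k)$, which is comparable to $\norm{\tilde{u}_k}_{L^2}$ in this regime. Both leading terms are nonnegative for $x_2>0$, so the ODE makes $\tilde{u}_k''$ on $[1,100]$ at least a fixed multiple of $\norm{\tilde{u}_k}_{L^2}$. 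This is exactly the computation Lemma \ref{l:separation from linear case} uses to rule out the near-linear regime, so it cannot prove the small second-derivative bound. Drop the global derivative claims: smallness of $D^2f_k$ on the compact intervals can only come from compactness, as in Lemma \ref{l:convergence to linear case}.
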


\begin{proof}
    The claim (1a) is immediate from a contradiction compactness argument and the previous lemma.  
    
    We now argue the second claim. Note that the lower bound follows immediately from Lemma \ref{l:properties of Ik}(3) and Lemma \ref{l:mass error is negligible}. To see the upper bound, first recall from Lemma \ref{l:properties of Ik} that
\begin{align*}
\gamma_{A_{1,2}}\left((\tilde{E}_k\Delta H_2(e_1, \overline{s}) \cap \Omega_+\right) \ge (\frac{1}{8}-\frac{c}{2})\lambda_{\gamma_A}(E_k), 
\end{align*}
where we can take $0<c$ to be as small as we wish and we use this $0<c$ to absorb the truncation error from Lemma \ref{l:mass error is negligible} for all sufficiently large $k \in \mathbb{N}$. 

Next, we recall from Corollary \ref{c:second symmetrization}(2) and Lemma \ref{l:properties of Ik} that 
\begin{align*}
    \gamma_{A_{1,2}}(I_k \cap \Omega_+) = \gamma_{A_{1,2}}(H_k \cap \Omega_+) = \frac{1}{2}\alpha + C\lambda_{\gamma_{A}}(E_k)
\end{align*}
for some $C_k \in (\frac{1}{4}-c,\frac{1}{2})$, where again we may take $0<c$ as small as we wish for all sufficiently large $k \in \mathbb{N}$. Thus, again using the $0<c$ to absorb the truncation error from Lemma \ref{l:mass error is negligible}, we can write 
\begin{align*}
\gamma_{A_{1,2}}\left((\tilde{E}_k\Delta H_2(e_1, \overline{s})) \cap \Omega_+\right) = C_k \lambda_{\gamma_{A}}(E_k) + 2 \gamma_{A_{1,2}}\left((H_{2}(e_1, \overline{s}) \setminus \tilde{E}_k) \cap \Omega_+\right).
\end{align*}

In particular, for $0<\delta_0<1$ as in (1ab) above, we have that $\tilde{u}_k>0$ on the interval $[100^{-1}, 100]$.  Thus, for all sufficiently large $k \in \mathbb{N}$, we use the Taylor expansion from Lemma \ref{l:fraenkel symmetry} to estimate that
\begin{align*}
&\gamma_{A_{1,2}}\left((H_{2}(e_1, \overline{s}) \setminus \tilde{E}_k) \cap \Omega_+\right) \\
& \le e^{-\frac{1}{2}d_1^2|\overline{s}|^2}\left(\frac{d_1d_2}{2\pi}\int_{0}^{\frac{1}{100}}|\tilde{u}_k|\,d\gamma_{A_{1,2}', 0} + \frac{d_1^2}{2\pi}\int_{100}^{\infty}|\tilde{u}_k|\,d\gamma_{A_{1,2}', 0}\right) + O_4(\epsilon^2)\\
& \le e^{-\frac{1}{2}d_1^2|\overline{s}|^2}\norm{\tilde{u}_k}_{L^2(\mathbb{R},\gamma_{A_{1,2}', 0})}\left(\frac{d_1d_2}{2\pi}\int_{0}^{\frac{1}{100}}x_2 + \frac{f_k(x_2)}{\norm{\tilde{u}_k}_{L^2(\mathbb{R},\gamma_{A_{1,2}', 0})}}\,d\gamma_{A_{1,2}', 0}(x_2)\right)\\
& \quad + e^{-\frac{1}{2}d_1^2|\overline{s}|^2}\norm{\tilde{u}_k}_{L^2(\mathbb{R},\gamma_{A_{1,2}', 0})}\left(\frac{d_1d_2}{2\pi}\int_{100}^{\infty}x_2 + \frac{f_k(x_2)}{\norm{\tilde{u}_k}_{L^2(\mathbb{R},\gamma_{A_{1,2}', 0})}}\,d\gamma_{A_{1,2}', 0}\right)\\
& \quad + O_4(\norm{\tilde{u}_k}_{L^2(\mathbb{R},\gamma_{A_{1,2}', 0})}^2)\\
& \le e^{-\frac{1}{2}d_1^2|\overline{s}|^2}\norm{\tilde{u}_k}_{L^2(\mathbb{R},\gamma_{A_{1,2}', 0})}\Big(\frac{2}{100}\gamma_{A_{1,2}', 0}([0,\frac{1}{100}]) + \frac{1}{100}\gamma_{A_{1,2}', 0}([100,\infty))\\ 
&\qquad \qquad + \frac{1}{2\pi}e^{-\frac{1}{2}d_2^2|100|^2}\Big) + O_4(\norm{\tilde{u}_k}_{L^2(\mathbb{R},\gamma_{A_{1,2}', 0})}^2)\\
& \le e^{-\frac{1}{2}|\overline{s}|^2}\frac{3}{100}\norm{\tilde{u}_k}_{L^2(\mathbb{R},\gamma_{A_{1,2}', 0})}
\end{align*}
for all sufficiently large $k \in \mathbb{N}$, where in the last line we have used our normalizing assumption that $d_2 \ge 1$ to obtain $e^{-\frac{1}{2}d_2^2|100|^2}<<\frac{1}{100}$.

Now, we note that by integration by parts
\begin{align*}
\norm{x_2}_{L^1(\mathbb{R},\gamma_1)} & = 2\frac{1}{\sqrt{2\pi}}\int_0^\infty x_2e^{-\frac{1}{2}|x_2|^2}\,dx_2= \sqrt{\frac{2}{\pi}}\\
\norm{x_2}_{L^2(\mathbb{R},\gamma_1)}^2 & = 2\frac{1}{\sqrt{2\pi}}\int_0^\infty x_2(x_2e^{-\frac{1}{2}|x_2|^2})\,dx_2 = 1.
\end{align*}
In particular, by the triangle inequality,
\begin{align}\label{e:L1 and L2 comparable for large a1}
\frac{1}{3}\norm{\tilde{u}_k}_{L^2(\mathbb{R},\gamma_{A_{1,2}', 0})} \le \norm{\tilde{u_k}}_{L^1(\mathbb{R},\gamma_{A_{1,2}', 0})} \le \norm{\tilde{u}_k}_{L^2(\mathbb{R},\gamma_{A_{1,2}', 0})}.  
\end{align}
Therefore, by the Taylor expansion in Lemma \ref{l:fraenkel symmetry}, our previous calculation shows that for all sufficiently large $k \in \mathbb{N}$
\begin{align*}
\gamma_{A_{1,2}}\left((H_{2}(e_1, \overline{s}) \setminus \tilde{E}_k) \cap \Omega_+\right) & \le e^{-\frac{1}{2}|\overline{s}|^2}\frac{3}{100}\norm{\tilde{u}_k}_{L^2(\mathbb{R},\gamma_{A_{1,2}', 0})}\\
& \le e^{-\frac{1}{2}d_1^2|\overline{s}|^2}\frac{9}{100}\norm{\tilde{u}_k}_{L^1(\mathbb{R},\gamma_{A_{1,2}', 0})}\qquad \qquad \eqref{e:L1 and L2 comparable for large a1}\\
& \le \frac{1}{10} \gamma_{A_{1,2}}((\tilde{E}_k \Delta H_2(e_1, \overline{s})) \cap \Omega_+) \qquad (\text{Lemma \ref{l:fraenkel symmetry}}).
\end{align*}
Thus, 
\begin{align*}
\gamma_{A_{1,2}}\left((\tilde{E}_k\Delta H_2(e_1, \overline{s})) \cap \Omega_+\right) \le C_k \lambda_{\gamma_{A}}(E_k) + \frac{1}{5} \gamma_{A_{1,2}}((\tilde{E}_k \Delta H_2(e_1, \overline{s})) \cap \Omega_+)\\
\longrightarrow \gamma_{A_{1,2}}\left((\tilde{E}_k\Delta H_2(e_1, \overline{s})) \cap \Omega_+\right) \le \frac{5}{4}C_k \lambda_{\gamma_{A}}(E_k) \le \frac{5}{8}\lambda_{\gamma_{A}}(E_k).
\end{align*}
Applying an identical argument for $\gamma_{A_{1,2}}(\tilde{E}_k \Delta H_2(e_1,\overline{s})) \cap \Omega_-$ gives the full claim.
\end{proof}

\subsection{Proof of Small Perturbation Case}

\begin{lemma}[Quantitative separation from the linear case]\label{l:separation from linear case}
    Let $A\in S^+_{n\times n}(\mathbb{R})$ be as in Definition \ref{d:standing assumptions}(3) with eigenvalues $$1=d_1^2 \le d_2^2\le 2.$$ Let $\overline{s} \in (-\infty, 0]$, and let $0<\delta_0<1$ be as in Lemma \ref{l:close to linear and comparability of lambdas}.  For any recovery sequence $\{E_k\}_k$ for $Q(H_n(e_1, \overline{s}))$, let $\{\tilde{E}_k\}_k$ be the sequence guaranteed by Theorem \ref{t:technical theorem} and $\tilde{u}_k(x_2)$ be the defining function for $\partial\tilde{E}_k$.  As in Lemma \ref{l:convergence to linear case} and Lemma \ref{l:close to linear and comparability of lambdas}, we define $w_k := \tilde{u}_k \circ \frac{1}{d_2}$. We write $w_k(y) = a_{0, k} + \sum_{i=1}^{\infty}a_{i,k}H_i(y)$. 
    
    Then,
    \begin{align*}
    |a_{1, k}|\le 1-\delta_0
    \end{align*}
    for all sufficiently large $k \in \mathbb{N}$.
\end{lemma}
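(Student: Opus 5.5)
We argue by contradiction: suppose that along a subsequence (not relabeled) $|a_{1,k}|>1-\delta_0$. By the normalization $\gamma_{A_{1,2}}(\tilde{E}_k\cap\Omega_+)>\gamma_{A_{1,2}}(\tilde{E}_k\cap\Omega_-)$ used in Lemma \ref{l:convergence to linear case}, we may take $a_{1,k}>1-\delta_0$. Lemma \ref{l:close to linear and comparability of lambdas} then gives the decomposition $\tilde{u}_k=\norm{\tilde{u}_k}_{L^2}x_2+f_k$ with $f_k,Df_k,D^2f_k$ small relative to $\norm{\tilde{u}_k}_{L^2}$, smooth on $K=[-100,-100^{-1}]\cup[100^{-1},100]$. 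It also gives the comparability $\gamma_{A_{1,2}}(\tilde{E}_k\Delta H_2(e_1,\overline{s}))\simeq\lambda_{\gamma_A}(E_k)$. The plan is to contradict the ODE \eqref{e:ODE}, which is the constant weighted mean curvature equation satisfied by $\partial I_k$ and hence by $\partial\tilde{E}_k$ on $K$, since $\tilde{u}_k=u_k$ on $[-100,100]$.

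First, on $K\cap\Omega_+$ we write \eqref{e:ODE} in graph form,
\begin{align*}
\frac{u_k''}{(1+|u_k'|^2)^{3/2}}=\pm\Big(d_1^2\ell_k^+-\frac{d_1^2(\overline{s}+u_k)-d_2^2x_2u_k'}{\sqrt{1+|u_k'|^2}}\Big),
\end{align*}
and linearize it using $\norm{u_k}_{W^{1,\infty}}\to0$. Setting $\mu_k=\norm{\tilde{u}_k}_{L^2}$ and $\ell_k^+-\overline{s}=:c_k^+$, this becomes
\begin{align*}
u_k''-d_2^2x_2u_k'+d_1^2u_k=d_1^2c_k^++O(\mu_k^2).
\end{align*}
The analogous equation holds on the negative side with $c_k^-$. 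Substituting $u_k=\mu_kx_2+f_k$ gives the residual
\begin{align*}
(d_1^2-d_2^2)\mu_kx_2+\big(f_k''-d_2^2x_2f_k'+d_1^2f_k\big)=d_1^2c_k^++O(\mu_k^2).
\end{align*}
The constants $c_k^\pm$ are handled by subtracting the equation at two points of the same component of $K$, which eliminates them.

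Second, we derive the contradiction by cases. In the strictly anisotropic case $1=d_1^2<d_2^2\le2$, the term $(d_1^2-d_2^2)\mu_kx_2$ varies by about $(d_2^2-1)\mu_k(100-100^{-1})$ across $[100^{-1},100]$. This requires $\norm{f_k''-d_2^2x_2f_k'+f_k}_{L^\infty(K)}$ to be of order $\mu_k$. To bound that term, use (1a)(1b): $|f_k''|\le 10^{-4}\mu_k$ on $K$, and via the fundamental theorem of calculus the $L^2$ bounds on $Df_k$ and $f_k$ become $L^\infty$ bounds of order $10^{-4}\mu_k$ on $K$. The weight $x_2\le100$ costs a factor of at most $200$, so the total is roughly $10^{-1}\mu_k$. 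This is too small when $d_2^2-1$ is not tiny, giving a contradiction for large $k$. In the isotropic or nearly isotropic case $d_2^2-1\ll1$, the linear term nearly solves the linearized equation (it is the Hermite mode $H_1$, a kernel direction of the translation). There we use the mass constraint instead. Lemma \ref{l:first order estimate} gives $\int u_k\,d\gamma\approx\frac{d_1^2\overline{s}}{2}\int u_k^2\,d\gamma=O(\mu_k^2)$, so $a_{0,k}=O(\mu_k^2)$. The relative masses from Corollary \ref{c:second symmetrization} and Lemma \ref{l:properties of Ik} force the positive and negative halves to carry excess mass of opposite sign and size $\gtrsim\lambda_{\gamma_A}(E_k)$. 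A pure $H_1$ profile, which is a rotated half-plane, is then approximable by an admissible half-plane $H_n(\omega,\overline{s})$ with $\omega\in V_{d_1}$, tilted as in the isotropic case of Lemma \ref{l:n-1 symmetrization} and Theorem \ref{t:technical theorem}(1d). This makes $\lambda_{\gamma_A}(E_k)$ much smaller than $\gamma_{A_{1,2}}(\tilde{E}_k\Delta H_2)$, contradicting (2) of Lemma \ref{l:close to linear and comparability of lambdas}. Choosing $\delta_0$ small relative to these numerical constants closes both cases.

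The main obstacle is making the isotropic/nearly isotropic case uniform. Near $d_2=d_1$ the ODE residual degenerates, and we must instead exploit that $E_k$ was chosen to realize $\lambda_{\gamma_A}$ with $H_n(e_1,\overline{s})$ as the best-approximating half-space. The first attempt will be to show that tilting by angle $\sim\mu_k$ reduces the symmetric difference by a definite fraction of $\mu_k$, while the $f_k$ contribution stays below $10^{-4}\mu_k$. The delicate bookkeeping is in transferring this tilt back through the symmetrizations via (1d).
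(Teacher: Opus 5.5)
There is a genuine gap. It starts with your first structural choice: you eliminate the constants $c_k^\pm=\ell_k^\pm-\overline{s}$ by subtracting the linearized equation at two points. Those constants are exactly the quantity the paper's proof is built on.

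The paper compares the excess relative mass \eqref{e:excess mass I_k estimate} of $I_k$ in $\Omega_+$ with the half-plane $H_2(e_1,\ell_k^+)$. This gives \eqref{e:LB on ell_k height}, that is, $\ell_k^+-\overline{s}\ge\frac{1}{10}\frac{\sqrt{2\pi}}{d_1}e^{\frac12 d_1^2\overline{s}^2}\lambda_{\gamma_A}(E_k)$. It then integrates \eqref{e:ODE} against test functions $\tilde\phi\to\chi_{[1,x_2]}$ supported in $\{1\le x_2\le 100\}$. Near-linearity bounds the curvature integral above by $\frac{1}{5000}\mu_k\int\tilde\phi$. The right-hand side is at least $d_1^2(\ell_k^+-\overline{s})\int\tilde\phi-(\frac{1}{10^4}+\frac{d_2^2}{100})\mu_k\int\tilde\phi-o(\mu_k)$. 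The anisotropic term $(d_2^2-d_1^2)\mu_k x_2\ge 0$ is simply dropped, because it has the favorable sign.

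Combined with $\lambda_{\gamma_A}(E_k)\ge\frac16 e^{-\frac12\overline{s}^2}\mu_k(1+o(1))$, from Lemma \ref{l:close to linear and comparability of lambdas}(2) and \eqref{e:L1 and L2 comparable for large a1}, this yields the numerical contradiction $\frac{1}{5000}\ge\frac{\sqrt{2\pi}}{60}-\frac{1}{50}-\frac{1}{10^4}$. The hypothesis $d_2^2\le 2$ enters only through $\frac{d_2^2}{100}\le\frac{1}{50}$. The argument is uniform in $d_2$, including $d_2=d_1$, and needs no case split.

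Once you discard the shift of the curvature constant, your only order-$\mu_k$ residual is $(d_1^2-d_2^2)\mu_k x_2$. It degenerates as $d_2\to d_1$, so your first case only covers $d_2^2-1$ above a threshold fixed by the error constants. Your second case does not cover the remaining range, for two reasons.

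First, whenever $1=d_1<d_2$, even slightly, $V_{d_1}(\sqrt{A})=\mathrm{span}(e_1)$. So there is no tilted admissible half-plane, and $\lambda_{\gamma_A}$ only compares with $H_n(\pm e_1,\pm\overline{s})$. The regime $0<d_2^2-1\ll 1$ is therefore handled by neither argument.

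Second, shrinking $\delta_0$ does not fix this. It would make $\delta_0$ depend on $d_2$ and degenerate as $d_2\to 1$. But $\delta_0$ in the statement is the fixed constant of Lemma \ref{l:close to linear and comparability of lambdas}, and it must stay positive at $d_2=d_1$ for the constant in Theorem \ref{t:small perturbation regime} to be nonzero.

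Even for $d_1=d_2$, the tilt argument rests on a step you cannot supply. Knowing that $\tilde E_k$ is close to a tilted half-plane tells you nothing about $E_k\subset\mathbb{R}^n$:
\begin{itemize}
\item The symmetrizations are $L^1$-contractions, so closeness transfers forward, not backward.
\item The replacement $H_k\mapsto I_k$ and the truncation preserve only the masses in $\Omega_\pm$.
\item Theorem \ref{t:technical theorem}(1d) concerns only the half-planes $H_2(\pm e_1,\cdot)$, not tilted ones.
\end{itemize}
So "$\lambda_{\gamma_A}(E_k)$ much smaller than $\gamma_{A_{1,2}}(\tilde E_k\Delta H_2(e_1,\overline{s}))$" does not follow.

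The missing idea is to keep $\ell_k^+-\overline{s}$ rather than cancel it, and to bound it below by a multiple of $\lambda_{\gamma_A}(E_k)$ using the mass imbalance between $\Omega_+$ and $\Omega_-$.
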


\begin{proof}
We argue by contradiction.  In particular, suppose that there exists a subsequence (also indexed by $k \in \mathbb{N}$) such that $a_{1, k} > 1-\delta_0$ for all $k \in \mathbb{N}$.  We will show that this contradicts the strong convergence of $D^2v_k \rightarrow 0$ in $L^2(\mathbb{R},\gamma_1)$ from Lemma \ref{l:convergence to linear case}.

\textbf{Finer estimates on $\boldsymbol{\ell_k^{\pm}}$.}  First, we note that $H_2(e_1, \ell_k^{\pm})$ is the unique (up to rotation in the case $d_1=d_2$) global minimizer for $\gamma_{A_{1, 2}}$-perimeter among all sets $E \subset \mathbb{R}^2$ with $\gamma_{A_{1,2}}(E)= 2 \gamma_{A_{1,2}}(I_k \cap \Omega_\pm)$ (see \eqref{unique minimizer}).   Restricting our attention to $\ell_k^+$, we have that $\partial H_2(e_1, \ell_k^+)=\{x_1=\ell_k^+\}$ is also a critical point for perimeter in $\Omega_+$ under the volume constraint $\gamma_{A_{1,2}}(E \cap \Omega_+)=  \gamma_{A_{1,2}}(I_k \cap \Omega_+)$.  Since by reflection we obtain a volume-constrained perimeter \textit{global minimizer} (i.e., an isoperimetric set), it must be that $H_2(e_1, \ell_k^+) \cap \Omega_+$ is not just a critical point, but a \textit{local minimizer} for perimeter in $\Omega_+$ under the volume constraint $\gamma_{A_{1,2}}(E \cap \Omega_+)=  \gamma_{A_{1,2}}(I_k \cap \Omega_+)$.  That is,
\begin{align*}
2\gamma_{A_{1,2}}((I_k \setminus H_2(e_1, \overline{s}))\cap \Omega^+) = \gamma_{A_{1,2}}(H_2(e_1, \ell_k^+) \setminus H_2(e_1, \overline{s})).
\end{align*}
Recalling the volume estimates of \eqref{e:excess mass I_k estimate}, we immediately obtain
\begin{align*}
        \frac{1}{5}\lambda_{\gamma_A}(E_k)\le \frac{d_1d_2}{2\pi}\int_{\overline{s}}^{\ell_k^+}\int_{\mathbb{R}}e^{-\frac{1}{2}(d_1^2x_1^2 + d_2^2x_2^2)}\,dx_2dx_1 & = \frac{d_1}{\sqrt{2\pi}}\int_{\overline{s}}^{\ell_k^+}e^{-\frac{1}{2}(d_1^2x_1^2)}\,dx_1\\
        & \le 2\frac{d_1}{\sqrt{2\pi}}(\ell_k^+ - \overline{s})e^{-\frac{1}{2}d_1^2|\overline{s}|^2},
    \end{align*}
    for all sufficiently large $k \in \mathbb{N}$ by Lemma \ref{l:continuity of Ck}.  That is, 
    \begin{align}\label{e:LB on ell_k height}
        \frac{1}{10}\frac{\sqrt{2\pi}}{d_1}e^{\frac{1}{2}d_1^2|\overline{s}|^2}\lambda_{\gamma_A}(E_k)\le \ell_k^+ - \overline{s}.
    \end{align}

\textbf{The contradiction.} Now, suppose towards a contradiction that there exists a sequence $\tilde{u}_k$ such that $w_k$ satisfies $a_{1,k}>1-\delta_0$. Let $\phi \in C^1_c(\mathbb{R})$ be a non-negative function such that $\Gamma_k(\text{supp}(\phi)) \subset \{1\le x_2\le 100\}$.  As an immediate consequence of the strong convergence of Lemma \ref{l:close to linear and comparability of lambdas} and Lipschitz control from Corollary \ref{c:tamanini convergence} by change of variables, we have for all sufficiently large $k \in \mathbb{N}$
\begin{align*}
    \int \phi(\tau) (\Gamma_k^+)''(\tau)\,d\tau & =  \int \tilde{\phi}(x_2) (\frac{D^2\tilde{u}_k}{\sqrt{1+|D\tilde{u}_k|^2}} - \frac{|D\tilde{u}_k|^2 D^2\tilde{u}_k}{(1+|D\tilde{u}_k|^2)^{\frac{3}{2}}})\, dx_2\\
    & \le \frac{1}{5,000}\norm{\tilde{u}_k}_{L^2(\mathbb{R},\gamma_{d_2})}\int \tilde{\phi}(x_2)\,dx_2,
\end{align*}
where $\tilde{\phi}(x_2) = \phi((\Gamma_{k}^+)^{-1}((\tilde{u}_k(x_2), x_2)))$.  On the other hand, from the ODE \eqref{e:ODE}, change of variables, and a Taylor expansion, we have 
\begin{align*}
    & \int \phi(\tau) (\Gamma_k^+)''(\tau)\,d\tau =  \int \phi_i(\tau) \left(d_1^2\ell_k - d_1^2 (\Gamma_k^+(\tau) \cdot e_1) \eta_{k,1}^+(\tau) - d_2^2(\Gamma_k^+(\tau)\cdot e_2)\eta_{k,2}^+(\tau)\right)\,d\tau\\
    & =  \int \phi(\tau) \Big(d_1^2\ell_k - d_1^2 (\overline{s}+\tilde{u}_k(\pi_{e_2}(\Gamma_k^+(\tau))) \left(\frac{1}{\sqrt{1+|D\tilde{u}_k(\pi_{e_2}(\Gamma_{k}^+(\tau)))|^2}}\right)\\
    & \qquad - d_2^2(\Gamma_k^+(\tau)\cdot e_2)\left(-\frac{D\tilde{u}_k(\pi_{e_2}(\Gamma_{k}^+(\tau)))}{\sqrt{1+|D\tilde{u}_k(\pi_{e_2}(\Gamma_{k}^+(\tau)))|^2}}\right)\Big)\,d\tau\\
    & =  \int \tilde{\phi}(x_2) \Big(d_1^2\ell_k\sqrt{1+|D\tilde{u}_k(x_2)|^2} - d_1^2 (\overline{s}+\tilde{u}_k(x_2)) +d_2^2x_2D\tilde{u}_k(x_2)\Big)\,dx_2\\
    & =  \int \tilde{\phi}(x_2) \Big(d_1^2(\ell_k\sqrt{1+|D\tilde{u}_k(x_2)|^2} - \overline{s})\Big)\,dx_2\\
    & \qquad \int \tilde{\phi}(x_2) \Big(-d_1^2 +d_2^2\Big)\norm{\tilde{u}_k}_{L^2(\mathbb{R},\gamma_1)}cx_2\,dx_2 + \int \tilde{\phi}(x_2) (-d_1^2f_k(x_2) + d_2^2x_2Df_k(x_2))\,dx_2\\
    & \ge \int \tilde{\phi}(x_2) \Big(d_1^2(\ell_k\sqrt{1+|D\tilde{u}_k(x_2)|^2} - \overline{s})\Big)\,dx_2 -(\frac{d_1^2}{10,000} + \frac{d_2^2}{100})\norm{\tilde{u}_k}_{L^2(\mathbb{R},\gamma_1)}) \int \tilde{\phi}(x_2) \,dx_2\\
    & \ge \int \tilde{\phi}(x_2) \Big(d_1^2(\ell_k - \overline{s})\Big)\,dx_2 -(\frac{d_1^2}{10,000} + \frac{d_2^2}{100})\norm{\tilde{u}_k}_{L^2(\mathbb{R},\gamma_1)}) \int \tilde{\phi}(x_2) \,dx_2 - o(\norm{\tilde{u}_k})\int \phi(x_2)\, dx_2,
\end{align*}
where we have used the assumption that $d_1 \le d_2$ to drop the middle term, the Taylor expansion 
\begin{align*}
    \ell_k\sqrt{1+|D\tilde{u}_k|^2} - \overline{s} = \ell_k - \overline{s} + \ell_k(\frac{1}{2} |D\tilde{u}_k|^2 + o(|D\tilde{u}_k|^2)),
\end{align*}
and the estimates on $\norm{D\tilde{u}_k}_{L^2(K, \gamma_{A'_{1,2}, 0})}$ from Lemma \ref{l:close to linear and comparability of lambdas}.

In particular, for any $x_2 \in (1, 100]$ we may choose $\phi_i$ such that $\tilde{\phi}_i \rightarrow \chi_{[1, x_2]}$.  Then, recalling that $\norm{D\tilde{u}_k}_{\infty}\rightarrow 0$ as $k \rightarrow \infty$, we have from \eqref{e:LB on ell_k height}
\begin{align*}
\frac{1}{5,000}\norm{\tilde{u}_k}_{L^2(\mathbb{R},\gamma_{d_2})}(x_2-1) & \ge  d_1^2\Big(\ell_k^+ - \overline{s}\Big)(x_2 - 1) -(\frac{d_1^2}{10,000} + \frac{d_2^2}{100})\norm{\tilde{u}_k}_{L^2(\mathbb{R},\gamma_{d_2})}(x_2-1)\\
&\qquad + o(\norm{\tilde{u}_k}_{L^2(\mathbb{R},\gamma_{d_2})})\\
& \ge d_1^2\Big(\frac{1}{10}\frac{\sqrt{2\pi}}{d_1}e^{\frac{1}{2}d_1^2|\overline{s}|^2}\lambda_{\gamma_A}(E_k)\Big)(x_2 - 1)\\
& \qquad -(\frac{d_1^2}{10,000} + \frac{d_2^2}{100})\norm{\tilde{u}_k}_{L^2(\mathbb{R},\gamma_{d_2})}(x_2-1)+ o(\norm{\tilde{u}_k}_{L^2(\mathbb{R},\gamma_{d_2})}).
\end{align*}
Now, we use Lemma \ref{l:close to linear and comparability of lambdas}(2) and the Taylor expansion estimate in Lemma \ref{l:fraenkel symmetry} to estimate 
\begin{align*}
    \lambda_{\gamma_A}(E_k)& \ge \frac{1}{2}\gamma_{2}(\tilde{E}_k \Delta H_2(e_1, \overline{s})) \qquad \qquad \qquad \qquad \text{Lemma \ref{l:close to linear and comparability of lambdas}(2)}\\
    & = \frac{1}{2}\norm{\tilde{u}_k}_{L^1(\mathbb{R},\gamma_{d_2})}e^{-\frac{1}{2}d_1^2|\overline{s}|^2} + O_4(\epsilon^2) \qquad \qquad \text{Lemma \ref{l:fraenkel symmetry}}\\
    & \ge \frac{1}{6}\norm{\tilde{u}_k}_{L^2(\mathbb{R},\gamma_{d_2})}e^{-\frac{1}{2}d_1^2|\overline{s}|^2} + o(\norm{\tilde{u}_k}_{L^2(\mathbb{R},\gamma_{d_2})}^2) \qquad \eqref{e:L1 and L2 comparable for large a1}.
\end{align*}

Plugging this into our previous estimates and canceling terms, we obtain 
\begin{align*}
    \frac{1}{5,000} & \ge d_1^2\Big(\frac{1}{10}\frac{\sqrt{2\pi}}{d_1} \frac{1}{6}\Big) -(\frac{d_1^2}{10,000} + \frac{d_2^2}{100})+ o(k).
\end{align*}
Therefore, using our assumption that $1=d_1^2 \le d_2^2\le 2$, we reach a contradiction for all sufficiently large $k \in \mathbb{N}$, since 
\begin{align*}
    \frac{1}{5,000} & < \left(\frac{1}{10}\sqrt{2\pi} \frac{1}{6}\right)-\left(\frac{1}{10,000} + \frac{1}{50}\right)\\
    & = \frac{\sqrt{2\pi}}{60}-\left(\frac{1}{50}+ \frac{1}{10,000}\right).
\end{align*}
Thus, no such subsequence $\tilde{u}_k$ exists.  This proves the lemma.
\end{proof}

\begin{theorem}[The Small Perturbation Regime]\label{l:fuglede-type estimate 2}
Let $A, A_{1,2}, \alpha, \overline{s}$ as in Definition \ref{d:standing assumptions}.  Let $\{\tilde{E}_k\}_k$ be the sequence guaranteed by Theorem \ref{t:technical theorem}.  Then, the following inequality holds.
    \begin{align*}
        \liminf_{k\rightarrow \infty}\frac{\delta_{\gamma_{A_{1,2}}}(\tilde{E}_k)}{\gamma_{A_{1,2}}(\tilde{E}_k \Delta H_2(e_1, \overline{s}))^2} \ge \begin{cases}\frac{(2\pi)e^{\frac{d_1^2\overline{s}^2}{2}}}{d_1}\left[\frac{d_2^2-d_1^2}{2}+d_2^2(\delta_0-\frac{1}{2}\delta_0^2)\right]& 1 \le \frac{d_2^2}{d_1^2}\le 2\\
        \frac{(2\pi)e^{\frac{d_1^2\overline{s}^2}{2}}}{d_1}\left[\frac{d_2^2-d_1^2}{2}\right] & \frac{d_2^2}{d_1^2}>2,
        \end{cases}
    \end{align*}
    where $0<\delta_0<1$ is the constant given in Lemma \ref{l:close to linear and comparability of lambdas}.
\end{theorem}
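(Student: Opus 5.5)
The plan is to feed the sets $\tilde{E}_k=E_{\tilde{u}_k}$ into Theorem \ref{l:fuglede-type estimate}. Since Theorem \ref{t:technical theorem}(2d) gives $\|\tilde{u}_k\|_{W^{1,\infty}}\to 0$, all error terms $O_5,O_6$ vanish as $k\to\infty$. Theorem \ref{t:technical theorem}(2a) says the mass constraint holds only up to $o(\lambda^8)$. I would fix this by a vertical shift $\tilde{u}_k+c_k$ with $|c_k|=o(\lambda_{\gamma_A}(E_k)^8)$; this changes the deficit and the symmetric difference only at negligible order. The Fuglede estimate is really a bound for $\gamma_{A_{1,2}}(E_u\Delta H_2(e_1,\overline{s}))^2$ in the denominator, because Hölder was applied to that quantity. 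After normalizing $d_1=1$ by Lemma \ref{l:rescaling d_1 is ok}, the problem reduces to bounding from below
\begin{align*}
\liminf_{k\to\infty}\frac{\int_{\mathbb{R}}\frac12|\tilde{u}_k'|^2\,d\gamma_{A',\overline{s}}}{\int_{\mathbb{R}}|\tilde{u}_k|^2\,d\gamma_{A',\overline{s}}}-\frac{d_1^2}{2}.
\end{align*}
The measure $\gamma_{A',\overline{s}}$ is a multiple of $e^{-d_2^2x_2^2/2}dx_2$. I would therefore pass to $w_k=\tilde{u}_k\circ\frac{1}{d_2}$ and expand in Hermite polynomials. With that scaling, $\int|\tilde{u}_k'|^2\,d\gamma/\int|\tilde{u}_k|^2\,d\gamma = d_2^2\,\|w_k'\|^2_{L^2(\gamma_1)}/\|w_k\|^2_{L^2(\gamma_1)}$. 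Writing $w_k=\sum_i a_{i,k}H_i$ gives $\|w_k'\|^2=\sum_i i\,a_{i,k}^2$.

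The mean $a_{0,k}$ has to be controlled. I would use Lemma \ref{l:first order estimate}: the mass constraint gives $\int \tilde{u}_k\,d\gamma=O(\|\tilde{u}_k\|_{L^2}^2)$, so $a_{0,k}^2=o(\|w_k\|^2)$ and, after normalization, $\sum_{i\ge1}a_{i,k}^2\to1$. This yields the Poincaré bound $\|w_k'\|^2\ge \sum_{i\ge1}a_{i,k}^2\ge 1-o(1)$. The ratio is then at least $\frac{d_2^2}{2}-\frac{d_1^2}{2}$, which is the case $d_2^2/d_1^2>2$. I expect this part to go through as stated.

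For $1\le d_2^2/d_1^2\le 2$, I would invoke Lemma \ref{l:separation from linear case}, which gives $|a_{1,k}|\le 1-\delta_0$ for large $k$, with $a_{1,k}$ the coefficient of the normalized $v_k$. Then
\begin{align*}
\sum_{i\ge1} i\,a_{i,k}^2\ \ge\ a_{1,k}^2+2\bigl(1-a_{1,k}^2-o(1)\bigr)=2-a_{1,k}^2-o(1)\ \ge\ 1+2\delta_0-\delta_0^2-o(1).
\end{align*}
Half of this, times $d_2^2$, gives $\frac{d_2^2}{2}+d_2^2(\delta_0-\frac12\delta_0^2)$. Subtracting $\frac{d_1^2}{2}$ gives exactly the stated bracket, and multiplying by the prefactor $\frac{2\pi e^{d_1^2\overline{s}^2/2}}{d_1}$ gives the claim.

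The main obstacle is bookkeeping. I must check that the normalization of $v_k$ used in Lemma \ref{l:separation from linear case} (dividing by the full $L^2$ norm, including $a_{0,k}$) matches the one in the spectral estimate; the vanishing of $a_{0,k}^2/\|w_k\|^2$ makes these agree in the limit. I must also check that Lemma \ref{l:separation from linear case} applies to a subsequence realizing the $\liminf$. Both points need care but seem routine. The $o(\lambda^8)$ mass correction is minor, since $\lambda\approx\|\tilde{u}_k\|_{L^1}$ by Lemma \ref{l:fraenkel symmetry}.
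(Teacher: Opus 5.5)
Your proposal is correct and follows the paper's proof essentially step for step. You reduce to the quotient $\int\frac12|\tilde u_k'|^2/\int|\tilde u_k|^2$ via Theorem \ref{l:fuglede-type estimate} (correctly read, as you note, with $\gamma_{A_{1,2}}(E_u\Delta H_2(e_1,\overline{s}))$ in the denominator), rescale to $\gamma_1$, expand in Hermite polynomials with $\|w_k'\|^2=\sum_i i\,a_{i,k}^2$, control $a_{0,k}$ by Lemma \ref{l:first order estimate}, and split cases using Lemma \ref{l:separation from linear case} when $d_2^2/d_1^2\le 2$. The only difference is in bookkeeping: you restore the exact mass constraint by a vertical shift of size $o(\lambda_{\gamma_A}(E_k)^8)$, whereas the paper applies Lemma \ref{l:first order estimate} at the adjusted height $\overline{s}_k$ with $\gamma_{A_{1,2}}(H_2(e_1,\overline{s}_k))=\alpha_k$, and your version handles this point more cleanly.
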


\begin{proof}
By change of variables, we may assume that $1=d_1^2 \le d_2^2$. By Lemma \ref{l:fuglede-type estimate}, it is sufficient to estimate the ratio
\begin{align*}
    \frac{\int_{\mathbb{R}}\frac{1}{2}|\nabla \tilde{u}_k(x_2)|^2 d\gamma_{A',\overline{s}}(x_2)}{\int_{\mathbb{R}}|\tilde{u}_k(x_2)|^2d\gamma_{A',\overline{s}}(x_2)}.
\end{align*} 
First, note by change of variables
\begin{align}\label{e:denominator bound in fuglede}
    \norm{\tilde{u}_k}_{L^2(\mathbb{R}, \gamma_{A'_{1,2}, \overline{s}})}:= e^{-\frac{1}{2}|\overline{s}|^2}\int_{\mathbb{R}}|\tilde{u}_k(x_2)|^2 e^{-\frac{1}{2}d_2^2|x_2|^2}\, dx_2 = \frac{e^{-\frac{1}{2}|\overline{s}|^2}}{d_2}\int_{\mathbb{R}}\tilde{u}_{k}(\frac{1}{d_2}y)\,d\gamma_1(y). 
\end{align}
Now, using the standard Hermite polynomials for $L^2(\mathbb{R},\gamma_1)$ for $u= \tilde{u}_k \circ \frac{1}{d_2} \in W^{1,\infty}(\mathbb{R},\gamma_1)$, we have Parseval-type identities
\begin{align*}
\|u\|^2_{L^2(\R,\gamma_1)}
=
\int_{\mathbb{R}}
|u(y)|^2\,d\gamma_{1}(y) =
\sum_{i=0}^{\infty} a_{i}^2,\qquad 
Du(y)=D\left[\sum_{i=0}^\infty a_{i}H_{i}(y)\right]=\sum_{i=1}^\infty a_{i} DH_{i}(y).
\end{align*}
Therefore, by the iteration identity $DH_{i}=\sqrt{i}H_{i-1}$ for $i\geq 1$ (see, for example, \cite[Lemma 1.3.2]{MR1642391}), we have 
\begin{align*}
\norm{\nabla \tilde{u}_k}^2_{L^2(\mathbb{R}, \gamma_{A'_{1,2}, \overline{s}})}& := e^{-\frac{1}{2}|\overline{s}|^2}\int_{\mathbb{R}}|\nabla \tilde{u}_k|^2\, e^{-\frac{1}{2}d_2^2|x_2|^2}dx_2\\
& = \frac{1}{d_2}e^{-\frac{1}{2}|\overline{s}|^2}\int_{\mathbb{R}}|\nabla \tilde{u}_k(\frac{1}{d_2}y)|^2\, d\gamma_1(y)\\
& = d_2e^{-\frac{1}{2}|\overline{s}|^2}\int_{\mathbb{R}}|\nabla u(y)|^2\, d\gamma_1(y)\\
&= d_2e^{-\frac{1}{2}|\overline{s}|^2} \sum_{i=1}^\infty ia_{i-1,k}^2.
\end{align*}
And, using Lemma \ref{l:first order estimate} with $H_2(e_1,\overline{s}_k) = \alpha_k = \gamma_{A_{1,2}}(\tilde{E}_k)$ and $u = \tilde{u}_k \circ \frac{1}{d_2}$, we have that for any $0<\varepsilon$ and all sufficiently large $k \in \mathbb{N}$
\begin{align*}
|a_{0}|=\left|\int_{\R}\tilde{u}_k(\frac{1}{d_2}y)d\gamma_{1}(y)\right|&=d_2 e^{\bar{s}_k^2/2}\left|\int_{\R}\tilde{u}_k(x_2)d\gamma_{A_{1,2}',\bar{s}_k}(x_2)\right|\\
&\leq d_2e^{\bar{s}_k^2/2}\left(\frac{d_1^2\bar{s}_k}{2}\int_{\mathbb{R}}|\tilde{u}_k(x_2)|^2d\gamma_{A_{1,2}',\overline{s}_k}(x_2)+O_3(\varepsilon^3)\right) \qquad \text{Lemma \ref{l:first order estimate}}\\
&\leq d_2Ce^{\bar{s}_k^2/2}\|\tilde{u}_k\|^2_{L^2(\R,\gamma_{A'_{1,2},\bar{s}_k})}\\
&\leq d_2C\varepsilon e^{\bar{s}_k^2/2}\|\tilde{u}_k\|_{L^2(\R,\gamma_{A'_{1,2},\bar{s}_k})}.
\end{align*}

Now, we break into two cases.  

\textbf{Case 1: $1\le d_2^2\le 2$.}
In this case, by Lemma \ref{l:separation from linear case}, we may estimate
\begin{align*}
d_2 \norm{\tilde{u}_k}_{L^2(\mathbb{R}, \gamma_{A_{1,2}', 0})} & =\|\tilde{u}_k \circ \frac{1}{d_2}\|^2_{L^2(\mathbb{R},\gamma_{1})}\\
&=\sum_{i=0}^\infty a_{i,k}^2\\
&\leq d_2^2C^2\varepsilon^2e^{\bar{s}_k^2}\|\tilde{u}_k\|_{L^2(\mathbb{R},\gamma_{A_{1,2}',\bar{s}_k})}^2 +(1-\delta_0)^2\|\tilde{u}_k\circ \frac{1}{d_2}\|_{L^2(\mathbb{R},\gamma_{1})}^2\\
& \qquad +\sum_{i=2}^\infty a_{i,k}^2.
\end{align*}
Absorbing the first two terms on the right hand side, we see that
\begin{align*}
\left(2\delta_0 - \delta_0^2 -d_2C\varepsilon^2e^{\bar{s}_k^2}\frac{\|\tilde{u}_k\|_{L^2(\mathbb{R},\gamma_{A_{1,2}',\bar{s}_k})}^2}{\|\tilde{u}_k\circ \frac{1}{d_2}\|^2_{L^2(\mathbb{R}, \gamma_{1})}}\right)\|\tilde{u}_k\circ \frac{1}{d_2}\|^2_{L^2(\mathbb{R}, \gamma_{1})}\leq \sum_{i=2}^\infty a_{i,k}^2,  
\end{align*}
where we have used the assumption that $d_2 \ge 1$.  However, unwinding the notation ($A_{1,2}'=d_2$), we see that
\begin{align*}
d_2e^{\frac{\bar{s}_k^2}{2}}\|\tilde{u}_k\|_{L^2(\mathbb{R}, \gamma_{A_{1,2}',\bar{s}_k})}^2= \|\tilde{u}_k\circ \frac{1}{d_2} \|_{L^2(\mathbb{R},\gamma_1)}^2.
\end{align*}
Therefore, we have that for any $0<\varepsilon$ and all sufficiently large $k \in \mathbb{N}$,
\begin{align}\label{e: lower bound on higher order terms 1}
    (2\delta_0-\delta_0^{2} - C^2\varepsilon^2e^{\frac{\bar{s}_k^2}{2}})\|\tilde{u}_k\circ \frac{1}{d_2}\|^2_{L^2(\mathbb{R}, \gamma_{1})}\leq \sum_{i=2}^\infty a_{i,k}^2.
\end{align}

This immediately implies an improvement in the Sobolev-type inequality.  That is,
\begin{align*}
\|\nabla \tilde{u}_k\|^2_{L^2(\mathbb{R},\gamma_{A_{1,2}', \overline{s}})}&= d_2 e^{-\frac{1}{2}|\overline{s}|^2}\Big(a_{1,k}^2+\sum_{i=2}^\infty ia_{i,k}^2\Big)\\
&\geq d_2 e^{-\frac{1}{2}|\overline{s}|^2}\Big(a_{1,k}^2+2\sum_{i=2}^\infty a_{i,k}^2\Big)\\
& \geq d_2 e^{-\frac{1}{2}|\overline{s}|^2}\Big(\sum_{i=1}^\infty a_{i,k}^2 + (2\delta_0-\delta_0^{2}- C^2\varepsilon_0^2e^{\frac{|\overline{s}_k|^2}{2}})\|\tilde{u}_k\circ \frac{1}{d_2}\|^2_{L^2(\mathbb{R}, \gamma_{1})}\Big) \qquad \eqref{e: lower bound on higher order terms 1}\\
& \geq d_2 e^{-\frac{1}{2}|\overline{s}|^2}\Big(\sum_{i=0}^\infty a_{i,k}^2 - d_2C^2\varepsilon^2e^{\bar{s}_k^2} \|\tilde{u}_k\circ\frac{1}{d_2}\|_{L^2(\mathbb{R},\gamma_1)}^2\\
& \qquad + (2\delta_0-\delta_0^{2}-C^2\varepsilon^2e^{\frac{\bar{s}_k^2}{2}})\|\tilde{u}_k\circ \frac{1}{d_2}\|^2_{L^2(\mathbb{R}, \gamma_{1})}\Big).
\end{align*}
Therefore, by \eqref{e:denominator bound in fuglede} and the above inequality, we see that
\begin{align*}
\frac{\int_{\mathbb{R}}|\nabla \tilde{u}(y)|^2 d\gamma_{A_{1,2}',\overline{s}}(y)}{2\Big(1+O_5(\varepsilon)\Big)\int_{\mathbb{R}}|u(y)|^2d\gamma_{A_{1,2}',\overline{s}}(y)} \geq \frac{d_2^2(1-d_2C\varepsilon^2e^{\bar{s}_k^2})}{2\Big(1+O_5(\varepsilon)\Big)}+\frac{d_2^2(2\delta_0-\delta_0^2 - C^2\varepsilon^2e^{\frac{\bar{s}_k^2}{2}})}{2\Big(1+O_5(\varepsilon)\Big)}.
\end{align*}
Letting $\varepsilon \rightarrow 0^+$, Lemma \ref{l:fuglede-type estimate} immediately gives that
\begin{align*}
\liminf_{k \rightarrow \infty}\frac{\delta_{\gamma_A}(\tilde{E}_k)}{\gamma_{A_{1,2}}(\tilde{E}_k\Delta H_2(e_1,\overline{s}))^2}&\geq \frac{(2\pi)e^{\frac{d_1^2\overline{s}^2}{2}}}{d_1}\left[\frac{d_2^2-d_1^2}{2}+d_2^2(\delta_0-\frac{1}{2}\delta_0^2)\right].
\end{align*}

\textbf{Case 2: $2<d_2^2$.}
Following exactly the same argument as above, we obtain without any control on $a_{1,k}$, 
\begin{align*}
\|\nabla \tilde{u}_k\|^2_{L^2(\mathbb{R},\gamma_{A_{1,2}', \overline{s}_k})}&= d_2 e^{-\frac{1}{2}|\overline{s}_k|^2}\Big(a_{1,k}^2+\sum_{i=2}^\infty ia_{i,k}^2\Big)\\
&\geq d_2 e^{-\frac{1}{2}|\overline{s}_k|^2}\Big(a_{1,k}^2+2\sum_{i=2}^\infty a_{i,k}^2\Big)\\
& \geq d_2 e^{-\frac{1}{2}|\overline{s}_k|^2}\Big(\sum_{i=1}^\infty a_{i,k}^2\Big)\\
& \geq d_2 e^{-\frac{1}{2}|\overline{s}_k|^2}\Big(\sum_{i=0}^\infty a_{i,k}^2 - d_2C^2\varepsilon^2e^{\bar{s}_k^2} \|\tilde{u}_k\circ\frac{1}{d_2}\|_{L^2(\mathbb{R},\gamma_1)}^2\Big).
\end{align*}
Plugging this in, we obtain
\begin{align*}
\frac{\int_{\mathbb{R}}|\nabla \tilde{u}(y)|^2 d\gamma_{A_{1,2}',\overline{s}_k}(y)}{2\Big(1+O_5(\varepsilon)\Big)\int_{\mathbb{R}}|u(y)|^2d\gamma_{A_{1,2}',\overline{s}_k}(y)} \geq \frac{d_2^2(1-d_2C\varepsilon^2e^{\bar{s}_k^2})}{2\Big(1+O_5(\varepsilon)\Big)}.
\end{align*}
Letting $\varepsilon \rightarrow 0^+$, Lemma \ref{l:fuglede-type estimate} immediately gives that in this case
\begin{align*}
\liminf_{k \rightarrow \infty}\frac{\delta_{\gamma_A}(\tilde{E}_k)}{\gamma_{A_{1,2}}(\tilde{E}_k\Delta H_2(e_1,\overline{s}))^2}&\geq \frac{(2\pi)e^{\frac{d_1^2\overline{s}^2}{2}}}{d_1}\left[\frac{d_2^2-d_1^2}{2}\right].
\end{align*}
This completes the proof.
\end{proof}

\section{Qualitative Stability}\label{s:qualitative stability}

In this section, we prove Lemma \ref{l:qualitative stability} and Lemma \ref{l:continuity of epsilon}.

\begin{lemma}[Qualitative Stability]\label{l:qualitative stability 2}
    Let $A$ be a symmetric, positive-definite $(n \times n)$-matrix.  For every $0<\varepsilon$ and $\alpha \in (0,1)$, there is a $\overline{\rho}(\alpha, \varepsilon, A, n)>0$ such that if $F\subset \mathbb{R}^{n-1}$ satisfies
    \begin{align*}
        \gamma_A(F)=\alpha, \qquad \delta_{\gamma_A}(F)< \overline{\rho}(\alpha, \varepsilon, A, n),
    \end{align*}
    then $\lambda_{\gamma_A}(F)< \varepsilon$.
\end{lemma}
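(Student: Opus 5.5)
We argue by contradiction and compactness. Suppose the statement fails for some $\varepsilon>0$ and $\alpha\in(0,1)$. Then there is a sequence $F_k\subset\mathbb{R}^n$ with
\begin{align*}
\gamma_A(F_k)=\alpha,\qquad \delta_{\gamma_A}(F_k)\to 0,\qquad \lambda_{\gamma_A}(F_k)\ge\varepsilon .
\end{align*}
Since $\delta_{\gamma_A}(F_k)\to0$, the perimeters $\operatorname{Per}_{\gamma_A}(F_k)$ are uniformly bounded, namely by $e^{-[\phi^{-1}(\alpha)]^2/2}\,d_1+1$. By Theorem \ref{t:weak compactness, LSC}(1), after passing to a subsequence, $\chi_{F_k}\to\chi_{F}$ in $L^1_{\mathrm{loc}}$.

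The first step is to upgrade this to convergence in $L^1(\mathbb{R}^n,\gamma_A)$. Because $\gamma_A$ is a finite measure with Gaussian tails, for each $R$ we have $\gamma_A(\mathbb{R}^n\setminus B_R)\to0$ as $R\to\infty$, uniformly in $k$. Combining this with local convergence on each $B_R$ gives $\gamma_A(F_k\Delta F)\to0$. In particular $\gamma_A(F)=\alpha$, so no mass escapes to infinity. This is the point where the Gaussian weight replaces the boundedness arguments needed in Euclidean space.

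Next, lower semicontinuity (Theorem \ref{t:weak compactness, LSC}(2)) gives
\begin{align*}
\operatorname{Per}_{\gamma_A}(F)\le\liminf_k \operatorname{Per}_{\gamma_A}(F_k)=e^{-[\phi^{-1}(\alpha)]^2/2}\,\frac{1}{\|(\sqrt A)^{-1}\|}.
\end{align*}
Hence $F$ is an isoperimetric set of $\gamma_A$-mass $\alpha$. By the characterization of equality cases \eqref{unique minimizer}, taken from \cite[Theorem 1.1]{Yeh2024}, $F$ coincides up to null sets with $H_n(\omega,\phi^{-1}(\alpha)/d_{\min})$ for some unit vector $\omega\in V_{d_{\min}}(\sqrt A)$.

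We then conclude using this $\omega$ as a competitor in the definition \eqref{e:gamma A asymmetry def}:
\begin{align*}
\lambda_{\gamma_A}(F_k)\le\gamma_A\big(F_k\Delta H_n(\omega,\phi^{-1}(\alpha)/d_{\min})\big)=\gamma_A(F_k\Delta F)\to0.
\end{align*}
This contradicts $\lambda_{\gamma_A}(F_k)\ge\varepsilon$. Note that the minimum in \eqref{e:gamma A asymmetry def} is attained because the map $\omega\mapsto\gamma_A(G\Delta H_n(\omega,\cdot))$ is continuous on the compact set $V_{d_{\min}}\cap\mathbb{S}^{n-1}$, though this is not needed for the argument.

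The only nontrivial inputs are the uniqueness of minimizers, quoted from \cite{Yeh2024}, and the tightness step. Since the weight is globally integrable, we do not expect either to pose a real obstacle. The refined properties of $\rho$ stated in Lemma \ref{l:qualitative stability}, namely monotonicity, left continuity, and lower semicontinuity, are not part of this statement. They would come afterwards by defining $\rho$ as the infimum of $\delta_{\gamma_A}$ over sets with $\lambda_{\gamma_A}\ge\varepsilon$ and repeating the same compactness argument while varying $\alpha$ and $A$.
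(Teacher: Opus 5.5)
Your proof is correct and follows the same compactness--contradiction argument as the paper. You extract a limit set $F$ via weak compactness and lower semicontinuity, identify it as a half-space $H_n(\omega,\phi^{-1}(\alpha)/d_{\min})$ with $\omega\in V_{d_{\min}}(\sqrt A)$ using the uniqueness of minimizers in \eqref{unique minimizer}, and then contradict $\lambda_{\gamma_A}(F_k)\ge\varepsilon$. Your explicit tightness step upgrades $L^1_{\mathrm{loc}}$ convergence to convergence in $L^1(\mathbb{R}^n,\gamma_A)$, so that $\gamma_A(F)=\alpha$ and $F$ is an admissible competitor; this fills in a detail the paper leaves implicit. You also correctly work in $\mathbb{R}^n$ rather than the $\mathbb{R}^{n-1}$ that appears as a typo in the statement.
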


\begin{proof}
    We argue by contradiction. Suppose that the statement is false and that there exists a pair $(\alpha,\varepsilon)$ for which there exists a sequence of sets $\{F_i\}_i$ such that 
    \begin{align*}
        \gamma_A(F_i)=\alpha, \qquad \delta_{\gamma_A}(F_i)\le i^{-1},
    \end{align*}
    but $\lambda_{\gamma_A}(F_i)\ge \varepsilon$.

    By weak compactness of sets of finite $\gamma_{A}$-perimeter and lower semi-continuity of $\gamma_A$-perimeter under convergence in $\gamma_A$-measure (Theorem \ref{t:weak compactness, LSC}), there exists an $F \subset \mathbb{R}^n$ and a subsequence $\{F_j\}_j$ such that $F_j \rightarrow F$ in $L^1(\mathbb{R}^n,\gamma_A)$ and $\delta_{\gamma_A}(F) \le \liminf_{j}\delta_{\gamma_A}(F_j) = 0$. In particular, $F$ is a $\gamma_A$-perimeter minimizer.  However, by \eqref{unique minimizer}, the unique $\gamma_A$-perimeter-minimizers are sets of the form $H_n(\omega',\overline{s})$ for $\omega' \in V_{d_{\min}}(\sqrt{A})$. Thus, $F_j \rightarrow H_n(\omega',\overline{s})$ in $L^1(\mathbb{R}^n,\gamma_{A})$, contradicting the assumption that $\lambda_{\gamma_A}(F_i)\ge \varepsilon$. Thus, the result holds.
\end{proof}

\begin{definition}\label{d:rho as supremum}
    For any $A,\alpha,\epsilon$ as in the hypotheses of Lemma \ref{l:qualitative stability 2}, we define $\rho(\alpha, \epsilon, A, n)$ to be the supremum of all the $\overline{\rho}(\alpha, \epsilon, A,n)$ such that Lemma \ref{l:qualitative stability 2} holds.  
\end{definition}
    
\begin{lemma}[Properties of the modulus $\rho$]\label{l:properties of modulus}
The function $\rho(\alpha,\epsilon, A, n)$ defined by Definition \ref{d:rho as supremum} satisfies the following properties.
\begin{enumerate}
\item For all $A,\alpha$ as in the hypotheses of Lemma \ref{l:qualitative stability 2}, $\rho(\alpha, \epsilon, A, n)$ is left continuous in $\epsilon$ and monotone non-decreasing in $\epsilon$.  Moreover, there exists an $0<\epsilon_2(\alpha,A,n)$ such that $\rho(\alpha, \epsilon_2, A, n)<\infty$.
\item The modulus $\rho(\alpha, \epsilon, A, n)$ is jointly lower-semicontinuous in $\alpha$ and $A$ on the set
\begin{align*}
    \{(\alpha, A) \in (0,1)\times S_{n \times n}^+(\mathbb{R}): \rho(\alpha, \epsilon, A, n)<\infty \}.
\end{align*}
\end{enumerate}
\end{lemma}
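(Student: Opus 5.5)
Monotonicity in $\epsilon$ will come straight from Definition \ref{d:rho as supremum}. If $\epsilon<\epsilon'$, then any admissible $\overline{\rho}$ for $\epsilon$ is also admissible for $\epsilon'$, because $\lambda_{\gamma_A}(F)<\epsilon$ implies $\lambda_{\gamma_A}(F)<\epsilon'$. Hence $\rho(\alpha,\epsilon,A,n)\le\rho(\alpha,\epsilon',A,n)$.

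For left continuity I will first record the equivalent description
\[
\rho(\alpha,\epsilon,A,n)=\inf\{\delta_{\gamma_A}(F):\gamma_A(F)=\alpha,\ \lambda_{\gamma_A}(F)\ge\epsilon\},
\]
with the convention $\inf\emptyset=\infty$. This holds because the admissible $\overline{\rho}$ are exactly the numbers below this infimum. Now take $\epsilon_j\nearrow\epsilon$ and suppose $\lim_j\rho(\alpha,\epsilon_j)<\rho(\alpha,\epsilon)$. Then there are sets $F_j$ with $\lambda_{\gamma_A}(F_j)\ge\epsilon_j$ and $\delta_{\gamma_A}(F_j)\le\lim\rho+o(1)$. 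The perimeters are bounded, so Theorem \ref{t:weak compactness, LSC} gives $F_j\to F$ in $L^1(\gamma_A)$ with $\gamma_A(F)=\alpha$ (no mass is lost, since $\gamma_A$ is a probability measure and we have $L^1$ convergence). Lower semicontinuity then gives $\delta_{\gamma_A}(F)\le\liminf\delta_{\gamma_A}(F_j)$. The asymmetry is continuous under $L^1(\gamma_A)$ convergence, since it is a minimum over a compact family of fixed half-spaces and $|\gamma_A(F\Delta H)-\gamma_A(F_j\Delta H)|\le\gamma_A(F\Delta F_j)$. So $\lambda_{\gamma_A}(F)\ge\epsilon$, and therefore $\rho(\alpha,\epsilon)\le\delta_{\gamma_A}(F)$, a contradiction.

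Finiteness for some $\epsilon_2$ will come from exhibiting a competitor. Take $F=H_n(\omega,\cdot)$ with $\omega\perp V_{d_{\min}}$ (possible since $A$ is not a multiple of the identity), or in the isotropic case a strip of mass $\alpha$. Such an $F$ has $\lambda_{\gamma_A}(F)=:\epsilon_2>0$ and finite deficit, so the infimum is finite.

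For joint lower semicontinuity in $(\alpha,A)$ I will use the same infimum formulation together with compactness. Take $(\alpha_j,A_j)\to(\alpha,A)$, where $A_j$ can be assumed diagonal after rotation (the pseudometric sees only eigenvalues), and take near-optimal sets $F_j$ with $\gamma_{A_j}(F_j)=\alpha_j$, $\lambda_{\gamma_{A_j}}(F_j)\ge\epsilon$, and $\delta_{\gamma_{A_j}}(F_j)\le\rho(\alpha_j,\epsilon,A_j)+1/j$. Suppose the liminf of the right-hand side is smaller than $\rho(\alpha,\epsilon,A)<\infty$. The densities converge locally uniformly and are uniformly comparable on compact sets, so the perimeters $\Per_{\gamma_A}(F_j)$ are locally bounded. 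Extracting a locally $L^1$ limit $F$, Gaussian tightness gives $\gamma_A(F)=\alpha$ and $\Per_{\gamma_A}(F)\le\liminf\Per_{\gamma_{A_j}}(F_j)$. The minimal perimeter $e^{-\phi^{-1}(\alpha)^2/2}d_{\min}$ is continuous in $(\alpha,A)$, so $\delta_{\gamma_A}(F)\le\liminf\delta_{\gamma_{A_j}}(F_j)$.

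The main obstacle is $\lambda_{\gamma_A}(F)\ge\epsilon$, because the admissible directions $V_{d_{\min}}(\sqrt{A_j})$ may jump. If $d_1^{(j)}<d_2^{(j)}$ but $d_1=d_2$, the limiting class is larger, so the asymmetry can drop. I will handle this by lower semicontinuity of the set-valued map $A\mapsto V_{d_{\min}}$. In the limit every minimizing half-space $H_n(\omega,\overline{s})$ of $A$ needs to be approximated by admissible ones for $A_j$, and that fails exactly in this jump case. There I would instead restrict to the case in which the eigenspace dimensions stabilize, or argue as in the proof of Theorem \ref{t:main theorem}, using a rotation that realizes the best half-space. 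If this cannot be made to work, I would state lower semicontinuity only along sequences with matching multiplicity of $d_{\min}$, since that is what the main proof actually uses via $\epsilon_0$.
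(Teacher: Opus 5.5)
Your part (1) is correct and is essentially the paper's argument. It uses the infimum reformulation, compactness together with lower semicontinuity of the deficit, and the bound $|\lambda_{\gamma_A}(F)-\lambda_{\gamma_A}(F_j)|\le\gamma_A(F\Delta F_j)$ at fixed mass. Your competitor for finiteness works as well as the paper's centered ball. Your argument for (2) is also correct along sequences in which the multiplicity of $d_{\min}$ is eventually that of the limit, for instance at every $A$ with $d_1<d_2$. In that case, in a common diagonal basis, $V_{d_{\min}}(\sqrt{A_j})=V_{d_{\min}}(\sqrt{A})$, and $\lambda_{\gamma_{A_j}}(F_j)\to\lambda_{\gamma_A}(F)$ uniformly over the admissible directions.

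The jump case you flagged is a genuine gap. No rotation or other device can close it, because (2) is false there. Take $n=2$, $\alpha=\tfrac12$, $A=I$, $A_j=\operatorname{diag}(1,1+\tfrac1j)$ (so $\dist(A_j,A)\to0$), and $F_j=\{x_2\le0\}$.
\begin{itemize}
\item The deficit vanishes: $\gamma_{A_j}(F_j)=\tfrac12$ and $\delta_{\gamma_{A_j}}(F_j)=\sqrt{1+1/j}-1\to0$.
\item The asymmetry does not: the only admissible half-planes for $A_j$ are $\{x_1\le0\}$ and $\{x_1\ge0\}$, so $\lambda_{\gamma_{A_j}}(F_j)=\tfrac12$ for every $j$.
\item Hence, for all small $\epsilon>0$ (small enough that $\rho(\tfrac12,\epsilon,I,2)<\infty$), $\rho(\tfrac12,\epsilon,A_j,2)\le\delta_{\gamma_{A_j}}(F_j)\to0$.
\item On the other hand, $\rho(\tfrac12,\epsilon,I,2)>0$ by Lemma \ref{l:qualitative stability 2}, so lower semicontinuity fails at $I$.
\end{itemize}
The paper's proof passes over exactly this point when it asserts $\lambda_{\gamma_A}(F_0)=\lim_i\lambda_{\gamma_{A_i}}(F_i)$ from the continuity of $\gamma_{A_i}$ in $A_i$.

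So your fallback, lower semicontinuity only along sequences with stable multiplicity of $d_{\min}$, is the correct statement. Your reason for it being enough is not. Case 2 of the proof of Theorem \ref{t:main theorem} invokes lower semicontinuity along $A_{d_1,d_{2,j}}\to A_{d_1,d_1}$ with $d_{2,j}>d_1$. That is precisely the jump case, so the restricted version does not support that step.
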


\begin{proof}
\textbf{Proof of (1):} We note that $\rho(\alpha, \epsilon, A, n)$ is monotone non-decreasing in $\epsilon$ by definition as the supremum. Now, we show that for any $0<\alpha<1$, and any $A \in S^+_{n\times n}(\mathbb{R})$ as in Definition \ref{d:standing assumptions}(3), $\rho(\alpha, \epsilon, A, n)<\infty$ for sufficiently small $0<\epsilon$.  By considering $B_r(0)\subset \mathbb{R}^n$ such that $\gamma_A(B_r(0))= \alpha$, we see from \eqref{unique minimizer} that $\infty>\delta_{\gamma_A}(B_r(0))>0$. Since $B_r(0)$ is not a minimizing half-plane, it must be that $\lambda_{\gamma_A}(B_r(0))>0$. Let $0<\epsilon'(\alpha, A, n)<\lambda_{\gamma_A}(B_r(0))$. Then, it must be that $$0<\rho(\alpha, \epsilon'(\alpha, A, n), A, n)<\delta_{\gamma_A}(B_r(0)) <\infty.$$  

Now, let $A,\alpha$ be fixed. We argue that $\rho(\alpha, \epsilon, A, n)$ is left continuous in $\epsilon.$  Suppose that there exists an $0<\epsilon'$ such that
\begin{align*}
    \lim_{\epsilon \nearrow \epsilon'}\rho(\alpha, \epsilon, A, n) <     \lim_{\epsilon \searrow \epsilon'}\rho(\alpha, \epsilon, A, n),
\end{align*}
since otherwise, by monotonicity, there is nothing to prove. By the definition of $\rho(\alpha, \epsilon, A, n)$ as a supremum, for any sequence of $\epsilon_i \nearrow \epsilon'$ and any $$\kappa\in (0,\lim_{\epsilon \nearrow \epsilon'}\rho(\alpha, \epsilon, A, n) -     \lim_{\epsilon \searrow \epsilon'}\rho(\alpha, \epsilon, A, n))$$ sufficiently small, there exists a subsequence $\epsilon_j\nearrow \epsilon'$ and sequence of sets $F_j \subset \mathbb{R}^n$ such that $\gamma_A(F_j)= \alpha$ and
\begin{align*}
    \rho(\alpha, A, \epsilon_j, n) < \delta_{\gamma_A}(F_j)& < \lim_{\epsilon \searrow \epsilon'}\rho(\alpha, \epsilon, A, n)-\kappa\\
    \lambda_{\gamma_A}(F_j) &\ge \epsilon_j.
\end{align*}
In particular, the sequence $F_j$ is uniformly bounded as sets of finite $\gamma_A$-perimeter.  By weak compactness and lower semicontinuity of $\delta_{\gamma_A}$, there exists a set $F_0 \subset \mathbb{R}^n$  and a further subsequence (also indexed by $j$) such that
\begin{align*}
    F_j \overset{L^1(\mathbb{R}^n,\gamma_A)}{\rightarrow } F_0 \qquad \text{and} \qquad \delta_{\gamma_A}(F_0) \le \lim_{\epsilon \searrow \epsilon'}\rho(\alpha, \epsilon, A, n)-\kappa.
\end{align*}
But, the convergence in measure implies that for any individual minimizing half-plane $H_n(\omega, \overline{s})$,
\begin{align*}
    \epsilon_j \le \lambda_{\gamma_A}(F_j) \le \gamma_{A}(F_j \Delta H_n(\omega, \overline{s})) \rightarrow \gamma_{A}(F_0 \Delta H_n(\omega, \overline{s})).
\end{align*}
Letting $j \rightarrow \infty$, we see that $\epsilon' \le \lambda_{\gamma_A}(F_0)$. Thus, it must be that $\rho(\alpha, \epsilon', A, n) \le \delta_{\gamma_A}(F_0) \le \lim_{\epsilon \searrow \epsilon'}\rho(\alpha, \epsilon, A, n)-\kappa$.  Letting 
\begin{align*}
    \kappa \rightarrow  \left|\lim_{\epsilon \searrow \epsilon'}\rho(\alpha, \epsilon, A, n)-\lim_{\epsilon \nearrow \epsilon'}\rho(\alpha, \epsilon, A, n)\right|,
\end{align*} we see that $\rho(\alpha, \epsilon', A, n)$ must be left continuous in $\epsilon$, since by monotonicity
\begin{align*}
    \lim_{\epsilon \nearrow \epsilon'}\rho(\alpha, \epsilon, A, n) & \le \rho(\alpha, \epsilon', A, n)\\
    & \le \lim_{\epsilon \searrow \epsilon'}\rho(\alpha, \epsilon, A, n) - \left|\lim_{\epsilon \searrow \epsilon'}\rho(\alpha, \epsilon, A, n)-\lim_{\epsilon \nearrow \epsilon'}\rho(\alpha, \epsilon, A, n)\right|\\
    & = \lim_{\epsilon \nearrow \epsilon'}\rho(\alpha, \epsilon, A, n).
\end{align*}

\textbf{Proof of (2):} Let $\alpha,A,\epsilon$ be given as in the hypotheses of the lemma such that $\rho(\alpha, \epsilon, A, n)<\infty$.  Let $(\alpha_i, A_i) \rightarrow (\alpha, A)$ be a sequence such that $\lim_{i \rightarrow \infty}\rho(\alpha_i, \epsilon, A_i, n)$ exists, as is the case for, say, a sequence which realizes $\liminf_{(\alpha_i, A_i) \rightarrow (\alpha, A)}\rho(\alpha_i, \epsilon, A_i, n)$. 

Suppose for the sake of contradiction that
\begin{align*}
    \rho(\alpha, \epsilon, A, n) - \lim_{i \rightarrow \infty}\rho(\alpha_i, \epsilon, A_i,n) > \kappa >0.
\end{align*}
Then, for each $i \in \mathbb{N}$, there must exist a set $F_i \subset \mathbb{R}^n$ such that $\gamma_{A_i}(F_i) = \alpha_i$ and
\begin{align*}
    \rho(\alpha_i, \epsilon, A_i, n) & \le \delta_{\gamma_{A_i}}(F_i) <\rho(\alpha_i, \epsilon, A_i, n) + \frac{1}{2}\kappa <\infty,\\
    \lambda_{\gamma_{A_i}}(F_i)&\ge \epsilon>0.
\end{align*}
In particular, using the fact that the $\gamma_{A_i}$-perimeter is continuous in $A_i$, the collection $\{F_i\}_i$ is uniformly locally bounded as sets of finite $\gamma_{A}$-perimeter.  Therefore, by a diagonalization argument, there exists a set $F_0\subset \mathbb{R}^n$ and a subsequence such that
\begin{align*}
    & F_j \overset{L^1(\mathbb{R}^n,\gamma_{A})}{\rightarrow }F_0\\
    \delta_{\gamma_{A}}(F_0)&\le \liminf_{i \rightarrow \infty} \delta_{\gamma_{A_i}}(F_i) \le  \rho(\alpha, \epsilon, A, n) - \frac{1}{2}\kappa\\
     \lambda_{\gamma_{A}}(F_0)& =\lim_{i \rightarrow \infty}\lambda_{\gamma_{A_i}}(F_i)\ge \epsilon>0,
\end{align*}
where we have used the fact that $\gamma_{A_i}(E)$ is continuous in $A_i$ for any $E\subset \mathbb{R}^2$ to obtain the last line.  But, this contradicts the definition of $\rho(\alpha, \epsilon, A,n)$. Thus, the Claim 2 must be true.
\end{proof}

\begin{definition}\label{d:epsilon threshold def 2}
Let $A_{1,2}$ be a diagonal, positive-definite, $(2\times 2)$-matrix with eigenvalues $0<d_1^2 \le d_2^2$. Let $\alpha \in (0,1)$.  Let $\epsilon_0(d_1, d_2, \alpha)$ be defined as in Definition \ref{d:epsilon threshold def}.
\end{definition}

\begin{lemma}\label{l:continuity of epsilon 2}
    For $A_{1,2}, d_1^2, d_2^2, \alpha$, and $\epsilon_0(d_1, d_2, \alpha)$ as in Definition \ref{d:epsilon threshold def}, the function $\epsilon_0(d_1, d_2,\alpha)$ satisfies $0<\epsilon_0(d_1, d_2, \alpha)$ and is upper semicontinuous in $(d_1, d_2, \alpha)$.
\end{lemma}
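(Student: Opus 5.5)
We argue both claims by contradiction and compactness, in the same spirit as Lemma \ref{l:qualitative stability 2} and Lemma \ref{l:properties of modulus}. The engine is the characterization of $Q(H_2(e_1,\overline{s}))$ already obtained: by Theorem \ref{t:technical theorem}(2), Corollary \ref{c:reduction ratio estimates} and Theorem \ref{t:small perturbation regime} (with $n=2$ and $A=A_{1,2}$), every recovery sequence for $Q(H_2(e_1,\overline{s}))$ has ratio $\delta/\gamma(\cdot\Delta H_2)^2$ asymptotically at least $\frac{1}{25}C_{SPR}(d_1,d_2,\alpha)>\frac{1}{100}C_{SPR}(d_1,d_2,\alpha)$.

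\emph{Positivity.} Suppose $\epsilon_0(d_1,d_2,\alpha)=0$. Then there are sets $F_k\subset\mathbb{R}^2$ with $\gamma_{A_{1,2}}(F_k)=\alpha$ and $0<\gamma_{A_{1,2}}(F_k\Delta H_2(e_1,\overline{s}))\le 1/k$, but with ratio below $\frac{1}{100}C_{SPR}$. In particular $F_k\to H_2(e_1,\overline{s})$ in $L^1(\gamma_{A_{1,2}})$ and the ratios stay bounded. Two points then need care.

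First, the sequence is phrased with the relative asymmetry $\gamma_{A_{1,2}}(F_k\Delta H_2(e_1,\overline{s}))$ rather than $\lambda_{\gamma_{A_{1,2}}}(F_k)$. Since $\lambda\le\gamma(F_k\Delta H_2)$, a small ratio in the relative form does not immediately give a small $Q$-ratio. We handle this by rerunning the reduction machinery directly on $\{F_k\}$. Already in dimension two, the steps after Lemma \ref{l:n-1 symmetrization} (Lemma \ref{t:can symm vertically 1}, Corollary \ref{c:second symmetrization}, Lemma \ref{l:properties of Ik}, the gradient control of Section \ref{s:gradient control}, and Theorem \ref{t:full truncation result}) only ever use the relative quantity $\gamma_{A_{1,2}}(\cdot\Delta H_2(e_1,\overline{s}))$ together with the finiteness of the limiting ratio. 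Here that finiteness is supplied by the bound $\frac{1}{100}C_{SPR}$, in place of Remark \ref{r:Q is finite and consequences}.

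Second, we apply the analogue of Theorem \ref{t:small perturbation regime} to the resulting $\tilde{F}_k$. The relative ratio is nonincreasing up to $o$-errors along the chain (constant $\frac{1}{25}$ from $\frac{1}{5}$), so we obtain $\liminf\ge\frac{1}{25}C_{SPR}$. This contradicts the assumed bound $<\frac{1}{100}C_{SPR}$.

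\emph{Upper semicontinuity.} Take $(d_{1,i},d_{2,i},\alpha_i)\to(d_1,d_2,\alpha)$ with $\epsilon_0(d_{1,i},d_{2,i},\alpha_i)\ge\epsilon'>\epsilon_0(d_1,d_2,\alpha)$. We show the defining property holds at the limit for every $\epsilon<\epsilon'$. Let $F$ be admissible at the limit parameters with $0<\gamma_{A_{1,2}}(F\Delta H_2(e_1,\overline{s}))\le\epsilon$. Because $\gamma_{A}$, $\operatorname{Per}_{\gamma_A}$, $\overline{s}$ and $C_{SPR}$ all depend continuously on $(d_1,d_2,\alpha)$ (one checks $\delta_0$ is universal), we perturb $F$ to $F_i$ with $\gamma_{A_{i}}(F_i)=\alpha_i$. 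One convenient choice is a small dilation plus a translation along $e_1$ to fix mass. This keeps the symmetric difference below $\epsilon'$ for large $i$ and makes the deficits and symmetric differences converge. Applying the inequality at index $i$ and passing to the limit gives the inequality for $F$, except in the degenerate case where the symmetric difference tends to $0$ while the limit is positive, which does not occur. Hence $\epsilon_0(d_1,d_2,\alpha)\ge\epsilon'$, a contradiction.

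The main obstacle is the positivity step: certifying that the entire reduction chain applies to an arbitrary sequence converging to the half-plane with bounded relative ratio, not only to recovery sequences. In particular, the strict dichotomy arguments in Lemma \ref{t:can symm vertically 1} must work with ratio bound $\frac{1}{100}C_{SPR}$ instead of $Q<\infty$. We expect this to go through, since that proof only uses boundedness of $\delta/\lambda^2$, which forces $\delta/\lambda\to0$.
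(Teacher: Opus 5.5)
\paragraph{Positivity fails at $d_1=d_2$.}
There is a genuine gap here: the positivity claim is false when $d_1=d_2=d$, and the obstruction is not the one you flag. Take $F_\theta:=H_2((\cos\theta,\sin\theta),\overline{s})$. Since $\gamma_{A_{1,2}}$ is rotation invariant, we have $\gamma_{A_{1,2}}(F_\theta)=\alpha$ and $\delta_{\gamma_{A_{1,2}}}(F_\theta)=0$. At the same time $0<\gamma_{A_{1,2}}(F_\theta\Delta H_2(e_1,\overline{s}))\to 0$ as $\theta\to 0$. But $C_{SPR}(d,d,\alpha)=2\pi d\,e^{d^2\overline{s}^2/2}(\delta_0-\frac12\delta_0^2)>0$. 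So the defining inequality of Definition \ref{d:epsilon threshold def} fails for every $\epsilon>0$, and $\epsilon_0(d,d,\alpha)$ is the supremum of an empty set.

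You correctly noticed that a small relative ratio does not give a small $Q$-ratio. Here $\lambda_{\gamma_{A_{1,2}}}(F_\theta)=0$, so these sets are not even admissible for $Q$. Rerunning the chain cannot repair this, because $F_\theta$ passes through every step essentially unchanged:
\begin{itemize}
\item both symmetrizations return a tilted half-plane;
\item the relative volume-constrained minimization returns a tilted half-plane, since $F_\theta$ is already a global minimizer;
\item when $A_{1,2}=d^2I$, a tilted line solves \eqref{e:ODE} with $\ell_k^\pm=\overline{s}$;
\item truncation then produces a truncated affine $\tilde{u}_k$.
\end{itemize}
At the end the ratio is $o(1)$, so your ``analogue of Theorem \ref{t:small perturbation regime}'' is false for this sequence. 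Concretely, Case 1 of that theorem rests on Lemma \ref{l:separation from linear case}, and its estimate \eqref{e:LB on ell_k height}, $\ell_k^+-\overline{s}\gtrsim\lambda$, fails for tilted lines. The tower/valley dichotomy you single out as the main obstacle is harmless; the real obstruction is the rotational degeneracy. The paper's own proof has the same gap, since it asserts that $\{F_i\}$ is admissible for $Q(H_2(e_1,\overline{s}))$, which is false for these sets.

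\paragraph{The case $d_1<d_2$ and upper semicontinuity.}
For $d_1<d_2$ your detour is unnecessary. The only other minimizer of mass $\alpha$ is $\{x_1\ge-\overline{s}\}$, whose symmetric difference with $H_2(e_1,\overline{s})$ has measure $2\min\{\alpha,1-\alpha\}>0$. Hence $\lambda_{\gamma_{A_{1,2}}}(F_k)=\gamma_{A_{1,2}}(F_k\Delta H_2(e_1,\overline{s}))$ for large $k$, and $\{F_k\}$ is admissible for $Q$. Then $Q(H_2(e_1,\overline{s}))\ge\frac1{25}C_{SPR}$, obtained from a genuine recovery sequence, gives the contradiction directly. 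This is what the paper does.

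Your upper semicontinuity argument matches the paper's, except for the perturbation used to fix the mass. The paper adds or removes a small ball at a density point. Your dilation plus $e_1$-translation need not change the mass at all, e.g.\ for $F=\{x_2\le 0\}$, so you should use the ball.

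Both your argument and the paper's also use continuity of $C_{SPR}$ in $(d_1,d_2,\alpha)$, and this fails across $d_2^2=2d_1^2$. For $d_2^2/d_1^2>2$ the term $d_2^2(\delta_0-\frac12\delta_0^2)$ drops out. Approaching such a point from that side, the limit passage only yields the inequality with a strictly smaller constant, so upper semicontinuity is not established there.
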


\begin{proof}
To see that the threshold $\epsilon_0(d_1, d_2, \alpha)$ is strictly positive, we use a simple contradiction argument.  Recall $C_{SPR}(d_1, d_2, \alpha)$ as in \eqref{e:spr lb constant}.  Suppose towards a contradiction that there exists a matrix $A_{1,2}$ and a sequence of sets $\{F_i\}\subset \mathbb{R}^n$ such that $\gamma_{A_{1,2}}(F_i) = \gamma_{A_{1,2}}(H_{2}(e_1, \overline{s}))$ and
\begin{align*}
    0<\gamma_{A_{1,2}}(F_i \Delta H_2(e_1, \overline{s}))\le \frac{1}{i}, \qquad  \frac{1}{100}C_{SPR}(d_1,d_2,\alpha) \ge \frac{\delta_{\gamma_{A_{1,2}}}(F_i)}{\gamma_{A_{1,2}}(F_i\Delta H_2(e_1, \overline{s}))^2}
\end{align*} for all $i \in \mathbb{N}$.  Then, $\{F_i\}_i$ is admissible in the definition of $Q(H_2(e_1,\overline{s}))$. Applying Theorem \ref{t:technical theorem}(2), Corollary \ref{c:reduction ratio estimates}, and Theorem \ref{t:small perturbation regime} gives 
\begin{align*}
    \frac{1}{100}C_{SPR}(d_1, d_2, \alpha) \ge \liminf_{i \rightarrow \infty}\frac{\delta_{\gamma_{A_{1,2}}}(F_i)}{\gamma_{A_{1,2}}(F_i\Delta H_2(e_1, \overline{s}))^2}\ge \frac{1}{25}C_{SPR}(d_1, d_2, \alpha),
\end{align*}
a contradiction.  Thus, $0<\epsilon_0(d_1, d_2, \alpha)$.    

To see that $\epsilon_0(d_1, d_2,\alpha)$ is upper semicontinuous in $d_1, d_2,\alpha$, we recall the notation
\begin{align*}
    A_{d_1,d_2}:= \begin{pmatrix}
        d_1^2 & 0\\
        0 & d_2^2
    \end{pmatrix}
\end{align*} to keep track of the eigenvalues.  Observe that for any $F \subset \mathbb{R}^2$ 
\begin{align}\label{e:continuity 1}
    \lim_{(d_1',d_2') \rightarrow (d_1,d_2)}\gamma_{A_{d_1',d_2'}}(F) & = \gamma_{A_{d_1,d_2}}(F)\\ \label{e:continuity 2}
    \lim_{(d_1',d_2') \rightarrow (d_1,d_2)} \operatorname{Per}_
    {\gamma_{A_{d_1',d_2'}}}(F) & = \operatorname{Per}_{\gamma_{A_{d_1,d_1}}}(F).
\end{align}
Now, we claim that for any sequence $(d_1', d_2',\alpha') \rightarrow (d_1, d_2, \alpha)$ it cannot be that 
\begin{align*}
  \epsilon_0(d_1, d_2, \alpha) < \limsup_{(d_1', d_2', \alpha') \rightarrow (d_1, d_2, \alpha)} \epsilon_0(d_1', d_2', \alpha').
\end{align*}
We argue by contradiction.  Assume that $$0<\eta = \limsup_{(d_1', d_2', \alpha') \rightarrow (d_1, d_2, \alpha)} \epsilon_0(d_1', d_2', \alpha') -  \epsilon_0(d_1, d_2, \alpha).$$  Since $\epsilon_0(d_1, d_2, \alpha)$ is a supremum, there is a set $F$ such that $\gamma_{A_{1,2}}(F) = \alpha$,
\begin{align*}
    \epsilon_0(d_1, d_2, \alpha) \le \gamma_{A_{d_1, d_2}}(F \Delta H_2(e_1, \overline{s})) &< \epsilon_0(d_1, d_2, \alpha) + \frac{\eta}{2}\\
    \frac{\delta_{\gamma_{A_{d_1,d_2}}}(F)}{\gamma_{A_{d_1,d_2}}(F\Delta H_2(e_1, \overline{s}))^2} & < \frac{1}{100}C_{SPR}(d_1, d_2, \alpha).
\end{align*}
Now, we modify $F$ as follows.  Let $x_0,x_1 \in \mathbb{R}^n$ be such that $x_0$ is a Lebesgue point of $F^c$ and $x_1$ is a Lebesgue point of $F$. We define $F(d_1', d_2', \alpha') \subset \mathbb{R}^2$ and the radius $0\le r'=r'(d_1', d_2', \alpha')$ such that by 
\begin{align*}
F':= \begin{cases}
        F \cup B_{r'}(x_0) & \text{ if }\gamma_{A_{d_1',d_2'}}(F \cup B_{r'}(x_0)) = \alpha'\\
        F \setminus B_{r_i}(x_1) & \text{ if }\gamma_{A_{d_1',d_2'}}(F \setminus B_{r'}(x_1)) = \alpha'.
    \end{cases}
\end{align*}
By \eqref{e:continuity 1}, we have that $r' \rightarrow 0$ as $(d_1', d_2', \alpha') \rightarrow (d_1, d_2, \alpha)$ and 
\begin{align*}
    \gamma_{A_{d_1, d_2}}(F \Delta H_2(e_1, \overline{s})) & = \lim_{(d_1', d_2', \alpha') \rightarrow (d_1, d_2, \alpha)} \gamma_{A_{d_1', d_2'}}(F' \Delta H_2(e_1, \overline{s}))\\
    &= \lim_{(d_1', d_2', \alpha') \rightarrow (d_1, d_2, \alpha)} \gamma_{A_{d_1', d_2'}}(F' \Delta H_2(e_1, \overline{s}'))<\limsup_{(d_1', d_2', \alpha') \rightarrow (d_1, d_2, \alpha)} \epsilon_0(d_1', d_2', \alpha'),
\end{align*}
where we use the notation $\overline{s}'$ to be the quantity such that $\gamma_{A_{d_1',d_2'}}(H_2(e_1, \overline{s}'))=\alpha'$. 

Similarly, \eqref{e:continuity 2} and the fact that $r' \rightarrow 0^+$ gives
\begin{align*}
    \lim_{(d_1', d_2', \alpha') \rightarrow (d_1, d_2, \alpha)}\operatorname{Per}_{\gamma_{A_{d_1',d_2'}}}(F') = \operatorname{Per}_{\gamma_{A_{d_1,d_2}}}(F).
\end{align*}
But, by our assumption and \eqref{e:continuity 1}, \eqref{e:continuity 2}, we see that 
\begin{align*}
    \lim_{(d_1', d_2', \alpha') \rightarrow (d_1, d_2, \alpha)}\frac{1}{100}C_{SPR}(d_1', d_2', \alpha') & = \frac{1}{100}C_{SPR}(d_1, d_2, \alpha) \\
    & >\frac{\delta_{\gamma_{A_{d_1,d_2}}}(F)}{\gamma_{A_{d_1,d_2}}(F\Delta H_2(e_1, \overline{s}))^2}\\
    & = \lim_{(d_1', d_2') \rightarrow (d_1, d_2)} \frac{\delta_{\gamma_{A_{d_1',d_2'}}}(F')}{\gamma_{A_{d_1',d_2'}}(F'\Delta H_2(e_1, \overline{s}))^2}\\
    & = \lim_{(d_1', d_2') \rightarrow (d_1, d_2)} \frac{\delta_{\gamma_{A_{d_1',d_2'}}}(F')}{\gamma_{A_{d_1',d_2'}}(F'\Delta H_2(e_1, \overline{s}'))^2}.
\end{align*}
This is a contradiction for sufficiently large $j \in \mathbb{N}$.
\end{proof}

\addtocontents{toc}{\protect\setcounter{tocdepth}{0}}
\section*{Acknowledgments}
The authors would like to thank Tatiana Toro and Francesco Maggi for bringing this problem to their attention. Additionally, K.Y. was partially supported by NSF grants DMS--1954545 and DMS--FRG--1853993.

\addtocontents{toc}{\protect\setcounter{tocdepth}{1}}

\bibliography{references}
\bibliographystyle{amsalpha}

\end{document}